\documentclass[letterpaper, 11pt,  reqno]{amsart}

\usepackage[margin=1.1in,marginparwidth=1.5cm, marginparsep=0.5cm]{geometry}

\usepackage{amsmath,amssymb,amscd,amsthm,amsxtra, esint, comment}

\usepackage{adjustbox}

\usepackage{kotex}
\usepackage{mathrsfs} 

\usepackage[implicit=true]{hyperref}

\usepackage{color} 
\usepackage{ulem}
\usepackage[makeroom]{cancel}

\allowdisplaybreaks[2]

\sloppy

\hfuzz  = 0.5cm 


\setlength{\pdfpagewidth}{8.50in}
\setlength{\pdfpageheight}{11.00in}

\definecolor{gr}{rgb}   {0.,   0.69,   0.23 }
\definecolor{bl}{rgb}   {0.,   0.5,   1. }
\definecolor{mg}{rgb}   {0.85,  0.,    0.85}
\definecolor{yl}{rgb}   {0.8,  0.7,   0.}
\definecolor{or}{rgb}  {0.7,0.2,0.2}

\newtheorem{theorem}{Theorem} [section]

\newtheorem{lemma}[theorem]{Lemma}
\newtheorem{proposition}[theorem]{Proposition}
\newtheorem{remark}[theorem]{Remark}

\newtheorem{corollary}[theorem]{Corollary}


\DeclareMathOperator*{\supp}{supp}

\DeclareMathOperator{\Id}{Id}

\newcommand{\I}{\hspace{0.5mm}\text{I}\hspace{0.5mm}}



\newcommand{\noi}{\noindent}
\newcommand{\Z}{\mathbb{Z}}
\newcommand{\R}{\mathbb{R}}

\newcommand{\T}{\mathbb{T}}

\let\P= \undefined
\newcommand{\P}{\mathbf{P}}

\newcommand{\E}{\mathbb{E}}

\newcommand{\be}{\beta}
\newcommand{\dl}{\delta}

\newcommand{\nb}{\nabla}

\newcommand{\Dl}{\Delta}
\newcommand{\eps}{\varepsilon}

\newcommand{\g}{\gamma}

\newcommand{\ld}{\lambda}

\newcommand{\s}{\sigma}

\newcommand{\wt}{\widetilde}
\newcommand{\cj}{\overline}
\newcommand{\dx}{\partial_x}

\newcommand{\dd}{\partial}

\newcommand{\ta}{\theta}

\newcommand{\les}{\lesssim}
\newcommand{\ges}{\gtrsim}

\newcommand{\jb}[1]
{\langle #1 \rangle}

\newcommand{\ind}{\mathbf 1}

\newcommand{\M}{\mathcal{M}}

\newcommand{\N}{\mathbb{N}}

\newcommand{\J}{\mathcal{J}}


\newtheorem*{ackno}{Acknowledgements}

\numberwithin{equation}{section}
\numberwithin{theorem}{section}



\newcommand{\PP}{\mathbb{P}}


\DeclareMathOperator{\Law}{Law}

\newcommand{\dr}{\theta}
\newcommand{\Dr}{\Theta}

\newcommand{\Ha}{\mathbb{H}_a}

\usepackage{tikz}

\usetikzlibrary{shapes.misc}
\usetikzlibrary{shapes.symbols}
\usetikzlibrary{shapes.geometric}
\tikzset{
	dot/.style={circle,fill=black,draw=black,inner sep=1pt,minimum size=0.5mm},
	>=stealth,
	}
\tikzset{
	ddot/.style={circle,fill=white,draw=black,inner sep=2pt,minimum size=0.8mm},
	>=stealth,
	}

\tikzset{decision/.style={ 
        draw,
        diamond,
        aspect=1.5
    }}

\tikzset{dia2/.style
={diamond,fill=white,draw=black,inner sep=0pt,minimum size=1mm},
	>=stealth,
	}

\tikzset{dia/.style
={star,fill=black,draw=black,inner sep=0pt,minimum size=1mm},
	>=stealth,
	}

\colorlet{symbols}{black}
\colorlet{testcolor}{green!60!black}

\def\1{\mathbf{{1}}}

\usetikzlibrary{shapes.misc}
\usetikzlibrary{shapes.symbols}
\usetikzlibrary{snakes}
\usetikzlibrary{decorations}
\usetikzlibrary{decorations.markings}
\definecolor{dblue}{rgb}{0.1, 0.1, 0.9}


\tikzset{
	root/.style={circle,fill=testcolor,inner sep=0pt, minimum size=2mm},		
	dot/.style={circle,fill=black,draw=black, solid,inner sep=0pt,minimum size=0.75mm},
	bdot/.style={circle,fill=blue,draw=dblue, solid,inner sep=0pt,minimum size=0.75mm},
		}
\colorlet{symbols}{blue!90!black}

\makeatletter
\def\DeclareSymbol#1#2#3{\expandafter\gdef\csname MH@symb@#1\endcsname{\tikz[baseline=#2,scale=0.15]{#3}}%
\expandafter\gdef\csname MH@symb@#1s\endcsname{\scalebox{0.6}{\tikz[baseline=#2,scale=0.15]{#3}}}}
\def\<#1>{\csname MH@symb@#1\endcsname}
\makeatother

\DeclareSymbol{sunset}{-3}{\draw (0,0) circle [x radius = 1.5, y radius = 1]; \draw (-1.5,0) node[dot] {} -- (1.5,0) node[dot] {};}
\DeclareSymbol{tadpole}{-3}{\draw (0,0) circle [x radius = 0.75, y radius = 1]; \draw (0,-1) node[dot] {};}

\DeclareSymbol{1}{0}{\draw[white] (-.4,0) -- (.4,0); \draw (0,0)  -- (0,1.2) node[dot] {};}
\DeclareSymbol{2}{0}{\draw (-0.5,1.2) node[dot] {} -- (0,0) -- (0.5,1.2) node[dot] {};}
\DeclareSymbol{3}{0}{\draw (0,0) -- (0,1.2) node[dot] {}; \draw (-.7,1) node[dot] {} -- (0,0) -- (.7,1) node[dot] {};}
\DeclareSymbol{4}{0}{\draw (-0.36,1.2) node[dot] {} -- (0,0) -- (0.36,1.2) node[dot] {}; \draw (-1,1) node[dot] {} -- (0,0) -- (1,1) node[dot] {};}
\DeclareSymbol{30}{-3}{\draw (0,0) -- (0,-1); \draw (0,0) -- (0,1.2) node[dot] {}; \draw (-.7,1) node[dot] {} -- (0,0) -- (.7,1) node[dot] {};}
\DeclareSymbol{31}{-3}{\draw (0,0) -- (0,-1) -- (1,0) node[dot] {}; \draw (0,0) -- (0,1.2) node[dot] {}; \draw (-.7,1) node[dot] {} -- (0,0) -- (.7,1) node[dot] {};}
\DeclareSymbol{32}{-3}{\draw (0,0) -- (0,-1) -- (1,0) node[dot] {}; \draw (0,0) -- (0,-1) -- (-1,0) node[dot] {}; \draw (0,0) -- (0,1.2) node[dot] {}; \draw (-.7,1) node[dot] {} -- (0,0) -- (.7,1) node[dot] {};}
\DeclareSymbol{20}{-3}{\draw (0,0) -- (0,-1);\draw (-.7,1) node[dot] {} -- (0,0) -- (.7,1) node[dot] {};}
\DeclareSymbol{22}{-3}{\draw (0,0.3) -- (0,-1) -- (1,0) node[dot] {}; \draw (0,0.3) -- (0,-1) -- (-1,0) node[dot] {};\draw (-.7,1) node[dot] {} -- (0,0.3) -- (.7,1) node[dot] {};}
\DeclareSymbol{202}{-3}{\draw (-.6625,0) -- (0,-1) -- (.6625,0)  {}; \draw (-1.1,1) node[dot] {} -- (-.6625,0) -- (-0.365,1) node[dot] {}; \draw (0.365,1) node[dot] {} -- (.6625,0) -- (1.1,1) node[dot] {};}

\makeatletter
\def\DeclareSymbol#1#2#3{\expandafter\gdef\csname MH@symb@#1\endcsname{\tikz[baseline=#2,scale=0.15]{#3}}}
\def\<#1>{\csname MH@symb@#1\endcsname}
\makeatother

\DeclareSymbol{X}{-2.4}{\node[dot] {};}
\DeclareSymbol{1}{0}{\draw[white] (-.4,0) -- (.4,0); \draw (0,0)  -- (0,1.2) node[dot] {};}
\DeclareSymbol{2}{0}{\draw (-0.5,1.2) node[dot] {} -- (0,0) -- (0.5,1.2) node[dot] {};}

\DeclareSymbol{sunset}{-3}{\draw (0,0) circle [x radius = 1.5, y radius = 1]; \draw (-1.5,0) node[dot] {} -- (1.5,0) node[dot] {};}
\DeclareSymbol{tadpole}{-3}{\draw (0,0) circle [x radius = 0.75, y radius = 1]; \draw (0,-1) node[dot] {};}

\DeclareSymbol{T0}{-2.7}
 { \draw (0,0) node[ddot]{};}

\DeclareSymbol{T1}{0}
 {  
 \draw (0,0) node[dot]{} -- (0,4) node[ddot] {}; 
 \draw (-3,0) node[dot] {} -- (0,4)node[ddot] {} -- (3,0) node[dot] {};
\draw[line width=1pt] (0, 4)node[ddot, label=above:$r_1$]{} 
.. controls(3,6) and (5,6) ..
(8, 4)node[ddot, label=above:$r_2$]{}
(4, 12) node[label ] {$\underline{j = 1}$};
 }

\DeclareSymbol{T2}{0}
 {\draw (0,0) node[dot]{} -- (0,4) node[ddot] {}; 
 \draw (-3,0) node[dot] {} -- (0,4)node[ddot] {} -- (3,0) node[dot] {};
 \draw (0,-4) node[dot]{} -- (0,0) node[dot] {}; 
 \draw (-3,-4) node[dot] {} -- (0,0)node[dot] {} -- (3,-4) node[dot] {};
\draw[line width=1pt] (0, 4)node[ddot, label=above:$r_1$]{} 
.. controls(3,6) and (5,6) ..
(8, 4)node[ddot, label=above:$r_2$]{}
(4, 12) node[label ] {$\underline{j = 2}$};
 }

\DeclareSymbol{T3}{0}
 {\draw (0,0) node[dot]{} -- (0,4) node[ddot] {}; 
 \draw (-3,0) node[dot] {} -- (0,4)node[ddot] {} -- (3,0) node[dot] {};
 \draw (0,-4) node[dot]{} -- (0,0) node[dot] {}; 
 \draw (-3,-4) node[dot] {} -- (0,0)node[dot] {} -- (3,-4) node[dot] {};
\draw (10,0) node[dot]{} -- (10,4) node[ddot] {}; 
 \draw (7,0) node[dot] {} -- (10,4)node[ddot] {} -- (13,0) node[dot] {};
\draw[line width=1pt] (0, 4)node[ddot, label=above:$r_1$]{} 
.. controls(4,6) and (6,6) ..
(10, 4)node[ddot, label=above:$r_2$]{}
(5, 12) node[label ] {$\underline{j = 3}$};
 }

\tikzstyle{dot1} = [ draw=  gray!00, 
 rectangle, rounded corners, fill=gray!00,  inner sep=1pt, inner ysep=3pt]

\tikzstyle{dot2} = [ draw=  black, 
ellipse, fill=gray!00,  inner sep=1pt, inner ysep=3pt]

\tikzstyle{dot3} = [ draw=  gray!00, 
ellipse, fill=gray!00,  inner sep=1pt, inner ysep=3pt]

\DeclareSymbol{T4}{0}
 {\draw (0,0) node[dot1]{$\phantom{n_2^{(1)} = n^{(2)}}$} -- (0,15) node[dot1] {$\phantom{n^{(1)}}$}; 
 \draw (-13,0) node[dot1] {$n_1^{(1)}$} -- (0,15)node[ddot] {$\phantom{n^{(1)}}$} 
 -- (13,0) node[dot1] {$n_3^{(1)}$};
 \draw (0,-15) node[dot1]{$n_2^{(2)}$} -- (0,0) node[dot1] {$\phantom{n_2^{(1)} = n^{(2)}}$}; 
 \draw (-13,-15) node[dot1] {$n_1^{(2)}$} -- (0,0)node[dot1] {$n_2^{(1)} = n^{(2)}$} -- (13,-15) node[dot1] {$n_3^{(2)}$};
\draw (40,0) node[dot1]{{$n_2^{(3)}$}} -- (40,15) node[dot3]  {$\phantom{n^{(1)} = n^{(3)}}$} ; 
 \draw (27,0) node[dot1] {$n_1^{(3)}$} -- (40,15)node[dot3]  {$\phantom{n^{(1)} = n^{(3)}}$}  -- (53,0) node[dot1] {$n_3^{(3)}$};
\draw[line width=1pt] (0, 15)node[ddot, label=above:$r_1$]{$n^{(1)}$} 
.. controls(10,23) and (27,23) ..
(40, 15)node[dot2, label=above:$r_2$] {$n^{(1)} = n^{(3)}$} ;
 }

\makeatletter
\def\DeclareSymbol#1#2#3{\expandafter\gdef\csname MH@symb@#1\endcsname{\tikz[baseline=#2,scale=0.15]{#3}}}
\def\<#1>{\csname MH@symb@#1\endcsname}
\makeatother

\newcommand{\hao}[1]{\marginpar{\color{red} $\Leftarrow\Leftarrow\Leftarrow$}{\smallskip \noi\color{blue}Hao: #1}\smallskip}

\newcommand{\kihoon}[1]{\marginpar{\color{red} $\Leftarrow\Leftarrow\Leftarrow$}{\smallskip \noi\color{brown}Kihoon: #1}\smallskip}

\begin{document}
\baselineskip = 14pt

\title[sine–Gordon measure and multi-solitons]
{Concentration and fluctuations of sine–Gordon measure around topological multi-soliton manifold}


\author[K.~Seong, H.~Shen, and P.~Sosoe]
{Kihoon Seong, Hao Shen, and Philippe Sosoe}


\address{Kihoon Seong\\
École polytechnique fédérale de Lausanne (EPFL)\\
Rte Cantonale, 1015 Lausanne\\
Switzerland}

\email{kihoon.seong@epfl.ch}

\address{Hao Shen\\ Department of Mathematics\\
University of Wisconsin–Madison\\
480 Lincoln Drive\\
Madison, WI 53706\\
USA}

\email{pkushenhao@gmail.com}

\address{Philippe Sosoe\\
Department of Mathematics\\
Cornell University\\ 
310 Malott Hall\\ 
Cornell University\\
Ithaca\\ New York 14853\\ 
USA }

\email{ps934@cornell.edu}

\subjclass[2020]{60F10, 60F05, 82B21, 60H30}

\keywords{topological solitons; multi-soliton manifold; collision manifold; large deviations, Ornstein–Uhlenbeck fluctuations, expected location, expected gap}


\begin{abstract}


We study the sine–Gordon measure defined on each homotopy class. The energy space decomposes into infinitely many such classes indexed by the topological degree $Q \in \Z$.  Even though the sine–Gordon action admits no minimizer in homotopy classes with $|Q| \ge 2$, we prove that 
the Gibbs measure on each class nevertheless concentrates and exhibits Ornstein–Uhlenbeck fluctuations near the multi-soliton manifold in the joint low-temperature and infinite-volume limit.  
Moreover, we show that soliton collisions are unlikely events, so that typical states consist of solitons separated at an appropriate scale.
Finally, we identify the joint distribution of the multi-soliton centers as the ordered statistics of independent uniform random variables, so that each soliton’s location follows a Beta distribution.







\end{abstract}

\maketitle

\tableofcontents

\setlength{\parindent}{0mm}
\setlength{\parskip}{6pt}

\section{Introduction}


\subsection{Motivations and implications of the main results}

The sine–Gordon model plays a central role as a fundamental example of a nonlinear scalar field theory admitting {\it topological solitons}. In this paper, we specifically study the massless sine–Gordon field theory, with action
\begin{align}
E(\phi)=\frac 12 \int_{\R} |\dx \phi|^2 dx +\int_{\R} (1-\cos \phi ) dx.
\label{Ham1}
\end{align}


This field theory allows the energy space to be classified into infinitely many disjoint homotopy classes according to the topological invariant $Q$, defined in \eqref{topd}.   For any finite-energy configuration $E(\phi)<\infty$, the field must satisfy
\begin{align*}
\phi(\infty):=\lim_{x \to \infty} \phi(x)   \in 2\pi \Z, \qquad \phi(-\infty):= \lim_{x \to -\infty} \phi(x)   \in 2\pi \Z.  
\end{align*}

\noi
These boundary conditions imply that the map $x \mapsto e^{i \phi(x)}$ winds around the target circle an integer number of times as $x$ runs from $-\infty$ to $\infty$. This integer $Q$ defines the topological degree/charge or winding number 
\begin{align}
Q=\frac{1}{2\pi} (\phi(\infty)-\phi(-\infty)) \in \Z.
\label{topd}
\end{align}

\noi
According to the winding number $Q$, the energy space $\mathcal{C}$ decomposes into disjoint connected components, referred to as homotopy classes or topological sectors $\mathcal{C}_Q$
\begin{align*}
\mathcal{C}:=\{\phi \in H_{\textup{loc}}^1(\R): E(\phi)<\infty\}
=\bigsqcup_{Q \in \Z} \mathcal{C}_{Q},
\end{align*}

\noi
where
\begin{align}
\mathcal{C}_{Q}=\{ \phi \in \mathcal{C}: (\phi(\infty)-\phi(-\infty) )/ 2\pi =Q  \}.
\label{HT1}
\end{align}



\noi
Within each sector $\mathcal{C}_Q$, fields can be continuously deformed into one another. However, a configuration in $\mathcal{C}_Q$ cannot be continuously deformed into a configuration in $\mathcal{C}_{Q'}$ when $Q\neq Q'$. Therefore, when studying the minimization problem for the energy functional $E(\phi)$, we fix a topological sector $\mathcal{C}_Q$ and consider 
\begin{align*}
\inf_{\phi \in \mathcal{C}_Q} E(\phi).
\end{align*}

\noi 
The following facts are well known:
\begin{itemize}
\item[(i)]  $Q=0$ (vacuum sector): the minimizers are the vacuum states
\begin{align*}
\phi(x) =2 \pi k, \quad k\in \Z.
\end{align*}

\medskip 

\item[(ii)] $|Q|=1$ (kink/antikink sector): the minimizers are the kinks when $Q=1$ and the anti-kinks when $Q=-1$, unique up to translation symmetry
\begin{align*}
\{ m(\cdot -\xi) \}_{\xi \in \R} \quad \textup{and} \quad \{ m^{-}(\cdot -\xi) \}_{\xi \in \R},
\end{align*}

\noi
where 
\begin{align*}
m(x)=4 \arctan(e^{ x }) \quad \textup{and} \quad m^{-}(x)=4 \arctan(e^{ -x }).
\end{align*}

\noi
Thus, the family of minimizers forms a one-dimensional soliton manifold.
These topological solitons, kinks and antikinks, interpolate between the vacua $0$ and $2\pi$.



\medskip 

\item[(iii)] $|Q|\ge 2$ (higher-charge sectors): no minimizer exists. For higher charge, the lack of compactness prevents the existence of a minimizer: the energy infimum is approached only by a ``runaway'' configuration of $|Q|$ widely separated kinks/antikinks.


 
\end{itemize}


Although the higher-charge sectors $|Q| \ge 2$ admit no minimizer, we nevertheless study, for each $Q\in \Z$, the concentration and fluctuation behavior of the Gibbs measure $\rho_\eps^Q$ on the homotopy class $\mathcal{C}_Q$ 
\begin{align*}
\rho_\eps^Q(d\phi)= (Z_{\eps}^Q)^{-1} \exp\Big\{-\frac 1\eps \int_{-L_\eps}^{L_\eps} ( 1-\cos \phi(x) ) dx -\frac 1{2\eps} \int_{-L_\eps}^{L_\eps} | \dx \phi|^2  dx   \Big\} \prod_{x \in  [-L_\eps, L_\eps] } d\phi(x)
\end{align*}

\noi
in the joint low-temperature $\eps \to 0$ and infinite-volume $L_\eps \to \infty$ limits. For the precise definition of the Gibbs measure $\rho_\eps^Q$ on each homotopy class $\mathcal{C}_Q$, see the next subsection.



\noi

We first state our main results in a somewhat informal manner; see Theorems \ref{THM:1}, \ref{THM:2}, and \ref{THM:3} for the precise statements. In the following, we only consider the nontrivial topological sector $Q \neq 0$, where solitons appear.

\begin{theorem}
Let $Q \in \Z$ with $Q \neq 0$. 

\begin{itemize}
\item[(1)] Under the ensemble $\rho^Q_\eps$ with $Q>0$, the field $\phi$ exhibits the typical behavior
\begin{align*}
\phi(x) \approx \sum_{j=1}^{Q} m(x-\xi_j) +\eps^{\frac 12} \cdot \textup{Ornstein–Uhlenbeck}
\end{align*}

\noi
with 
\begin{align*}
\min_{ i \neq j} |\xi_j-\xi_i| \ge \big|\log (\eps \log \tfrac 1\eps) \big| \to \infty
\end{align*}

\noi 
as $\eps \to 0$ and $L_\eps \to \infty$. When $Q<0$ the soliton $m$ is replaced by $m^{-}$.


\medskip 

\item[(2)] 
As $\eps \to 0$, the joint distribution of the centers $(\xi_1,\dots, \xi_{|Q|} )$ is the ordered statistics of $|Q|$ independent uniform random variables. In particular, each marginal $\xi_{(j)}$ has a Beta distribution. Consequently, the expected position and the expected gap are 
\begin{align*}
\E[\xi_{(j)}]\approx-L_\eps+\frac{2L_\eps j}{|Q|+1}, \qquad  \E[\xi_{(j)}-\xi_{(j-1)}]\approx \frac{2L_\eps}{|Q|+1},
\end{align*}

\noi 
where $\xi_{(j)}$ denotes the $j$-th ordered center in increasing rearrangement $\xi_{(1)}\le \cdots \le \xi_{(|Q|)}$.

\end{itemize}

\end{theorem}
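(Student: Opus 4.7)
The plan is to establish a modulation decomposition for $\phi \in \mathcal{C}_Q$ near the multi-soliton manifold
\begin{align*}
\mathcal{M}_Q \deff \Big\{ \Phi_{\vec\xi}(x) := \sum_{j=1}^{|Q|} m(x-\xi_j) \,:\, \vec\xi \in \R^{|Q|} \Big\}
\end{align*}
(with $m$ replaced by $m^{-}$ when $Q<0$), and then to integrate out the orthogonal fluctuation. First, I would write $\phi = \Phi_{\vec\xi} + \eps^{1/2}\eta$ with the orthogonality conditions $\langle \eta, \pa_{\xi_j}\Phi_{\vec\xi}\rangle_{L^2}=0$, which via a quantitative implicit function argument uniquely determines $\vec\xi$ in a tubular neighbourhood of $\mathcal{M}_Q$ of thickness $\eps^{1/2}|\log\eps|^{1/2}$. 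A Taylor expansion of the action then gives
\begin{align*}
E(\phi) = |Q|\, E(m) + \mathcal{W}(\vec\xi) + \tfrac{\eps}{2}\langle L_{\vec\xi}\eta, \eta\rangle + (\text{lower order}),
\end{align*}
where $L_{\vec\xi} = -\pa_x^2 + \cos \Phi_{\vec\xi}$, the linear-in-$\eta$ term is negligible because $\Phi_{\vec\xi}$ is an approximate critical point of $E$ in the well-separated regime, and the classical interaction energy $\mathcal{W}(\vec\xi) = C\sum_{i<j} e^{-|\xi_i-\xi_j|} + O\big(e^{-2\min_{i\neq j}|\xi_i-\xi_j|}\big)$ with $C>0$ is repulsive, which is consistent with the nonexistence of minimizers stated in (iii) above.

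Changing variables from $\phi$ to $(\vec\xi,\eta)$ produces a Jacobian $J(\vec\xi,\eta)$ that, for well-separated centers, is asymptotic to the constant $\prod_j \|\pa_x m\|_{L^2}$, and the Gibbs measure factorizes schematically as
\begin{align*}
\rho^Q_\eps(d\phi) \propto e^{-\mathcal{W}(\vec\xi)/\eps}\, J(\vec\xi,\eta)\, e^{-\frac{1}{2}\langle L_{\vec\xi}\eta,\eta\rangle}\, d\vec\xi\, d\eta.
\end{align*}
Integrating the Gaussian in $\eta$ leaves a centered Gaussian with covariance $L_{\vec\xi}^{-1}$; after rescaling by $\eps^{1/2}$, this is precisely the Ornstein--Uhlenbeck process associated with the linearized sine--Gordon operator, which proves part~(1). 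For part~(2), in the separated regime $\min_{i\neq j}|\xi_i-\xi_j|\ge |\log\sqrt{\eps\log\tfrac{1}{\eps}}|$ one has $\mathcal{W}(\vec\xi)/\eps = o(1)$, so the effective density on the simplex $\{\xi_{(1)}<\cdots<\xi_{(|Q|)}\}\subset [-L_\eps,L_\eps]^{|Q|}$ is asymptotically flat. Relabelling by symmetry then identifies the law of $(\xi_1,\dots,\xi_{|Q|})$ with the order statistics of $|Q|$ i.i.d.\ uniform random variables on $[-L_\eps,L_\eps]$; the Beta marginals and the expectation formulas $\E[\xi_{(j)}]\approx -L_\eps + \tfrac{2L_\eps j}{|Q|+1}$ then follow from standard order-statistics computations.

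The main obstacle is showing that $\rho^Q_\eps$ is indeed concentrated on the tube where the modulation decomposition is well-defined. This requires (a)~a sharp coercivity estimate for $E$ around $\mathcal{M}_Q$, i.e.\ a quantitative soliton-stability inequality uniform in $\vec\xi$ subject to the separation condition, and (b)~an exponential Chebyshev / large-deviation argument ruling out configurations outside this tube. A second delicate point is pinning down the precise separation threshold $|\log\sqrt{\eps\log\tfrac{1}{\eps}}|$: it emerges from balancing the Boltzmann penalty $\exp(-Ce^{-r}/\eps)$ against the phase-space volume of center configurations violating the gap at scale $r$, with the $\sqrt{\log(1/\eps)}$ correction tracking the Gaussian scale of the fluctuation $\eta$. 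Finally, the infinite-volume limit must ensure that no soliton escapes to the boundary $\pm L_\eps$; this is expected to follow because the interior placements dominate the partition function by the entropic factor $L_\eps^{|Q|}/|Q|!$ that drives the convergence to uniform order statistics.
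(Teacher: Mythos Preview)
Your overall strategy is correct and matches the paper's approach: large-deviation concentration on a tube around the multi-soliton manifold, disintegration into $(\vec\xi,\eta)$-coordinates, Taylor expansion of the action, identification of the quadratic form as a Gaussian, and a flat effective density on the simplex yielding uniform order statistics. However, there are two genuine gaps.

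First, you write that integrating out $\eta$ yields a Gaussian with covariance $L_{\vec\xi}^{-1}=(-\partial_x^2+\cos\Phi_{\vec\xi})^{-1}$ and identify this with ``the Ornstein--Uhlenbeck process''. But this covariance is $\vec\xi$-dependent, whereas the theorem asserts convergence to the \emph{fixed} OU measure with covariance $(-\partial_x^2+1)^{-1}$. Bridging this is a substantial part of the paper's argument: one proves exponential correlation decay for the Green function of $L_{\vec\xi}$ restricted to the normal space, then shows that for test functions $g$ satisfying $\mathrm{dist}(\supp g,\xi_j)\ge\eps^{-1/2+2\eta}$ for every $j$ one has $\langle g,L_{\vec\xi}^{-1}g\rangle=\langle g,(-\partial_x^2+1)^{-1}g\rangle\bigl(1+O(e^{-c\eps^{-1/2+2\eta}})\bigr)$, and finally that the ``forbidden'' set of configurations $\vec\xi$ for which some $\xi_j$ lies within distance $\eps^{-1/2+2\eta}$ of $\supp g$ has relative volume $O(\eps^\eta)$ inside the simplex. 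Without this analysis the fluctuation law remains $\vec\xi$-dependent and part~(1) does not follow.

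Second, your claim that $\mathcal{W}(\vec\xi)/\eps=o(1)$ on the regime $\min_{i\neq j}|\xi_i-\xi_j|\ge d_\eps:=\bigl|\log\sqrt{\eps\log(1/\eps)}\bigr|$ is quantitatively wrong: at the threshold one has $e^{-d_\eps}=\sqrt{\eps\log(1/\eps)}$, so $\mathcal{W}/\eps\sim\sqrt{\log(1/\eps)/\eps}\to\infty$. The paper resolves this by further restricting to separation $\ge(1+\eta)d_\eps$, shown via a refined energy-gap estimate to carry all but an $e^{-c\eps^{-1/2+}}$ fraction of the mass. On this smaller set the first-order term $\frac{1}{\eps}\langle\nabla E(\Phi_{\vec\xi}),\sqrt{\eps}\,\eta\rangle$ is $O(\eps^{0+})$ thanks to the combination of the gradient bound $\|\nabla E(\Phi_{\vec\xi})\|_{L^2}\lesssim e^{-(1-\eta/2)(1+\eta)d_\eps}$ with the constraint $\|\sqrt{\eps}\,\eta\|_{L^2}\le\delta_\eps=\eta\sqrt{\eps\log(1/\eps)}$; the $\vec\xi$-independent leading term $|Q|E_{\mathrm{kink}}/\eps$ then cancels in the ratio with the partition function. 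Your proposed balance of Boltzmann penalty against phase-space volume is the right heuristic for the collision threshold, but the actual mechanism is the compatibility constraint $\frac{1}{2}e^{-d}\ge c\,\delta$ between the collision scale and the $L^2$-tube width, coming from the energy-gap lemma for near-colliding multi-kinks.
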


The main results have the following implications:

\begin{itemize}
\item[(i)] The main theorem provides the first result on the concentration and fluctuation behavior of Gibbs measures around multi-solitons $\sum_{j=1}^Q m(\cdot-\xi_j)$, rather than a single soliton. In particular, our result shows that even though the higher-charge sector $\mathcal{C}_Q$, $|Q| \ge 2$, does not contain minimizers of the action on $\mathbb{R}$, the Gibbs measure over $\mathcal{C}_Q$ still exhibits concentration and fluctuation around multi-soliton configurations.


\medskip 

\item[(ii)] 
At leading order, typical configurations under the Gibbs ensemble consist of exactly $|Q|$ solitons whose mutual separations are of order $\big|\log (\eps \log \tfrac 1\eps)\big|$.  Consequently, configurations in which the solitons collide are unlikely, and the solitons behave as effectively non-interacting objects.


\medskip 

\item[(iii)] Our base measure is the Brownian bridge \eqref{BB} without a mass term, which lacks correlation decay, whereas the fluctuation measure is the Ornstein–Uhlenbeck measure, exhibiting strong correlation decay. This contrast is rather striking, since in many quantum field and statistical physics models, the fluctuations are typically governed by the underlying base field, rather than having a completely different covariance structure.

\medskip 

\item[(iv)]  
The expected soliton centers $\xi_1,\dots,\xi_{|Q|}$ are evenly spaced, dividing the interval $[-L_\eps, L_\eps]$ into $|Q| + 1$ equal parts of length $\tfrac{2L_\eps}{|Q| + 1}$. Furthermore, each individual center $\xi_j$ exhibits a Beta-type fluctuation around its expected position.

\end{itemize}


\begin{figure}[h]
\includegraphics[scale=0.7]{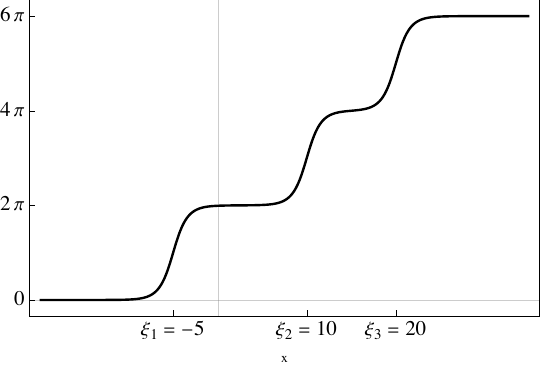}
\caption{a multi-soliton  $\sum_{j=1}^Q m(\cdot-\xi_j)$ with $Q=3$ and $(\xi_1,\xi_2,\xi_3)=(-5,10,20)$}
\end{figure}

The remarkable point is that in previous works (see Subsections \ref{SUBSUBSEC:NLSC}, \ref{SUBSUBSEC:NLSd}, \ref{SUBSUBSEC:ALLCAHN}, and \ref{SUBSUBSEC:TOPSOL}), the behavior of Gibbs measures was mainly studied around a single soliton, and most of the analysis in those works focused primarily on the concentration of the measure. On the other hand, our result is the first study of the Gibbs measure around \textit{multi-soliton configurations}, involving not only concentration but also a central limit theorem behavior around the multi-soliton manifold.  Furthermore, we provide a concrete description of the soliton locations and gaps.

Moreover, the geometry of the multi-soliton manifold $\{\sum_{j=1}^Q m(\cdot-\xi_j): \xi_j \in \R \}$ is a central object of interest in this work. In contrast to the single-soliton manifold $\{m(\cdot-\xi): \xi \in \R\}$, the multi-soliton manifold fails to be differentiable and becomes merely a topological manifold when solitons collide, that is, when $|\xi_i-\xi_j| \les 1$. As a result, fundamental geometric objects such as tangent and normal vectors are no longer well defined in the collision regime. This geometric degeneracy requires a careful analysis and a delicate decomposition of the multi-soliton manifold. See Remark~\ref{REM:strata}.


We finally remark that, unlike in the one-dimensional setting where topological solitons are well understood (see Subsection \ref{SUBSUBSEC:SG}), the situation in two dimensions is quite different.
The sine–Gordon equation in 2D also admits soliton-like solutions, often called kink walls, obtained by extending the one-dimensional kink uniformly in another spatial direction.
These configurations solve the equation but necessarily have infinite energy, and thus are not finite-energy solitons in the usual sense. To the best of our knowledge, such infinite-energy kink-wall solutions are far less understood and have not been studied as systematically as their one-dimensional counterparts.




\subsection{Main results}


In this subsection, we present the three main theorems, \ref{THM:1}, \ref{THM:2}, and \ref{THM:3}. Before stating the theorems, we first study Gibbs measures corresponding to each topological degree $Q \in \Z$.

Based on the definition of the topological degree $Q$ in \eqref{topd}, 
when $\phi(\infty)=2\pi n^{+}$ and $\phi(-\infty)=2 \pi n^{-}$, with $n^{+}, n^{-} \in \Z$, the homotopy class $\mathcal{C}_Q$ depends only on the difference 
\begin{align*}
Q=n_{+}-n_{-}, 
\end{align*}


\noi
not on the individual values of $n_{+}$ and $n_{-}$. Therefore, when describing each sector $\mathcal{C}_Q$ with $Q>0$, we fix a representative in the equivalence class by choosing the left boundary value $\phi(-L_\eps)=0$  as the base point, so that $\phi(L_\eps)=2\pi Q$, where $L_\eps \to \infty$. A similar convention applies for $Q<0$ by reversing the orientation.   By symmetry, we only consider the case $Q>0$ throughout the paper, unless specified otherwise.



We now introduce the base measure, namely the Brownian bridge,
\begin{align}
\mu_\eps^Q(d\phi)=\frac{1}{Z_\eps^{Q,\textup{BB}}} \exp \Big\{-\frac 1\eps \int_{-L_\eps}^{L_\eps} |\dx \phi|^2 dx \Big\} \prod_{x \in [-L_\eps, L_\eps ] }d\phi(x),
\label{BB}
\end{align}

\noi
which is the Gaussian measure conditioned on $\phi(-L_\eps)=0$ and $\phi(L_\eps)=2\pi Q$. This Gaussian measure describes fluctuations around the affine line connecting the boundary values $0$ and $2\pi Q$. See Subsection \ref{SUBSEC:BB}. This choice of base measure pins down a representative within the equivalence class $\mathcal{C}_Q$ and yields a unique Brownian bridge measure $\mu^Q_{\eps}$.



\noi 
For each topological degree $Q \in \Z$, we now define the Gibbs measure, using the Brownian bridge 
\begin{align}
\rho_\eps^Q(d\phi)=(Z_{\eps}^Q)^{-1} \exp\Big\{-\frac 1\eps \int_{-L_\eps}^{L_\eps} ( 1-\cos \phi(x) ) dx \Big\} \mu_\eps^Q (d\phi).
\label{Gibbs1} 
\end{align}

\noi
In the following, any field $\phi$ distributed according to the Gibbs measure $\rho_\eps^Q$ is viewed as a function on $\R$, extended trivially by $0$ and $2\pi |Q|$ outside the interval $[-L_\eps, L_\eps]$. We now state, for each $Q\in \Z$, how the Gibbs measure $\rho^Q_{\eps}$, associated with the homotopy class $\mathcal{C}_Q$, concentrates around the multi-soliton manifold.





\begin{theorem}\label{THM:1}
Let $Q \in \Z$ with $Q \neq 0$, and $L_\eps =\eps^{-\frac 12+\eta}$ with $\eta>0$ arbitrarily small but fixed.

\begin{itemize}
\item[(i)] There exists $c>0$ such that for any $\dl>0$ 
\begin{align}
\limsup_{\eps \to 0} \eps \log \rho_\eps^Q  \big(\{   \textup{dist}(\phi, \M_Q ) \ge \dl   \} \big) \le -c \dl^2,
\label{ldp1} 
\end{align}



\noi
where $\textup{dist}$ denotes the $L^2(\R)$-distance and the multi-soliton manifold\footnote{The multi-soliton profile $m(\cdot-\xi_1)+\dots +m(\cdot-\xi_Q)$ is invariant under permutation of the labels $i=1,\ldots, |Q|$. Each unordered configuration corresponds to $Q!$ identical ordered configurations. Because of the indistinguishable nature, we work with the ordered set.} is defined as
\begin{align}
\M_Q:=\Big\{\sum_{j=1}^Q m(\cdot-\xi_j): -\infty<\xi_1\le \dots \le \xi_Q<\infty  \Big\}, \quad Q>0.
\label{man0}
\end{align}

\noi 
For $Q<0$, we define the corresponding multi-soliton manifold with anti-kinks.

\medskip

\item[(ii)] There exists $c>0$ such that  
\begin{align} 
\limsup_{\eps \to 0} \eps \log \rho_\eps^Q  \big(\{   \textup{dist}(\phi, \M_Q^{<d} ) < \dl  \big) \le - c e^{- d},
\label{ldp2}
\end{align}


\noi
for any $\dl>0$, $d>0$ satisfying\footnote{ Later, we prove that the collision manifold $\M_Q^{<d}$ is an unlikely event. Therefore, when the field $\phi$ is sufficiently close to $\M_Q^{<d}$, in the sense that $ c e^{-d } \ge \dl^2$, we obtain the same result. See Lemmas~\ref{LEM:GAP2} and~\ref{lem:gap-multikink}.} $ c  e^{-d } \ge \dl^2$, where the collision manifold $\M_Q^{<d}$, $Q>0$, is defined as 
\begin{align}
\M^{<d}_{Q}:=\Big\{\sum_{j=1}^Q m(\cdot-\xi_j) : -\infty<\xi_1\le \dots \le \xi_{Q}<\infty \; \; \textup{and} \; \; \min_{i \neq j} |\xi_i-\xi_j|<d \Big\}.
\label{man1}
\end{align}

\noi
For $Q<0$, we define the corresponding manifold with anti-kinks.


\end{itemize}

\end{theorem}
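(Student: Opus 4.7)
The plan is to prove both \eqref{ldp1} and \eqref{ldp2} by a single modulation-plus-coercivity strategy: for a field $\phi \in \mathcal{C}_Q$, one identifies the closest multi-soliton profile, Taylor-expands the action around it, and then compares partition functions to obtain the large-deviation rates. For concreteness, let $U_\xi(x) = \sum_{j=1}^{|Q|} m(x-\xi_j)$ and define the modulation parameters $\xi^*(\phi)$ as a minimizer of $\xi \mapsto \|\phi - U_\xi\|_{L^2(\R)}$, so that $\|\phi - U_{\xi^*}\|_{L^2} = \textup{dist}(\phi, \M_Q)$. Writing $\phi = U_{\xi^*} + r$, a Bogomol'nyi-type factorisation yields $E(U_{\xi^*}) = 8|Q| + I(\xi^*)$, where the soliton interaction energy $I(\xi^*)$ is nonnegative, vanishes as $\min_{i\ne j}|\xi_i^* - \xi_j^*| \to \infty$, and satisfies the sharp lower bound $I(\xi^*) \ge c\, e^{-\min_{i\ne j}|\xi_i^* - \xi_j^*|}$ in the regime of close kinks.

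For the coercive quadratic part one expands
\begin{align*}
E(\phi) = E(U_{\xi^*}) + \langle E'(U_{\xi^*}), r\rangle + \tfrac12 \langle \mathcal{L}_{\xi^*} r, r\rangle + \mathcal{O}\bigl(\|r\|_{L^\infty}\|r\|_{L^2}^2\bigr),
\end{align*}
with $\mathcal{L}_{\xi^*} = -\dx^2 + \cos U_{\xi^*}$. Since $\cos$ is $2\pi$-periodic and, in the well-separated regime, near each center $\xi_j^*$ only the $j$-th kink is actively transitioning (the others contribute multiples of $2\pi$), $\mathcal{L}_{\xi^*}$ decouples up to exponentially small errors into a direct sum of single-kink operators $\mathcal{L}_m = -\dx^2 + \cos m$, each a Pöschl–Teller Schrödinger operator with potential $1 - 2\,\textup{sech}^2(x)$, simple zero eigenvalue spanned by the translation mode $m'$, and continuous spectrum starting at $1$. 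The $L^2$-orthogonality relations $\langle r, \pa_{\xi_j} U_{\xi^*}\rangle = 0$ built into the choice of $\xi^*$ kill the approximate zero modes, while the cross term $\langle E'(U_{\xi^*}), r\rangle$ is of order $e^{-\min_{i\ne j}|\xi_i^* - \xi_j^*|}\|r\|_{L^2}$ because $U_{\xi^*}$ is an exponentially accurate critical point of $E$ on $\mathcal{C}_Q$. Altogether this produces
\begin{align*}
E(\phi) \;\ge\; 8|Q| + c_1\|r\|_{L^2}^2 + c_2\, e^{-\min_{i\ne j}|\xi_i^* - \xi_j^*|},
\end{align*}
which gives the energy gaps $c\dl^2$ for (i) and $c e^{-d}$ for (ii); the smallness condition $\tfrac{1}{2c} e^{-d} \ge \dl$ in (ii) ensures that any $\phi$ within $L^2$-distance $\dl$ of $\M_Q^{<d}$ has its closest multi-soliton also in the collision stratum, so that the $e^{-d}$ interaction penalty genuinely applies.

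The final step is a Laplace-type comparison of partition functions. After a Cameron–Martin shift of $\mu_\eps^Q$ by a fixed well-separated multi-soliton profile, the base measure becomes a centred Gaussian and $\rho_\eps^Q$ takes the schematic form $Z_\eps^{-1} e^{-E(\phi)/\eps}\, d\phi$. Lower-bounding $Z_\eps$ by integration over a $\sqrt{\eps}$-neighbourhood of some reference well-separated multi-soliton configuration — which contributes the free volume $\sim L_\eps^{|Q|}$ from independent translations of the $|Q|$ kink centres together with a Gaussian prefactor from fluctuations — and upper-bounding the numerator on the bad events using the coercivity above, one obtains
\begin{align*}
\rho_\eps^Q\bigl(\{\textup{dist}(\phi,\M_Q)\ge \dl\}\bigr) \le C\eps^{-\alpha} e^{-c\dl^2/\eps}, \qquad \rho_\eps^Q\bigl(\{\textup{dist}(\phi,\M_Q^{<d})<\dl\}\bigr) \le C\eps^{-\alpha} e^{-c e^{-d}/\eps}
\end{align*}
for some $\alpha > 0$, and taking $\eps\log$ recovers \eqref{ldp1} and \eqref{ldp2}. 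The main obstacle is that the coercivity genuinely degenerates on the collision strata where the tangent vectors $\pa_{\xi_j} U_{\xi^*}$ become linearly dependent and $\xi^*$ need not be uniquely or smoothly determined; this is resolved by stratifying $\M_Q$ according to which kinks coincide (cf.\ Remark~\ref{REM:strata}), running the argument above only in the well-separated stratum, and excising the collision stratum $\M_Q^{<d}$ via part (ii) itself — a bootstrap consistent precisely because the rate $e^{-d}$ in (ii) already quantifies how negligible the excised region is.
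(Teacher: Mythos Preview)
Your modulation-plus-coercivity strategy works for part (ii) --- it is essentially the paper's Lemma~\ref{LEM:GAP2} --- but it has a genuine gap for part (i). The displayed global inequality $E(\phi) \ge 8|Q| + c_1\|r\|_{L^2}^2 + c_2\, e^{-\min_{i\ne j}|\xi_i^*-\xi_j^*|}$ cannot be obtained from the Taylor expansion you write down. The second-variation coercivity $\langle \mathcal{L}_{\xi^*} r, r\rangle \ge c\|r\|_{L^2}^2$ on the orthogonal complement of the translation modes (the paper's Lemma~\ref{LEM:coer2}) is only valid when the centres of $\xi^*$ are well-separated; the cross term $\langle E'(U_{\xi^*}), r\rangle$ has no useful smallness when $\xi^*$ is in the collision regime; and the cubic remainder $\mathcal{O}(\|r\|_{L^\infty}\|r\|_{L^2}^2)$ is only harmless for small $r$. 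None of these conditions is guaranteed on the event $\{\textup{dist}(\phi,\M_Q) \ge \dl\}$: the nearest multi-soliton $\xi^*(\phi)$ may sit arbitrarily deep in a collision stratum, and $\|r\|_{L^2}$ may be arbitrarily large. Your bootstrap via (ii) does not rescue this: the event in (ii) is $\{\textup{dist}(\phi,\M_Q^{<d}) < \dl\}$, which is \emph{disjoint} from $\{\textup{dist}(\phi,\M_Q) \ge \dl\}$, and in any case (ii) delivers the rate $e^{-d}$, not $\dl^2$.

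The paper proves the energy gap for (i) by a completely non-perturbative route (Lemma~\ref{LEM:GAP1}): one slices $\phi$ along the first-passage times $t_j = \inf\{x : \phi(x) = 2\pi j\}$ into $|Q|$ single-kink pieces $\phi_j$ on disjoint intervals, and applies the \emph{global} single-kink coercivity $E(\psi) \ge E_{\textup{kink}} + c\,\textup{dist}(\psi,\M_1)^2$ of Lemma~\ref{LEM:SEC1} (a concentration-compactness fact, not a second-variation bound) to each piece; summing and comparing $\sum_j d_j^2$ with $\textup{dist}(\phi,\M_Q)^2$ yields the gap with a constant depending only on $Q$. This block decomposition requires no modulation, no smallness of $r$, and no separation of centres, and is the key idea your proposal lacks. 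On the probabilistic side, your ``Laplace-type comparison of partition functions'' is also underspecified for the joint limit $\eps \to 0$, $L_\eps = \eps^{-1/2+\eta} \to \infty$: the paper instead uses the Bou\'e--Dupuis variational formula (Lemma~\ref{LEM:BD}) to turn both $\eps\log Z_\eps^Q$ and the restricted log-expectation into stochastic control problems, and then exploits the specific structure $|\sin W \cdot \sqrt{\eps}Y| \le \kappa(1-\cos W) + C_\kappa (\sqrt{\eps}Y)^2$ of the sine--Gordon potential to reach the scale $L_\eps = \eps^{-1/2+\eta}$ (see Proposition~\ref{PROP:free1} and Lemma~\ref{LEM:LDP}).
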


The first part of Theorem \ref{THM:1} shows that when a field $\phi$ is far from the multi-soliton manifold $\M_Q$, the event is exponentially unlikely with rate $\dl^2$. In particular, the manifold consists of exactly $|Q|$ solitons, without any mixture of kinks and anti-kinks.
Furthermore, the second part shows that although a field $\phi$ is close to the multi-soliton manifold, when the solitons collide so that
$\min_{i \neq j}|\xi_i-\xi_j|<d$, the collision region becomes a large-deviation event.
From \eqref{ldp1}, \eqref{ldp2}, and the condition $c e^{-d}\ge \dl^2$, we may choose the distance and collision scales as $\dl_\eps=\eta\sqrt{\eps \log \frac 1\eps}$ and $d_\eps = \big|\log (\eps \log \frac 1\eps) \big|$. With these choices, we obtain
\begin{align*}
&\rho_\eps^Q  \big(\{   \textup{dist}(\phi, \M_Q ) \ge \dl_\eps   \} \big) \le e^{-c \log \frac 1\eps}\\
&\rho_\eps^Q  \big(\{   \textup{dist}(\phi, \M_Q^{<d_\eps} ) < \dl_\eps   \} \big) \le e^{-c \log \frac 1\eps }
\end{align*}

\noi
for some $c>0$ as $\eps \to 0$. An interesting fact is that we identify the collision scale $\big|\log (\eps \log \frac 1\eps) \big|$ that determines the typical behavior of solitons. Hence, most of the probability mass is concentrated in the well-separated (non-collision) region
\begin{align*}
\{   \textup{dist}(\phi, \M_Q^{\ge d_\eps} ) < \dl_\eps   \},
\end{align*}

\noi
where the non-collision manifold $
\M^{ \ge d_\eps}_Q$ is defined as 
\begin{align*}
\M^{ \ge d_\eps}_{Q}:=\Big\{\sum_{j=1}^Q m(\cdot-\xi_j) : -\infty<\xi_1\le \dots \le \xi_{Q}<\infty \; \; \textup{and} \; \; \min_{i \neq j} |\xi_i-\xi_j|\ge d_{\eps} \Big\}.
\end{align*}

\noi 
In the proof of Theorem \ref{THM:1}, the key aspect is to understand how the solitons interact with each other on the collision scale $d_\eps=\big|\log (\eps \log \tfrac 1\eps)\big|$, and how the energy behaves even though these configurations are not minimizers.

\begin{remark}\rm 
In Theorem \ref{THM:1}, there is a competition between the vanishing energy scale $\eps \to 0$ and the entropic effects arising from the growing interval $L_\eps=\eps^{-\frac 12+\eta} \to \infty$.  Under our method, the interval size $L_\eps=\eps^{-\frac 12+\eta}$ is optimal. See 
Section \ref{SEC:LDP}
for an explanation of this scaling. 
\end{remark}

As a consequence of Theorem \ref{THM:1}, under the measure $\rho^Q_\eps$,
the leading-order behavior is described by $\sum_{j=1}^{Q}m(\cdot-\xi_j)$
with almost no collision $\min_{i\neq j}|\xi_i-\xi_j|\ge |\log(\eps \log \tfrac 1\eps)| \to \infty$. In the following theorem, we investigate the next-order fluctuation behavior around the multi-solitons. To state our next theorem, we first introduce $\pi^\eps$, the projection onto the (approximating)\footnote{Since we work on the finite volume $[-L_\eps, L_\eps]$, we need to define an approximating multi–soliton profile
$\sum_{j=1}^Q m^\eps(\cdot-\xi_j)$ so that the entire transition from $0$ to $2\pi Q$ occurs inside this interval. As $\eps \to 0$, $m^\eps(\cdot-\xi_j) $ becomes a more and more precise approximation of the topological soliton $m(\cdot-\xi_j)$  on $\R$ (see \eqref{APM0}) } multi-soliton manifold $\M_Q^{\eps, \ge d_\eps}$ defined in \eqref{APM5}, where the measure $\rho^Q_\eps$ concentrates (see Lemma \ref{LEM:partition}).



\begin{theorem}\label{THM:2}

Let $Q \in \Z$ with $Q\neq 0$, $L_\eps=\eps^{-\frac 12+\eta}$, and let $F$ be a bounded and continuous function. Then 
\begin{align*}
\lim_{\eps \to 0} \int F( \sqrt{\eps}^{-1}(\phi- \pi^\eps(\phi) ) ) \rho_\eps^Q (d\phi)=\int F(\phi) \mu_{\text{OU}} (d\phi), 
\end{align*}

\noi
where $\mu_{\text{OU}}$ is the Ornstein–Uhlenbeck measure 
\begin{align*} 
\mu_{\text{OU}} (d\phi)=Z^{-1}\exp\Big\{ -\frac 12 \jb{ \phi, (-\dx^2+1)\phi }_{L^2(\R)}  \Big\} \prod_{x \in \R} d\phi(x).
\end{align*}

\noi 


\end{theorem}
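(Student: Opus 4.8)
\textbf{Proof strategy for Theorem \ref{THM:2}.}

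The plan is to combine the concentration result of Theorem \ref{THM:1} with a change-of-variables (tubular neighborhood) argument on the non-collision stratum $\M_Q^{\eps,\ge(1+\eta)d_\eps}$, on which the manifold is a genuine smooth $|Q|$-dimensional submanifold and the normal bundle / projection $\pi^\eps$ is well defined. First I would restrict attention to the event $\{\textup{dist}(\phi,\M_Q^{\eps,\ge(1+\eta)d_\eps})<\dl_\eps\}$ with $\dl_\eps=\eta\sqrt{\eps\log\frac1\eps}$; by Theorem \ref{THM:1} (parts (i) and (ii), with the stated choices of $\dl_\eps,d_\eps$) this event carries all but exponentially small mass, so it suffices to prove the convergence of $\int F(\eps^{-1/2}(\phi-\pi^\eps\phi))\rho_\eps^Q(d\phi)$ restricted to it, up to an error $o(1)$. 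On this tube, write $\phi=\sum_{j=1}^Q m^\eps(\cdot-\xi_j)+\sqrt\eps\, v$, where $v\perp T_{\pi^\eps\phi}\M_Q$ in $L^2$, and factor the reference Brownian bridge measure $\mu_\eps^Q$ together with the Jacobian of this decomposition into (a) a measure on the centers $\xi=(\xi_1\le\cdots\le\xi_Q)$ and (b) a conditional Gaussian-type measure on the transverse field $v$.

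Next I would Taylor-expand the action. Writing $\phi=\Phi_\xi+\sqrt\eps v$ with $\Phi_\xi=\sum_j m^\eps(\cdot-\xi_j)$, the sine-Gordon potential term expands as
\begin{align*}
\frac1\eps\int_{-L_\eps}^{L_\eps}(1-\cos\phi)\,dx
=\frac1\eps\int(1-\cos\Phi_\xi)\,dx
+\frac1{\sqrt\eps}\int(\sin\Phi_\xi)\,v\,dx
+\frac12\int(\cos\Phi_\xi)\,v^2\,dx+\cdots,
\end{align*}
and the gradient term in $\mu_\eps^Q$ similarly produces $\frac1{2\eps}\int|\dx\Phi_\xi|^2+\frac1{\sqrt\eps}\int\dx\Phi_\xi\cdot\dx v+\frac12\int|\dx v|^2$. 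The linear-in-$v$ terms combine to $\eps^{-1/2}\langle v,(-\dx^2\Phi_\xi+\sin\Phi_\xi)\rangle$; because $m^\eps$ nearly solves the kink ODE $-m''+\sin m=0$ and the solitons are separated by $\gg1$, this residual is small on $\M_Q^{\eps,\ge(1+\eta)d_\eps}$, and moreover its $L^2$-pairing with the \emph{transverse} $v$ is further suppressed (the dominant part of the residual lies along the tangent directions $\dx m^\eps(\cdot-\xi_j)$, which are killed by $v\perp T\M_Q$). Since $\cos\Phi_\xi\to1$ away from the soliton cores and the cores occupy $O(1)$-length regions whose contribution to $\int(1-\cos\Phi_\xi)v^2$ is $O(\|v\|_{L^2}^2)$ — negligible after one shows $\|v\|_{L^2}=O_{\P}(1)$ — the quadratic form converges to $\frac12\langle v,(-\dx^2+1)v\rangle_{L^2(\R)}$, which is exactly the inverse covariance of $\mu_{\textup{OU}}$. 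The zeroth-order terms $\frac1\eps(E(\Phi_\xi)-$ const$)$ depend only on $\xi$ and get absorbed into the marginal law of the centers (this is what drives Theorem \ref{THM:3}); they factor out of the $v$-integral.

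Then I would make this rigorous by the standard route: (1) show $\|v\|_{L^2}$ has Gaussian tails uniformly in $\eps$ under the tilted conditional measure, using that the quadratic form $-\dx^2+1$ restricted to the $L^2$-orthogonal complement of $T\M_Q$ is uniformly coercive on the non-collision stratum (here one invokes nondegeneracy of the linearized operator $-\dx^2+\cos m$ around a single kink, whose kernel is spanned by $m'$, plus an almost-orthogonality/gluing argument for well-separated multi-kinks); (2) truncate to $\|v\|_{L^2}\le R$, where on this compact set $F$ is uniformly continuous and the cubic-and-higher remainder $\frac1{\sqrt\eps}\cdot\eps^{3/2}\int(\text{stuff})v^3$ plus the error from $m^\eps$ vs $m$ and from the finite volume is $o(1)$; (3) recognize the leading $v$-measure as a finite-volume Gaussian with covariance $(-\dx^2+1)^{-1}$ conditioned to be $L^2$-orthogonal to the $O(|Q|)$ tangent vectors, and verify that as $\eps\to0$ (hence $L_\eps\to\infty$ and the centers spread out so the tangent vectors' supports escape to spatial infinity) this conditioned Gaussian converges weakly to $\mu_{\textup{OU}}$ on $\R$ — the finitely many linear constraints become asymptotically invisible because $(-\dx^2+1)^{-1}$ has exponentially decaying kernel and the constraint directions drift apart and out. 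Tightness of the laws of $\eps^{-1/2}(\phi-\pi^\eps\phi)$ in a suitable space (e.g. $C^\alpha_{\text{loc}}$ or weighted Sobolev) follows from the uniform Gaussian bound, so weak convergence plus identification of the limit finishes the proof.

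The main obstacle, I expect, is step (1)–(3) of the rigorous part: controlling the transverse Gaussian \emph{uniformly} over the non-collision stratum and showing that the finitely many orthogonality constraints wash out in the limit. The delicate points are the uniform spectral gap of the linearized operator for multi-kinks that are only $|\log\sqrt{\eps\log(1/\eps)}|$-separated (not $\gg1/\eps$-separated), the well-definedness and smoothness of $\pi^\eps$ up to the boundary $\min_{i\ne j}|\xi_i-\xi_j|\approx(1+\eta)d_\eps$ of the stratum (cf. Remark \ref{REM:strata}), and handling the mismatch between the finite-volume bridge measure and the infinite-volume OU measure — in particular that the bridge has no mass term and hence no correlation decay, so the decay of $\mu_{\textup{OU}}$ must be \emph{generated} purely by the $\cos\Phi_\xi\approx1$ potential well, which only holds off the soliton cores. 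Quantifying that the core contributions are harmless requires the a priori bound $\|v\|_{L^2}=O_{\P}(1)$, making the argument somewhat circular-looking; breaking the circularity (e.g. via a bootstrap or a direct Gaussian comparison on the tilted measure) is the technical heart of the matter.
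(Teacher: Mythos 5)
Your overall architecture — restrict to the tube around $\M_Q^{\eps,\ge(1+\eta)d_\eps}$ via Theorem~\ref{THM:1}, disintegrate into tangential coordinates $(\xi_1,\dots,\xi_Q)$ and a transverse Gaussian field $v$, Taylor-expand the action, and control the cubic remainder, the Weingarten Jacobian, and the residual $\nb E(m^\eps_{\xi_1,\dots,\xi_Q})$ — is the right one and matches Sections~\ref{SEC:coord}--10 of the paper. But there is a genuine conceptual gap in how you arrive at the Ornstein--Uhlenbeck covariance, and it would cause the proof to fail.

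You claim that the quadratic form $\tfrac12\jb{v,(-\dx^2+\cos\Phi_\xi)v}$ converges to $\tfrac12\jb{v,(-\dx^2+1)v}$ because the cores occupy $O(1)$-length regions whose contribution to $\int(1-\cos\Phi_\xi)v^2$ is ``$O(\|v\|_{L^2}^2)$, negligible after one shows $\|v\|_{L^2}=O_{\P}(1)$.'' This does not work. First, $\|v\|_{L^2([-L_\eps,L_\eps])}$ is not $O_{\P}(1)$: under any Gaussian with covariance comparable to $(-\dx^2+1)^{-1}$ on the growing interval one has $\E\|v\|_{L^2}^2\sim L_\eps\to\infty$, and even after the truncation $\|\sqrt{\eps}v\|_{L^2}<\dl_\eps$ one only gets $\|v\|_{L^2}\lesssim\sqrt{\log\frac1\eps}$. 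Second and more importantly, even if $\|v\|_{L^2}$ were $O(1)$, the core contribution $-2\sum_j\jb{v,\mathrm{sech}^2(\cdot-\xi_j)v}$ is of the same order as the term you are trying to reduce it to; it does not vanish. The transverse fluctuation measure therefore has covariance $C_{\xi_1,\dots,\xi_Q}=\P_{V}\big(-\dx^2+1-2\sum_j\mathrm{sech}^2(\cdot-\xi_j)\big)^{-1}\P_{V}$ (see~\eqref{cov}), \emph{not} the OU covariance, and this covariance differs from OU by $O(1)$ in a neighborhood of each soliton center. The Schr\"odinger potential wells never ``wash out'' in the quadratic form; that is precisely why the paper constructs the measures $\nu^\perp_{\xi_1,\dots,\xi_Q}$ of Lemma~\ref{LEM:Gauss} with this full covariance and spends Section~8 on their analysis.

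What actually rescues the theorem is the mechanism you only invoke for the orthogonality constraints: the centers $\xi_j$ are (asymptotically) uniform on the growing interval $[-\cj L_\eps,\cj L_\eps]$, so for a \emph{fixed} compactly supported test function $g$ the event that some $\xi_j$ lands within $\eps^{-\frac12+2\eta}$ of $\supp g$ has probability $O(\eps^\eta)$ (this is the volume estimate~\eqref{M7}, driven by the same ordered-statistics fact behind Theorem~\ref{THM:3}). On the complementary, dominant event, the resolvent identity shows $\jb{g,C_{\xi_1,\dots,\xi_Q}g}=\jb{g,(-\dx^2+1)^{-1}g}(1+O(e^{-c\eps^{-1/2+2\eta}}))$ (Lemma~\ref{LEM:cov}): the perturbation $W=-2\sum_j\mathrm{sech}^2(\cdot-\xi_j)$ is localized near the $\xi_j$, the exponential decay of the OU kernel propagates this localization, and $\supp g$ is far from all of it, so both the $\mathrm{sech}^2$ wells \emph{and} the finitely many tangential constraints become invisible to $g$ simultaneously. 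In other words, the OU covariance is not obtained by taking a limit of quadratic forms; it is obtained marginally, because the random wells escape the test function. Your proposed bootstrap on $\|v\|_{L^2}$ is both unachievable and, once the argument is set up this way, unnecessary.

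A secondary concern: you say the quadratic form ``converges to $\frac12\jb{v,(-\dx^2+1)v}_{L^2(\R)}$'' on the finite interval. Even away from the soliton cores one must still pass from $[-L_\eps,L_\eps]$ (with Dirichlet boundary) to $\R$; the paper does this pointwise via the explicit Brownian-bridge/OU Green's functions and their exponential approach to the whole-line kernel in the bulk (Lemma~\ref{LEM:OUdec}, Proposition~\ref{PROP:dec}). Your proposal gestures at this under ``tightness plus identification of the limit,'' which is fine in principle, but it should be made precise via the Green's-function comparison rather than through a convergence of quadratic forms, since the latter is not where the OU structure comes from.
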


The fluctuations described in Theorem \ref{THM:2} exhibit behavior that is different from the classical result of Ellis–Rosen \cite[Theorem 4]{ER2}, where Ellis–Rosen studied the central limit theorem for general Gibbs measures in the low–temperature limit. In the usual low-temperature setting $\eps \to 0$,  the fluctuation behavior is determined by the second variation $\nb^2 E$ of the energy evaluated at the minimizers. In contrast, Theorem \ref{THM:2} concerns a joint limit in which $\eps \to 0$ and $L_\eps \to \infty$ simultaneously.
This introduces a competition between energy and entropic effects, leading to a fluctuation behavior that differs markedly from that of Ellis–Rosen \cite[Theorem 4]{ER2}. Furthermore, in our case, the energy $E$ does not possess minimizers in the homotopy classes with $|Q| \ge 2$. Consequently, the second–variation approach used in Ellis–Rosen \cite[Theorem 4]{ER2} cannot be applied here.


To prove Theorem \ref{THM:2}, although a minimizer does not exist when $|Q| \ge 2$, we analyze the second variation of the energy at the multi–soliton configuration $m_{\xi_1,\dots,\xi_Q}=\sum_{j=1}^Q m(\cdot-\xi_j)$
\begin{align*}
\nb^2 E(m_{\xi_1,\dots,\xi_{Q}})=-\dx^2+\cos(m_{\xi_1,\dots,\xi_Q})
\end{align*}

\noi
under the separation scale $\min_{i \neq j} |\xi_i-\xi_j|\ge \big|\log (\eps \log \frac 1\eps)\big|$ (see also \eqref{2ndvar}). It allows to study the Gaussian measure
$\exp\big\{-\frac 12 \jb{ \nb^2 E(m_{\xi_1,\dots,\xi_Q} )v,v  } \big\}$ whose covariance structure (Lemma \ref{LEM:cov}) and correlation decay (Proposition \ref{PROP:dec}) are crucial ingredients in the proof of the central limit theorem.

In particular, the spectral analysis of $\nb^2 E(m_{\xi_1,\dots,\xi_{Q}})$  is closely linked to an understanding of the geometry of the multi-soliton manifold $\M_Q$ through quadratic forms such as 
\begin{align*}
\jb{ \nb^2 E(m_{\xi_1,\dots,\xi_Q} )v,v  }, 
\end{align*}

\noi 
where  $v$ is taken in either the tangential or the normal direction to the manifold $\M_Q$. As discussed in Remark~\ref{REM:strata}, the multi-soliton manifold $\M_Q$ and, in particular, the collision manifold $\M_Q^{<d}$ fail to be differentiable and are only a topological manifold. This lack of smooth geometric structure prevents the use of standard tools such as tangent and normal decompositions, which are essential for performing a second-order expansion and identifying Gaussian fluctuations in Ellis–Rosen \cite[Theorem 4]{ER2}.
To overcome this issue, by proving the large-deviation theorem \ref{THM:2}, we exclude the collision manifold $\M_Q^{<d}$. On the resulting non-collision manifold $\M_Q^{\ge d}$, the manifold is smooth and admits well-defined tangent and normal directions.  This allows us to carry out a geometric decomposition for studying $\nb^2 E(m_{\xi_1,\dots,\xi_{Q}})$.






\begin{remark}\rm 
In Theorem \ref{THM:2}, the restriction $L_\eps=\eps^{-\frac 12+\eta}$ follows from Theorem~\ref{THM:1}. If one could enlarge the admissible range of $L_\eps$ in Theorem \ref{THM:1}, then the fluctuation result in Theorem \ref{THM:2} continues to hold on a much larger scale.
\end{remark}

We now state the final theorem. The infinite separation condition $|\xi_i-\xi_j|\to \infty$ implies that the interactions between solitons are negligible. However, it does not provide any information about the locations of the solitons. In the following, we analyze the joint and marginal distribution of the soliton locations $(\xi_1,\dots, \xi_{|Q|})$,
which describes their expected positions and the gaps.



Before stating the final theorem, we first present some preliminaries.
Under the coordinate representation\footnote{Thanks to the large deviation results in Theorem \ref{THM:1}, we can write the field $\phi$ as a multi–soliton configuration plus a small perturbation.} $\phi=\sum_{j=1}^Q m^\eps(\cdot-\xi_j)+v$, where $\|v \|_{L^2}<\dl$, defined in \eqref{APM5}, we consider the projection
$\pi_\eps^T(\phi)=(\xi_1,\dots,\xi_Q)$ onto the coordinate variables, where\footnote{when working on the finite volume $[-L_\eps, L_\eps]$, we need to define an approximating multi–soliton profile
$\sum_{j=1}^Q m^\eps(\cdot-\xi_j)$ so that the entire transition from $0$ to $2\pi Q$ occurs inside this interval. For this reason, we introduce a slightly smaller interval, denoted by $\cj L_\eps \sim L_\eps$, on which the translation $\xi_j \in [-\cj L_\eps, \cj L_\eps]$ modes are defined (see \eqref{cjL}).} $-\cj L_\eps \le \xi_1\le \cdots \le \xi_Q \le \cj L_\eps$. Furthermore, we define the (marginal) tangential projection $\pi^T_{j}(\phi)=\xi_{j}$, where $\xi_{j}$ denotes the $j$-th ordered center in the increasing rearrangement $\xi_{1}\le \cdots \le \xi_{Q}$.

\begin{theorem}\label{THM:3}
Let $Q \in \Z$ with $Q\neq 0$, $\cj L_\eps\sim L_\eps=\eps^{-\frac 12+\eta}$.

\begin{itemize}

\item[(i)] The joint distribution of the centers $(\xi_1,\dots, \xi_{|Q|})$ is given by the ordered statistics of $|Q|$ independent uniform random variables on $[-\cj L_\eps, \cj L_\eps]$
\begin{align*}
\rho_\eps^Q \{ \pi_\eps^{T}(\phi) \in A \}
= \frac{\big|A \cap \{-\cj L_\eps \le \xi_1 \le \dots \le \xi_{|Q|} \le \cj L_\eps    \} \big|}{ \big|\{-\cj L_\eps \le \xi_1 \le \dots \le \xi_{|Q|} \le \cj L_\eps    \} \big|  }(1+O(\eps^{0+}) )
\end{align*}

\noi 
as $\eps \to 0$, where $A\subset  \{ -\cj L_\eps \le \xi_1\le \cdots \le \xi_Q \le \cj L_\eps   \}  $ is a measurable subset.


\medskip

\item[(ii)] The marginal distribution, that is, each center $\xi_j$, has a Beta-shaped fluctuation
\begin{align*}
\rho_\eps^Q\big\{ \pi^T_{j}(\phi) \in B  \big\}= \int_{B} f_j(x) dx \cdot (1+O(\eps^{0+}))
\end{align*}

\noi
as $\eps \to 0$, where $B\subset [-\cj L_\eps, \cj L_\eps] $ is a measurable subset  and
\begin{align*}
f_j(x)= \frac{|Q|!}{ (2L_\eps)^{|Q|} }\frac{(x+\cj L_\eps)^{j-1}}{(j-1)!} \frac{ (\cj L_\eps-x)^{|Q|-j}  }{(|Q|-j)! }, \; \; -\cj L_\eps<x<\cj L_\eps.
\end{align*}


\medskip

\item[(iii)] The expected location of each soliton is given by
\begin{align*}
\E_{\rho_\eps^Q } \big[  \pi^T_{j}(\phi)  \big]=\Big(-\cj L_\eps+\frac{2\cj L_\eps j}{|Q|+1} \Big) \cdot (1+O(\eps^{0+}) ).
\end{align*}

\end{itemize}

\end{theorem}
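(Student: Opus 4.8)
The plan is to leverage Theorems \ref{THM:1} and \ref{THM:2} to reduce the computation of the distribution of $\pi_\eps^T(\phi)$ to a finite-dimensional integral over the coordinate variables $(\xi_1,\dots,\xi_{|Q|})$, and then to show that, after integrating out the Gaussian (Ornstein--Uhlenbeck) fluctuation $v$, the remaining density in $(\xi_1,\dots,\xi_{|Q|})$ is flat up to $O(\eps^{0+})$ errors. More precisely, write $\phi = \sum_{j=1}^Q m^\eps(\cdot-\xi_j) + v = \pi^\eps(\phi) + v$ using the coordinate representation \eqref{APM5}; by Theorem \ref{THM:1} the measure $\rho_\eps^Q$ is, up to $e^{-c\eps^{-1/2+}}$ errors, supported on the non-collision region where this decomposition is a genuine change of variables (the map is a diffeomorphism on $\M_Q^{\ge d_\eps} \times \{\|v\|_{L^2}<\dl_\eps\}$, since on the non-collision manifold tangent and normal directions are well defined, cf.\ Remark \ref{REM:strata}). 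The first step is therefore to perform this change of variables, producing a Jacobian factor and a reduced measure of the form
\begin{align*}
\rho_\eps^Q(d\xi\, dv) \propto \exp\Big\{-\tfrac1\eps E\big(\textstyle\sum_j m^\eps(\cdot-\xi_j)+v\big)\Big\}\, J(\xi,v)\, d\xi\, \mathcal{L}(dv),
\end{align*}
where $\mathcal{L}$ is the Brownian-bridge-type reference measure on the fibre.

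The second step is to expand the energy. Writing $m_{\xi} = \sum_j m^\eps(\cdot-\xi_j)$, one has $E(m_\xi + v) = E(m_\xi) + \langle \nabla E(m_\xi), v\rangle + \tfrac12\langle \nabla^2 E(m_\xi)v,v\rangle + (\text{cubic})$. The key structural input is that the tangential and $\xi$-dependent contributions must be controlled: first, the ``self-energy'' $E(m_\xi)$ equals $|Q|$ times the single-kink energy plus an interaction term which, on the non-collision manifold, is exponentially small in the gaps $|\xi_i-\xi_j| \ge d_\eps$ — hence of size $O(\sqrt{\eps\log\frac1\eps}\,)$, contributing a factor $1+O(\eps^{0+})$ to $e^{-E(m_\xi)/\eps}$ that is \emph{uniform} in $\xi$; second, the Jacobian $J(\xi,v)$ and the Gaussian normalization $\det(\nabla^2 E(m_\xi))^{-1/2}$ must likewise be shown to be $\xi$-independent up to $1+O(\eps^{0+})$, again using the exponentially small dependence of the covariance structure (Lemma \ref{LEM:cov}, Proposition \ref{PROP:dec}) on the separations. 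The only surviving $\xi$-dependence is then a factor $(1+O(\eps^{0+}))$ times a constant, so integrating out $v$ leaves the uniform density on $\{-\cj L_\eps \le \xi_1\le\cdots\le\xi_{|Q|}\le \cj L_\eps\}$, which proves part (i). Part (ii) is then the classical order-statistics computation: the $j$-th order statistic of $|Q|$ i.i.d.\ uniforms on $[-\cj L_\eps,\cj L_\eps]$ has exactly the stated Beta-type density $f_j$, obtained by marginalizing the flat joint law; and part (iii) follows by integrating $x f_j(x)$, i.e.\ from the known mean $\tfrac{j}{|Q|+1}$ of a $\mathrm{Beta}(j,|Q|-j+1)$ variable rescaled to $[-\cj L_\eps,\cj L_\eps]$.

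I would organize the proof as: (a) restrict to the non-collision region using Theorem \ref{THM:1} and \eqref{ldp2}, absorbing the complement into an $e^{-c\eps^{-1/2+}}$ error; (b) change variables $\phi \mapsto (\xi, v)$ on this region and record the Jacobian; (c) Taylor-expand $\frac1\eps E(m_\xi+v)$ and show that the cross term $\frac1\eps\langle\nabla E(m_\xi),v\rangle$ combines with the Gaussian to give an Ornstein--Uhlenbeck law on the fibre as in Theorem \ref{THM:2}, while all $\xi$-dependent prefactors (self-energy, Jacobian, Gaussian determinant) are $1+O(\eps^{0+})$ uniformly; (d) integrate out $v$ to obtain the flat law on the ordered simplex; (e) compute the marginals and means via standard order-statistics formulas. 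The main obstacle is step (c): quantifying precisely that the interaction energy, the metric/Jacobian, and the spectrum of $\nabla^2 E(m_\xi)$ depend on the soliton separations only through exponentially small corrections $e^{-|\xi_i-\xi_j|} \le e^{-d_\eps} = \sqrt{\eps\log\frac1\eps}$, and checking that after dividing by $\eps$ this still yields a multiplicative error $1+O(\eps^{0+})$ rather than something of order $1$. This requires the careful kink-interaction estimates and the uniform-in-$\xi$ covariance bounds developed in the earlier sections, and is where the restriction $L_\eps = \eps^{-1/2+\eta}$ (so that $|Q|\cdot L_\eps \cdot (\text{per-gap error}) = o(1)$ after division by $\eps$) is used.
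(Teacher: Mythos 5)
Your proposal follows the same route as the paper's proof of Theorem~\ref{THM:3}: restrict to the well-separated region using the large-deviation estimates (Lemma~\ref{LEM:partition}), disintegrate on the smooth non-collision manifold via the tangential/normal change of variables (Proposition~\ref{PROP:cha}), show the fibre integral $\mathcal{F}_{\xi_1,\dots,\xi_Q}(1)=1+O(\eps^{0+})$ uniformly in $\xi$ (via \eqref{part1} and Lemma~\ref{LEM:AEC}), argue that removing the thin collision band changes the simplex volume only by $O(d_\eps/\cj L_\eps)=O(\eps^{1/2-})$ as in \eqref{M6}--\eqref{ORU6}, and finish with the classical order-statistics/Beta computation for (ii)--(iii). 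One imprecision worth flagging in your step (c): the factor $\exp\{-\tfrac1\eps[E(m^\eps_\xi)-|Q|E_{\textup{kink}}]\}$ is \emph{not} $1+O(\eps^{0+})$ pointwise uniformly on $\{\min_{i\neq j}|\xi_i-\xi_j|\ge(1+\eta)d_\eps\}$, since $\tfrac1\eps e^{-(1+\eta)d_\eps}\approx\eps^{-1/2+}\to\infty$ (you cannot simply divide a per-gap error of size $\sqrt{\eps\log\tfrac1\eps}$ by $\eps$); what actually makes the argument close is that this factor is $\le 1$ everywhere, equals $1+O(\eps)$ once $\min|\xi_i-\xi_j|\gtrsim|\log\eps|$ (a region of relative simplex volume $1-O(\eps^{1/2-})$), and appears in both numerator and denominator of $\rho_\eps^Q\{\pi_\eps^T(\phi)\in A\}$, so its effect on the \emph{ratio} is $O(\eps^{0+})$ -- this ratio/volume bookkeeping is where $L_\eps=\eps^{-1/2+\eta}$ enters, rather than through the pointwise bound $|Q|\cdot L_\eps\cdot(\text{per-gap error})/\eps$ you suggested.
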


In Theorem \ref{THM:3}, we obtain explicit formulas for the joint and marginal distributions, thereby describing quantitatively how the solitons are arranged over the entire interval. In particular, Theorem \ref{THM:3} shows that the expected centers $\xi_j$ are evenly spaced, dividing the interval $[-\cj L_\eps, \cj L_\eps]$ into $|Q|+1$ equal parts, each of length $\frac{2\cj L_\eps}{|Q|+1}$.

Notice that, in contrast with the soliton resolution result of Chen–Liu–Lu \cite[Theorem~1.1]{CLL20}, which describes the asymptotic decomposition of solutions under deterministic dynamics, our results are probabilistic in nature. Under the Gibbs measure, we identify the typical locations of individual solitons and precisely characterize the gaps between neighboring solitons.



\subsection{Related literature}\label{SUBSEC:RLIT}

\subsubsection{Sine–Gordon field theory}\label{SUBSUBSEC:SG}

McKean-Vaninsky \cite{McKV} studied the construction of the one–dimensional sine Gordon measure. More recently, Lacoin–Rhodes–Vargas \cite{LRV23} studied the one–dimensional sine–Gordon measure with a log–correlated base field in the full subcritical regime on a bounded domain.

From the PDE perspective, the one-dimensional sine–Gordon equation and its soliton solutions have been widely studied in recent years. In particular, the asymptotic stability and long–time behavior of soliton solutions have attracted considerable attention. We refer to the works of McKean \cite{McKean}, L\"uhrmann–Schlag \cite{LS23}, Alejo–Muñoz–Palacios \cite{AMP23}, Chen–Liu–Lu \cite{CLL20}, and Chen–Lührmann \cite{CL}.

The two-dimensional sine–Gordon theory has connections to various problems in statistical physics, such as the Coulomb gas and the XY model. We refer to \cite{FS76, BK87, DH93, BB21, BH22, B22,  GS0} for the study of the 2D sine–Gordon measures and to \cite{HS,CHS, BC24} for the two-dimensional sine–Gordon equation with stochastic forcing. 
In two dimensions, the sine–Gordon equation also admits soliton-like solutions, often called kink walls, obtained by extending the one-dimensional kink uniformly in another spatial direction.
Although these configurations solve the 2D sine–Gordon equation, they necessarily have infinite energy and hence fall outside the standard class of finite-energy solitons (see \cite[Chapter 5.4]{MSTOPSOL}).
To the best of our knowledge, these infinite-energy kink-wall solutions are considerably less explored and have not been developed to the same extent as their one-dimensional counterparts.





\subsubsection{Continuum focusing NLS Gibbs measure and single-soliton manifold}\label{SUBSUBSEC:NLSC}
The behavior of (invariant) Gibbs measures around a single soliton manifold has been studied for the focusing nonlinear Schr\"odinger (NLS) equation. For the continuum focusing NLS Gibbs measure, McKean \cite{McK1} initiated the study of the infinite-volume limit. Later, Rider \cite{Rider} and Tolomeo–Weber \cite{TW} proved that, on the 1D torus, the measure concentrates around the single-soliton manifold in the infinite-volume limit. In particular, Tolomeo–Weber \cite{TW} identified a critical regime for the strength of the coupling: either the measure strongly concentrates around the single-soliton manifold, or the limiting measure reduces to the underlying Gaussian field. Recently, in \cite{SS24}, the first and third authors proved a central limit theorem for the Gibbs measure around a single-soliton manifold.  In this paper we establish the corresponding result for multi-soliton manifolds in the sine-Gordon setting. The two situations exhibit different fluctuation behavior:  For the focusing NLS Gibbs measure, the fluctuations are white noise near a single soliton, but in the sine–Gordon case studied here, the system exhibits Ornstein–Uhlenbeck fluctuations near the multi-soliton manifold.



\subsubsection{Discrete focusing NLS Gibbs measure and single-soliton manifold}\label{SUBSUBSEC:NLSd}
The discretized focusing Gibbs measure and the nonlinear Schrödinger (NLS) equation on a lattice have been studied. When $d \ge 3$, Chatterjee–Kirkpatrick \cite{ChaKirk} initiated the study of the discretized focusing NLS Gibbs measure, identified a critical temperature, and showed that below this threshold the system exhibits striking single-soliton–like behavior. In \cite{Chatt} Chatterjee used microcanonical invariant measures and showed that a typical function in the ensemble decomposes into a “visible” part, which is close to a single soliton, and an “invisible” part that is small in the $L^\infty$ norm.  Notice that, in particular, regarding the reason why a single-soliton profile appears, Chatterjee mentioned in \cite{Chatt} that `` whereas multisoliton solutions eventually merge into a single soliton on the finite discrete torus considered in Theorem 1". In contrast to that situation, in our sine–Gordon model, the boundary conditions imposed by a fixed homotopy class \eqref{HT1} enforce the presence of multiple solitons, and therefore prevent the multi-soliton configuration from collapsing into a single soliton. Again, as emphasized above, to the best of our knowledge, our results are the first study of the concentration and fluctuations of Gibbs measures around multi-soliton manifold. In particular, a crucial aspect is to understand how solitons interact with each other on the collision scale $\big|\log (\eps \log \tfrac 1\eps)\big|$, and how the energy behaves at this scale, even though such configurations are not minimizers.


Regarding the phase transition of the discrete focusing NLS Gibbs measure, in \cite{DKK}, for $d \ge 3$, Dey-Kirkpatrick-Krishnan identified a phase transition, analogous to the one found by Tolomeo–Weber \cite{TW}, on the lattice. Using two parameters, temperature and the strength of the nonlinearity, they proved the existence of a continuous phase transition curve that divides the parameter plane into two regions, the appearance or non-appearance of (single) solitons.  In the recent work \cite{KR25}, Krishnan and Ray further investigated the model using the two parameters. They proved that the three regions in the phase diagram lead to three distinct limits. A natural question is whether the discrete (or even continuous) sine–Gordon model, in one dimension or higher, also exhibits a phase transition depending on the temperature and the strength of the coupling constant.



\subsubsection{Gibbs measure for stochastic Allen–Cahn equation and single-soliton manifold}\label{SUBSUBSEC:ALLCAHN}
For the (invariant) Gibbs measure of the stochastic one-dimensional Allen–Cahn equation, Weber \cite{WEB10} proved that, in a joint low-temperature and infinite-volume limit, the measure concentrates on the single soliton manifold. Subsequently, Otto–Weber–Westdickenberg \cite{OWW14} studied the same limits, with $\eps \to 0$ and $L_\eps \to \infty$, but identified the optimal scaling of the interval length $L_\eps$ by analyzing the competition between energy $\eps \to 0$ and entropy $L_\eps \to \infty$. Under this optimal scaling, they again observed concentration of the measure around the single soliton manifold. Recently, Bertini–Buttà–Di Gesù \cite{BBG} showed that, beyond the optimal length of the interval $L_\eps$, the measure no longer concentrates, and the interfaces (the soliton centers $\xi_j$) become asymptotically distributed according to a Poisson point process. 
It would be interesting to investigate the optimal length scale for which Theorems \ref{THM:1}, \ref{THM:2}, and \ref{THM:3} remain valid, and to understand what kinds of behavior occur beyond this optimal scale.


\subsubsection{Gibbs measure and topological solitons}\label{SUBSUBSEC:TOPSOL}
We refer to the work of Bringmann \cite{Bring} on topological solitons and Gibbs measures.  Bringmann recently studied exterior equivariant wave maps with spatial domain $\R^3 \setminus B(0,1)$, which admit topological solitons. In contrast to our setting, where the high charge sector $|Q| \ge 2$ has no minimizer, the wave map system admits infinitely many topological sectors, and each sector possesses a unique minimizer. In \cite{Bring}, Bringmann constructed the Gibbs measure on each homotopy class and proved the invariance of this measure under the corresponding PDE flow. Notice that the measure considered by Bringmann \cite[(1.10)]{Bring} takes a form similar to that of the sine–Gordon measure considered here \eqref{Gibbs1}, \eqref{BB1}, where the base point is fixed, and the measure is defined on the fluctuation coordinate.

Finally, we remark that, as discussed by Manton–Sutcliffe \cite{MSTOPSOL}, there exist many models admitting infinitely many disjoint topological sectors, each supporting topological solitons, including the Abelian Higgs, Ginzburg–Landau, and Yang–Mills models. It would be natural to investigate whether the methods developed in this paper can be applied to study the concentration and fluctuations of Gibbs measures in these settings. The present work may be viewed as a first step toward such a program.




\section{Notations and preliminaries}\label{SEC:Notation}

\subsection{Notations}

Throughout the paper, we fix $L_\eps=\eps^{-\frac 12+\eta}$, where $\eta>0$
is an arbitrarily small but fixed number, and we denote by $\jb{\cdot,\cdot}$ the $L^2$ inner product on the interval $[-L_\eps,  L_\eps ]$:
\begin{align*}
\jb{\phi, \psi }:=\jb{\phi,\psi}_{L^2([-L_\eps,L_\eps])}=\int_{-L_\eps}^{L_\eps} \phi \psi dx, 
\end{align*}

\noi
where $\phi$ and $\psi$ are real-valued functions. All $L^p$-norms appearing below are understood to be over the interval $[-L_\eps, L_\eps]$, and we suppress the domain from the notation for $1\le p \le \infty$
\begin{align*}
\|\phi\|_{L^p}^p=\|\phi\|_{L^p([-L_\eps,L_\eps]) }^p=\int_{-L_\eps}^{L_\eps}|\phi|^p dx, \quad  \| \phi\|_{L^\infty}=\| \phi\|_{L^\infty([-L_\eps, L_\eps])}.
\end{align*}

\noi
When we use $L^p$-norms on the real line $\R$, we explicitly write $L^p(\R)$.

Regarding the topological degree $Q \in \Z$ defined in \eqref{topd}, through the paper we mainly consider the case $Q>0$. 
When $Q<0$, the same results follow by replacing the multi-soliton
$m_{\xi_1,\dots,\xi_k}=m_{\xi_1}+\dots+m_{\xi_k} $, consisting of kinks, with $m_{\xi_1,\dots,\xi_k}^{-}=m_{\xi_1}^{-}+\dots+m_{\xi_k}^{-} $, consisting of anti-kinks.

Let $A_1,\dots, A_k$ be measurable sets. We use the notation 
\begin{align*}
\E \Big[F(\phi), A_1,\dots A_k \Big]=\E\bigg[F(\phi)\prod_{j=1}^k \ind_{A_j} \bigg],
\end{align*}

\noi
where $\E$ stands for the expectation with respect to the probability distribution of $\phi$ under consideration.

We use $c>0$ to denote an unimportant positive constant whose value may change from line to line. We write $ A \les B$ to indicate an estimate of the form $ A \leq CB$ for some $C> 0$.
We also write  $ A \approx B $ to denote $ A \les B $ and $ B \les A $ and use $ A \ll B $ 
when we have $A \leq \zeta B$ for some small $\zeta > 0$.
We may include subscripts to show dependence on external parameters; for example, $A\les_{p} B$ means $A\le C(p) B$, where the constant $C(p)$ depends on a parameter $p$. In addition, we use $a-$ 
and $a+$ to denote $a- \eta$ and $a+ \eta$, respectively for arbitrarily small $\eta > 0$.

\subsection{Brownian bridges and homotopy-class representatives}\label{SUBSEC:BB}

Recall that, in defining the base Gaussian measure corresponding to the homotopy class $\mathcal{C}_Q$, we introduce the Gaussian measure $\mu^Q_\eps$ in \eqref{BB}, conditioned on $\phi(-L_\eps)=0$ and $\phi(L_\eps)=2\pi Q$. This measure is precisely the law induced by the Brownian bridge $B_{\eps, (-L_\eps, L_\eps)}^{0,2\pi Q}$   
\begin{align}
B_{\eps, (-L_\eps, L_\eps)}^{0,2\pi Q}(x)=\frac{\pi Q}{L_\eps}(x+L_\eps) +\sqrt{\eps} B_{(-L_\eps, L_\eps)}^{0,0}(x),
\label{BB1}
\end{align}

\noi 
where $B_{(-L_\eps, L_\eps)}^{0,0}$ is the mean zero Brownian bridge pinned at $0$ at both ends $-L_\eps, L_\eps$, and its covariance is given by 
\begin{align}
\E\Big[\sqrt{\eps} B_{(-L_\eps, L_\eps)}^{0,0}(x_1) \sqrt{\eps} B_{(-L_\eps, L_\eps)}^{0,0}(x_2) \Big]=\frac{\eps}{2L_\eps} \big( (x_1+L_\eps)(L_\eps-x_2) \wedge (x_2+L_\eps)(L_\eps-x_1)   \big)
\label{BB2}
\end{align}

\noi
for all $x_1,x_2 \in [-L_\eps, L_\eps]$. In \eqref{BB1}, $\ell^Q(x)=\frac{\pi Q}{L_\eps}(x+L_\eps)$ is the affine function interpolating between $0$ and $2\pi Q$. Thus, the fluctuations are of order
$\sqrt{\eps}$ around this straight line connecting the boundary values.
Recall that we interpret a field $\phi$ distributed according to $\mu^Q_\eps$ as a function on $\R$, extended by $0$ and $2\pi Q$ outside $[-L_\eps, L_\eps]$. Hence, the reference profile $\ell^Q(x)=\frac{ \pi Q }{ L_\eps}(x+L_\eps)$ is extended by the same boundary values, while the fluctuation coordinate $B^{0,0}_{(-L_\eps, L_\eps) }$ is extended by $0$ outside $[-L_\eps, L_\eps]$.


Let $\{e_n\}_{n\ge 1}$ be the $L^2$-orthonormal eigenfunctions of $-\dx^2$ on $[-L_\eps, L_\eps]$ with Dirichlet boundary conditions
\begin{align}
e_n(x)= \frac{1}{ \sqrt{L_\eps} } \sin\Big(  \frac{n\pi(x+L_\eps)  }{2L_\eps}   \Big)
\end{align}

\noi
for $n \in \N$. The corresponding eigenvalues are $\ld_n=\Big( \frac{n \pi}{2L_\eps} \Big)^2$.  Then, for $x\in [-L_\eps, L_\eps]$, the Brownian bridge admits the Fourier series representation
\begin{align}
B_{\eps, (-L_\eps, L_\eps)}^{0,2\pi Q}(x)=\frac{\pi Q}{L_\eps}(x+L_\eps) +\sqrt{\eps} \sum_{n=1}^\infty   \frac{2 \sqrt{L_\eps} g_n  }{\pi n}   \sin\Big(  \frac{n\pi(x+L_\eps)  }{2L_\eps}   \Big),
\label{BB0}
\end{align}

\noi
where $\{g_n\}_{n \ge 1}$ is a family of independent standard Gaussian random variables.



\subsection{Bou\'e-Dupuis formula}

In this subsection, we express Gaussian functional integrals with respect to the Brownian bridge measure $\mu^Q_\eps=\Law\big(B_{\eps, (-L_\eps, L_\eps)}^{0,2\pi Q} \big)$ in \eqref{BB1} in terms of an optimal control problem. We first define a centered Gaussian process at each scale $t \in [0,1]$ as follows
\begin{align}
Y(t,x)=\sum_{n \ge 1} \frac{B_n(t)}{\sqrt{\ld_n}} e_n(x) =\sum_{n=1}^\infty   \frac{2 \sqrt{L_\eps} B_n(t)  }{\pi n}   \sin\Big(  \frac{n\pi(x+L_\eps)  }{2L_\eps}   \Big),
\label{CY0} 
\end{align}

\noi
where $\{B_n\}_{n \ge 1}$ is a sequence of independent Brownian motions. Then we have
\begin{align*}
\mu^Q_\eps=\Law\big(B_{\eps, (-L_\eps, L_\eps)}^{0,2\pi Q} \big)=\Law\big((\tfrac{\pi Q}{L_\eps}(x+L_\eps) +  \sqrt{\eps}Y(1) \big).
\end{align*}

\noi 
Next, let $\mathbb{H}_a$ denote the space of drifts, which consists of mean-zero progressively measurable processes belonging to $L^2([0,1]; L^2([-L_\eps, L_\eps]))$, $\PP$-almost surely. We are now ready to state the  Bou\'e-Dupuis variational formula \cite{BD, Ust}; in particular, see Theorem 7 in~\cite{Ust}. See also Theorem 2 in \cite{BG}.

\begin{lemma}\label{LEM:BD}
Let $Q\in \Z$ and let $\eps>0$. Suppose that  $F$ is a measurable real-valued functional such that
$\E\big[|F(Y(1))|^p\big] < \infty$ and $\E\big[|e^{-F(Y(1))}|^q \big] < \infty$ for some $1 < p, q < \infty$ with $\frac 1p + \frac 1q = 1$.
Then, we have
\begin{align*}
-\log \E_{\mu^Q_\eps}\Big[ e^{-F(\phi)}   \Big]=\inf_{\dr \in \Ha}  \E\bigg[ F\big(
\ell^Q +  \sqrt{\eps}Y(1)
+\sqrt{\eps}\Dr(1) \big)  +\frac 12 \int_0^1 \| \dr(t) \|_{L^2_x}^2 dt  \bigg], 
\end{align*}


\noi
where  $\ell^Q(x)=\frac{\pi Q}{L_\eps}(x+L_\eps)$ is the line connecting $0$ and $2\pi Q$ in \eqref{BB1}, and
\begin{align}
\Dr(t):=\int_0^t (-\dx^2)^{-\frac 12} \dr(s) ds.
\label{CY1}
\end{align}

\noi 
Here the expectation $\E = \E_\PP$ is an  expectation with respect to the underlying probability measure~$\PP$.

\end{lemma}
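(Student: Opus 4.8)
The plan is to recognize Lemma~\ref{LEM:BD} as a direct instance of the Bou\'e--Dupuis variational formula for Gaussian functional integrals on abstract Wiener space, after stripping off the deterministic boundary profile $\ell^Q$ and factoring out the scaling by $\sqrt\eps$. The entire content is then a matching of the data $(\mu^Q_\eps, F, \Ha, \Dr)$ against the hypotheses of the cited abstract statement (Theorem~7 in \cite{Ust}, or Theorem~2 in \cite{BG}), so that no new analytic estimate is needed; the argument is essentially a change of variables.

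First I would record the probabilistic representation already set up in \eqref{CY0}. Writing $W(t)=\sum_{n\ge1}B_n(t)e_n$ for the cylindrical Wiener process on $L^2([-L_\eps,L_\eps])$ built from the independent Brownian motions $\{B_n\}$, one has $Y(t)=(-\dx^2)^{-1/2}W(t)$; since the Dirichlet Laplacian on $[-L_\eps,L_\eps]$ has $\sum_n \ld_n^{-1}<\infty$, the random field $Y(1)$ is $\PP$-a.s.\ a continuous function on $[-L_\eps,L_\eps]$, with law equal to that of the mean-zero Brownian bridge $B^{0,0}_{(-L_\eps,L_\eps)}$. Hence the affine map $\Phi:y\mapsto \ell^Q+\sqrt\eps\,y$ pushes $\Law(Y(1))$ forward to $\mu^Q_\eps$. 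Setting $G(y):=F\big(\ell^Q+\sqrt\eps\,y\big)$, this push-forward identity gives $\E[|G(Y(1))|^p]=\E_{\mu^Q_\eps}[|F(\phi)|^p]<\infty$ and $\E[|e^{-G(Y(1))}|^q]=\E_{\mu^Q_\eps}[|e^{-F(\phi)}|^q]<\infty$, so the integrability hypotheses of the abstract formula hold for $G$ with the same conjugate exponents $p,q$.

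Next I would apply the Bou\'e--Dupuis formula to the centered Gaussian $Y(1)$ and the functional $G$. In the present coordinates a drift is an element $\dr\in\Ha$, the controlled Gaussian is $Y(1)+\Dr(1)$ with $\Dr(1)=\int_0^1(-\dx^2)^{-1/2}\dr(s)\,ds$ --- equivalently $Y(1)+\Dr(1)=(-\dx^2)^{-1/2}\big(W(1)+\int_0^1\dr(s)\,ds\big)$, the Cameron--Martin shift of $W(1)$ by $t\mapsto\int_0^t\dr(s)\,ds$ --- and the control cost is $\tfrac12\int_0^1\|\dr(t)\|_{L^2_x}^2\,dt$. The cited theorem then yields
\begin{align*}
-\log\E\big[e^{-G(Y(1))}\big]=\inf_{\dr\in\Ha}\E\Big[G\big(Y(1)+\Dr(1)\big)+\tfrac12\int_0^1\|\dr(t)\|_{L^2_x}^2\,dt\Big].
\end{align*}
Unfolding $G$ via $G\big(Y(1)+\Dr(1)\big)=F\big(\ell^Q+\sqrt\eps\,Y(1)+\sqrt\eps\,\Dr(1)\big)$, together with $\E[e^{-G(Y(1))}]=\E_{\mu^Q_\eps}[e^{-F(\phi)}]$ from the previous step, gives exactly the claimed identity.

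The only point requiring some care --- and the closest thing to an obstacle --- is invoking a version of the variational formula valid in this infinite-dimensional, non-bounded-functional setting: the classical Bou\'e--Dupuis theorem concerns bounded functionals of finite-dimensional Brownian motion, whereas here $F$ is only controlled in $L^p$ and $L^q$ and the driving noise is the cylindrical Wiener process $W$. This is precisely why the hypotheses are phrased in terms of conjugate exponents, and it is what \cite[Theorem~7]{Ust} (see also \cite[Theorem~2]{BG}) provides; one need only verify that $G$ lies in the admissible class used there, which was done above. Beyond this, the proof uses no estimate on $F$ or the cosine interaction itself.
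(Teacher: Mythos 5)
Your proof is correct and follows the same approach as the paper, which simply states the lemma and cites \cite[Theorem~7]{Ust} and \cite[Theorem~2]{BG} without writing out a proof. Your unpacking of the reduction --- identifying $\mu^Q_\eps$ as the push-forward of $\Law(Y(1))$ under the affine map $y\mapsto\ell^Q+\sqrt{\eps}\,y$, transferring the $L^p$/$L^q$ integrability hypotheses to $G(y)=F(\ell^Q+\sqrt{\eps}\,y)$, and then applying the abstract Bou\'e--Dupuis formula to $G$ --- is precisely the routine change of variables the paper leaves implicit.
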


In the following, we use the shorthand notations $Y:=Y(1)$ and $\Dr:=\Dr(1)$ for convenience.

In the large deviation estimates (Section \ref{SEC:LDP}), we need moment estimates for $Y(t)$ and a pathwise estimate for the drift term.
\begin{lemma}\label{LEM:Moment}
Let $Y(t)$ and $\Dr(t)$ be as in \eqref{CY0} and \eqref{CY1}.

\textup{(i)} 
For any $t\in [0,1]$, we have 
\begin{align}
\E \bigg[ \int_{-L_\eps}^{L_\eps} |\sqrt{\eps} Y(t)|^2 dx \bigg]
& = \frac 23 t \cdot \eps L_\eps^2, \label{2M}\\
\E \bigg[ \int_{-L_\eps}^{L_\eps} |\sqrt{\eps} Y(t)| dx \bigg]
& = \frac{\sqrt{\pi t}}{2} \cdot  \eps^\frac 12  L_\eps^{\frac 32}.
\label{1M}
\end{align}

\smallskip

\noi
\textup{(ii)}
The drift term $ \Dr(t) $ has the regularity
of the Cameron-Martin space, that is, for any $\dr\in \mathbb{H}_a $, we have
\begin{align}
\| \Dr(1)\|_{  \dot{H}^{1}_x}^2 \leq \int_0^1 \|  \dot{\Dr}(t) \|_{\dot{H}^1_x}^2dt=\int_0^1 \| \dr(t) \|_{L^2_x}^2 dt,
\label{DRE}
\end{align}

\noi
where $\dot{\Dr}(t)=(-\dx^2)^{-\frac 12}\dr(t)$.
\end{lemma}

\begin{proof}
For Part (i),
\eqref{2M} follows immediately from \eqref{CY0},
Parseval's identity, and $\sum_{n\ge 1}\frac{1}{n^{2}}=\frac{\pi^2}6$.
To prove \eqref{1M}, by \eqref{CY0},
\eqref{BB1}, \eqref{BB0}, and \eqref{BB2}, we have  
\begin{align*}
\E|Y(t,x)|^2=t \sum_{n \ge 1} \frac{1}{\ld_n} e_n(x)^2
=t\E\Big[ (B_{(-L_\eps, L_\eps)}^{0,0}(x) )^2 \Big]=t \frac{(x+L_\eps)(L_\eps-x)}{2L_\eps}.
\end{align*}
Namely $\sqrt{\eps}Y(t,x) \sim \mathcal{N}(0, \eps t  \tfrac{(x+L_\eps)(L_\eps-x)}{2L_\eps}  )$. Recall that for a centered Gaussian $Z \sim \mathcal{N}(0,\s^2)$, $\E|Z|=\s \sqrt{ \frac{2}{\pi}} $. Therefore,
\eqref{1M} follows by computing $\frac{\sqrt{\eps t} }{\sqrt{\pi L_\eps}  }  \int_{-L_\eps}^{L_\eps} \sqrt{L_\eps^2-x^2}\, dx$.

Part (ii) follows from Minkowski and Cauchy-Schwarz inequalities. 

\end{proof}

\section{Structure of multi–topological solitons}\label{SEC:TOPSOL}

In this section, we study the properties of topological solitons and their superpositions forming multi-soliton configurations.
The key points are to understand how solitons interact with each other on the collision scale, and how the energy behaves even though these configurations are not minimizers.

\subsection{Topological solitons}

In this subsection, we investigate the minimizers of the Hamiltonian 
\begin{align*}
\inf_{\phi \in \mathcal{C}_Q} E(\phi).
\end{align*}

\noi
within the homotopy class $\mathcal{C}_Q$ for $|Q|=1$, referred to as topological solitons. The topological solitons, namely the kink ($Q=1$) and anti-kink ($Q=-1$) centered at $\xi \in \R$
\begin{align*}
m_{\xi}(x)=m(x-\xi)=4 \arctan(e^{x-\xi}), \qquad  m_{\xi}^{-}(x)=m^{-}(x-\xi)=4 \arctan(e^{-(x-\xi)})
\end{align*}

\noi 
are localized transition layers connecting the distinct vacua $0$ and $2\pi$, satisfying  the Euler–Lagrange equation
\begin{align*}
-\dx^2 \phi+\sin \phi=0. 
\end{align*}

\noi
The kink represents an increasing transition from $0$ to $2\pi$, whereas the anti-kink corresponds to the decreasing transition from $2\pi $ to $0$. These configurations $\{m_\xi \}_{\xi \in \R}$ and $\{m_{\xi}^{-}\}_{\xi \in \R}$ minimize the Hamiltonian within their respective topological sectors $\mathcal{C}_Q$
\begin{align*}
E_{\textup{kink}}=\inf_{\phi \in \mathcal{C}_1} E(\phi)=E(m_{\xi})=8, \qquad  E_{ \textup{anti-kink} }=\inf_{\phi \in \mathcal{C}_{-1}} E(\phi)=E(m_{\xi}^{-})=8.
\end{align*}

In the following lemma, we show that each $m_{\xi}(x)$ is exponentially close to a vacuum (0 or $2\pi$), that is, nearly constant, when $x$ is far from its center $\xi$.


\begin{lemma}\label{LEM:sol}
Let $\xi \in \R$.

\begin{itemize}
\item[(i)] When $x>\xi$,
\begin{align*}
2e^{-|x-\xi|} \le |m_{\xi }(x) -2\pi  | \le 4 e^{-|x-\xi|}, \quad 2 e^{-|x-\xi|}  \le |m_{\xi }^{-}(x)  | \le 4 e^{-|x-\xi|},
\end{align*}

\noi
uniformly in all centers $\xi$ and all $x\in \R$.

\medskip

\item[(ii)] When $x<\xi$,
\begin{align*}
2 e^{-|x-\xi|} \le |m_{\xi }(x)  | \le 4 e^{-|x-\xi|}, \quad 2 e^{-|x-\xi|}   \le |m_{\xi }^{-}(x)-2\pi  | \le 4 e^{-|x-\xi|},
\end{align*}

\noi
uniformly in all centers $\xi$ and all $x\in \R$.

\medskip 

\item[(iii)] 
We have
\begin{align*}
|\partial_{\xi} m_{\xi}(x) |
+|\partial_{\xi}^2 m_{\xi}(x) | 
\le 4 e^{-|x-\xi|},  
\quad |\partial_{\xi} m_{\xi}^{-}(x) |
+|\partial_{\xi}^2 m_{\xi}^{-}(x)| 
\le 4 e^{-|x-\xi|},  
\end{align*} 

\noi
uniformly in all centers $\xi$ and all $x\in \R$. The same holds with $\xi$-derivatives replaced by $x$-derivatives. 

\end{itemize}

\begin{proof}
The parts (i) and (ii) follow from direct computation using $m_{\xi}(x)=4\arctan (e^{x-\xi})$ and $m^{-}_{\xi}(x)=4\arctan(e^{-(x-\xi)})$.
The part (iii) follows from the fact that $\partial_{\xi}m_{\xi}(x)=-2\textup{sech}(x-\xi)$,
and $\partial_{\xi}^2  m_{\xi}(x) =2 \textup{tanh}(x-\xi) \textup{sech}(x-\xi)$ and direct computations.
These derivatives are highly localized profile around $\xi$ with an exponentially decaying tail.
\end{proof}


\end{lemma}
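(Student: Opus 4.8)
\emph{Proof proposal.} The plan is to reduce the entire statement to the closed forms $m_\xi(x)=4\arctan(e^{x-\xi})$ and $m_\xi^-(x)=4\arctan(e^{-(x-\xi)})$ and to a few elementary one-variable inequalities; there is essentially no analytic content beyond bookkeeping, and the only subtlety is keeping track of which vacuum each profile approaches on each side of its center.

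First I would set $t:=x-\xi$ and record the complementary-angle identity $\arctan u+\arctan(1/u)=\tfrac\pi2$ for $u>0$, which gives
\begin{align}
m_\xi^-(x)=2\pi-m_\xi(x),\qquad m_\xi(x)-2\pi=-4\arctan(e^{-t}).
\label{EQ:prop-sol1}
\end{align}
The first identity in \eqref{EQ:prop-sol1} yields $|m_\xi^-(x)|=|m_\xi(x)-2\pi|$ and $|m_\xi^-(x)-2\pi|=|m_\xi(x)|$, so every antikink estimate in (i)--(ii) follows from the corresponding kink estimate, and $\partial_\xi^k m_\xi^-=-\partial_\xi^k m_\xi$ does the same for (iii). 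The second identity in \eqref{EQ:prop-sol1}, together with the plain formula $m_\xi(x)=4\arctan(e^{t})$ used in the regime $t<0$ (where $e^{t}=e^{-|t|}\in(0,1)$), reduces all four cases of (i)--(ii) to the single two-sided bound
\begin{align}
\tfrac u2\le\arctan u\le u\qquad\text{for all }0<u\le1,
\label{EQ:prop-sol2}
\end{align}
whose upper half is $(\arctan)'\le1$ and whose lower half is concavity of $\arctan$ on $[0,\infty)$ (the graph lies above the chord through $(0,0)$ and $(1,\tfrac\pi4)$, and $\tfrac\pi4>\tfrac12$). Plugging $u=e^{-|x-\xi|}$ into \eqref{EQ:prop-sol2} turns $4\arctan(e^{-|x-\xi|})$ into the claimed sandwich $2e^{-|x-\xi|}\le 4\arctan(e^{-|x-\xi|})\le 4e^{-|x-\xi|}$, uniformly in $\xi$ and $x$.

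For (iii) I would simply differentiate the closed form. Since $\tfrac{d}{ds}(4\arctan e^{s})=2\,\textup{sech}\,s$ and $\tfrac{d}{ds}(2\,\textup{sech}\,s)=-2\,\textup{sech}\,s\,\tanh s$, one gets $\partial_\xi m_\xi(x)=-2\,\textup{sech}(x-\xi)$ and $|\partial_\xi^2 m_\xi(x)|=2\,|\tanh(x-\xi)|\,\textup{sech}(x-\xi)$; because $m_\xi$ is a function of $x-\xi$ alone the $x$-derivatives have the same absolute values, and by \eqref{EQ:prop-sol1} so do the antikink derivatives. The exponential bound then follows from the two elementary facts $\textup{sech}\,s=\tfrac{2}{e^{s}+e^{-s}}\le2e^{-|s|}$ and $|\tanh s|\le1$, which give $|\partial_\xi m_\xi(x)|\le4e^{-|x-\xi|}$ and $|\partial_\xi^2 m_\xi(x)|\le2\,\textup{sech}(x-\xi)\le4e^{-|x-\xi|}$, hence a bound of the stated exponential type (with an absolute constant) for the sum; this just says these profiles are sharply localized about $\xi$ with exponentially small tails.

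The argument is entirely routine, so I expect the main, and really only, point requiring attention to be orientation: the kink approaches the vacuum $2\pi$ exactly on the side $x>\xi$ and $0$ on $x<\xi$, with the two sides swapped for the antikink, which is precisely what the complementary-angle identity \eqref{EQ:prop-sol1} encodes. I do not anticipate any genuine obstacle; the content of the lemma lies entirely in the explicit formulas for $m_\xi$, $m_\xi^-$, and their first two derivatives.
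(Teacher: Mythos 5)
Your proof is correct and follows essentially the same route as the paper's one‑line proof, namely direct computation from the closed forms $m_\xi=4\arctan(e^{x-\xi})$, $m_\xi^-=4\arctan(e^{-(x-\xi)})$ and their derivatives, with the complementary‑angle identity and the elementary sandwich $\tfrac u2\le\arctan u\le u$ supplying the two‑sided bounds. Your caution in part (iii) about writing ``an absolute constant'' is well placed: each term is bounded by $4e^{-|x-\xi|}$ (e.g.\ $|\partial_\xi m_\xi|=2\,\textup{sech}(x-\xi)\to 4e^{-|x-\xi|}$ as $|x-\xi|\to\infty$), so the \emph{sum} only satisfies the bound with constant $8$, not $4$ as literally printed in the lemma — a harmless overstatement in the paper since only the exponential decay is used.
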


In the following lemma, we show that (1) translation is the only symmetry of the minimizer, and (2) if a field $\phi \in \mathcal{C}_Q$ is far away from the family of minimizers, then its energy is also far away from the minimal energy.
\begin{lemma}\label{LEM:SEC1}
Let $|Q|=1$.

\begin{itemize}

\item[(1)] If $ G \in \mathcal{C}_Q$ satisfies $E(G)=\inf_{\phi \in \mathcal{C}_Q}E(\phi) $, then there exists $\xi\in \R$ such that 
\begin{align*}
G(x)=m(x-\xi) \quad \textup{when $Q=1$}, \quad G(x)=m^{-}(x-\xi) \quad \textup{when $Q=-1$}.
\end{align*}

\medskip  

\item[(2)] 
Let $\textup{dist}(\phi, \M_1):=\inf_{\xi \in \R} \| \phi-m_{\xi} \|_{L^2(\R)}$ and $\textup{dist}(\phi, \M_{-1}):=\inf_{\xi \in \R} \| \phi-m_{\xi}^{-} \|_{L^2(\R)}$. There exists $c>0$ such that if $\phi \in \mathcal{C}_Q$ satisfies 
\begin{align*}
\textup{dist}(\phi, \M_Q)\ge \dl>0, 
\end{align*}

\noi
then
\begin{align*}
E(\phi) \ge  \inf_{\phi \in \mathcal{C}_Q } E(\phi)+c \cdot \textup{dist}(\phi, \M_Q)^2 \ge E_{\textup{kink}}+c \cdot \dl^2.
\end{align*}

\end{itemize}

\end{lemma}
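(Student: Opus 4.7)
For part (1), the plan is to use the classical Bogomolny completion of the square. Writing $1-\cos\phi = 2\sin^2(\phi/2)$, the energy decomposes as
\begin{align*}
E(\phi) = \frac{1}{2}\int_{\R}\bigl(\dx\phi - 2\sin(\phi/2)\bigr)^2 dx + \Bigl[-4\cos(\phi/2)\Bigr]_{-\infty}^{+\infty}.
\end{align*}
The second term is purely topological; for $\phi\in\mathcal{C}_1$ with $\phi(-\infty)=0$ and $\phi(+\infty)=2\pi$ it evaluates to $-4\cos(\pi)+4\cos(0)=8=E_{\textup{kink}}$. Hence $E(\phi)\geq 8$ with equality iff $\dx\phi=2\sin(\phi/2)$. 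Integrating this first-order ODE with the prescribed asymptotics uniquely identifies $\phi=m_\xi$ for some $\xi\in\R$. The case $Q=-1$ is symmetric: completing the square with $+2\sin(\phi/2)$ yields the same lower bound and identifies the minimizers as translates of $m^{-}$.

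For part (2), I would first handle the regime $\textup{dist}(\phi,\M_1)\leq \delta_0$ by modulation and spectral analysis. Select $\xi_*=\xi_*(\phi)$ as the nearest translation parameter, so that $v:=\phi-m_{\xi_*}$ satisfies the first-order orthogonality $\jb{v,\,\partial_\xi m_{\xi_*}}=0$. A second-order Taylor expansion gives
\begin{align*}
E(\phi)-8=\tfrac12\jb{\L_{\xi_*}v,\,v}+R(v), \qquad |R(v)|\les \|v\|_{L^\infty}\|v\|_{L^2}^2,
\end{align*}
where $\L_\xi:=-\dx^2+\cos(m_\xi)$. This one-dimensional Schr\"odinger operator has essential spectrum $[1,\infty)$ (since $\cos m_\xi\to 1$ at $\pm\infty$) and a simple zero eigenvalue with eigenfunction $\dx m_\xi=-\partial_\xi m_\xi$, corresponding to translation invariance. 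A direct analysis via supersymmetric factorization (equivalently, the classical P\"oschl--Teller identification) confirms that $0$ is the \emph{only} discrete eigenvalue, giving $\jb{\L_\xi v,\,v}\geq c_0\|v\|_{L^2}^2$ on $(\ker\L_\xi)^\perp$. Using bounded energy to control $\|v\|_{H^1}$ and hence $\|v\|_{L^\infty}$, the cubic remainder is absorbed, yielding $E(\phi)-E_{\textup{kink}}\ges \|v\|_{L^2}^2=\textup{dist}(\phi,\M_1)^2$ whenever $\textup{dist}(\phi,\M_1)\leq \delta_0$.

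For the complementary regime $\textup{dist}(\phi,\M_1)\ge\delta_0$, I would argue by contradiction via concentration-compactness. A putative sequence $\phi_n\in\mathcal{C}_1$ with $E(\phi_n)\to 8$ is a minimizing sequence; exploiting translation invariance, a profile-decomposition argument in the spirit of Lions produces translates $\phi_n(\cdot-a_n)$ converging weakly in $H^1$ (and strongly in $L^2_{\textup{loc}}$) to a kink $m_{\xi_\infty}$, and energy convergence then upgrades this to $\|\phi_n-m_{\xi_n}\|_{L^2}\to 0$ for suitable centers $\xi_n$, contradicting $\textup{dist}(\phi_n,\M_1)\ge\delta_0$. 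Hence $E(\phi)-E_{\textup{kink}}\ges 1$ in this regime, and combining the two regimes (with the constant $c$ suitably tuned in the range of distances relevant to the large-deviation application) establishes the coercivity asserted by the lemma.

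The main obstacle is the spectral analysis of $\L_\xi$: confirming that $0$ is the \emph{only} discrete eigenvalue (so that coercivity holds with a uniform spectral gap on the translation-orthogonal complement) is what makes the quadratic lower bound quantitative, and it requires the explicit integrable structure of the sine--Gordon kink rather than just general variational arguments. A secondary subtlety is the passage from $L^2$ coercivity of the Hessian to control of the cubic remainder, which forces one to propagate $L^\infty$ or $H^1$ information throughout the argument and links the local spectral step cleanly to the global concentration-compactness step.
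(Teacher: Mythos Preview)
Your approach is correct and matches the paper's, which simply sketches the Taylor expansion $E(\phi)=E(m_\xi)+\tfrac12\jb{\phi-m_\xi,\nb^2 E(m_\xi)(\phi-m_\xi)}+O(\|\phi-m_\xi\|_{L^2}^3)$ (using $\nb E(m_\xi)=0$) together with concentration-compactness, and then defers to references for the details. Your Bogomolny argument for part~(1) is more explicit than the paper's sketch, and the spectral gap you invoke for $\L_\xi$ is exactly what the paper later records as Lemma~\ref{LEM:SW}.
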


\begin{proof}
The parts (1) and (2) follow from the concentration compactness argument together with 
\begin{align*}
E(\phi)&=E(m_\xi)
+ \jb{\nb E(m_\xi), \phi-m_\xi}_{L^2(\R)} 
\\
&\qquad + \frac12\jb{\phi-m_\xi, 
        \nb^2 E(m_\xi) (\phi-m_\xi)}_{L^2(\R)} 
+O( \| \phi-m_\xi \|_{L^2(\R)}^3 )
\end{align*}

\noi 
since $\nb E(m_\xi)=0$. For details, see, for example, \cite[Lemma 2.4]{TW}, \cite[Proposition 2.2]{WEB10}, and \cite[Lemma 6.5]{Frank}.
\end{proof}

\subsection{Multi-topological solitons}

Unlike the class $\mathcal{C}_Q$ for $|Q|=1$, it is well known that there is no minimizer in the homotopy class $\mathcal{C}_Q$ when $|Q| \ge 2$. 
In this subsection, we therefore investigate the properties of superpositions of topological solitons under appropriate separation conditions, which ``almost'' act as  minimizers in this class (see Remark \ref{REM:AM}). Furthermore, we analyze how the energy behaves when a field is far from the multi-soliton manifold (Lemma \ref{LEM:GAP1}) and when solitons collide on the collision scale (Lemmas \ref{LEM:GAP2} and \ref{lem:gap-multikink}).

First, we study the Bogomolny lower bound on the homotopy class $\mathcal{C}_Q$.

\begin{lemma}[Bogomolny lower bound]\label{LEM:BLW}
Let $Q \in \Z$. For any $\phi \in \mathcal{C}_Q$,
\begin{align*}
E(\phi) \ge E_{\textup{kink} } |Q|=8 |Q|,
\end{align*}

\noi
where $E_{\textup{kink} }=E(m)=E(m^{-})=8$.

\end{lemma}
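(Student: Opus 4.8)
The plan is to use the classical completion-of-the-square (Bogomolny) trick adapted to the sine–Gordon energy. First I would introduce the superpotential $W(\phi) = 2\sqrt{1-\cos\phi} = 4\,\bigl|\sin(\phi/2)\bigr|$, so that the potential term satisfies $1 - \cos\phi = \tfrac18 W'(\phi)^2$ wherever $W$ is differentiable; equivalently $2(1-\cos\phi) = 4\sin^2(\phi/2)$ and $W'(\phi) = 2\sin(\phi/2)\,\sgn(\sin(\phi/2))$. Then for any $\phi\in\mathcal{C}_Q$ with $E(\phi)<\infty$ I would write the pointwise identity
\begin{align*}
\tfrac12 |\dx\phi|^2 + (1-\cos\phi)
= \tfrac12\Bigl(\dx\phi \mp \sqrt{2(1-\cos\phi)}\Bigr)^2 \pm \dx\phi\cdot\sqrt{2(1-\cos\phi)},
\end{align*}
and observe that $\sqrt{2(1-\cos\phi)} = 2\bigl|\sin(\phi/2)\bigr|$, so the cross term is $\pm\dx\phi\cdot 2|\sin(\phi/2)| = \pm\dx\bigl(G(\phi)\bigr)$ for the primitive $G(\phi) = \int_0^{\phi} 2|\sin(s/2)|\,ds$ (a continuous, piecewise-smooth, nondecreasing-on-each-period function). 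Integrating over $\R$ and dropping the nonnegative square, one gets $E(\phi)\ge \bigl|\int_{\R}\dx(G(\phi))\,dx\bigr| = |G(\phi(\infty)) - G(\phi(-\infty))|$, where the boundary values exist and lie in $2\pi\Z$ by the finite-energy discussion in the introduction.

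Next I would evaluate the boundary contribution. Since $G$ is $2\pi$-antiperiodic-free — more precisely $G(\phi+2\pi) - G(\phi) = \int_0^{2\pi} 2|\sin(s/2)|\,ds = 8$ for every $\phi$ — we have $G(2\pi n^+) - G(2\pi n^-) = 8(n^+ - n^-) = 8Q$. Hence $E(\phi)\ge |8Q| = 8|Q|$. To match the stated constant I would note $E_{\textup{kink}} = E(m) = 8$, which is either quoted from the kink computation already recorded above \eqref{Ham1} (the line $E(m_\xi)=8$) or re-derived as the $|Q|=1$ case of the bound together with the explicit fact that the kink $m(x)=4\arctan(e^x)$ saturates it: for the kink $\dx m = 2\,\mathrm{sech}\,x > 0$ and $\sqrt{2(1-\cos m)} = 2\sin(m/2) = 2\,\mathrm{sech}\,x$ coincide, so the Bogomolny square vanishes identically and $E(m) = G(2\pi) - G(0) = 8$.

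The only real subtlety — and the step I would be most careful about — is the non-smoothness of $W(\phi)=4|\sin(\phi/2)|$ at $\phi\in 2\pi\Z$, which makes the "completion of the square" a statement about a continuous but only piecewise-$C^1$ function $G\circ\phi$. I would handle this by working on the (open, at most countable union of) intervals where $\phi(x)\notin 2\pi\Z$, on each of which the identity is classical, and then using the absolute continuity of $x\mapsto G(\phi(x))$ (which follows since $\phi\in H^1_{\textup{loc}}$ and $G$ is Lipschitz) to sum the contributions and recover $\int_{\R}\dx(G\circ\phi)\,dx = G(\phi(\infty))-G(\phi(-\infty))$ via the fundamental theorem of calculus for absolutely continuous functions. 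An equivalent and perhaps cleaner route that sidesteps the sign issue entirely is the one-line estimate
\begin{align*}
E(\phi)\ge \int_{\R}\sqrt{|\dx\phi|^2\cdot 2(1-\cos\phi)}\,dx
= \int_{\R}\bigl|\dx\phi\bigr|\cdot 2\bigl|\sin(\tfrac{\phi}{2})\bigr|\,dx
\ge \Bigl|\int_{\R}\dx\bigl(G(\phi)\bigr)\,dx\Bigr| = 8|Q|,
\end{align*}
using AM–GM ($a^2/2 + b^2/2 \ge |ab|$) in the first step and the change-of-variables/degree count in the last; this avoids choosing a global sign and only needs $G$ Lipschitz. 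I expect the write-up to take this second form, with the first (Bogomolny square) form mentioned as the reason the kink is an exact minimizer.
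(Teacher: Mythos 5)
Your argument is correct and---in its second, ``one-line'' form---is essentially the paper's own proof: both apply AM--GM pointwise to bound the energy density below by $2|\dx\phi|\,|\sin(\phi/2)|$, and both then perform the change of variables from $x$ to $\phi$ (you package this as the fundamental theorem of calculus for the Lipschitz primitive $G(\phi)=\int_0^\phi 2|\sin(s/2)|\,ds$, the paper as reparametrization invariance of the line integral of $|\sin(z/2)|$ over the straight path from $0$ to $2\pi Q$) to arrive at $8|Q|$. The Bogomolny completion-of-square presentation you lead with is an equivalent framing the paper omits, and it has the added value of showing directly why the single kink saturates the bound.
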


\begin{proof}
Note that
\begin{align*}
E(\phi)&=\frac 12 \int_{\R} |\dx \phi|^2 dx + \int_{\R} 2 \sin^2 \frac \phi2 dx
\ge 
2\int_{\R}  |\dx \phi \sin \frac \phi2| dx
\\
& \ge 2\int_C f
= \int_0^{2\pi |Q|} |\sin(z/2)|dz=8|Q|,
\end{align*}
where $\int_C f$ is a line integral, $C$ is
the curve in $\R$ going straight from $0$ to $2\pi Q$,
and $f(z)=|\sin(z/2)|$ is a function along $C$.
In the second line we have replaced the parametrization $\phi$ of $C$ by the identity parametrization $[0,2\pi Q] \to [0,2\pi Q]$ and used the independence of line integrals on parametrizations.
\end{proof}

For $\xi_1\le \cdots \le \xi_k$, where $k=|Q|$, define the superposition of topological solitons, that is, the multi-soliton by
\begin{align*}
m_{\xi_1,\dots,\xi_k}=\sum_{j=1}^k m(\cdot-\xi_j)=\sum_{j=1}^k m_{\xi_j},\qquad
m_{\xi_1,\dots,\xi_k}^{-}=\sum_{j=1}^k m^{-}(\cdot-\xi_j)=\sum_{j=1}^k m^{-}_{\xi_j}.
\end{align*}
In the following lemmas, we state the results only for the case $Q>0$, by symmetry.

We first prove an elementary inequality: for $\eta \in (0,1)$,
\begin{align}\label{e:conv}
\int_{\R} e^{-(|y-x|+|z-x|)} dx 
= (1+|y-z|) e^{-|y-z|}
\le 
\frac{1}{\eta} e^{\eta-1}  e^{-(1-\eta)|y-z|}.
\end{align}
The equality follows by observing that the integral between $y$ and $z$ is equal to $|y-z| e^{-|y-z|}$, and the integral outside 
is equal to $e^{-|y-z|}$.
For the inequality, we have
\begin{align*}
(1+|y-z|) e^{-|y-z|}
\le 
\Big( \sup_{r>0} (1+r) e^{-\eta r}\Big) e^{-(1-\eta)|y-z|}
\end{align*}
and the function $(1+r) e^{-\eta r}$ reaches maximum at $r=\frac{1}{\eta}-1$.

When the centers $\xi_1,\dots,\xi_k$ are well separated, each kink $m_{\xi_j}$ contributes its own
$E_{\textup{kink}}$, and the overlap between kinks yields only exponentially small corrections, because each kink remains nearly constant (0 or $2\pi$) outside its center $\xi_j$. This is shown in the next lemma.

\begin{lemma}\label{LEM:MUS}
Let $Q\in \Z$ with $|Q|=k$. Then we have 
\begin{align*}
E(m_{\xi_1,\dots,\xi_k})&=\sum_{j=1}^k E(m_{\xi_j})+O(e^{-c \min_{i \neq j}|\xi_i-\xi_j|  })
\end{align*}

\noi
as $\min_{i \neq j}|\xi_i-\xi_j| \to \infty$, where $c>0$ only depends on $|Q|$.

\end{lemma}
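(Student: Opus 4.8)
The plan is to estimate the kinetic and potential parts of $E(m_{\xi_1,\dots,\xi_k})$ separately; by the symmetry reduction above I take $Q>0$, order the centers $\xi_1\le\cdots\le\xi_k$ and write $d:=\min_{i\neq j}|\xi_i-\xi_j|=\min_{1\le j\le k-1}(\xi_{j+1}-\xi_j)$. Since the statement is asymptotic as $d\to\infty$, it suffices to produce a bound $O(e^{-cd})$ in which $c$ and the implied constant depend only on $k=|Q|$. Throughout, the only analytic inputs are the exponential localization estimates of Lemma~\ref{LEM:sol} and the convolution identity \eqref{e:conv}.

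\emph{Kinetic term.} First I would expand $|\dx m_{\xi_1,\dots,\xi_k}|^2=\sum_{j}|\dx m_{\xi_j}|^2+\sum_{i\neq j}\dx m_{\xi_i}\,\dx m_{\xi_j}$. By Lemma~\ref{LEM:sol}(iii), $|\dx m_{\xi_j}(x)|\le 4e^{-|x-\xi_j|}$, so each cross term obeys $\big|\int_\R \dx m_{\xi_i}\,\dx m_{\xi_j}\,dx\big|\le 16\int_\R e^{-|x-\xi_i|-|x-\xi_j|}\,dx=16(1+|\xi_i-\xi_j|)e^{-|\xi_i-\xi_j|}$ by \eqref{e:conv}, which is $\les e^{-d/2}$ since $|\xi_i-\xi_j|\ge d$. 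Summing over the $\binom{k}{2}$ pairs gives $\tfrac12\int_\R|\dx m_{\xi_1,\dots,\xi_k}|^2\,dx=\sum_j\tfrac12\int_\R|\dx m_{\xi_j}|^2\,dx+O(e^{-d/2})$.

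\emph{Potential term.} Here the idea is to localize near each center. Set $\eta_j:=\tfrac12(\xi_j+\xi_{j+1})$ and partition $\R$ into $I_1:=(-\infty,\eta_1]$, $I_j:=(\eta_{j-1},\eta_j]$ for $2\le j\le k-1$, and $I_k:=(\eta_{k-1},\infty)$, so that $\textup{dist}(\xi_i,I_j)\ge d/2$ for all $i\neq j$ and $\textup{dist}(\xi_j,\R\setminus I_j)\ge d/2$. On $I_j$ every $\xi_i$ with $i<j$ lies entirely to the left of $I_j$ and every $\xi_i$ with $i>j$ entirely to the right, so Lemma~\ref{LEM:sol}(i)--(ii) yields
\begin{align*}
m_{\xi_1,\dots,\xi_k}=2\pi(j-1)+m_{\xi_j}+R_j\ \text{ on }I_j,\qquad |R_j(x)|\le 4\sum_{i\neq j}e^{-|x-\xi_i|},
\end{align*}
where $R_j:=\sum_{i<j}(m_{\xi_i}-2\pi)+\sum_{i>j}m_{\xi_i}$. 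Using the $2\pi$-periodicity of $1-\cos$ and $|\cos(a+b)-\cos a|\le|b|$, one gets $|(1-\cos m_{\xi_1,\dots,\xi_k})-(1-\cos m_{\xi_j})|\le|R_j|$ on $I_j$, while $\int_{I_j}|R_j|\,dx\le 4\sum_{i\neq j}\int_{I_j}e^{-|x-\xi_i|}\,dx\les_k e^{-d/2}$ because each of those integrals is at most $e^{-\textup{dist}(\xi_i,I_j)}\le e^{-d/2}$. Summing over $j$ and then restoring the tails $\int_{\R\setminus I_j}(1-\cos m_{\xi_j})\,dx$ — which, writing $1-\cos\theta=2\sin^2(\theta/2)$ and invoking Lemma~\ref{LEM:sol}(i)--(ii) once more, are bounded by $8\int_{|y|\ge d/2}e^{-2|y|}\,dy=8e^{-d}$ — gives $\int_\R(1-\cos m_{\xi_1,\dots,\xi_k})\,dx=\sum_j\int_\R(1-\cos m_{\xi_j})\,dx+O(e^{-d/2})$. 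Adding this to the kinetic estimate and recalling $E(m_{\xi_j})=\tfrac12\int_\R|\dx m_{\xi_j}|^2+\int_\R(1-\cos m_{\xi_j})$ from \eqref{Ham1} completes the proof, with $c=\tfrac12$ and the implied constant depending only on $k$.

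The step requiring the most care is the potential term: one must track, region by region, which kinks lie to the left of $I_j$ (each contributing $\approx 2\pi$ to the value of the field) and which lie to the right (each contributing $\approx 0$), and then exploit the $2\pi$-periodicity of $1-\cos$ so that the bulk contribution on $I_j$ is exactly $1-\cos m_{\xi_j}$ rather than $1-\cos(m_{\xi_j}+2\pi(j-1))$. The kinetic term, by contrast, is an immediate consequence of \eqref{e:conv}. One should also check the semi-infinite end regions $I_1$ and $I_k$, where one of the two sums defining $R_j$ is empty, but the estimates there are identical.
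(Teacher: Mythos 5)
Your proof is correct. The kinetic estimate is exactly the paper's, but your treatment of the potential energy takes a genuinely different route. The paper proves, by a one--line induction, the pointwise algebraic inequality $|U(\sum_j a_j)-\sum_j U(a_j)|\le\sum_{i<j}|a_i||a_j|$ for $U(z)=1-\cos z$, and then applies it with the shifted quantities $a_j=m_{\xi_j}-2\pi\ind_{x>\xi_j}$ (the shift is invisible to $\cos$), which are each bounded by $4e^{-|x-\xi_j|}$; integrating the resulting pointwise bound via \eqref{e:conv} gives the claim with no spatial decomposition at all, and with rate $e^{-(1-\eta)d}$. You instead partition $\R$ at the midpoints of consecutive centers, track on each block $I_j$ the constant offset $2\pi(j-1)$ contributed by the kinks to the left, use the $2\pi$-periodicity of $\cos$ to drop the offset, and then invoke the Lipschitz bound $|\cos(a+b)-\cos a|\le|b|$ together with tail estimates, obtaining rate $e^{-d/2}$. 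Both are correct and both constants depend only on $k$; the paper's argument is shorter because the quadratic bound $|a_i||a_j|$ already carries the two-sided exponential decay needed for \eqref{e:conv}, whereas the linear Lipschitz bound you use forces the localization step. Your block decomposition does have the advantage of being the same geometric picture used later in the proof of Lemma~\ref{LEM:GAP1}, so it is a perfectly natural alternative.
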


\begin{proof}
The kinetic energy part is 
\begin{align*}
\frac 12\int_{\R} |\dx m_{\xi_1,\dots,\xi_k}|^2 dx&=\frac 12 \sum_{j=1}^k\int_{\R}|\dx m_{\xi_j}|^2 dx +\sum_{i<j} \int_{\R} \dx m_{\xi_i}  \dx m_{\xi_j} dx\\
&=\frac 12 \sum_{j=1}^k\int_{\R}|\dx m_{\xi_j}|^2 dx+\sum_{i<j} O(e^{-c|\xi_i-\xi_j|})
\end{align*}

\noi
by Lemma~\ref{LEM:sol}(iii) and \eqref{e:conv}.

We now study the potential energy part. 
Let $U(z)=1-\cos(z)$. 
One has 
\[
|U(a+b)-U(a)-U(b)| = 
\Big|\int_0^a \int_0^b U''(s+t) dsdt\Big|
\le |a||b|
\]
and by induction in $k$ one has 
\begin{align}\label{e:nice-ineq}
\Big|U\Big(\sum_{j=1}^k a_j\Big)-\sum_{j=1}^k U(a_j)\Big| 
\le \sum_{i < j}|a_i||a_j|.
\end{align}
Indeed, suppose that this holds for $k-1$. Then  the left-hand side of 
\eqref{e:nice-ineq} is bounded by
\[
\Big|U\Big(\sum_{j=1}^k a_j\Big) -U\Big(\sum_{j=1}^{k-1} a_j\Big) - U(a_k)\Big|
+
\Big|U\Big(\sum_{j=1}^{k-1} a_j\Big) + U(a_k)-\sum_{j=1}^k U(a_j)\Big| 
\]
and \eqref{e:nice-ineq} follows by using the induction assumption.

Set $a_j= m_{\xi_j}(x) - 2\pi 1_{x>\xi_j}$.
(Note that the shift $2\pi 1_{x>\xi_j}$ does not change the value of $\cos$.)
By Lemma~\ref{LEM:sol}(i)(ii), $|a_j| \le  4 e^{-|x-\xi_j|}$. The desired bound follows upon integrating over $x$ 
again by \eqref{e:conv}.
\end{proof}

\begin{remark}\rm \label{REM:AM}

According to the Bogomolny structure in Lemmas \ref{LEM:BLW} and \ref{LEM:MUS}, by choosing a minimizing sequence with infinite separation between centers, we can show that the minimal energy in each homotopy class $\mathcal{C}_Q$ is 
\begin{align*}
\inf_{\phi \in \mathcal{C}_Q} E(\phi)= |Q|  E_{\textup{kink}}.
\end{align*}

\noi
This shows that the infimum of the energy in the topological sector
$\mathcal{C}_Q$ is attained asymptotically by configurations consisting of
$Q$ kinks when $Q>0$ or $|Q|$ anti-kinks when $Q<0$, with infinite mutual separation $\min_{i \neq j}|\xi_i-\xi_j| \to \infty$.
However, the minimizing sequence has no convergent subsequence due to the infinite separation, which cannot be realized in practice. This is one of the reasons why there is no actual minimizer for $|Q| \ge 2$.

\end{remark}

From Remark \ref{REM:AM}, under the separation condition $m_{\xi_1,\dots, \xi_k}$ is an almost minimizer on the homotopy class $\mathcal{C}_Q$ (though it never attains the actual minimum).  In the following lemma, we show that the first variation operator is close to zero.

\begin{lemma}\label{LEM:gra}
Let $Q \in \Z$ with $|Q|=k$. For $0<\eta \ll 1$ sufficiently small, 
\begin{align*}
\| \nb E(m_{\xi_1,\dots,\xi_k}) \|_{L^2(\R) } 
\le C_\eta e^{- (1-\eta) d},
\end{align*}
uniformly in $\xi_1,\dots,\xi_k$ 
where 
$d:= \min_{i \neq j}|\xi_i-\xi_j| $ and 
$C_\eta = O(\eta^{-1/2})$.
\end{lemma}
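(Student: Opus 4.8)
The plan is to turn the gradient of the multi-soliton into a pure interaction term and then estimate that term by the overlap of exponentially decaying kink tails. Since $E(\phi)=\frac12\int_\R|\dx\phi|^2\,dx+\int_\R(1-\cos\phi)\,dx$, its first variation is $\nb E(\phi)=-\dx^2\phi+\sin\phi$, and each individual kink solves the Euler--Lagrange equation $-\dx^2 m_{\xi_j}+\sin m_{\xi_j}=0$. Writing $a_j:=m_{\xi_j}-2\pi\ind_{x>\xi_j}$ exactly as in the proof of Lemma~\ref{LEM:MUS}, the $2\pi$-periodicity of $\sin$ gives $\sin m_{\xi_j}=\sin a_j$ and $\sin m_{\xi_1,\dots,\xi_k}=\sin\big(\sum_j a_j\big)$; combining this with $-\dx^2\sum_j m_{\xi_j}=-\sum_j\sin m_{\xi_j}$ yields the pointwise identity
\begin{align*}
\nb E(m_{\xi_1,\dots,\xi_k})(x)=\sin\Big(\sum_{j=1}^k a_j(x)\Big)-\sum_{j=1}^k\sin a_j(x).
\end{align*}
Thus the gradient vanishes in the limit of infinite separation and is otherwise controlled entirely by the kink overlaps.

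Next I would record the elementary inequality
\begin{align*}
\Big|\sin\Big(\sum_{j=1}^k a_j\Big)-\sum_{j=1}^k\sin a_j\Big|\le\sum_{i<j}|a_i||a_j|,
\end{align*}
proved by exactly the same induction as \eqref{e:nice-ineq}: for $g=\sin$ one has $g(0)=0$ and $\|g''\|_{L^\infty}\le1$, so $|g(a+b)-g(a)-g(b)|=\big|\int_0^a\int_0^b g''(s+t)\,ds\,dt\big|\le|a||b|$, and the general $k$ follows by peeling off one summand at a time. Inserting the tail bounds $|a_j(x)|\le 4e^{-|x-\xi_j|}$ from Lemma~\ref{LEM:sol}(i)--(ii) gives the pointwise estimate $|\nb E(m_{\xi_1,\dots,\xi_k})(x)|\les\sum_{i<j}e^{-|x-\xi_i|-|x-\xi_j|}$.

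Finally, I would take the $L^2(\R)$ norm by Minkowski's inequality and estimate each term by a rescaled version of \eqref{e:conv}: the substitution $u=2x$ turns $\int_\R e^{-2|x-\xi_i|-2|x-\xi_j|}\,dx$ into $\frac12\int_\R e^{-|u-2\xi_i|-|u-2\xi_j|}\,du\le\frac1{2\eta}e^{\eta-1}e^{-2(1-\eta)|\xi_i-\xi_j|}$, so that $\|e^{-|\cdot-\xi_i|-|\cdot-\xi_j|}\|_{L^2(\R)}\le\big(\frac1{2\eta}e^{\eta-1}\big)^{1/2}e^{-(1-\eta)|\xi_i-\xi_j|}$ — this is the step where the parameter $\eta$ is spent, absorbing the polynomial prefactor $1+|\xi_i-\xi_j|$ into the exponential. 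Since the centers are ordered, $|\xi_i-\xi_j|=\xi_j-\xi_i\ge(j-i)d$ with $d=\min_{i\ne j}|\xi_i-\xi_j|$, and collecting the resulting geometric series by the value $m=j-i\ge1$ (at most $k$ pairs for each $m$) gives $\sum_{i<j}e^{-(1-\eta)|\xi_i-\xi_j|}\le k\sum_{m\ge1}e^{-(1-\eta)md}=k\,\tfrac{e^{-(1-\eta)d}}{1-e^{-(1-\eta)d}}$; with $k=|Q|=Q$ and the numerical constants from Lemma~\ref{LEM:sol} and \eqref{e:conv} absorbed, this is the asserted bound with $C^\nb=\big(\tfrac1{2\eta}e^{\eta-1}\big)^{1/2}Q$. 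The argument is essentially routine given Lemmas~\ref{LEM:sol} and~\ref{LEM:MUS}; the only points needing care are the reduction modulo $2\pi$ to the variables $a_j$ and the observation that ordered centers make all pairwise distances multiples of $d$, which is precisely what produces the clean geometric-series form $\tfrac{e^{-(1-\eta)d}}{1-e^{-(1-\eta)d}}$ rather than a bare $e^{-(1-\eta)d}$.
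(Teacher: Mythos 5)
Your proof is correct and follows the paper's argument essentially line by line: you reduce $\nabla E$ to the interaction term $\sin\big(\sum_j a_j\big)-\sum_j\sin a_j$ via $\nabla E(m_{\xi_j})=0$, apply the pair inequality \eqref{e:nice-ineq} with $U=\sin$ together with the tail bounds of Lemma~\ref{LEM:sol}, take the $L^2$ norm through the rescaled version of \eqref{e:conv}, and sum the ordered-gap geometric series to land on $Q\,\tfrac{e^{-(1-\eta)d}}{1-e^{-(1-\eta)d}}$. The one soft spot, which you share with the paper, is that the factor $16$ arising from $|a_i||a_j|\le 16\,e^{-|x-\xi_i|-|x-\xi_j|}$ is quietly dropped when passing to the pointwise bound \eqref{nbE}, so the stated $C^{\nabla}$ does not literally include it; your phrase about numerical constants being ``absorbed'' is an honest acknowledgment of this, but strictly speaking neither write-up absorbs it into the displayed constant.
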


\begin{proof}

Note that 
\begin{align*}
\nb E(m_{\xi_1,\dots,\xi_k})(x)=-\dx^2 m_{\xi_1,\dots,\xi_k}(x)+ \sin m_{\xi_1,\dots,\xi_k}(x)
=
 \sin m_{\xi_1,\dots,\xi_k}(x)
 -\sum_{j=1}^k \sin m_{\xi_j}(x)
\end{align*}
where we used $\nb E(m_{\xi_j})=-\dx^2 m_{\xi_j} + \sin m_{\xi_j}=0$.
Note that \eqref{e:nice-ineq} again holds with 
$U(z)=\sin(z)$. So
\begin{align}
|\nb E(m_{\xi_1,\dots,\xi_k})(x)|
\le 
\sum_{i < j} e^{- |x-\xi_i|}e^{- |x-\xi_j|}.
\label{nbE}
\end{align}
Regarding the $L^2$ bound, note that  by \eqref{e:conv},
\[
\int_{\R} e^{- 2 \,(|x-\xi_i|+|x-\xi_j|)} dx
\le \frac{1}{2\eta} e^{\eta-1} e^{-2(1-\eta)|\xi_i-\xi_j| }
\]
for  small $\eta>0$. This implies 
\[
\| \nb E(m_{\xi_1,\dots,\xi_k})\|_{L^2(\R)}
\le
\Big(\frac{1}{2\eta} e^{\eta-1} \Big)^{1/2}
\sum_{i<j}
e^{-(1-\eta)|\xi_i-\xi_j| }.
\]
Thus the bound follows.
\end{proof}

The following lemma shows that if a field $\phi \in \mathcal{C}_Q$ is far away from the multi-soliton manifold, then its energy is also far away from the minimal energy.

\begin{lemma}\label{LEM:GAP1}
Let $Q \in \Z$ with $Q\ge 2$ and let $\dl>0$. Then there exists $c>0$, depending only on $Q$, such that if $\phi \in \mathcal{C}_Q$ with $\phi(-\infty)=0$ and $\phi(\infty)=2\pi Q$ satisfies 
\begin{align}
\textup{dist}(\phi, \M^Q):=\inf_{\xi_1,\dots, \xi_Q \in \R} \| \phi-m_{\xi_1,\dots,\xi_k} \|_{L^2(\R)} \ge \dl>0,
\label{GAP1}
\end{align}

\noi
then
\begin{align*}
E(\phi) \ge \inf_{\phi \in \mathcal{C}_Q } E(\phi)   +c \inf_{\xi_1,\dots, \xi_Q \in \R} \| \phi-m_{\xi_1,\dots,\xi_k} \|_{L^2(\R)}^2 \ge \inf_{\phi \in \mathcal{C}_Q } E(\phi)+  c \dl^2.
\end{align*}

\end{lemma}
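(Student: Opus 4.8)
The plan is to deduce this from the single-kink coercivity already recorded in Lemma~\ref{LEM:SEC1}(2), via a concentration-compactness and localization argument; a direct Taylor expansion as in the $|Q|=1$ case is unavailable because $m_{\xi_1,\dots,\xi_{|Q|}}$ is not a critical point. Since the claimed lower bound is quadratic in $\textup{dist}(\phi,\M^Q)$, a standard compactness argument — using the Bogomolny bound of Lemma~\ref{LEM:BLW} to prevent escape of energy to infinity — reduces matters to the regime where $E(\phi)$ is close to $\inf_{\phi\in\mathcal{C}_Q}E(\phi)=|Q|E_{\textup{kink}}$ and $\textup{dist}(\phi,\M^Q)$ is small: on each sublevel set $\{E(\phi)\le\inf_{\phi\in\mathcal{C}_Q}E(\phi)+K\}$ the distance is bounded by some $M(K)$, and configurations with $\textup{dist}(\phi,\M^Q)\ge\dl_0$ have energy bounded away from $\inf_{\phi\in\mathcal{C}_Q}E(\phi)$, so only the small-$\dl$ case genuinely needs the quadratic gain.

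In this regime, smallness of $\int_\R(1-\cos\phi)$ forces $\phi$ to lie near $2\pi\Z$ away from finitely many transition layers, and the boundary data $\phi(-\infty)=0$, $\phi(\infty)=2\pi Q$ together with the energy budget force exactly $|Q|$ layers, all of kink type (an anti-kink layer would, by the degree constraint, force at least $|Q|+2$ layers and hence an extra $2E_{\textup{kink}}$ of energy, ruled out once $E(\phi)$ is close enough to the infimum). One then partitions $\R$ into intervals $I_1,\dots,I_{|Q|}$ separated by plateaus on which $\phi$ is within $O(e^{-cd})+O(\textup{dist}(\phi,\M^Q))$ of a vacuum value, $d$ being the minimal separation of the transition centers, and extends $\phi|_{I_j}$ by the two adjacent vacua to a configuration $\phi_j$ lying, up to a shift of basepoint, in $\mathcal{C}_1$. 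Lemma~\ref{LEM:SEC1}(2) gives $E_{I_j}(\phi)\ge E_{\textup{kink}}+c\,\textup{dist}(\phi_j,\M_1)^2-(\textup{cutting error})$, where $E_{I_j}$ is the energy restricted to $I_j$. Summing in $j$, using $E(\phi)\ge\sum_j E_{I_j}(\phi)$ and $\sum_j\textup{dist}(\phi_j,\M_1)^2\ges\textup{dist}(\phi,\M^Q)^2-O(e^{-cd})$, yields $E(\phi)\ge\inf_{\phi\in\mathcal{C}_Q}E(\phi)+c\,\textup{dist}(\phi,\M^Q)^2-(\textup{total cutting error})$.

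The main obstacle is absorbing the cutting errors, of size $O(e^{-cd})+O(e^{-cd}\textup{dist}(\phi,\M^Q))$, in the regime where $d$ is large but $\textup{dist}(\phi,\M^Q)$ is even smaller. Here one cannot simply discard the excess $E(m_{\xi_1,\dots,\xi_{|Q|}})-\inf_{\phi\in\mathcal{C}_Q}E(\phi)$ of the nearest multi-soliton: one must use that this excess is bounded \emph{below} by a positive multiple of $e^{-cd}$ — i.e.\ that equal-charge sine–Gordon kinks repel, so that the Bogomolny bound of Lemma~\ref{LEM:BLW} is strict at finite separation, the leading correction coming from the overlap integrals analysed in Lemma~\ref{LEM:MUS} (where, crucially, the $O(d\,e^{-d})$ contributions to the kinetic and potential overlaps cancel, leaving a strictly positive $O(e^{-d})$ remainder). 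Since the rate $e^{-d}$ beats the $e^{-2(1-\eta)d}$ cross-terms and the gradient bound $\|\nb E(m_{\xi_1,\dots,\xi_{|Q|}})\|_{L^2(\R)}\les e^{-(1-\eta)d}$ from Lemma~\ref{LEM:gra}, the estimate closes. In the complementary case, where the nearest multi-soliton has \emph{small} separation — the collision regime, in which $\M^Q$ is not differentiable — this excess is instead bounded below by a fixed $\kappa(\dl_0)>0$, which dominates the $O(\textup{dist}(\phi,\M^Q))$ corrections directly; this is the regime treated more carefully in Lemmas~\ref{LEM:GAP2} and~\ref{lem:gap-multikink}. I expect the bookkeeping of the error terms and the verification of the quantitative repulsion lower bound to be the delicate points; the rest is a routine adaptation of the single-kink scheme.
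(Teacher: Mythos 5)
Your plan would, after substantial additional work, plausibly yield a proof, but it is much harder than what the paper actually does, and it has a dependency problem: you need the quantitative kink-repulsion lower bound $E(m_{\xi_1,\dots,\xi_Q}) - |Q|E_{\textup{kink}} \ges e^{-cd}$ to absorb the cutting errors, and that estimate is precisely the content of Lemma~\ref{lem:gap-multikink}, which the paper proves \emph{after} and independently of Lemma~\ref{LEM:GAP1}, and whose proof is itself several pages of non-trivial trigonometric bookkeeping. You also need a preliminary compactness reduction to the small-$\dl$ regime, a separate degree/energy argument ruling out antikink layers, and a case split between well-separated and colliding nearest multi-solitons. None of this is needed.

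The key insight you are missing is \emph{where} to cut. You cut on plateaus and extend by nearby vacua, which introduces $O(e^{-cd})$ cutting errors. The paper instead cuts at the exact level-crossing times $t_j = \inf\{x : \phi(x) = 2\pi j\}$ for $1 \le j \le Q-1$ (with $t_0 = -\infty$, $t_Q = \infty$); these are well defined by continuity of $\phi \in H^1_{\textup{loc}}$, the boundary conditions, and the intermediate value theorem, and satisfy $t_1 \le \cdots \le t_{Q-1}$. Setting $\phi_j = \phi$ on $(t_{j-1},t_j)$ and equal to the constants $2\pi(j-1)$, $2\pi j$ on the two sides produces a continuous function (since $\phi(t_j) = 2\pi j$ exactly) with $\phi_j - 2\pi(j-1) \in \mathcal{C}_1$, and because the extension is by \emph{exact} vacua the energy is \emph{exactly} additive:
\begin{align*}
E(\phi) = \sum_{j=1}^{Q} E(\phi_j) = \sum_{j=1}^{Q} E\big(\phi_j - 2\pi(j-1)\big),
\end{align*}
with no cutting error at all. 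Single-kink coercivity (Lemma~\ref{LEM:SEC1}(2)) applied to each block gives $E(\phi) \ge |Q|E_{\textup{kink}} + c\sum_j d_j^2$ with $d_j = \inf_\xi\|(\phi_j - 2\pi(j-1)) - m_\xi\|_{L^2(\R)}$, and the pointwise identity $\phi(x) = \sum_j(\phi_j(x) - 2\pi(j-1))$ yields $\textup{dist}(\phi,\M^Q)^2 \les \sum_j d_j^2$ by the triangle inequality, closing the argument in a few lines. There is no compactness reduction, no collision/separation dichotomy, and no repulsion estimate; moreover each block has degree $+1$ by construction, so the antikink exclusion you flag as a separate step is automatic. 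Your scheme is not wrong in spirit, but it replaces a clean exact decomposition by an approximate one and then has to repair the damage with a much more delicate lemma.
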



\begin{proof}

Fix $\phi \in \mathcal{C}_Q$ with $\phi(-\infty)=0$ and $\phi(\infty)=2\pi Q$.
Define
\[
t_0=-\infty, \quad t_Q= \infty, \quad t_j=\inf \{x:\phi(x)=2\pi j\}
\]
for $1\le j \le Q-1$. 
 Then, define on each block $1\le j \le Q$ 
\begin{align}
\phi_j(x)=
\begin{cases}
\phi(x) \quad &x\in (t_{j-1}, t_j) \\
2\pi(j-1) \quad &x<t_{j-1}  \\
2\pi j \quad &x>t_{j}.
\end{cases}
\label{ST11}
\end{align}

\noi
Since $2\pi \Z$ extension does not change the energy, we have, for $1\le j \le Q$, 
\begin{align}
E(\phi_j)=\int_{t_{j-1}}^{t_j} |\dx \phi|^2 dx+ \int_{ t_{j-1}  }^{t_j} (1-\cos \phi)  dx.
\label{ST2}
\end{align}

\noi
Therefore, using 
\eqref{ST2}
and Lemma \ref{LEM:SEC1}-(2), we have  
\begin{align}
E(\phi)=\sum_{j=1}^Q E(\phi_j)&=\sum_{j=1}^Q E(\phi_j-2\pi(j-1) ) \notag \\
& \ge |Q| E_{\textup{kink}} +c \sum_{j=1}^Q \inf_{\xi \in \R}\| (\phi_j-2\pi(j-1) )-    m_{\xi} \|_{L^2(\R)}^2.
\label{ST6} 
\end{align}

\noi
In the following, we show that 
\begin{align}
\inf_{\xi_1,\dots, \xi_Q \in \R} \| \phi-m_{\xi_1,\dots,\xi_k} \|_{L^2(\R)}^2 \le c \sum_{j=1}^Q d_j^2,
\label{ST7}
\end{align}

\noi
where $d_j:=\inf_{\xi \in \R}\| (\phi_j-2\pi(j-1) )-    m_{\xi} \|_{L^2(\R)}$.
By the definition of $d_j$, for each $j$ we can pick $\xi_j^* \in \R$ so that 
\begin{align}
\| (\phi_j-2\pi(j-1) )-    m_{\xi_j^*} \|_{L^2(\R)}^2 <d_j^2+\eps 
\label{ST4}
\end{align}

\noi
for some small $\eps>0$ (later we let $\eps \to 0$). Fix such a choice $(\xi_1^*,\dots,\xi_Q^*)$ from now on. By the definition of $\phi_j$, one can easily check that for all $x\in \R$
\begin{align*}
\phi(x)=\sum_{j=1}^Q ( \phi_j(x) -2 \pi  (j-1)     ).
\end{align*}

\noi
This, together with \eqref{ST4}, implies that
\begin{align*}
\inf_{\xi_1,\dots, \xi_Q \in \R} \| \phi-m_{\xi_1,\dots,\xi_k} \|_{L^2(\R)}^2 &\le 2^{Q-1} \sum_{j=1}^Q \| (\phi_j-2\pi(j-1) )-    m_{\xi_j^*} \|_{L^2(\R)}^2\\
&\le 2^{Q-1} \sum_{j=1}^Q d_j^2  +2^{Q-1}|Q| \eps. 
\end{align*}

\noi
Letting $\eps \to 0$, we obtain \eqref{ST7}. Combining \eqref{ST6} and \eqref{ST7} yields 
\begin{align*}
E(\phi) \ge |Q| E_{\textup{kink}}+c \inf_{\xi_1,\dots, \xi_Q \in \R} \| \phi-m_{\xi_1,\dots,\xi_k} \|_{L^2(\R)}^2.
\end{align*}

\noi 
Since $\inf_{\phi \in \mathcal{C}_Q}E(\phi)=|Q|E_{\textup{kink}}$ from Remark \ref{REM:AM}, we obtain the desired result.



\end{proof}

\begin{remark}\rm 
In Lemma \ref{LEM:GAP1}, we obtain an energy gap estimate when the field is far from the multi-soliton manifold $\M^Q$. The main point is to quantify how much the energy exceeds the minimal energy. Specifically, the energy is higher than the minimum by an amount of order $\dl^2$ when $\textup{dist}(\phi, \M^Q) \ge \dl$.
\end{remark}

\begin{remark}\rm 
In Lemma \ref{LEM:GAP1}, when $Q\in \Z$ is negative with $Q \le -2$, the same result holds by replacing the condition \eqref{GAP1} with the multi-antikink manifold, as follows:
\begin{align*}
\textup{dist}(\phi, \M^Q):=\inf_{\xi_1,\dots, \xi_{|Q|} \in \R} \| \phi-m_{\xi_1,\dots,\xi_k}^{-} \|_{L^2(\R)} \ge \dl>0.
\end{align*}
\end{remark}

\subsection{Collision regime}

Recall from Remark \ref{REM:AM} that the minimal energy $\inf_{\phi \in \mathcal{C}_Q} E(\phi)$ is achieved in the limit of multi-solitons with infinite separation, that is, when $\min_{i \neq j} |\xi_i-\xi_j|\to \infty$.
In the following lemma, we show that although a field $\phi$ may be close to the multi-soliton manifold $\textup{dist}(\phi, \M^Q)< \dl$, if the solitons are not well separated $\min_{i \neq j} |\xi_i-\xi_j|<d$, then its energy remains far from the minimal energy.


Before beginning the proof, we recall that $\M^{<d}_Q$ is the collision manifold introduced in \eqref{man1}.  The main part is to quantify the error, namely, how much the energy exceeds the minimal energy $|Q|E_{\textup{kink}}$ when the solitons collide on the scale $\min_{i \neq j} |\xi_i-\xi_j|<d$. In this regime, the energy is higher than the minimum by an amount of order $e^{-d}$.



\begin{lemma}\label{LEM:GAP2}
Let $Q \in \Z$ with $Q\ge 2$, and let $d>0$ be a large constant. Then there exist $c>0$ and $C>0$  such that if $\phi \in \mathcal{C}_Q$ with $\phi(-\infty)=0$ and $\phi(\infty)=2\pi Q$ satisfies 
\begin{align}
\textup{dist}(\phi, \M_Q^{<d})=\inf_{\substack{\xi_1,\dots,\xi_Q \in \R \\ \min_{i \neq j}|\xi_i-\xi_j|<d } }\| \phi-m_{\xi_1,\dots,\xi_Q}   \|_{L^2(\R)}<\dl
\label{GAP2}
\end{align}
where $\delta\le c \,e^{-d/2}$, then
\begin{align*}
E(\phi) \ge |Q| E_{\textup{kink}}+ C\cdot e^{-d}.
\end{align*}

\end{lemma}

\begin{proof}

Since $\textup{dist}(\phi, \M_Q^{<d})<\dl $, we can find 
$m^* := m_{\xi_1^*,\dots,\xi_Q^*}$
such that 
\begin{align}
s:=\min_{i \neq j}|\xi_i^*-\xi_j^*|<d \quad 
\textup{and} 
\quad \| \phi-m^* \|_{L^2(\R)} \le \dl.
\label{middle}
\end{align}


Denoting $\eta(x):=\phi(x)-m^*(x)$, we write
\begin{align*}
E(\phi)-E(m^*)
&=\int \dx  m^* \dx \eta dx+\frac 12 \int_{\R} |\dx \eta|^2 dx  
\\
&\hphantom{X}+\int_{\R} ( 1-\cos(m^*+\eta ) ) dx-\int_{\R}  (1-\cos m^*  ) dx \notag
\\
&=\int \dx  m^* \dx \eta dx+\frac 12 \int_{\R} |\dx \eta|^2 dx 
\\
&\hphantom{X}+\int_{\R} \sin m^* \cdot \eta +\frac 12  \cos\big( m^* +\dr \eta   \big)  \cdot \eta^2 \, dx 
\end{align*}
where we Taylor expanded $\cos(m^*(x) +\eta(x))$ in $\eta(x)$ and 
$\dr(x) \in (0,1)$.

\noi
Integrating by parts,
\begin{align*}
E(\phi)&-E(m^*)
=\int_{\R} \nb E(m^*) \cdot \eta   dx +\frac 12 \int_{\R} |\dx \eta|^2 dx+\frac 12 \int_{\R} \cos\big( m^* +\dr \eta   \big)  \cdot \eta^2 dx,
\end{align*}

\noi
where $\nb E(m^*)=-\dx^2 m^*+\sin m^*  $. Using 
$\| \eta\|_{L^2(\R)}=\| \phi-m^* \|_{L^2(\R)}\le \dl$
and $\cos \ge -1$,
\begin{align}
E(\phi)-E(m^*) 
\ge  -\| \nb E(m^*) \|_{L^2(\R)} \cdot \dl  - \frac 12 \dl^2.
\label{middle2}
\end{align}




By Lemma~\ref{lem:gap-multikink} below, 
\begin{align}\label{eq:new1}
 E(m^* ) \ge  |Q| E_{\textup{kink}} +C_1 e^{-s},
\end{align}

\noi
where $s=\min_{i \neq j}|\xi_i^*-\xi_j^*|$ is defined in \eqref{middle}. By Lemma~\ref{LEM:gra}, 
\begin{align}\label{eq:new2}
\| \nb E(m^* ) \|_{L^2(\R) } 
\le C_\eta e^{- (1-\eta) s}.
\end{align}

Recall that $d$ is fixed, while $s$ depends on $\phi$.
In the following we consider two cases.

Case 1: ``Strong Collision'' regime $s \le d/2$. 
In this regime, the solitons are very close, leading to a large energy surplus. \eqref{eq:new1} implies
\begin{align*}
 E(m^* ) \ge  |Q|E_{\textup{kink}} +C_1 e^{-d/2}.
\end{align*}
For the gradient term we simply bound
$\| \nb E(m^* ) \|_{L^2(\R) } \le C_3$.
Substituting these into 
\eqref{middle2}, we have
\[
E(\phi)-|Q| E_{\textup{kink}} \ge C_1 e^{-d/2} - C_3 \delta - \frac12 \delta^2.
\]
Since $\delta \le c\,e^{-d/2}$, for $c>0$ sufficiently small,
the term $C_1 e^{-d/2}$ dominates the other two terms on the right-hand side, so we have
\[
E(\phi)-|Q| E_{\textup{kink}} \ge \frac{C_1}2 e^{-d/2} \ge C e^{-d}.
\]

Case 2: ``Weak collision'' regime $\frac{d}{2}<s<d$. 
In this regime, the separation is larger, so the energy gap $e^{-s}$ is smaller, but the gradient is also exponentially small.
From \eqref{middle2}, \eqref{eq:new1}, and \eqref{eq:new2} we have
\begin{align}
E(\phi)-|Q| E_{\textup{kink}} \ge
C_1 e^{-s} - C_\eta e^{-(1-\eta) s} \delta -\frac12 \delta^2.
\label{RHS1}
\end{align}
Note that for $c>0$ small enough,
\[
\frac12 \delta^2 \le \frac12 c^2 e^{-d} \le \frac{C_1}{8} e^{-s},
\]
where we used $\dl \le c e^{-d/2}$, and by choosing for instance $\eta=1/4$,
\[
C_\eta e^{-(1-\eta) s} \delta
\le
C_\eta  e^{-s+\eta d} \cdot c\,e^{-d/2} \le \frac{C_1}{8} e^{-s}
\]
so
the first term $C_1 e^{-s}$ in \eqref{RHS1} again dominates the other two terms on the right-hand side, and 
\[
E(\phi)-|Q| E_{\textup{kink}} \ge  C e^{-s} \ge  C e^{-d}
\]
for some $C>0$.

\end{proof}

\begin{remark}\rm 
In Lemma \ref{LEM:GAP2}, when $Q\in \Z$ is negative with $Q \le -2$, the same result holds by replacing the condition \eqref{GAP2} with the multi-antikink manifold.
\end{remark}

It remains to prove Lemma \ref{lem:gap-multikink}, which was used in the proof of Lemma \ref{LEM:GAP2}. As alluded above, the key is to quantify the error, namely, how much the energy exceeds the minimal energy 
in the collision regime. Unlike the previous results such as 
Lemma~\ref{LEM:MUS} and Lemma~\ref{LEM:gra} where kink interactions are weak when well-separated, here in the collision regime their interactions are stronger.
In order to better understand their interactions in this regime, our analysis will have a more ``pointwise'' (rather than $L^2$) flavor, and we will prove an exact 
pointwise representation for the energy density gap.
From this representation we will be able to identify the leading order contribution to the energy gap.


\begin{lemma}\label{lem:gap-multikink}
Let $Q \in \Z$ with $Q\ge 2$.
For $d>0$ sufficiently large, there exists $c>0$ such that
for all $\xi_1,\dots,\xi_Q \in \R $ with  $\min_{i \neq j} |\xi_i-\xi_j|=d $, 
one has 
\begin{align*}
E(m_{\xi_1,\dots,\xi_Q} ) \ge |Q| E_{\textup{kink}}+c\cdot e^{- d}.
\end{align*}
\end{lemma}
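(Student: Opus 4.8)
The plan is to establish an exact pointwise representation for the energy density gap and then extract the leading-order term. Writing $m_{\vec\xi} = m_{\xi_1,\dots,\xi_Q}$, we first use Lemma~\ref{LEM:MUS}'s Bogomolny-type identity $E(\phi) = 2\int_\R |\dx\phi\,\sin\tfrac\phi2|\,dx + (\text{boundary})$ only as motivation; more usefully, we recall the Bogomolny completion of the square
\begin{align*}
E(\phi) = 8|Q| + \frac12\int_\R \Bigl(\dx\phi - 2\sin\tfrac\phi2\Bigr)^2 dx
\end{align*}
valid for configurations with $\phi(-\infty)=0$, $\phi(\infty)=2\pi Q$ and $\phi$ monotone-ish so that $\sin(\phi/2)\ge 0$ along the transition (when $Q>0$ and the kinks are arranged left to right; this monotonicity holds for superpositions of well-oriented kinks). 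Thus the energy gap $E(m_{\vec\xi}) - 8|Q|$ equals $\tfrac12\int_\R R(x)^2\,dx$ with $R := \dx m_{\vec\xi} - 2\sin(m_{\vec\xi}/2)$, and since each single kink satisfies $\dx m_\xi = 2\sin(m_\xi/2)$ exactly (the BPS equation), $R$ measures precisely the failure of the superposition to solve the BPS equation, i.e.\ it is a sum of ``overlap'' terms localized at the midpoints between adjacent centers.

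The key steps, in order: (1) Reduce to a pair of adjacent kinks realizing the minimal gap. After relabeling assume $\xi_1 \le \cdots \le \xi_Q$ and let $d_0 := \min_i (\xi_{i+1}-\xi_i) = |\xi_{k+1}-\xi_k| < d$ for some $k$. Since $R^2 \ge 0$ everywhere, restrict the integral $\tfrac12\int R^2$ to a window $I_k$ of width $\sim 1$ centered at the midpoint $\tfrac12(\xi_k+\xi_{k+1})$ between the two closest centers, so $E(m_{\vec\xi})-8|Q| \ge \tfrac12\int_{I_k} R(x)^2\,dx$. (2) Compute $R$ on $I_k$ to leading order. On this window, the two neighboring kinks $m_{\xi_k}, m_{\xi_{k+1}}$ are each at distance $\approx d_0/2$ from the center of $I_k$, so by Lemma~\ref{LEM:sol} they are in the exponential-tail regime: $m_{\xi_k}(x)$ is close to $2\pi$, $m_{\xi_{k+1}}(x)$ is close to $0$, with corrections of size $\asymp e^{-d_0/2}$; all other kinks $m_{\xi_j}$ with $j\notin\{k,k+1\}$ contribute only $O(e^{-3d_0/2})$ or smaller (their centers are at distance $\ge 3d_0/2$). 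Expanding $\sin(m_{\vec\xi}/2)$ and $\dx m_{\vec\xi}$ using the additivity-up-to-$2\pi$ structure and the product-to-sum identities, one finds $R(x) = c_0\,e^{-(\xi_{k+1}-\xi_k)} \cdot g(x - \tfrac{\xi_k+\xi_{k+1}}2) + (\text{lower order})$ for an explicit nonzero profile $g$ and constant $c_0\ne 0$; this is the cross-term coming from $\sin\bigl(\tfrac{m_{\xi_k}+m_{\xi_{k+1}}}2\bigr) \ne \sin\tfrac{m_{\xi_k}}2 + \sin\tfrac{m_{\xi_{k+1}}}2$ on the tails. (3) Conclude: $\tfrac12\int_{I_k} R^2 \gtrsim e^{-2(\xi_{k+1}-\xi_k)} \ge e^{-2d_0} > e^{-2d}$. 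Hmm --- this gives $e^{-2d}$, not $e^{-d}$; to get the sharper $e^{-d}$ one should instead integrate $R^2$ over the \emph{whole} half-line to the right (or left) of the midpoint rather than a bounded window, or, cleaner, avoid squaring: use the non-negativity of the BPS deficit differently by comparing $E(m_{\vec\xi})$ directly to $E$ of a reference two-kink configuration and invoking a known sharp two-kink interaction asymptotic $E(m_{\xi_k}+m_{\xi_{k+1}}) - 16 \sim C e^{-(\xi_{k+1}-\xi_k)}$ with $C>0$. I will use the latter route: localize via the block decomposition of Lemma~\ref{LEM:GAP1} (cutting $\R$ at the points where $m_{\vec\xi} = 2\pi j$) to isolate the colliding pair into one block contributing $16 + c\,e^{-d_0}$ while the remaining $Q-2$ blocks each contribute $\ge 8$ by Lemma~\ref{LEM:BLW}, and use Lemma~\ref{LEM:MUS} to control the cross-block overlap error as $O(e^{-c d_0})$ with a smaller constant in the exponent, absorbing it into $c\,e^{-d_0}$.

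The main obstacle is step (2)/(3): proving that the two-kink interaction energy is bounded \emph{below} (not just above, as Lemma~\ref{LEM:MUS} gives) by a positive multiple of $e^{-d_0}$, with an explicit sign. The subtlety is that the leading interaction term could in principle vanish or be negative; one must compute the coefficient $c_0$ (equivalently, show the sine--Gordon kink--kink interaction is repulsive, which is classical) and ensure the next-order terms, the contributions of the other $Q-2$ kinks, and the cross-block errors from the block decomposition are all genuinely of smaller order than $e^{-d_0}$ uniformly in the configuration. Since $d_0 < d$ with $d$ large, $e^{-d_0} > e^{-d}$, so once the lower bound $E(m_{\vec\xi}) - 8|Q| \ge c\,e^{-d_0}$ is in hand, the claimed $E(m_{\vec\xi}) \ge 8|Q| + c\,e^{-d}$ follows immediately. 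A secondary technical point is handling the case where \emph{several} gaps are simultaneously small: there the energy surplus is even larger (sum of the individual interaction terms), so the single-pair lower bound suffices, but one must make sure the block decomposition remains well-defined, which it does since the $t_j$ in Lemma~\ref{LEM:GAP1} are defined for the actual profile $m_{\vec\xi}$ regardless of separation.
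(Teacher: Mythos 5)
Your proposal does not go through as written, for two independent reasons, and the pivot you introduce mid-argument does not rescue it.

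First, the Bogomolny completion
\begin{align*}
E(\phi) = 8|Q| + \frac12\int_\R \Bigl(\dx\phi - 2\sin\tfrac\phi2\Bigr)^2\,dx
\end{align*}
is false for $|Q|\ge 2$. Expanding the square gives $E(\phi)=\tfrac12\int(\dx\phi-2\sin\tfrac\phi2)^2 + 2\int\dx\phi\sin\tfrac\phi2$, and for a monotone $\phi$ with $\phi(-\infty)=0$, $\phi(\infty)=2\pi Q$ the cross term equals $\int_0^{2\pi Q} 2\sin(z/2)\,dz = 4(1-(-1)^Q)$, which is $0$ for $Q$ even and $8$ for $Q$ odd, not $8|Q|$. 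The Bogomolny bound of Lemma~\ref{LEM:BLW} rests on $|\sin(z/2)|$, which changes sign as $\phi$ passes through $2\pi, 4\pi,\dots$, so the single global BPS identity you wrote does not hold and $R$ does not measure the full gap. You would need a block-by-block completion with $\sigma=\sgn\sin(\phi/2)$ chosen per block, and then your quantity $R$ picks up discontinuous pieces at the crossings; you do not work this out. You correctly notice a second problem — that even if the identity held, squaring $R\sim e^{-d_0}$ gives only $e^{-2d}$ — and pivot, which is the right instinct, but the pivot has its own gaps.

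The pivot leans on ``a known sharp two-kink interaction asymptotic $E(m_{\xi_k}+m_{\xi_{k+1}})-16\sim Ce^{-(\xi_{k+1}-\xi_k)}$ with $C>0$.'' That lower bound, with a sign, is precisely the content of the lemma (in the case $Q=2$) and cannot be cited as known; it is what the paper's pointwise identity \eqref{e:identityF} establishes. Your plan to isolate a colliding pair via the block decomposition of Lemma~\ref{LEM:GAP1} is also problematic: that decomposition cuts $\R$ at the crossing times $t_j$ where $m_{\vec\xi}=2\pi j$, and when $\xi_k$, $\xi_{k+1}$ are close the transition from $2\pi(k-1)$ to $2\pi k$ is a single block shaped by the overlap of both kinks — it does not cleanly isolate a ``two-kink subsystem'' whose energy exceeds $16$. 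Finally, your claim that cross-block/tail errors of size $O(e^{-cd_0})$ ``with a smaller constant in the exponent'' can be absorbed into $c\,e^{-d_0}$ is reversed: if $c<1$ in the exponent then $e^{-cd_0}\gg e^{-d_0}$, so a smaller constant means a \emph{larger} error, potentially swamping the leading term. The paper circumvents all of this by first proving the pointwise nonnegativity $F_Q(\xi_1,\dots,\xi_Q,x)\ge 0$ (by a clean induction on $Q$ using $\sum\sin m_j\ge|\sin\sum m_j|$), which legitimizes restricting the integral to a unit window around $\xi_\ell$, and then proving the exact algebraic identity \eqref{e:identityF} separating ``$n$-body'' terms, where the $2$-body term carries a manifestly positive factor $\cos(m_i+m_j)+1\ge 0$ and alone supplies the $c\,e^{-d}$ lower bound, while the $n\ge 3$ terms are bounded by $(2e^{-(d-1)})^{n-1}$ and thus genuinely subleading.
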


\begin{proof}
From Lemma~\ref{LEM:sol}, we have two-sided exponential asymptotic tail:
for $x<\xi$,
\begin{align}\label{e:kink-upper-lower}
2 e^{-|x-\xi|} 
\le m_{\xi }(x) =4 \arctan(e^{x-\xi}) 
 \le 4 e^{-|x-\xi|}.
\end{align}
Similar lower bound holds for $x>\xi$. Recall from the proof of Lemma~\ref{LEM:MUS}
that
\begin{align*}
E& (m_{\xi_1,\dots,\xi_Q} )  - |Q| E_{\textup{kink}}
\\
&=
\int_{\R} \sum_{i<j}  \dx m_{\xi_i}  \dx m_{\xi_j} 
+ 1-Q -
\cos\Big(\sum_{j=1}^Q m_{\xi_j}\Big)
+\sum_{j=1}^Q \cos(m_{\xi_j}) 
\; dx.
\end{align*}
It is elementary to check that  
(let $\tan\theta=e^{x-\xi_i}$
and use $\sin(2\theta)=\frac{2\tan\theta}{1+(\tan\theta)^2}$)
\[
 \dx m_{\xi_i} 
 =
 \frac{4e^{x-\xi_i}}{1+e^{2(x-\xi_i)}}
 =
2\sin ( 2\arctan(e^{x-\xi_i}))
 = 2\sin(m_{\xi_i}(x) /2).
 \]
We claim that 
\begin{align*}
F_Q & (\xi_1,\cdots, \xi_Q , x)
\\
&:=
4\sum_{i<j}  
\sin(m_{\xi_i}(x) /2) \sin(m_{\xi_j}(x) /2) 
+ 1-Q-
\cos\Big(\sum_{j=1}^Q m_{\xi_j}(x)\Big)
+\sum_{j=1}^Q \cos(m_{\xi_j}(x)) \ge 0
\end{align*}
for all $x\in \R$ and all $\xi_1\le \cdots \le \xi_Q$.
In other words the energy density of 
$m_{\xi_1,\dots,\xi_Q} $ is pointwisely greater than the sum of the energy densities of the corresponding single kinks.

To prove the claim, we first note that $F_1=0$. 
Suppose  we have proved that $F_{Q-1}(\xi_1,\cdots, \xi_{Q-1},x)\ge 0$ for all $x\in \R$
and all $\xi_1\le \cdots \le \xi_{Q-1}$,
and we now prove this for $F_Q$. 
Consider the case $x \le \xi_Q$ (the other possibility is $x\ge \xi_1$ which is similar).
It suffices to prove
\[
F_{Q}(\xi_1,\cdots, \xi_{Q},x) \ge
 F_{Q}(\xi_1,\cdots, \xi_{Q-1},\infty,x)
\]
since the right-hand side is equal to $F_{Q-1}(\xi_1,\cdots, \xi_{Q-1},x)$,
noting that $m_{\infty}(x)=0$.
To this end, 
writing $m_j = m_{\xi_j}(x)/2$, one has
\begin{align*}
F_{Q} & (\xi_1,\cdots, \xi_Q,x) 
-
 F_{Q}(\xi_1,\cdots, \xi_{Q-1},\infty,x)
\\
& =
\Big( 
4 \sin m_Q \sum_{j=1}^{Q-1}  \sin m_j
-
\cos (2m_Q +2 A )
+ \cos (2m_Q)
\Big)
-\Big( \cos 2A -1 \Big)
\\
& =
4 \sin m_Q \sum_{j=1}^{Q-1}  \sin m_j
-2\sin^2 m_Q
+2\sin(m_Q+2A)\sin(m_Q)
\\
& =
4 \sin m_Q
\Big(
 \sum_{j=1}^{Q-1}  \sin m_j+ \cos(m_Q+A)\sin A
\Big)
\end{align*}
where $A:= \sum_{j=1}^{Q-1} m_j$ and we have used sum-to-product trigonometric identities. Note that $ \sum_{j=1}^{Q-1}  \sin m_j \ge |\sin ( \sum_{j=1}^{Q-1} m_j)|$
for any $m_1,\cdots,m_{Q-1} \in [0,\pi]$, which holds by the induction.
Since $ \cos(m_Q+A) \in [-1,1]$, the above expression is indeed non-negative.
Thus we have proved $F_Q  (\xi_1,\cdots, \xi_Q , x)\ge 0$.

\begin{figure}[h]
\includegraphics[scale=0.7]{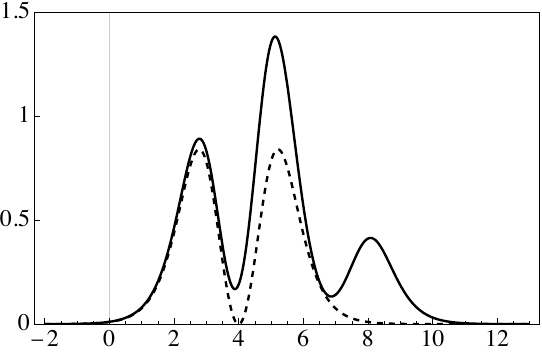}
\caption{The solid curve plots $F_3(\xi_1,\xi_2,\xi_3,\cdot)$ with $(\xi_1,\xi_2,\xi_3)=(3,5,8)$. Clearly, the main contribution to the energy gap is in the neighborhoods of $\xi_j$. The dashed curve plots $F_3(\xi_1,\xi_2,\infty,\cdot)$.}
\end{figure}

Now we need to prove that $\int_{\R} F_Q (\xi_1,\cdots, \xi_Q , x) dx \ge c\cdot e^{- d}$. To this end 
we show the following representation of $F_Q$
\begin{align}
{} & F_Q  (\xi_1,\cdots, \xi_Q , x)		\notag
\\		
&:=
4\sum_{i<j}  
\sin m_i \sin m_j
+ 1-Q 
-\cos\Big(2 \sum_{j=1}^Q m_j \Big)
+\sum_{j=1}^Q \cos(2m_j)			\notag
\\
&=
4\sum_{i<j}  
\sin m_i \sin m_j (\cos(m_i+m_j)+1)	\label{e:identityF}
\\							
&\qquad +
\sum_{n=3}^Q
2^n \sum_{i_1<\cdots<i_n}
\sin m_{i_1} \cdots \sin m_{i_n} 		\notag
\\
&
\qquad\qquad \times
\Big(
(-1)^{\frac{n-1}2} \sin( m_{i_1} + \cdots + m_{i_n} ) 1_{n\in 2\Z-1}
+
(-1)^{\frac{n-2}2} \cos( m_{i_1} + \cdots + m_{i_n} ) 1_{n\in 2\Z}
\Big).						\notag
\end{align}

\noi 
This representation may be of independent interest;\footnote{It is somehow reminiscent with Brydges--Kennedy expansion \cite{BK87} for 2D sine-Gordon.} however below we will mainly use the fact that the ``2-body interaction''  terms (which have good positive sign since $m_i\in [0,\pi]$) exhibit the desired lower bound whereas the ``$\ge 3$-body interaction'' terms
(which do not have good sign) will have much smaller absolute values   since they have more $\sin$ factors, if we observe in the right regime.

Assume that the above identity holds. 
Suppose that $|\xi_{\ell}-\xi_{\ell+1}|=d$ is the smallest distance.
Thanks to the above pointwise positivity,
it suffices to prove the lower bound for the integral  of $F_Q$ over $x\in [\xi_\ell - 1 , \xi_\ell + 1]$. Fix $x$ in this interval.

By the lower bound in \eqref{e:kink-upper-lower}, $\sin m_\ell = \sin(m_{\xi_\ell }(x)/2) \ge \sin(e^{-1}) \ge \frac1{2e}$. Also, for every $j\neq \ell$, again by  the lower bound in \eqref{e:kink-upper-lower}, 
\[
\sin m_j = \sin(m_{\xi_j }(x)/2) \ge \sin(e^{-|x-\xi_\ell|})
\ge \frac12 e^{-|\xi_j-\xi_\ell|}
\]
and 
since $|\xi_j -x| > d-1$,  $m_{\xi_j}(x)$ is close to $0$ or $2\pi$ up to an error bounded by $4e^{-(d-1)}$ using the upper bound in  
\eqref{e:kink-upper-lower}. 
Therefore, 
\[
\sin m_j \le 2e^{-(d-1)}.
\]
On the other hand, 
 since $x$ is in a neighborhood around $\xi_\ell$, 
$m_{\xi_\ell}(x)/2$ is in a neighborhood of $\pi/2$. More precisely, 
there is a universal constant $c_0>0$ (as long as $d>10$) such that 
\[
\cos(m_\ell+m_j)+1 \ge c_0
\]
since $m_j$ is close to $0$ or $\pi$ up to an exponentially small error as shown above. 

Summarizing these bounds, the ``2-body interaction'' terms
\begin{align*}
4\sum_{i<j}  
\sin m_i \sin m_j (\cos(m_i+m_j)+1)
&\ge 
4 
\sin m_\ell \sin m_{\ell+1} (\cos(m_\ell+m_{\ell+1})+1)
\\
&\ge 
c_2 e^{-|\xi_{\ell+1}-\xi_\ell|} = c_2 e^{-d}
\end{align*}
for  $c_2 = 4\cdot (2e)^{-1} \cdot \frac12 \cdot c_0>0$.  For the ``$n$-body interaction'' terms with $n\ge 3$, using the above bounds on $\sin m_j$,
\[
\Big| \sin m_{i_1} \cdots \sin m_{i_n} \Big|
\le (2e^{-(d-1)})^{n-1},
\]
which, even with $\sum_{n=3}^Q
2^n \sum_{i_1<\cdots<i_n}$, is much smaller than $c_2 e^{-d}$ for $d>0$ sufficiently large.

So the proof is complete once we verify the identity \eqref{e:identityF}.
We will use the following product-to-sum identities: if $n$ is even,
\[
\prod_{k=1}^n \sin \theta_k = \frac{(-1)^{n/2}}{2^{n}} \sum_{e} \cos(e_1 \theta_1+\cdots+e_n\theta_n) \prod_{j=1}^n e_j
\]
and if $n$ is odd,
\[
\prod_{k=1}^n \sin \theta_k
 = \frac{(-1)^{(n-1)/2}}{2^{n}}
 \sum_{e}
  \sin(e_1 \theta_1+\cdots+e_n\theta_n) \prod_{j=1}^n e_j
\]
where $e$ sums over $e=(e_1,\cdots,e_n) \in \{1,-1\}^n$.


On the RHS of  \eqref{e:identityF}, by the above identity,
for $n$ odd,
\begin{align}
(- & 1)^{\frac{n-1}2}   2^n 
\sin m_{i_1} \cdots \sin m_{i_n} 
 \sin( m_{i_1} + \cdots + m_{i_n} )			\notag
\\
&=
\sum_{e}
  \sin(e_1 m_{i_1} +\cdots+e_n m_{i_n} ) 
 \sin( m_{i_1} + \cdots + m_{i_n} )
\prod_{j=1}^n e_j						\notag
\\
&=
\frac12 
\sum_{e}
\Big(\cos((1-e_1)m_{i_1}+\cdots+ (1-e_n) m_{i_n})   \notag
\\
&\qquad\qquad\qquad
-
\cos((1+e_1)m_{i_1}+\cdots+ (1+e_n) m_{i_n})\Big)
\prod_{j=1}^n e_j						\notag
\\
&=
-
\sum_{e}
\cos((1+e_1)m_{i_1}+\cdots+ (1+e_n) m_{i_n})\Big)
\prod_{j=1}^n e_j						\label{e:RHS-n-odd}
\end{align}
where the last step is by a change of variables $e_j \to -e_j$.
Similarly for $n$ even,
\begin{align*}
(- & 1)^{\frac{n-2}2}   2^n 
\sin m_{i_1} \cdots \sin m_{i_n} 
\cos( m_{i_1} + \cdots + m_{i_n} )		
\\
&=
-\frac12 
\sum_{e}
\Big(\cos((1-e_1)m_{i_1}+\cdots+ (1-e_n) m_{i_n})
\\
&\qquad\qquad\qquad
+
\cos((1+e_1)m_{i_1}+\cdots+ (1+e_n) m_{i_n})\Big)
\prod_{j=1}^n e_j									
 \end{align*}
which actually gives the same result \eqref{e:RHS-n-odd} by a change of variables.

For $n=Q$, 
the  $e=(1,\cdots,1)$
term precisely gives us the term 
$-\cos(2 \sum_{j=1}^Q m_j )$ on the LHS of  \eqref{e:identityF}.
Consider without loss of generality 
$\cos(2m_1+\cdots+2m_k)$ for $2\le k<Q$. 
This term shows up once in the case $n=Q$ and $e=(1,\cdots,1,-1,\cdots,-1)$
with $\prod_j e_j =(-1)^{Q-k}$,
and shows up ${Q-k \choose 1}$ times in the case $n=Q-1$
 and $e=(1,\cdots,1,-1,\cdots,-1)$
with $\prod_j e_j =(-1)^{Q-k-1}$, etc.,
and finally it shows up once in the case $n=k$  and $e=(1,\cdots,1)$.
So by binomial identity $(1-1)^{Q-k}=0$ which means there is no such term 
$\cos(2m_1+\cdots+2m_k)$ appearing on the LHS.
Regarding the case $k=1$, the same argument applies except that 
we do not have the ``final'' case $n=k=1$ because we only sum over $n\ge 2$:
this precisely gives us the term $\cos(2m_1)$ on the LHS.
Then for each $n$ we also have a term with $e_j=-1$ for all $j$, and they precisely sum up to the constant $1-Q$ on the LHS of \eqref{e:identityF}. 
\end{proof}

\section{Geometry of the multi-soliton manifold}

\subsection{Approximate multi-soliton manifold}

In the previous section, we studied the multi-solitons defined on $\R$ and their properties. When restricted to the interval $[-L_\eps, L_\eps]$, where  $L_\eps=\eps^{-\frac 12+\eta} \to \infty$ as $\eps \to 0$, we define an approximating soliton 
$m^\eps_{\xi}$ and multi-solitons $m^\eps_{\xi_1,\dots,\xi_k}$,  which becomes an increasingly accurate approximation of the soliton $m_\xi$ and the multi-soliton $m_{\xi_1,\dots,\xi_k}$ on $\R$ as $\eps \to 0$.

\noi
Define $m^\eps$ to be a smooth, monotone function that coincides with $m$
on $[-\eps^{-\frac 12+2\eta}, \eps^{-\frac 12 +2\eta} ]$, where $\eps^{-\frac 12+2\eta} \ll L_\eps=\eps^{-\frac 12+\eta}$, and is extended to the constants $0$ and $2\pi$ outside a slightly larger interval:
\begin{align*}
m^\eps(x)=
\begin{cases}
m(x), \;  &x\in [-\eps^{-\frac 12+2\eta}, \eps^{-\frac 12+2\eta } ], \\
2\pi, \;  &x \ge \eps^{-\frac 12+2\eta}+1,\\
0,    \;  &x\le -\eps^{-\frac 12+2\eta}-1.
\end{cases}
\end{align*}

\noi
On the transition intervals $[\eps^{-\frac 12+2\eta}, \eps^{-\frac 12+2\eta} +1 ]$ and $[-\eps^{-\frac 12+2\eta}-1, -\eps^{-\frac 12+2\eta} ]$, we require  
\begin{align*}
m(x) \le m^\eps (x) \le 2\pi \quad \textup{and} \quad m(x) \ge m^\eps(x) \ge 0.
\end{align*}

\noi
Define the translated soliton
\begin{align}
m^\eps_\xi(x):=m^\eps(x-\xi),
\label{MGS1}
\end{align}

\noi
for $\xi \in [-\cj L_\eps, \cj L_\eps ]$, where
\begin{align}
\cj L_\eps:=L_\eps-\eps^{-\frac 12+2\eta}-1=L_\eps(1-\eps^\eta -\eps^{\frac 12-\eta}) \sim L_\eps.
\label{cjL}
\end{align}

\noi 
This choice of $\cj L_\eps$ ensures that the entire transition region of the kink $m_\xi^\eps$ (from $0$ to $2\pi$) remains inside the interval $[-L_\eps, L_\eps]$. Note that one can easily check that, for any $1\le p \le \infty$
\begin{align}
\| m_\xi -m_\xi^\eps  \|_{L^p(\R)} \les e^{-c \eps^{-\frac 12+2\eta} } \label{APM0}\\
\| \dx m_\xi - \dx m_\xi^\eps  \|_{L^p(\R)} \les e^{-c \eps^{-\frac 12+2\eta} }.
\label{APM1}
\end{align}

\noi
Therefore, as $\eps \to 0$, $m_\xi^\eps$ becomes a more and more precise approximation of the topological soliton $m_\xi$ on $\R$. We now define the multi-soliton, given by the superposition of single solitons
\begin{align}
m^\eps_{\xi_1,\dots, \xi_k}=\sum_{j=1}^k m^\eps_{\xi_j},
\label{MS0}
\end{align}

\noi
where $\xi_j \in [-\cj L_\eps, \cj L_\eps]$. When $k=Q$ with $\xi_j \in [-\cj L_\eps, \cj L_\eps]$, this definition is enough for the multi-soliton $m^\eps_{\xi_1,\dots,\xi_k}$ to do the transition from $0$ to $2\pi Q$ on the interval $[-L_\eps, L_\eps]$.   From the single-soliton bounds \eqref{APM0} and \eqref{APM1}, we immediately get the same type of exponential approximation for the multi-soliton
\begin{align}
\| m_{\xi_1,\dots,\xi_k} -m_{\xi_1,\dots,\xi_k}^\eps  \|_{L^p(\R)} \les k e^{-c \eps^{-\frac 12+2\eta} } \label{APM2}\\
\|  \dx m_{\xi_1,\dots,\xi_k} - \dx m_{\xi_1,\dots,\xi_k}^\eps  \|_{L^p(\R)} \les k e^{-c \eps^{-\frac 12+2\eta} },
\label{APM3}
\end{align}

\noi
where $1 \le p \le \infty$ and $ \xi_j \in [-\cj L_\eps, \cj L_\eps]$. 
Thanks to \eqref{APM2} and \eqref{APM3}, we can transfer all lemmas in Section~\ref{SEC:TOPSOL} from the multi-soliton $m_{\xi_1,\dots,\xi_k}$  to the approximating multi-soliton $m^\eps_{\xi_1,\dots,\xi_k}$ as $\eps \to 0$.

With the definition of the multi-soliton in \eqref{MS0}, we define the (approximate) multi-soliton manifold as follows
\begin{align}
\M_k^\eps:=\big\{m^\eps_{\xi_1,\dots,\xi_k}: -\cj L_\eps \le \xi_1\le \dots \le \xi_k \le \cj L_\eps  \big\},
\label{MAN00}
\end{align}

\noi
where $\cj L_\eps$ is defined as in \eqref{cjL}. Then $\M^{\eps}_k$ is a manifold of dimension $k$. Inside the multi-soliton manifold $\M^{\eps}_k$, we further decompose into the collision region 
\begin{align}
\M^{\eps, <d}_{k}:=\Big\{ m^\eps_{\xi_1,\dots,\xi_k}  : -\cj L_\eps \le \xi_1\le \dots  \xi_k \le \cj L_\eps  \; \; \textup{and} \; \; \min_{i \neq j} |\xi_i-\xi_j|<d \Big\},
\label{MAN0}
\end{align}

\noi
where the solitons interact with each other at distances less than $d$, and the non-collision region 
\begin{align}
\M^{\eps, \ge d}_k:&=\{ m^\eps_{\xi_1,\dots,\xi_k}: 
-\cj L_\eps \le \xi_1\le \dots  \xi_k \le \cj L_\eps \quad \textup{and} \quad  \min_{i \neq j}|\xi_i-\xi_j| \ge  d        \},
\label{MAN1} 
\end{align}

\noi
where the interaction between solitons is negligible as $d\to \infty$.
Note that $\M^{\eps, \ge d}_k$ is a closed subset of the compact set $\M^{\eps}_k$, and therefore $\M^{\eps, \ge d}_k$ is also compact.

\subsection{Tangent and normal spaces  and disintegration formula}

In this subsection, we study the geometry of the multi-soliton manifold $\M^{\eps, \ge d}_k$, its tangent and normal spaces, and a related disintegration formula.

When $k>1$, at the collisions,   $\M^{\eps}_k$ will have singularities and thus lose the smooth manifold structure.
We postpone this discussion to the end of this section (see Remark~\ref{REM:strata} and the discussion above). Therefore, in order to carry out Riemannian geometric considerations such as defining tangent and normal vectors, we restrict our analysis to the smooth non-collision manifold $\M_k^{\eps, \ge d}$ by removing the collision region. 

At each point $m^\eps_{\xi_1,\dots,\xi_k}$ on the multi-soliton manifold $\M^{\eps, \ge d}_k$, we define the normal space to the manifold as follows
\begin{align}
V_{\xi_1,\dots,\xi_k}=\big\{ v \in L^2: \;  \jb{v,  \dd_{\xi_j}m^\eps_{\xi_1,\dots,\xi_k}  }=0 
\quad \mbox{for all } 1\le j \le k
\big\}.
\end{align}

\noi 
Here the normal space $V_{\xi_1,\dots,\xi_k}$ at $m^\eps_{\xi_1,\dots, \xi_k}$ is a subspace of codimension $k$ in $L^2$, orthogonal to the tangent vectors 
\begin{align*}
\partial_{\xi_1} m^\eps_{\xi_1,\dots,\xi_k},\dots, \partial_{\xi_k} m^\eps_{\xi_1,\dots,\xi_k}
\end{align*}

\noi
of the multi-soliton manifold $\M^{\eps}_k$. 
Thanks to Lemma \ref{LEM:sol}-(iii), each tangent vector $\partial_{\xi_j} m^\eps_{\xi_1,\dots,\xi_k}$ is highly localized around its center $\xi_j$ with an exponentially decaying tail. Therefore, 
\begin{align}
|\jb{\partial_{\xi_j} m^\eps_{\xi_1,\dots,\xi_k}, \partial_{\xi_i} m^\eps_{\xi_1,\dots,\xi_k}  }| \les e^{-c |\xi_i-\xi_j| }
\label{ALO}
\end{align}

\noi
for $i\neq j$. This implies that the tangent vectors are almost orthogonal when they do not collide, that is, when $\min_{i\neq j}|\xi_i-\xi_j| \to \infty$.

We are now ready to define the projection map $\pi^\eps$ onto the multi-soliton manifold, introduced in Theorem \ref{THM:2}. Let $\M$ be a compact manifold in a Hilbert space $\mathcal{H}$. If $\dl>0$ is sufficiently small, we can assign to any $\phi \in \mathcal{H}$ with $\text{dist}(\phi, \M) < \dl$ a unique closest point $\pi(\phi)$ in the manifold $\M$. This follows from the $\eps$-neighborhood theorem \cite[p.69]{GP}. If  $\text{dist}(\phi,\M) \ge \dl$, then we set $\pi(\phi)=0$.  Recall that 
$\M^{\eps, \ge d}_k$ defined in \eqref{MAN1} 
is a $k$-dimensional compact manifold. Therefore, if a field $\phi$ satisfies 
\begin{align}
\text{dist}(\phi, \M^{\eps, \ge d}_k)=\inf_{\substack{ -\cj L_\eps \le \xi_1\le \dots  \xi_k \le \cj L_\eps  \\ \min_{i \neq j } |\xi_i-\xi_j| \ge d   } } \|\phi-m^\eps_{\xi_1,\dots,\xi_k}\|_{L^2}<\dl
\end{align}

\noi
for sufficiently small $\dl>0$,  we can assign a unique pair $(\xi_1,\dots,\xi_k)\in [-\cj L_\eps, \cj L_\eps]^k$  such that 
\begin{align}
\phi=m^\eps_{\xi_1,\dots,\xi_k}+v,
\label{APM4} 
\end{align}

\noi 
where the normal coordinate $v\in V_{\xi_1,\dots,\xi_k}$ satisfies $\|v \|_{L^2}<\dl$. Therefore, according to \eqref{APM4}, we are now able to define the projection $\pi^\eps$ onto the multi-soliton manifold $\M^{\eps, \ge d}_k$ as the closest point 
\begin{align}
\pi^\eps(\phi)=
\begin{cases}
m^\eps_{\xi_1,\dots,\xi_k}, \; 
&\text{dist}(\phi, \M^{\eps, \ge d}_k) <\dl \\
0, \; & \text{dist}(\phi, \M^{\eps, \ge d}_k) \ge \dl.
\end{cases}
\label{APM5}
\end{align}

\begin{remark}\rm 
For sufficiently large $d>0$, we can define a projection map onto the (non-approximating) manifold 
\begin{align*}
\M^{\ge d}_k:&=\{ m_{\xi_1,\dots,\xi_k}: 
-\infty< \xi_1\le \dots  \xi_k <\infty \quad \textup{and} \quad  \min_{i \neq j}|\xi_i-\xi_j| \ge  d        \}
\end{align*}

\noi 
with a uniform neighborhood of size $\dl>0$ since the Jacobian matrix has the uniform lower bound
\begin{align*}
\textup{det}(\jb{\partial_{\xi_i}m_{\xi_1,\dots,\xi_k}, \partial_{\xi_j}m_{\xi_1,\dots,\xi_k}   }_{1\le i,j \le k} ) \ges \| \dx m\|_{L^2(\R)}^k(1+O(e^{-cd}))
\end{align*}

\noi
as $d \to \infty$. Note that this allows us to apply the implicit function theorem with uniform control.

\end{remark}




We now introduce a disintegration formula from \cite[Lemma 3]{ER2}, which expresses Gaussian functional integrals on a small neighborhood of the compact manifold $\M^{\eps, \ge d}_k$, defined in \eqref{MAN1}, in terms of tangential $\xi_1,\dots,\xi_k$ and normal coordinates $v$.

\begin{lemma}\label{LEM:chan}
Let $F$ be a bounded, continuous function on $L^2$. Then, we have
\begin{align}
&\int_{ \{ \textup{dist}(\phi, \mathcal{M}_k^{\eps, \ge d} ) < \dl    \} }F(\phi) \mu_{\eps}^k(d\phi) \notag \\
&=\int \dots \int \limits_{ U_\eps } F( m_{\xi_1,\dots,\xi_k}^\eps+v   )e^{-\frac 12 \| \dx m^\eps_{\xi_1,\dots,\xi_k}  \|_{L^2 }^2  -\jb{ (-\dx^2) m^\eps_{\xi_1,\dots, \xi_k}, v }_{L^2} } \notag  \\
&\hphantom{XXXXXX} \cdot 
\textup{Det}_{\xi_1,\dots,\xi_k}(v) \, \mu^{\perp}_{\eps, \xi_1,\dots,\xi_k}(dv) \, d\s(\xi_1,\dots,\xi_k),
\label{cha0}
\end{align}

\noi
where $\M^{ \eps, \ge d}_k$ is as defined in \eqref{MAN1}, $\Dl_k=\{ -\cj L_\eps \le \xi_1 \le \cdots \le \xi_k \le \cj L_\eps  \}$,
\begin{align}
&U_\eps=\big\{ ( \xi_1,\dots, \xi_k, v )\in  \Dl_k \times V_{\xi_1,\dots,\xi_k}: \|v \|_{L^2 }< \dl \; \textup{and} \; \min_{i \neq j}|\xi_i-\xi_j| \ge d   \big\} \notag \\
&\textup{Det}_{\xi_1,\dots,\xi_k}(v):=\det\big( \Id-W_{\xi_1,\dots \xi_k, v }\big),
\label{wein1}
\end{align}

\noi
and $\mu^{\perp}_{\eps, \xi_1,\dots,\xi_k }$ is the Gaussian measure on the normal space $V_{\xi_1,\dots,\xi_k}$ with covariance $\eps(-\dx^2)^{-1}$, subject to Dirichlet boundary conditions on $[-L_\eps, L_\eps]$
\begin{align*}
\mu^{\perp}_{\eps, \xi_1,\dots,\xi_k }(dv)=Z_{\eps,\xi_1,\dots,\xi_k}^{-1} e^{-\frac 1{2\eps} \int_{-L_\eps}^{L_\eps} |\dx v|^2 dx  }\prod_{x \in [-L_\eps, L_\eps ]} dv(x). 
\end{align*}

\noi
In \eqref{wein1} $W_{\xi_1, \dots, \xi_k, v}$ denotes the Weingarten map, defined in \eqref{Wein}. In addition,  $d\s$ is the surface measure on the manifold, parametrized by $(\xi_1,\dots, \xi_k)\in  [- \cj L_\eps, \cj L_\eps]^k$, as defined in \eqref{surfm}.

\end{lemma}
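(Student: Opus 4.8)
The plan is to view \eqref{cha0} as the Gaussian disintegration (coarea) formula of Ellis--Rosen \cite[Lemma~3]{ER2}, applied to the compact manifold $\M_k^{\eps,\ge d}$, and to carry it out by verifying that lemma's hypotheses and reading off the explicit exponential weight from an elementary expansion of the quadratic form. First I would reduce to finite dimensions: truncating the Brownian bridge $\mu_\eps^k$ to its first $N$ Fourier modes \eqref{BB0} gives a finite-dimensional Gaussian whose density is proportional to $\exp\{-\tfrac{1}{2\eps}\|\dx\phi\|_{L^2}^2\}$ against Lebesgue measure on the affine space of functions with boundary values $0$ and $2\pi Q$, consistent with the covariance \eqref{BB2}. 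On this space \eqref{cha0} is a classical smooth change of variables.

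Next I would carry out the change of variables. By \eqref{APM4}--\eqref{APM5} and the $\eps$-neighborhood theorem \cite{GP}, on the set $\{\textup{dist}(\phi,\M_k^{\eps,\ge d})<\dl\}$ every $\phi$ decomposes uniquely as $\phi=m^\eps_{\xi_1,\dots,\xi_k}+v$ with $(\xi_1,\dots,\xi_k)\in\Dl_k$, $\min_{i\neq j}|\xi_i-\xi_j|\ge d$, and $v\in V_{\xi_1,\dots,\xi_k}$, $\|v\|_{L^2}<\dl$. A moving-frame computation (the tube formula) shows that the Jacobian of $(\xi_1,\dots,\xi_k,v)\mapsto m^\eps_{\xi_1,\dots,\xi_k}+v$ factors as $\sqrt{\det\big(\jb{\dd_{\xi_i}m^\eps_{\xi_1,\dots,\xi_k},\dd_{\xi_j}m^\eps_{\xi_1,\dots,\xi_k}}\big)_{i,j}}\cdot\det(\Id-W_{\xi_1,\dots,\xi_k,v})$: the first factor is the density of the surface measure $d\s$ of \eqref{surfm}, and the second is $\textup{Det}_{\xi_1,\dots,\xi_k}(v)$ of \eqref{wein1}, $W$ being the Weingarten (shape) operator. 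For the Gaussian weight, I would expand $\|\dx(m^\eps_{\xi_1,\dots,\xi_k}+v)\|_{L^2}^2=\|\dx m^\eps_{\xi_1,\dots,\xi_k}\|_{L^2}^2+2\jb{(-\dx^2)m^\eps_{\xi_1,\dots,\xi_k},v}+\|\dx v\|_{L^2}^2$, where the cross term has been integrated by parts with no boundary contribution since $v$ obeys Dirichlet conditions on $[-L_\eps,L_\eps]$ while $m^\eps_{\xi_1,\dots,\xi_k}$ carries the correct boundary values $0,2\pi Q$. Absorbing the $\|\dx v\|_{L^2}^2$-term, together with Lebesgue measure on $V_{\xi_1,\dots,\xi_k}$, into the Gaussian $\mu^\perp_{\eps,\xi_1,\dots,\xi_k}$ leaves exactly the exponential prefactor $\exp\{-\tfrac12\|\dx m^\eps_{\xi_1,\dots,\xi_k}\|_{L^2}^2-\jb{(-\dx^2)m^\eps_{\xi_1,\dots,\xi_k},v}\}$ of \eqref{cha0}. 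This establishes \eqref{cha0} for the $N$-mode truncation.

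Finally I would pass to the limit $N\to\infty$ by invoking \cite[Lemma~3]{ER2} directly, whose hypotheses I would verify as follows: $\M_k^{\eps,\ge d}$ is a closed, hence compact, subset of the compact manifold $\M_k^\eps$; deleting the collision strata $\min_{i\neq j}|\xi_i-\xi_j|<d$ makes it a smooth submanifold with a smooth normal bundle, per Remark~\ref{REM:strata}; and the near-orthogonality estimate \eqref{ALO} yields a lower bound on $\det(\jb{\dd_{\xi_i}m^\eps_{\xi_1,\dots,\xi_k},\dd_{\xi_j}m^\eps_{\xi_1,\dots,\xi_k}})$ that stays bounded away from $0$ uniformly over $\M_k^{\eps,\ge d}$, so that the nearest-point projection is well defined on a tube of uniform radius $\dl$ and the second fundamental form (hence $W_{\xi_1,\dots,\xi_k,v}$) is uniformly controlled. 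Boundedness and continuity of $F$ then justify passage to the limit. Since $d>0$ forces the centers to be distinct, the parametrization by the ordered simplex $\Dl_k$ is genuinely injective, so no permutation overcounting occurs.

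I expect the main difficulty to be precisely this uniformity in the tubular-neighborhood construction — that $(\xi_1,\dots,\xi_k,v)\mapsto m^\eps_{\xi_1,\dots,\xi_k}+v$ is a bijection onto the $\dl$-neighborhood with the stated Jacobian for a radius $\dl$ independent of the base point on $\M_k^{\eps,\ge d}$. This is exactly where the separation $\min_{i\neq j}|\xi_i-\xi_j|\ge d$ is used: it upgrades the pointwise almost-orthogonality \eqref{ALO} into a uniform lower bound on the tangential Gram determinant, giving a uniform injectivity radius and uniform control of the shape operator. Everything else is the routine Gaussian bookkeeping of the middle step together with the verbatim appeal to \cite[Lemma~3]{ER2}.
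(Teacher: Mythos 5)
Your proposal is correct and follows the same route the paper takes: the paper's proof of Lemma~\ref{LEM:chan} is a single line that invokes \cite[Lemma~3]{ER2} directly, and your finite-dimensional reduction, tube-formula Jacobian factorization, quadratic-form expansion, and verification of the uniform injectivity radius via the separation condition $\min_{i\neq j}|\xi_i-\xi_j|\ge d$ are precisely the standard bookkeeping that citation encapsulates. You have simply unpacked what the paper leaves implicit, including the key point (echoed in Remark~\ref{REM:strata}) that removing the collision strata is what makes the manifold smooth with uniformly controlled tubular neighborhood.
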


\begin{proof}
The formula \eqref{cha0} follows from \cite[Lemma 3]{ER2}.
\end{proof}

\begin{remark}\rm \label{REM:disinteg}
The disintegration formula has been widely employed in recent works \cite{OST1, SS24, GT24} in settings where the energy functional admits explicit minimizers and the analysis is carried out around a single-soliton manifold. However, to the best of our knowledge,  this is the first time the disintegration formula is applied in a regime where no minimizers exist and the analysis is performed around a multi-soliton manifold. See also  Proposition \ref{PROP:cha}.
\end{remark}

Note that the geometry of the smooth soliton manifold $\M^{\eps, \ge d}_k$ is reflected in the surface measure $d\s$ and the Weingarten map $W_{\xi_1, \dots, \xi_k, v}$.  The orthonormal vectors $t_j = t_j(\xi_1,\dots,\xi_k)$, for $j = 1, \dots, k$, are obtained by applying the Gram-Schmidt orthonormalization procedure in $L^2$ to the tangent vectors
\begin{align*}
\big\{ \partial_{\xi_1}m^\eps_{\xi_1,\dots,\xi_k}, \dots, \partial_{\xi_k} m^\eps_{\xi_1,\dots, \xi_k} \big\}
\end{align*} 

as follows 
\begin{align}
t_j:=\frac{w_j}{\|w\|_{L^2} } \qquad \textup{and} \qquad w_j:=\partial_{\xi_j} m^\eps_{\xi_1,\dots, \xi_k}-\sum_{\ell<j} \jb{\partial_{\xi_j} m^\eps_{\xi_1,\dots, \xi_k}, t_\ell}_{L^2} \;  t_\ell.
\label{GRAM}
\end{align}

\noi 
Then, the surface measure $d\s(\xi_1,\dots,\xi_k)$ is given by 
\begin{align}
d \s (\xi_1,\dots,\xi_k)=|\g(\xi_1,\dots,\xi_k)| d \xi_1 \dots   d \xi_k,
\label{surfm}
\end{align}

\noi
where
\begin{align*}
\g(\xi_1, \dots, \xi_k)=\det 
\begin{pmatrix}
\langle \partial_{\xi_1 }m^\eps_{\xi_1,\dots, \xi_k}, t_1\rangle_{L^2}  & \cdots & \langle \partial_{\xi_k } m^\eps_{\xi_1,\dots, \xi_k} ,t_1\rangle_{L^2} \\
\vdots & \ddots & \vdots \\
\langle \partial_{\xi_1}m^\eps_{\xi_1,\dots,\xi_k},t_k\rangle_{L^2} & \cdots & \langle \partial_{\xi_k} m^\eps_{\xi_1,\dots,\xi_k}, t_k \rangle_{L^2}
\end{pmatrix}.
\end{align*}

\noi 
Indeed, thanks to \eqref{ALO}, the tangent vectors $\{ \partial_{\xi_j} m^\eps_{\xi_1,\dots,\xi_k}  \}_{j=1}^k$ are almost orthogonal, with an error of order $e^{-d}$, where $ \min_{i \neq j}|\xi_i-\xi_j| \ge d$. Hence, the Jacobian matrix is nearly diagonal, with diagonal entries given by $\| \dx m \|_{L^2(\R)}^2$, 
\begin{align}\label{e:Jac}
|\g(\xi_1,\dots,\xi_k)|=k \| \dx   m \|_{L^2(\R)}^2(1+O(e^{-cd}) ),
\end{align}

\noi
uniformly in $\xi_1,\dots \xi_k$ satisfying $\min_{i \neq j }|\xi_i-\xi_j| \ge d \to \infty $.

In \eqref{wein1} the Weingarten map $W_{\xi_1,\dots,\xi_k,v}$ encodes the curvature of the surface $\mathcal{M}^{\eps, \ge d}_k$ by capturing how the normal vector $v$ changes direction as we move along different tangent directions on the surface. 
More precisely, the Weingarten map $W_{\xi_1,\dots,\xi_k,v}=-dN_{\xi_1,\dots,\xi_k}(v)$ at a point $m^\eps_{\xi_1,\dots,\xi_k} \in \M^{k,\eps}$, defined via the differential of the Gauss map $N_{\xi_1,\dots,\xi_k }$ at $m^\eps_{\xi_1,\dots,\xi_k} \in \M^{\eps, \ge d}_k$, is the linear map 
\begin{align}
W_{\xi_1,\dots,\xi_k,v}: T_{\xi_1, \dots, \xi_k} \M^{\eps, \ge d}_k \to T_{\xi_1, \dots, \xi_k} \M^{\eps, \ge d}_k, 
\label{Wein} 
\end{align}



\noi 
where $T_{\xi_1, \dots,\xi_k}  \M^{\eps, \ge d}_k:=\text{span}\big\{ t_1,\dots,t_k \big\}$ is the tangent space of $ \M^{\eps, \ge d}_k$ at $m^\eps_{\xi_1,\dots,\xi_k}$. 
Specifically, the Weingarten map $W_{\xi_1, \dots,\xi_k, v}$ in the basis $\{ t_1, \dots, t_k  \}$ is given by
\begin{align*}
\begin{pmatrix}
\langle -\partial_{\xi_1} N_{\xi_1,\dots,\xi_k}(v),  t_1 \rangle_{L^2}  & \cdots & \langle -\partial_{\xi_1} N_{\xi_1,\dots,\xi_k}(v),  t_k  \rangle_{L^2} \\
\vdots & \ddots & \vdots \\
\langle- \partial_{\xi_k} N_{\xi_1,\dots \xi_k }(v),  t_1    \rangle_{L^2} & \cdots & \langle - \partial_{\xi_k} N_{\xi_1,\dots \xi_k }(v)  , t_k   \rangle_{L^2}
\end{pmatrix}.
\end{align*}

\noi 
In particular, the $k \times k$ determinant 
\begin{align}
\det\big( \Id-W_{\xi_1,\dots,\xi_k,  v}\big)=1+O\big(\| v\|_{L^\infty  }^k \big)
\end{align}

\noi 
is a $k$-th order function of  $v$. 


Before concluding this subsection, we present the following lemma, which will be used later.
\begin{lemma}\label{LEM:TGRAM}
Let $t_1,\dots, t_k$ be obtained by applying the Gram–Schmidt orthonormalization in $L^2$ to $\partial_{\xi_1}m^\eps_{\xi_1,\dots,\xi_k}, \dots, \partial_{\xi_k} m^\eps_{\xi_1,\dots, \xi_k}$, as described in \eqref{GRAM}. Then, for each $j$, we have 
\begin{align*} 
|t_j(x)| \les e^{-c|x-\xi_j|}
\end{align*}

\noi
provided that $\min_{i \neq j}|\xi_i-\xi_j| \ge d$ for $d$ sufficiently large.
\end{lemma}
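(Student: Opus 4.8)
The plan is to prove the stronger assertion that, under the Gram--Schmidt procedure \eqref{GRAM}, one may write $t_j = \sum_{i\le j} a_{ji}\,\partial_{\xi_i} m^\eps_{\xi_i}$ with coefficients that decay in the separation, namely $|a_{jj}| \les 1$ and $|a_{ji}| \les e^{-c|\xi_i - \xi_j|}$ for $i<j$; the claimed pointwise bound on $t_j$ then follows by inserting the pointwise localization of the individual tangent vectors. Two elementary inputs are used throughout. First, since $m^\eps_{\xi_1,\dots,\xi_k}$ is a sum of single solitons, $\partial_{\xi_j} m^\eps_{\xi_1,\dots,\xi_k} = \partial_{\xi_j} m^\eps_{\xi_j}$ depends only on $\xi_j$, and $|\partial_{\xi_j} m^\eps_{\xi_j}(x)| = |(m^\eps)'(x-\xi_j)| \les e^{-c|x-\xi_j|}$ uniformly: on the region where $m^\eps$ coincides with $m$ this is Lemma~\ref{LEM:sol}(iii) (since $\partial_\xi m_\xi = -2\,\textup{sech}(\cdot-\xi)$), on the two transition intervals $(m^\eps)'$ is super-exponentially small by \eqref{APM1}, and outside it vanishes. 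Second, by \eqref{ALO}, $|\langle \partial_{\xi_i}m^\eps_{\xi_i}, \partial_{\xi_j}m^\eps_{\xi_j}\rangle| \les e^{-c|\xi_i-\xi_j|}$ for $i\ne j$, while $\|\partial_{\xi_j}m^\eps_{\xi_j}\|_{L^2}^2$ is a fixed positive constant (equal to $\|\dx m\|_{L^2(\R)}^2$ up to an error $O(e^{-c\eps^{-1/2+2\eta}})$ by \eqref{APM1}); and since $\min_{i\ne j}|\xi_i-\xi_j|\ge d$, the sum $\sum_i e^{-c|\xi_i-\xi_j|}$ is bounded uniformly in $j$.

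I would argue by induction on the Gram--Schmidt index $j$. The base case $j=1$, where $t_1 = \partial_{\xi_1}m^\eps_{\xi_1}/\|\partial_{\xi_1}m^\eps_{\xi_1}\|_{L^2}$, is immediate. For the inductive step, assuming the expansion for $t_1,\dots,t_{j-1}$, I would first bound the Gram--Schmidt coefficients via $\langle \partial_{\xi_j}m^\eps_{\xi_j}, t_\ell\rangle = \sum_{i\le\ell} a_{\ell i}\langle\partial_{\xi_j}m^\eps_{\xi_j}, \partial_{\xi_i}m^\eps_{\xi_i}\rangle$: combining the inductive bounds on $a_{\ell i}$, the estimate $|\langle\partial_{\xi_j}m^\eps_{\xi_j}, \partial_{\xi_i}m^\eps_{\xi_i}\rangle|\les e^{-c|\xi_i-\xi_j|}$, and the triangle inequality $|\xi_i-\xi_\ell| + |\xi_i-\xi_j| \ge |\xi_\ell-\xi_j|$, one obtains $|\langle\partial_{\xi_j}m^\eps_{\xi_j}, t_\ell\rangle| \les e^{-c|\xi_\ell-\xi_j|} \le e^{-cd}$ (implicit constant depending on $k$). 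The Pythagorean identity then gives $\|w_j\|_{L^2}^2 = \|\partial_{\xi_j}m^\eps_{\xi_j}\|_{L^2}^2 - \sum_{\ell<j}\langle\partial_{\xi_j}m^\eps_{\xi_j}, t_\ell\rangle^2 = \|\dx m\|_{L^2(\R)}^2 - O(e^{-cd})$, which is bounded away from $0$ and $\infty$ once $d$ is large. Substituting the inductive expansions of $t_1,\dots,t_{j-1}$ into $w_j = \partial_{\xi_j}m^\eps_{\xi_j} - \sum_{\ell<j}\langle\partial_{\xi_j}m^\eps_{\xi_j}, t_\ell\rangle t_\ell$ and collecting the coefficient of each $\partial_{\xi_p}m^\eps_{\xi_p}$, it equals $1$ for $p=j$ and, by one further application of the same triangle inequality, is $\les e^{-c|\xi_p-\xi_j|}$ for $p<j$; dividing by $\|w_j\|_{L^2}$ closes the induction.

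To conclude, from $t_j = \sum_{p\le j}a_{jp}\,\partial_{\xi_p}m^\eps_{\xi_p}$ and the pointwise bounds above,
\[
|t_j(x)| \les |a_{jj}|\,e^{-c|x-\xi_j|} + \sum_{p<j}|a_{jp}|\,e^{-c|x-\xi_p|} \les e^{-c|x-\xi_j|} + \sum_{p<j} e^{-c|\xi_p-\xi_j|}\,e^{-c|x-\xi_p|} \les e^{-c|x-\xi_j|},
\]
where the last inequality uses $|\xi_p-\xi_j| + |x-\xi_p| \ge |x-\xi_j|$ and the finitely many terms are absorbed into the implicit constant (which depends on $k$). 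The one genuinely delicate point is propagating the \emph{separation-dependent} decay $|a_{ji}|\les e^{-c|\xi_i-\xi_j|}$, rather than merely a uniform smallness $|a_{ji}|\les e^{-cd}$, through the Gram--Schmidt recursion: the crude bound is not enough, because near a far-off center $\xi_p$ the target $e^{-c|x-\xi_j|}\approx e^{-c|\xi_p-\xi_j|}$ can be far smaller than $e^{-cd}$, so one must keep track of the actual pairwise distances at every stage. Everything else --- the uniform pointwise bound on $(m^\eps)'$, the value of $\|\dx m\|_{L^2(\R)}^2$, and the boundedness of the geometric sum --- is routine given the material in Sections~\ref{SEC:Notation} and~\ref{SEC:TOPSOL}.
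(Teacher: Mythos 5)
Your argument is correct and follows essentially the same route as the paper's: an induction on the Gram--Schmidt index that combines the exponential localization of the tangent vectors, the separation decay of their pairwise inner products (via \eqref{ALO}), and the triangle inequality $|\xi_p-\xi_j|+|x-\xi_p|\ge|x-\xi_j|$ to keep the bound pointwise rather than merely uniform. The only cosmetic difference is that you carry the localization through the explicit coefficients $a_{jp}$ in the expansion $t_j=\sum_{p\le j}a_{jp}\,\partial_{\xi_p}m^\eps_{\xi_p}$, whereas the paper inducts directly on the pointwise estimate $|t_\ell(x)|\les e^{-c_1|x-\xi_\ell|}$ and normalizes by $\|w_j\|_{L^2}$ implicitly; both bookkeeping schemes rest on the same inputs.
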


\begin{proof}
Note that $t_1$ is localized at $\xi_1$ with an exponentially decaying tail. Inductively assume $t_\ell$ are localized at $\xi_\ell$ for $\ell<j$. Then $|\jb{\partial_{\xi_j} m^\eps_{\xi_1,\dots, \xi_k}, t_\ell   }| \les e^{-c|\xi_j-\xi_\ell|}$ and so 
\begin{align*}
|\jb{\partial_{\xi_j} m^\eps_{\xi_1,\dots, \xi_k}, t_\ell   } t_\ell(x)|  \les e^{-c|\xi_j-\xi_\ell|} e^{-c_1|x-\xi_\ell| } \les e^{-c\min_{j \neq \ell} |\xi_j-\xi_\ell| }. 
\end{align*}

\noi 
This, together with \eqref{GRAM}, implies that
\begin{align*}
|w_j(x)|&\le |\partial_{\xi_j} m^\eps_{\xi_1,\dots, \xi_k}(x)|+\sum_{\ell<j} |\jb{\partial_{\xi_j} m^\eps_{\xi_1,\dots, \xi_k}, t_\ell}_{L^2}| |  t_\ell(x)| \les e^{-c|x-\xi_j|}
\end{align*}

\noi
as $\min_{j \neq \ell}|\xi_j-\xi_\ell| \ge d \to \infty$.

\end{proof}



We conclude this section with some discussion on the singular geometry in the collision regime,
since understanding the structure of the collision manifold is of independent interest from a geometric perspective.

When $k=2$, as $|\xi_1-\xi_2|$ becomes small, we lose the almost orthogonality \eqref{ALO} of the tangent vectors 
$\partial_{\xi_1} m^\eps_{\xi_1,\xi_2}$ and $\partial_{\xi_2} m^\eps_{\xi_1,\xi_2}$.
The Jacobian decreases rank by $1$ when $\xi_1=\xi_2$.
Writing in the coordinates $\xi_1=\xi-s$ and $\xi_2=\xi+s$  where $\xi$ is the ``center of mass'', the differentiation of the map $(\xi_1,\xi_2)\mapsto  m^\eps_{\xi_1,\xi_2} \in L^2$ 
in the $s$-direction vanishes at the boundary.

\begin{figure}[h]
\centering
\adjustbox{valign=t}{\includegraphics[scale=0.3]{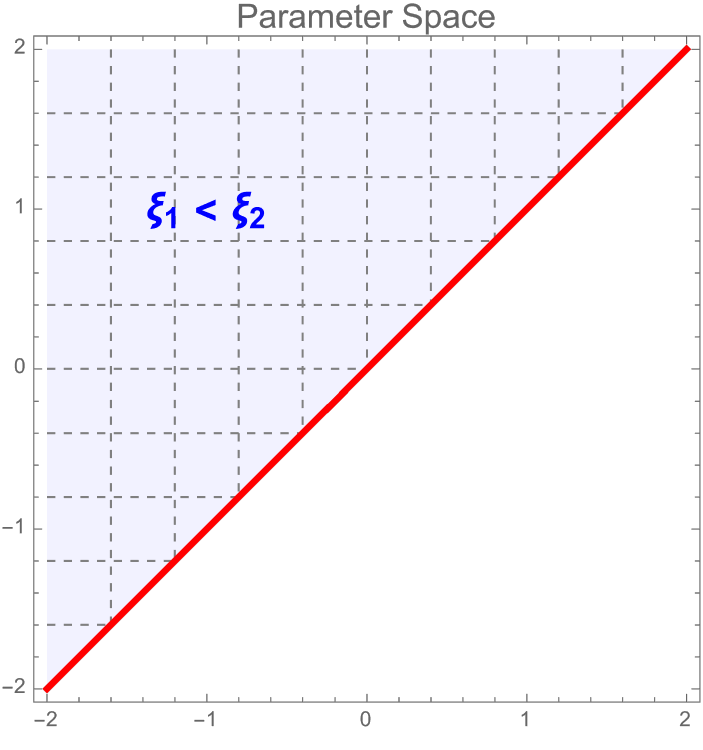}}
\hspace{0.8em}
\raisebox{-10ex}{$\rightarrow$}
\hspace{0.8em}
\adjustbox{valign=t}{\includegraphics[scale=0.4]{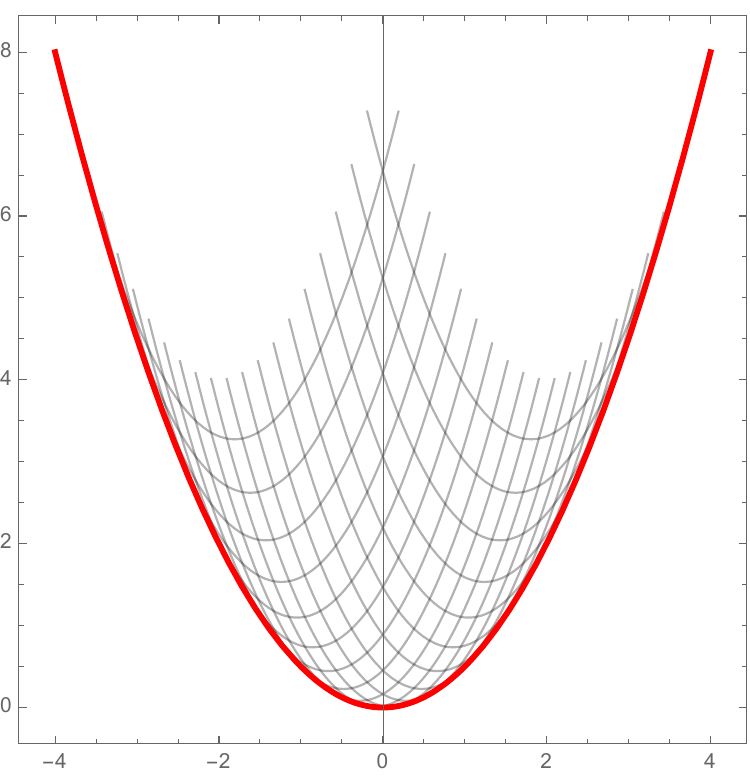}}
\caption{A map $\{(\xi_1,\xi_2):\xi_1<\xi_2\}\to \R^2$ where tangent vectors are almost orthogonal in the bulk, but asymptotically collinear near the boundary. The two families of curves in the bulk are lines of constant $\xi_1$ and $\xi_2$. Picture drawn with a toy model $(\xi_1,\xi_2) \mapsto (\xi_1+\xi_2,\xi_1^2+\xi_2^2)$ to illustrate $(\xi_1,\xi_2) \mapsto m_{\xi_1,\xi_2}$ which is not drawable in infinite dimensions.}
\end{figure}

\begin{remark}\rm\label{REM:strata}

Although for $k=2$ one may endow another differentiable structure to turn $\M_2^\eps$ and $\M_2^{\eps, <d}$ into smooth manifolds with boundaries, in general $\M_k^\eps$ and $\M_k^{\eps, <d}$ are just topological manifolds and fail to be differentiable manifolds for $k\ge 3$ due to the presence of collision strata. 

For example, when $k=3$, the parameter space $\{\xi_1\le \xi_2 \le \xi_3\}$ 
has two codimension-one faces, given by  $\xi_1 =\xi_2$ and $\xi_2=\xi_3$. Each point on these faces  admits a neighborhood that is locally diffeomorphic to $\R_{+} \times \R^2$. However, there is also an edge corresponding to $\xi_1=\xi_2=\xi_3$ and each point on this edge has a neighborhood that is locally diffeomorphic to  $\R^2_{+} \times \R$. Since $\R^2_{+}$ is the quadrant with a corner, which is homeomorphic but not diffeomorphic to the half-plane,  $\M_k$ and $\M_k^{\eps,<d}$ fail to be a differentiable manifold (or a differentiable manifold with boundary), 
although they are topological manifolds with boundaries. For $k>3$, the geometry is more complicated, as $\M_k^\eps$ and $\M_k^{\eps, <d}$ decompose into strata of different codimensions. 
\end{remark}

\section{Gaussian measures associated with Schrödinger operators}


In this section, we study the linearized operator around the multi-soliton configuration. For well-separated centers $\xi_1,\dots,\xi_k$ satisfying $\min_{i \neq j}|\xi_i-\xi_j| \ge d_\eps \to \infty$, the Hessian of the energy at $m_{\xi_1,\dots,\xi_k}=\sum_{j=1}^k m_{\xi_j} $ is
\begin{align}
\nb^2 E(m_{\xi_1,\dots,\xi_k})
&=-\dx^2+\cos(m_{\xi_1,\dots,\xi_k}) \notag 
\\
&=-\dx^2+1-2 \sum_{j=1}^k \textup{sech}^2(\cdot-\xi_j)+O(e^{-c\min_{i\neq j}|\xi_i-\xi_j| }),
\label{2ndvar}
\end{align}

\noi
where the second equality follows from the well-separated condition $\min_{i \neq j}|\xi_i-\xi_j| \ge d_\eps \to \infty$ and the structure of the multi-soliton. Note that the error term is exponentially small in the minimal separation $d_\eps$ and uniform in all $\xi_1,\dots,\xi_k$ with  $\min_{i \neq j}|\xi_i-\xi_j|  \ge d_\eps $. In Section \ref{SEC:coord}, $\nb^2 E(m_{\xi_1,\dots,\xi_k})$ plays the role of the covariance operator for a new Gaussian measure.

We begin by recalling the following well-known spectral properties of the linearized operator $\nb^2 E(m_\xi)=\mathcal{L}_\xi$ around the single kink
\begin{align}
\mathcal{L}_\xi=-\dx^2+\cos(m_\xi)=-\dx^2+1-2\text{sech}^2(\cdot-\xi),
\label{lin1}
\end{align}

\noi
where $\xi \in \R$. The potential is of the reflectionless Pöschl–Teller type.

\begin{lemma}\label{LEM:SW}
Let $\xi \in \R$.

\begin{itemize}
\item[(1)]  The linearized operator $\mathcal{L}_\xi$, defined in \eqref{lin1},  is self-adjoint. Its spectrum is given by
\begin{align*}
\s(\mathcal{L}_\xi)=\s_d \cup \s_c=\{0\}\cup [1,\infty).
\end{align*}

\smallskip

\item[(2)] The eigenfunction corresponding to the zero eigenvalue arises from the tangent vector $\partial_{\xi} m_{\xi}$ to the soliton manifold $\{ m(\cdot-\xi) \}_{\xi\in \R}$, which is associated with translation invariance
\begin{align}
\mathcal{L}_\xi (\partial_{\xi} m_{\xi})=0.
\label{L0}
\end{align}

\smallskip

\item[(3)] The linearized operator $\mathcal{L}_\xi$  satisfies the following coercivity: there exists $\ld_0 \in [1,\infty ) $ such that  
\begin{align}
\jb{\mathcal{L}_\xi v,v   } \ge \ld_0 \|v \|_{H^1}^2.
\label{AO1}
\end{align}

\noi
for every $v \in H^1(\R)$ with $\jb{v, \partial_{\xi} m_{\xi}}_{L^2(\R)}=0$.

\end{itemize}

\end{lemma}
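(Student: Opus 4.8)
The plan is to reduce everything to the case $\xi=0$ by conjugating with the translation unitary $(\tau_\xi f)(x)=f(x-\xi)$, so that $\mathcal{L}_\xi=\tau_\xi\mathcal{L}_0\tau_\xi^{-1}$ with $\mathcal{L}_0=-\dx^2+1-2\,\textup{sech}^2 x$; all of the spectral statements in \textbf{(1)}, the eigenrelation in \textbf{(2)}, and the coercivity inequality in \textbf{(3)} are invariant under this conjugation, and $\tau_\xi$ carries $\partial_x m_0$ to $\partial_\xi m_\xi$ up to sign. For \textbf{(1)}, self-adjointness on the domain $H^2(\R)$ is immediate since the potential $1-2\,\textup{sech}^2 x$ is real and bounded, so $\mathcal{L}_0$ is a bounded self-adjoint perturbation of $-\dx^2$. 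Since $-2\,\textup{sech}^2 x\to 0$ as $|x|\to\infty$, multiplication by this potential is relatively compact with respect to $-\dx^2+1$, and Weyl's theorem on the invariance of the essential spectrum gives $\sigma_{\textup{ess}}(\mathcal{L}_0)=\sigma_{\textup{ess}}(-\dx^2+1)=[1,\infty)$. It then remains to identify the discrete spectrum in $(-\infty,1)$.

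I would handle the discrete spectrum and part \textbf{(2)} together. Differentiating the Euler--Lagrange equation $-\dx^2 m_\xi+\sin m_\xi=0$ in $\xi$ gives $-\dx^2(\partial_\xi m_\xi)+\cos(m_\xi)\,\partial_\xi m_\xi=0$, i.e. $\mathcal{L}_\xi(\partial_\xi m_\xi)=0$, which is exactly \eqref{L0}; for $\xi=0$ this reads $\mathcal{L}_0(\textup{sech}\,x)=0$ (using $\partial_\xi m_\xi(x)=-2\,\textup{sech}(x-\xi)$), and can also be checked directly from the identity $(\textup{sech}\,x)''=\textup{sech}\,x-2\,\textup{sech}^3 x$. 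Since $\textup{sech}\,x>0$ is nodeless, it is the ground state of the one-dimensional Schrödinger operator $\mathcal{L}_0$; hence $0=\min\sigma(\mathcal{L}_0)$ and $0$ is a simple eigenvalue. To rule out any further eigenvalue in the gap $(0,1)$, I would invoke the exactly solvable structure of the Pöschl--Teller operator $-\dx^2-2\,\textup{sech}^2 x$, which corresponds to coupling $\ell(\ell+1)=2$, $\ell=1$, and therefore has exactly one bound state (at energy $-1$, shifted to $0$ after adding $1$). This yields $\sigma(\mathcal{L}_\xi)=\{0\}\cup[1,\infty)$.

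For \textbf{(3)}, the spectral picture from \textbf{(1)}--\textbf{(2)} gives, for every $v\in H^1(\R)$ with $\jb{v,\partial_\xi m_\xi}_{L^2(\R)}=0$ (equivalently, $v$ orthogonal to the one-dimensional zero eigenspace spanned by $\textup{sech}(\cdot-\xi)$), the $L^2$-gap bound $\jb{\mathcal{L}_\xi v,v}\ge\|v\|_{L^2}^2$. To upgrade this to an $H^1$ estimate I would combine it with the elementary bound $\jb{\mathcal{L}_\xi v,v}=\|\dx v\|_{L^2}^2+\int_\R\cos(m_\xi)\,v^2\,dx\ge\|\dx v\|_{L^2}^2-\|v\|_{L^2}^2$, valid since $\cos\ge-1$. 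Writing $\jb{\mathcal{L}_\xi v,v}=\theta\jb{\mathcal{L}_\xi v,v}+(1-\theta)\jb{\mathcal{L}_\xi v,v}$ for a small $\theta\in(0,\tfrac12)$ and inserting the two bounds gives $\jb{\mathcal{L}_\xi v,v}\ge\theta\|\dx v\|_{L^2}^2+(1-2\theta)\|v\|_{L^2}^2\ge\ld_0\|v\|_{H^1}^2$ with $\ld_0:=\min(\theta,1-2\theta)>0$.

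The only non-elementary ingredient is ruling out eigenvalues of $\mathcal{L}_0$ in the gap $(0,1)$; everything else follows from Weyl's theorem, simplicity of the ground state, and the interpolation trick. For that step I would either cite the standard solvability of the Pöschl--Teller spectrum (cf.\ the references on kink stability already used in the proof of Lemma \ref{LEM:SEC1}, e.g.\ \cite{TW, Frank}), or, if a self-contained argument is preferred, count the negative eigenvalues of $-\dx^2-2\,\textup{sech}^2 x$ via the Birman--Schwinger principle, which shows there is exactly one. I expect this to be the main (and essentially the only) obstacle; the rest is routine.
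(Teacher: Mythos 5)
The paper does not actually supply a proof of this lemma: it simply cites \cite[Lemma 3.1]{CL}. Your proposal supplies a genuine, self-contained argument, and it is correct. The reduction to $\xi=0$ by conjugating with the translation unitary, self-adjointness as a bounded real perturbation of $-\dx^2$, and Weyl's theorem for $\sigma_{\mathrm{ess}}=[1,\infty)$ are all exactly as one would want. Differentiating the Euler--Lagrange equation in $\xi$ to obtain $\mathcal{L}_\xi(\partial_\xi m_\xi)=0$ is the standard derivation of \eqref{L0}, and your direct check $(\operatorname{sech} x)''=\operatorname{sech} x-2\operatorname{sech}^3 x$ is a clean corroboration. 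You correctly isolate the one non-routine ingredient, namely ruling out discrete spectrum in the gap $(0,1)$; the nodelessness argument pins down $0$ as the ground state and forces simplicity, and the Pöschl--Teller exact solvability (coupling $\ell(\ell+1)=2$, hence exactly one bound state) or, equivalently, a Birman--Schwinger count, closes the gap. The interpolation in part (3), averaging the $L^2$ spectral-gap bound with the trivial bound $\jb{\mathcal{L}_\xi v,v}\ge\|\dx v\|_{L^2}^2-\|v\|_{L^2}^2$, is the standard device to upgrade $L^2$- to $H^1$-coercivity and is carried out correctly.

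One small remark: your proof produces a constant $\ld_0=\min(\theta,1-2\theta)\in(0,\tfrac12)$, not $\ld_0\in[1,\infty)$ as written in the statement of the lemma. This is in fact forced: since $\cos(m_\xi)\le 1$ with strict inequality near $\xi$, one has $\jb{\mathcal{L}_\xi v,v}\le\|v\|_{H^1}^2$ for all $v$ and $<\|v\|_{H^1}^2$ for $v$ localized near the kink, so a coercivity constant $\ld_0\ge 1$ for the $H^1$-norm is impossible. The ``$\ld_0\in[1,\infty)$'' in the lemma statement appears to be a slip (presumably a confusion with the $L^2$ spectral gap, which is $\ge 1$); the bound actually used downstream is precisely the $H^1$-coercivity with some $\ld_0>0$, which your argument delivers.
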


\begin{proof}
For the proof, see \cite[Lemma 3.1]{CL}.
\end{proof}

We denote 
\begin{align}\label{linQ}
\mathcal{L}_{\xi_1,\dots,\xi_k}:=-\dx^2+1-2\sum_{j=1}^k \text{sech}^2(\cdot-\xi_j).
\end{align}


\noi
Note that the tangent vectors with respect to the center coordinates are
\begin{align*}
\partial_{\xi_j} m_{\xi_1,\dots,\xi_k}=\partial_{\xi_{j}} m_{\xi_j}=-2 \text{sech}(\cdot-\xi_j), \quad j=1,\dots,k.
\end{align*}


\noi
This implies that 
\begin{align*}
\mathcal{L}_{\xi_1,\dots,\xi_k} (\partial_{\xi_j} m_{\xi_1,\dots,\xi_k})&=(-\dx^2+1-2\text{sech}^2(\cdot-\xi_j))\partial_{\xi_{j}} m_{\xi_j} +4 \sum_{i \neq j} \text{sech}^2(\cdot-\xi_i) \text{sech}(\cdot-\xi_j)\\
&=4 \sum_{i \neq j} \text{sech}^2(\cdot-\xi_i) \text{sech}(\cdot-\xi_j),
\end{align*}

\noi
where we used  $\mathcal{L}_{\xi_j} (\partial_{\xi_{j}} m_{\xi_j})=0$  from \eqref{L0}. Hence, the linearized operator $\mathcal{L}_{\xi_1,\dots,\xi_k}$ in \eqref{linQ} satisfies
\begin{align*}
\jb{\mathcal{L}_{\xi_1,\dots,\xi_k} (\partial_{\xi_j} m_{\xi_1,\dots,\xi_k}), \partial_{\xi_j} m_{\xi_1,\dots,\xi_k} }&= -8 \sum_{i \neq j} \int_{\R} \text{sech}^2(x-\xi_i) \text{sech}^2(x-\xi_j) dx\\
&=-O(e^{- \min_{i \neq j}|\xi_i-\xi_j|  } ), 
\end{align*}

\noi 
In the second line, we used the fact that  $\text{sech}(\cdot-\xi_i)$ is localized around $\xi_i$ with exponentially decaying tails.
Consequently, $\mathcal{L}_{\xi_1,\dots,\xi_k}$ has $k$ near-zero (negative) eigenvalues of size $O(e^{-c d_\eps})$, where $d_\eps=\min_{i \neq j} |\xi_i-\xi_j|$.

The next lemma shows that if the centers of the multi-soliton $m_{\xi_1, \dots,\xi_k}$ are sufficiently separated, then the linearized operator $\mathcal{L}_{\xi_1,\dots,\xi_k}$ is uniformly coercive on the normal space, that is, after projecting off the tangent vectors.

\begin{lemma}\label{LEM:coer2}
There exists $\ld_0, d_0>0$ such that, under the separation condition $\min_{i \neq j}|\xi_i-\xi_j| \ge d_0$, the coercivity 
\begin{align*}
\jb{\mathcal{L}_{\xi_1,\dots, \xi_k}v,v }_{L^2(\R)}  \ge \ld_0  \| v\|_{H^1(\R)}^2  
\end{align*}

\noi
holds for every $v \in H^1(\R)$  with $\jb{v,  \partial_{\xi_j} m_{\xi_1,\dots,\xi_k} }_{L^2(\R)}=0$, $j=1,\dots, k$.
\end{lemma}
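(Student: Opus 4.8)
The plan is to establish coercivity by an IMS-type localization adapted to the well-separated centers, combined with the single-kink coercivity of Lemma~\ref{LEM:SW}(3). The crucial structural observation is that $\partial_{\xi_j}m_{\xi_1,\dots,\xi_k}=\partial_{\xi_j}m_{\xi_j}=-2\,\text{sech}(\cdot-\xi_j)$ is \emph{exactly} the zero mode of the single-kink operator $\mathcal{L}_{\xi_j}$, so the hypothesis reads precisely $\jb{v,\text{sech}(\cdot-\xi_j)}_{L^2(\R)}=0$ for every $j$. First I would fix a partition of unity $\{\chi_j\}_{j=0}^k$ on $\R$ with $\sum_{j=0}^k\chi_j^2\equiv 1$, where for $1\le j\le k$ the cutoff $\chi_j$ is supported in $\{|x-\xi_j|\le d_0/4\}$ and equals $1$ on $\{|x-\xi_j|\le d_0/8\}$ (so that $\min_{i\neq j}|\xi_i-\xi_j|\ge d_0$ forces the supports of $\chi_1,\dots,\chi_k$ to be pairwise disjoint), $\chi_0$ is supported in the complementary far region, and $\|\partial_x\chi_j\|_{L^\infty}\les d_0^{-1}$. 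The IMS identity $\mathcal{L}_{\xi_1,\dots,\xi_k}=\sum_{j=0}^k\chi_j\mathcal{L}_{\xi_1,\dots,\xi_k}\chi_j-\sum_{j=0}^k|\partial_x\chi_j|^2$ (valid since $\sum_j\chi_j^2\equiv 1$, the commutator terms cancelling because $\sum_j\chi_j\partial_x\chi_j=\tfrac12\partial_x(\sum_j\chi_j^2)=0$) gives, upon testing against $v$,
\begin{align*}
\jb{\mathcal{L}_{\xi_1,\dots,\xi_k}v,v}_{L^2(\R)}=\sum_{j=0}^k\jb{\mathcal{L}_{\xi_1,\dots,\xi_k}(\chi_j v),\chi_j v}_{L^2(\R)}-\sum_{j=0}^k\big\||\partial_x\chi_j|\,v\big\|_{L^2(\R)}^2,
\end{align*}
and a dual computation yields $\sum_{j=0}^k\|\chi_j v\|_{H^1(\R)}^2=\|v\|_{H^1(\R)}^2+\sum_{j=0}^k\||\partial_x\chi_j|v\|_{L^2(\R)}^2$. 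Thus it suffices to bound each localized form below by a fixed fraction of $\|\chi_j v\|_{H^1(\R)}^2$ up to errors vanishing as $d_0\to\infty$.

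On a near region, by Lemma~\ref{LEM:sol}(iii) and the exponential decay of $\text{sech}$, the potential difference $\mathcal{L}_{\xi_1,\dots,\xi_k}-\mathcal{L}_{\xi_j}=-2\sum_{i\neq j}\text{sech}^2(\cdot-\xi_i)$ is $O(e^{-cd_0})$ on $\supp\chi_j$, so $\jb{\mathcal{L}_{\xi_1,\dots,\xi_k}(\chi_j v),\chi_j v}=\jb{\mathcal{L}_{\xi_j}(\chi_j v),\chi_j v}+O\big(e^{-cd_0}\|v\|_{L^2(\R)}^2\big)$. Writing $\psi_j=\text{sech}(\cdot-\xi_j)/\|\text{sech}\|_{L^2(\R)}$ and $P_j$ for the $L^2$-orthogonal projection onto $\psi_j$, the hypothesis $\jb{v,\psi_j}=0$ together with the fact that $1-\chi_j$ vanishes on $\{|x-\xi_j|\le d_0/8\}$ gives $|\jb{\chi_j v,\psi_j}|=|\jb{v,(\chi_j-1)\psi_j}|\les e^{-cd_0}\|v\|_{L^2(\R)}$, i.e. $\chi_j v$ is almost normal to the zero mode. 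Since $\mathcal{L}_{\xi_j}\psi_j=0$ by \eqref{L0}, one has $\mathcal{L}_{\xi_j}=(\Id-P_j)\mathcal{L}_{\xi_j}(\Id-P_j)$, hence
\begin{align*}
\jb{\mathcal{L}_{\xi_j}(\chi_j v),\chi_j v}=\jb{\mathcal{L}_{\xi_j}(\Id-P_j)(\chi_j v),(\Id-P_j)(\chi_j v)}\ge\ld_0\,\|(\Id-P_j)(\chi_j v)\|_{H^1(\R)}^2
\end{align*}
by Lemma~\ref{LEM:SW}(3), while $\|(\Id-P_j)(\chi_j v)\|_{H^1(\R)}^2\ge\tfrac12\|\chi_j v\|_{H^1(\R)}^2-|\jb{\chi_j v,\psi_j}|^2\|\psi_j\|_{H^1(\R)}^2\ge\tfrac12\|\chi_j v\|_{H^1(\R)}^2-Ce^{-cd_0}\|v\|_{L^2(\R)}^2$. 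On the far region, every $|x-\xi_j|\gtrsim d_0$ on $\supp\chi_0$, so $1-2\sum_j\text{sech}^2(x-\xi_j)\ge 1-Ce^{-cd_0}\ge\tfrac12$ for $d_0$ large, giving $\jb{\mathcal{L}_{\xi_1,\dots,\xi_k}(\chi_0 v),\chi_0 v}\ge\tfrac12\|\chi_0 v\|_{H^1(\R)}^2$ directly.

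Assembling these bounds in the IMS identity, and using that $\sum_{j=0}^k\||\partial_x\chi_j|v\|_{L^2(\R)}^2\les d_0^{-2}\|v\|_{L^2(\R)}^2$ (the supports of $\partial_x\chi_1,\dots,\partial_x\chi_k$ being disjoint) while the accumulated near-region errors are $\les k\,e^{-cd_0}\|v\|_{L^2(\R)}^2$, we obtain
\begin{align*}
\jb{\mathcal{L}_{\xi_1,\dots,\xi_k}v,v}_{L^2(\R)}\ge c_0\|v\|_{H^1(\R)}^2-C\big(k\,e^{-cd_0}+d_0^{-2}\big)\|v\|_{L^2(\R)}^2,
\end{align*}
with $c_0=\min\{\ld_0/2,\,1/2\}>0$. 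Choosing $d_0$ large enough (allowed, since $k=|Q|$ is fixed) so that $C(k\,e^{-cd_0}+d_0^{-2})\le c_0/2$, and bounding $\|v\|_{L^2(\R)}^2\le\|v\|_{H^1(\R)}^2$, finishes the proof with coercivity constant $\ld_0^{\mathrm{new}}:=c_0/2$. The step requiring the most care is the near-region analysis: one must exploit that the dangerous low modes of $\mathcal{L}_{\xi_1,\dots,\xi_k}$ sit \emph{exactly} along the directions $\text{sech}(\cdot-\xi_j)$ that are killed by the single-kink operators — so that, after localization, the zero-mode contributions genuinely cancel rather than merely being small — and to take the cutoff length scale $\sim d_0$ so that the localization error $\sum_j\||\partial_x\chi_j|v\|_{L^2(\R)}^2$ tends to $0$.
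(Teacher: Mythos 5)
Your proof is correct and takes essentially the same approach as the paper: an IMS-type partition of unity adapted to the separation scale, reduction of each localized piece to the single-kink operator $\mathcal{L}_{\xi_j}$, near-orthogonality of $\chi_j v$ to the zero mode $\partial_\xi m_{\xi_j}$, and Lemma~\ref{LEM:SW}(3) for single-kink coercivity. Your write-up is in fact slightly more careful than the paper's (it treats the far-field region explicitly and records the localization error with the correct scaling $d_0^{-2}\,\|v\|_{L^2}^2$, where the paper's display has what appears to be a typo reading $d_\eps^2$ rather than $d_\eps^{-2}$), but the strategy and the key ingredients are identical.
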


\begin{proof}

Choose a smooth partition of unity such that $\sum_{j=0}^{k+1} \chi_j^2 =1$ and $\sup_j \| \partial_x \chi_j \|_{L^\infty} \les d_\eps^{-1}$, where $d_\eps=\min_{i \neq j}|\xi_i-\xi_j| \to \infty$ as $\eps \to 0$.  For $j=1,\dots,k$, $\chi_j$ is supported in a fixed neighborhood of $\xi_j$, and $\chi_0$, $\chi_{k+1}$ cover the left/right far field (vacua $0$ and $2\pi Q$). Then, we have 
\begin{align}
\jb{ \mathcal{L}_{\xi_1,\dots,\xi_k}v,v }_{L^2(\R)}&=\sum_{j} \jb{ \mathcal{L}_{\xi_1,\dots,\xi_k} (\chi_j v), \chi_j v }_{L^2(\R)} -\sum_{j} \| (\dx \chi_j) v \|_{L^2}^2 \notag \\
& \ge \sum_{j} \jb{ \mathcal{L}_{\xi_1,\dots,\xi_k} (\chi_j v), \chi_j v }_{L^2(\R)} - C d_\eps^2  \|v \|_{L^2}^2,
\label{AO00}
\end{align}

\noi
where the term $\sum_{j=1}^k \| (\dx \chi_j) v \|_{L^2}^2$ comes from expanding the kinetic term after applying a partition of unity. We analyze each localized piece to obtain a uniform coercivity estimate. Note that
\begin{align*}
\mathcal{L}_{\xi_1,\dots,\xi_k} (\chi_j v)&=\mathcal{L}_{\xi_j}(\chi_j v)-2\sum_{i:i \neq j} \text{sech}^2(\cdot-\xi_i) \chi_j v\\
&=\mathcal{L}_{\xi_j}(\chi_j v)+O(e^{-c d_\eps}) \cdot \chi_j v,
\end{align*}


\noi
where  $\mathcal{L}_{\xi_j}$ is defined as in \eqref{lin1}. This implies that 
\begin{align}
\jb{ \mathcal{L}_{\xi_1,\dots,\xi_k} (\chi_j v), \chi_j v }_{L^2(\R)}\ge \jb{ \mathcal{L}_{\xi_j}(\chi_j v), \chi_j v }_{L^2(\R)}-Ce^{-c d_\eps } \| \chi_j v \|_{L^2}^2.
\label{AO0}
\end{align}

\noi
We now show that $\chi_j v$ is almost orthogonal to $\partial_{\xi_j} m_{\xi_j}$ if $\jb{v, \partial_{\xi_j} m_{\xi_1,\dots,\xi_k}  }=0$. Indeed,
\begin{align*}
\jb{\chi_j v, \partial_{\xi_j} m_{\xi_j} }_{L^2(\R)}&=\jb{v, \partial_{\xi_j} m_{\xi_j} }_{L^2(\R)}- \sum_{i: i \neq j}\jb{v, \chi_i (\partial_{\xi_j} m_{\xi_j} )}_{L^2(\R)}\\
&=- \sum_{i: i \neq j}\jb{v, \chi_i (\partial_{\xi_j} m_{\xi_j} )}_{L^2(\R)}, 
\end{align*}

\noi
where we used  $\jb{v, \partial_{\xi_j}m}=\jb{v, \partial_{\xi_j} m_{\xi_1,\dots,\xi_k}  }=0$. Since $\partial_{\xi_j} m_{\xi_j}$ is localized around $\xi_j$ and has exponentially decaying tail, we obtain almost orthogonality
\begin{align}
|\jb{\chi_j v, \partial_{\xi_j} m_{\xi_j} }_{L^2(\R)} | \les \sum_{i: i \neq j} e^{-c d_\eps } \| \chi_i v \|_{L^2}^2.
\label{A02}
\end{align}

\noi
Combining the almost-orthogonality \eqref{A02} with the coercivity estimate \eqref{AO1} for the linearized operator $\mathcal{L}_{\xi_j}$ around the single kink $m_{\xi_j}$, we obtain
\begin{align}
\jb{ \mathcal{L}_{\xi_j}(\chi_j v), \chi_j v }_{L^2(\R)} \ge  \frac {\ld_0}2 \| \chi_j v \|_{H^1}^2 -Ce^{-c d_\eps} \sum_{i: i \neq j} \|  \chi_i v \|_{L^2}^2
\label{A03}
\end{align}

\noi
for some $\ld_0>0$. Putting the pieces together with \eqref{AO00}, \eqref{AO0}, \eqref{A03}, and taking $d_\eps \ge d_0$ large enough, we obtain 
\begin{align*}
\jb{ \mathcal{L}_{\xi_1,\dots,\xi_k}v,v }_{L^2(\R)}& \ge  \sum_{j} \jb{ \mathcal{L}_{\xi_1,\dots,\xi_k} (\chi_j v), \chi_j v }_{L^2(\R)} - C d_\eps^2  \|v \|_{L^2}^2\\
&\ge \frac {\ld_0}2 \sum_{j=1}^k \|  \chi_j v \|_{H^1}^2 -C( e^{-c d_\eps } +d_\eps^2  )\|   v \|_{L^2}^2\\
& \ge \frac {\ld_0}4 \| v\|_{H^1}^2.
\end{align*}

\noi
This completes the proof of Lemma \ref{LEM:coer2}.

\end{proof}


From now on, we study the operator $\mathcal{L}_{\xi_1,\dots,\xi_k}$, defined in \eqref{linQ}, on the finite interval $[-L_\eps, L_\eps]$ with Dirichlet boundary conditions.


\begin{lemma}\label{LEM:coer3}
Let $d_0>0$ be as in Lemma \ref{LEM:coer2}. Then there exits $L_0, \zeta>0$ such that if $L_\eps \ge L_0$ and $\min_{i \neq j} |\xi_i-\xi_j| \ge d_0 $, 
\begin{align*}
\jb{\mathcal{L}_{\xi_1,\dots,\xi_k } v,v    }_{L^2([-L_\eps,L_\eps])}  \ge \zeta \|v \|^2_{L^2([-L_\eps,L_\eps])}
\end{align*}

\noi
holds for every $v\in H^1_0([-L_\eps,L_\eps])$  with $\jb{v,  \partial_{\xi_j} m_{\xi_1,\dots,\xi_k} }_{L^2([-L_\eps,L_\eps])}=0$, $j=1,\dots, k$. Here $\zeta$  is independent of $L_\eps, \xi_1,\dots, \xi_k$. 

\end{lemma}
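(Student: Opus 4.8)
The plan is to deduce the bounded-interval estimate from the whole-line coercivity already established in Lemma \ref{LEM:coer2}, by extending admissible test functions by zero. Let $v\in H^1_0([-L_\eps,L_\eps])$ satisfy $\jb{v,\partial_{\xi_j}m_{\xi_1,\dots,\xi_k}}_{L^2([-L_\eps,L_\eps])}=0$ for $j=1,\dots,k$, and let $\tilde v$ be its extension by zero to $\R$. Since $v$ lies in $H^1_0$, its trace vanishes at $\pm L_\eps$, so $\tilde v\in H^1(\R)$ with distributional derivative $\dx\tilde v=(\dx v)\,\ind_{[-L_\eps,L_\eps]}$; in particular $\|\tilde v\|_{L^2(\R)}=\|v\|_{L^2([-L_\eps,L_\eps])}$ and $\|\dx\tilde v\|_{L^2(\R)}=\|\dx v\|_{L^2([-L_\eps,L_\eps])}$.

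The first thing I would check is that the zero extension leaves the two relevant quantities invariant. For the quadratic form, integrating by parts on $[-L_\eps,L_\eps]$, using $v(\pm L_\eps)=0$ to discard the boundary term, and noting that the potential acts by multiplication,
\[
\jb{\mathcal{L}_{\xi_1,\dots,\xi_k}v,v}_{L^2([-L_\eps,L_\eps])}=\int_{-L_\eps}^{L_\eps}\Big(|\dx v|^2+v^2-2\sum_{j=1}^k\text{sech}^2(\cdot-\xi_j)\,v^2\Big)dx=\jb{\mathcal{L}_{\xi_1,\dots,\xi_k}\tilde v,\tilde v}_{L^2(\R)}.
\]
For the orthogonality, since $\partial_{\xi_j}m_{\xi_1,\dots,\xi_k}=-2\text{sech}(\cdot-\xi_j)$ is, on $[-L_\eps,L_\eps]$, simply the restriction of the corresponding function on $\R$, and $\tilde v$ vanishes off $[-L_\eps,L_\eps]$,
\[
\jb{\tilde v,\partial_{\xi_j}m_{\xi_1,\dots,\xi_k}}_{L^2(\R)}=\jb{v,\partial_{\xi_j}m_{\xi_1,\dots,\xi_k}}_{L^2([-L_\eps,L_\eps])}=0,\qquad j=1,\dots,k.
\]
Hence $\tilde v$ is an admissible test function for Lemma \ref{LEM:coer2}.

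It then remains only to invoke Lemma \ref{LEM:coer2}, whose separation hypothesis $\min_{i\neq j}|\xi_i-\xi_j|\ge d_0$ is exactly ours:
\[
\jb{\mathcal{L}_{\xi_1,\dots,\xi_k}v,v}_{L^2([-L_\eps,L_\eps])}=\jb{\mathcal{L}_{\xi_1,\dots,\xi_k}\tilde v,\tilde v}_{L^2(\R)}\ge\ld_0\|\tilde v\|_{H^1(\R)}^2\ge\ld_0\|\tilde v\|_{L^2(\R)}^2=\ld_0\|v\|_{L^2([-L_\eps,L_\eps])}^2,
\]
so the lemma holds with $\zeta:=\ld_0$, which by Lemma \ref{LEM:coer2} is independent of $L_\eps$ and of $\xi_1,\dots,\xi_k$. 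The same chain in fact gives the stronger $H^1_0$-coercivity $\jb{\mathcal{L}_{\xi_1,\dots,\xi_k}v,v}_{L^2}\ge\ld_0\|v\|_{H^1_0}^2$, of which the stated $L^2$-bound is a weakening. The hypothesis $L_\eps\ge L_0$ is not used in the inequality itself; it only serves to guarantee that the relevant parameter region is nonempty (e.g. that $\cj L_\eps>0$, and that $k$ points with pairwise separation $\ge d_0$ fit inside the admissible range).

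Since the argument is entirely soft, there is no genuine analytic obstacle; the only point deserving a moment's care is conceptual. On $[-L_\eps,L_\eps]$ with Dirichlet conditions the operator $\mathcal{L}_{\xi_1,\dots,\xi_k}$ has \emph{no} exact zero mode (the would-be zero eigenfunctions $\text{sech}(\cdot-\xi_j)$ do not satisfy the Dirichlet condition), and a direct spectral-gap argument on the interval would need to control the small Dirichlet eigenvalues uniformly in $L_\eps$; routing through the whole-line statement by zero extension sidesteps this and makes the constant $\zeta$ manifestly uniform in $L_\eps$. If a later section requires the analogous bound with the approximating tangent vectors $\partial_{\xi_j}m^\eps_{\xi_1,\dots,\xi_k}$ in place of the exact ones, the same proof applies after absorbing the $O(e^{-c\eps^{-\frac12+2\eta}})$ discrepancy coming from \eqref{APM1} into $\zeta$.
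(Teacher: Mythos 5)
Your proof is correct and rests on exactly the paper's key idea: extend $v$ by zero to $\tilde v\in H^1(\R)$, observe that both the quadratic form and the orthogonality conditions are preserved under this extension, and then invoke the whole-line coercivity from Lemma~\ref{LEM:coer2}. The paper phrases this same argument as a proof by contradiction along sequences $L_n\to\infty$, but as your direct version makes plain, the contradiction framing is logically superfluous and the constant $\zeta=\ld_0$ comes out immediately, uniformly in $L_\eps$ and $\xi_1,\dots,\xi_k$.
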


\begin{proof}
Assume not. Then there exist sequences $L_n \to \infty$, configurations $\xi_1^n,\dots, \xi_k^n$ with $\min_{i\neq j} |\xi_i^n-\xi_j^n | \ge d_0$, and $v_n \in H^1_0([-L_n,L_n])$ with $\jb{v_n,  \partial_{\xi_j^n} m_{\xi_1^n,\dots,\xi_k^n} }_{L^2([-L_n,L_n])}=0$   such that
\begin{align*}
\jb{\mathcal{L}_{\xi_1^n,\dots, \xi_k^n}v_n,v_n}_{L^2([-L_n,L_n])} \le \frac 1n \| v_n\|^2_{L^2([-L_n,L_n])}.
\end{align*}

\noi
We extend $v_n$ by zero outside $[-L_\eps, L_\eps]$ and denote the resulting function by $\wt v_n$. Then $\wt v_n \in H^1(\R)$ and 
\begin{align}
\jb{\mathcal{L}_{\xi_1^n,\dots, \xi_k^n} \wt v_n, \wt v_n}_{L^2(\R)} =
\jb{\mathcal{L}_{\xi_1^n,\dots, \xi_k^n}v_n,v_n}_{L^2([-L_n,L_n])} \le \frac 1n \| v_n\|^2_{L^2([-L_n,L_n])}= \frac 1n \|\wt v_n\|^2_{L^2(\R)},
\label{L1}   
\end{align}

\noi
where we used the Dirichlet boundary condition. Since 
\begin{align*}
\jb{\wt v_n, \partial_{\xi_j^n} m_{\xi_1^n,\dots,\xi_k^n} }_{L^2(\R)} =\jb{v_n, \partial_{\xi_j^n} m_{\xi_1^n,\dots,\xi_k^n}  }_{L^2([-L_n, L_n])}=0
\end{align*}

\noi
for $j=1,\dots,k$, Lemma \ref{LEM:coer2} implies that  
\begin{align}
\jb{\mathcal{L}_{\xi_1^n,\dots, \xi_k^n} \wt v_n, \wt v_n}_{L^2(\R)} \ge \ld_0 \| \wt v_n\|_{L^2(\R)}^2.
\label{L2}
\end{align}

\noi 
Combining \eqref{L1} and \eqref{L2} gives a contradiction for large $n$. 
Hence, a uniform $\zeta>0$ exists.

\end{proof}

According to Lemma \ref{LEM:coer3}, to avoid zero eigenvalues, we need to project off the tangential directions. Once the zero modes are removed, we can invert the operator on the subspace, namely, the normal space $V_{\xi_1,\dots,\xi_k}$.  We define the projected operator 
\begin{align}
C_{\xi_1,\dots,\xi_k}=\P_{V_{\xi_1,\dots, \xi_k} } \big(-\dx^2+1-2\sum_{j=1}^k \text{sech}^2(\cdot-\xi_j) \big)^{-1} \P_{V_{\xi_1,\dots, \xi_k} },
\label{cov}
\end{align}

\noi
viewed as an operator on the finite interval $[-L_\eps, L_\eps]$, with Dirichlet boundary conditions. Here the projection  $\P_{V_{\xi_1,\dots,\xi_k}}$ is given by
\begin{align}
\P_{V_{\xi_1,\dots,\xi_k}}
=\Id-  \sum_{j=1}^k  \jb{  \cdot,  t_j  }   t_j
=\Id-\sum_{j=1}^k \P_j,
\label{proj1}
\end{align}

\noi
where $t_1,\dots,t_k$ are obtained by applying the Gram–Schmidt orthonormalization to the tangent vectors $\partial_{\xi_j}  m^\eps_{\xi_1,\dots,\xi_k}$, $j=1,\dots,k$, as described above \eqref{GRAM}, and
\begin{align}
\P_j= \jb{\cdot, t_j  } t_j.
\label{dec00}
\end{align}


\noi
As mentioned in Section \ref{SEC:Notation}, the inner product $\jb{\cdot,\cdot}$ is understood as $\jb{\cdot, \cdot}_{L^2([-L_\eps, L_\eps])}$ unless otherwise specified.

We are now ready to define the Gaussian measure with the covariance operator $C_{\xi_1,\dots,\xi_k}$. 
\begin{lemma}\label{LEM:Gauss}
There exist sufficiently large $L_0$, $d_0>0$ such that if $L_\eps \ge L_0$ and $\min_{i\neq j} |\xi_i-\xi_j| \ge d_0$, we can define the Gaussian measure  
\begin{align*}
d\nu^\perp_{ \xi_1,\dots,\xi_k}=Z_{\xi_1,\dots,\xi_k}^{-1} e^{-\frac 12 \jb{C_{\xi_1,\dots, \xi_k}^{-1} v,v  } }\prod_{x \in [-L_\eps, L_\eps] } dv(x).
\end{align*}

\noi
for any fixed $\xi_1,\dots,\xi_k \in [-L_\eps, L_\eps]$.

\end{lemma}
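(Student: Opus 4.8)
The plan is to realize $C_{\xi_1,\dots,\xi_k}$ as the inverse of a uniformly positive self-adjoint operator on the normal space $V_{\xi_1,\dots,\xi_k}$, verify that this inverse is trace class, and then invoke the standard construction of a centered Gaussian measure on a separable Hilbert space from a prescribed trace-class covariance.

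First I would record the structure of $\mathcal{L}_{\xi_1,\dots,\xi_k}=-\dx^2+1-2\sum_{j=1}^k \text{sech}^2(\cdot-\xi_j)$ on $[-L_\eps,L_\eps]$ with Dirichlet boundary conditions: since the potential is bounded, $\mathcal{L}_{\xi_1,\dots,\xi_k}$ is a bounded self-adjoint perturbation of $-\dx^2+1$ with Dirichlet boundary conditions, hence self-adjoint with compact resolvent and discrete spectrum $\ld_1\le \ld_2\le \cdots \to \infty$, and by the min–max principle $\ld_n\ge \big(\tfrac{n\pi}{2L_\eps}\big)^2+1-2k$, so $\ld_n\approx n^2/L_\eps^2$ for $n$ large. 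The operator $C_{\xi_1,\dots,\xi_k}$ in \eqref{cov} is then understood as the inverse of the compression $A_{\xi_1,\dots,\xi_k}:=\P_{V_{\xi_1,\dots,\xi_k}}\mathcal{L}_{\xi_1,\dots,\xi_k}\P_{V_{\xi_1,\dots,\xi_k}}$, regarded as a self-adjoint operator on the closed codimension-$k$ subspace $V_{\xi_1,\dots,\xi_k}\subset L^2([-L_\eps,L_\eps])$ (extended by $0$ on the tangent space); projecting onto $V_{\xi_1,\dots,\xi_k}$ is exactly what discards the $k$ near-zero eigenvalues of $\mathcal{L}_{\xi_1,\dots,\xi_k}$, which on the finite interval may be of either sign.

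Next I would upgrade Lemma \ref{LEM:coer3} — whose coercivity constant $\zeta$ is stated for $v\perp \partial_{\xi_j}m_{\xi_1,\dots,\xi_k}$ — to the space $V_{\xi_1,\dots,\xi_k}$, which is defined via the approximate tangent vectors $\partial_{\xi_j}m^\eps_{\xi_1,\dots,\xi_k}$. By \eqref{APM2}–\eqref{APM3} and Lemma \ref{LEM:TGRAM} the two families of vectors, hence the two subspaces, differ by $O(e^{-c\eps^{-1/2+2\eta}})$, so a routine perturbation argument gives, after shrinking $\zeta$ to $\zeta/2$ and enlarging $L_0,d_0$ if necessary, $\jb{\mathcal{L}_{\xi_1,\dots,\xi_k}v,v}_{L^2([-L_\eps,L_\eps])}\ge \tfrac\zeta2\|v\|_{L^2([-L_\eps,L_\eps])}^2$ for all $v\in V_{\xi_1,\dots,\xi_k}$. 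Thus $A_{\xi_1,\dots,\xi_k}\ge \tfrac\zeta2\,\Id$ on $V_{\xi_1,\dots,\xi_k}$, so it is boundedly invertible there with $\|C_{\xi_1,\dots,\xi_k}\|\le 2/\zeta$; moreover its eigenvalues $a_1\le a_2\le \cdots$ satisfy $a_n\ge \ld_n$ up to the harmless shift by $2k$ by eigenvalue interlacing for compressions, so $a_n\gtrsim n^2/L_\eps^2$ and $\sum_n a_n^{-1}<\infty$. Hence $C_{\xi_1,\dots,\xi_k}$ is a non-negative, self-adjoint, trace-class operator on $V_{\xi_1,\dots,\xi_k}$.

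Finally I would conclude by the standard Hilbert-space construction: let $\{f_n\}_{n\ge1}$ be an $L^2$-orthonormal eigenbasis of $A_{\xi_1,\dots,\xi_k}$ in $V_{\xi_1,\dots,\xi_k}$ with $A_{\xi_1,\dots,\xi_k}f_n=a_nf_n$ (so $C_{\xi_1,\dots,\xi_k}f_n=a_n^{-1}f_n$), and let $(g_n)_{n\ge1}$ be i.i.d.\ standard Gaussians. Since $\sum_n a_n^{-1}<\infty$, the series $\sum_n a_n^{-1/2}g_n f_n$ converges almost surely and in $L^2(\PP;L^2([-L_\eps,L_\eps]))$; we define $\nu^\perp_{\xi_1,\dots,\xi_k}$ to be its law. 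This is the centered Gaussian probability measure on $V_{\xi_1,\dots,\xi_k}$ with covariance $C_{\xi_1,\dots,\xi_k}$, which is the rigorous meaning of the formal density $Z_{\xi_1,\dots,\xi_k}^{-1}e^{-\frac12\jb{C_{\xi_1,\dots,\xi_k}^{-1}v,v}}\prod_{x\in[-L_\eps,L_\eps]}dv(x)$, and it is uniquely determined since a Gaussian measure on a separable Hilbert space is fixed by its mean and covariance. The restrictions $L_\eps\ge L_0$ and $\min_{i\ne j}|\xi_i-\xi_j|\ge d_0$ enter only through the use of Lemma \ref{LEM:coer3}. The only step needing genuine care is the coercivity transfer from the exact normal space to $V_{\xi_1,\dots,\xi_k}$ — equivalently, the observation that it is the compression $A_{\xi_1,\dots,\xi_k}$, not $\mathcal{L}_{\xi_1,\dots,\xi_k}$ itself, that must be inverted; once that is secured, everything else is textbook Gaussian measure theory on Hilbert space.
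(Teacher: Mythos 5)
Your proposal is correct and follows essentially the same route as the paper: discreteness of spectrum with $\ld_n\sim n^2/L_\eps^2$ for the Sturm--Liouville operator, strict positivity of the compression to the normal space via Lemma~\ref{LEM:coer3}, trace-class inverse, and the standard Gaussian construction on a separable Hilbert space. Your one genuine refinement is the perturbation step transferring the coercivity of Lemma~\ref{LEM:coer3} (stated for the orthogonal complement of the \emph{exact} tangent vectors $\partial_{\xi_j}m_{\xi_1,\dots,\xi_k}$) to $V_{\xi_1,\dots,\xi_k}$ (defined via the \emph{approximate} tangent vectors $\partial_{\xi_j}m^\eps_{\xi_1,\dots,\xi_k}$) using the exponentially small discrepancy from \eqref{APM2}--\eqref{APM3}; the paper's proof applies Lemma~\ref{LEM:coer3} directly without recording this transfer, so your version is slightly more careful but not materially different.
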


\begin{proof}
The operator $\mathcal{L}_{\xi_1,\dots,\xi_k}=-\dx^2+1-2\sum_{j=1}^k \text{sech}^2(\cdot-\xi_j)$, considered on the finite interval $[-L_\eps, L_\eps]$, is a self-adjoint Sturm–Liouville operator since $2\sum_{j=1}^k \text{sech}^2(\cdot-\xi_j)$ is a Schwartz function. Hence, the operator has purely discrete spectrum $\{\ld_n\}_n$ with $\ld_n \sim |n|^2$ as $n \to \infty$. In particular, if the operator is strictly positive, its inverse belongs to the trace class. 
By Lemma \ref{LEM:coer3}, the projected operator is strictly positive, hence its inverse on that subspace is trace class. Therefore, the corresponding Gaussian measure is well defined.

\end{proof}

Before concluding this subsection, we present the structure of the partition function of the Gaussian measure in Lemma \ref{LEM:Gauss}. As the interaction between solitons becomes negligible, the spectrum of the multi-well operator $\mathcal{L}_{\xi_1,\dots,\xi_k}$ becomes exactly $k$ copies of the single-well spectrum.

\begin{lemma}\label{LEM:part} 
Let $Q\in \Z$ with $|Q|=k$. Then we have 
\begin{align*}
Z_{\xi_1,\dots,\xi_k}=(Z_0)^k(1+O(e^{-cd_\eps})),
\end{align*}

\noi
uniformly in $\xi_1,\dots,\xi_k$ satisfying $\min_{i \neq j} |\xi_i-\xi_j| \ge d_\eps \to \infty $ as $\eps \to 0$, where $Z_0$ is the partition function of the Gaussian measure associated with the single-well operator $\mathcal{L}_0=-\dx^2+1-\textup{sech}^2(x)$.
\end{lemma}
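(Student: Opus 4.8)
The plan is to express $Z_{\xi_1,\dots,\xi_k}$, up to divergent constants that cancel in the ratio with $(Z_0)^k$, as a Fredholm determinant measured relative to the Ornstein--Uhlenbeck Gaussian with covariance $(-\dx^2+1)^{-1}$ (the conversion from the Lebesgue normalization used in Lemma~\ref{LEM:Gauss} producing a factor that itself factorizes over the wells, by the localization of the $t_j$), and then to exploit that the potential bumps $2\,\text{sech}^2(\cdot-\xi_j)$ and the Gram--Schmidt vectors $t_1,\dots,t_k$ of Lemma~\ref{LEM:TGRAM} are localized around the well-separated centers $\xi_1,\dots,\xi_k$. Cross-well interactions will be controlled by the resolvent bound $|(-\dx^2+1)^{-1}(x,y)|\les e^{-|x-y|}$ on $[-L_\eps,L_\eps]$ (which differs from $\tfrac12 e^{-|x-y|}$ by $O(e^{-cL_\eps})$, negligible) together with the exponential localization of the bumps and of the $t_j$.

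First I would reduce to the strictly positive projected operator. By the conditional-Gaussian / Schur-complement identity for the codimension-$k$ subspace $V_{\xi_1,\dots,\xi_k}=\mathrm{span}\{t_1,\dots,t_k\}^\perp$,
\[
(\det C_{\xi_1,\dots,\xi_k})^{-1}=\det\bigl(\mathcal{L}_{VV}-\mathcal{L}_{VW}\mathcal{L}_{WW}^{-1}\mathcal{L}_{WV}\bigr),
\]
where $\mathcal{L}=\mathcal{L}_{\xi_1,\dots,\xi_k}$ and $\mathcal{L}_{VV},\mathcal{L}_{WW},\dots$ are its blocks in $L^2=V_{\xi_1,\dots,\xi_k}\oplus\mathrm{span}\{t_1,\dots,t_k\}$. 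Since $\mathcal{L}t_j=\mathcal{L}_{\xi_j}t_j-2\sum_{i\neq j}\text{sech}^2(\cdot-\xi_i)\,t_j$ with $\|\mathcal{L}_{\xi_j}t_j\|\les e^{-c\eps^{-1/2+2\eta}}$ ($t_j$ being an approximate zero mode of $\mathcal{L}_{\xi_j}$, cf.\ \eqref{APM0} and \eqref{L0}) and $\|\text{sech}^2(\cdot-\xi_i)\,t_j\|_{L^2}\les e^{-c|\xi_i-\xi_j|}\le e^{-cd_\eps}$ by the localization of $t_j$, the correction $\mathcal{L}_{VW}\mathcal{L}_{WW}^{-1}\mathcal{L}_{WV}$ is finite rank with trace norm $\les e^{-cd_\eps}$ — the large factor $\|\mathcal{L}_{WW}^{-1}\|\les e^{cd_\eps}$ (coming from the $k$ near-zero eigenvalues of $\mathcal{L}$) being beaten by the two factors $\|\mathcal{L}_{VW}\|,\|\mathcal{L}_{WV}\|\les e^{-cd_\eps}$. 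As $\mathcal{L}_{VV}=\P_{V_{\xi_1,\dots,\xi_k}}\mathcal{L}\,\P_{V_{\xi_1,\dots,\xi_k}}\ges\zeta$ by Lemma~\ref{LEM:coer3}, this gives
\[
Z_{\xi_1,\dots,\xi_k}\propto(\det\mathcal{L}_{VV})^{-1/2}\,\bigl(1+O(e^{-cd_\eps})\bigr),
\]
and the same computation for a single kink (where the Schur correction vanishes since $t_1$ is the exact zero mode) gives $Z_0\propto(\det(\P_{V_0}\mathcal{L}_0\P_{V_0}))^{-1/2}$, a quantity invariant under translation.

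Second I would factorize $\det\mathcal{L}_{VV}$ over the wells. Measured relative to $-\dx^2+1$ restricted to $V_{\xi_1,\dots,\xi_k}$, it is a Fredholm determinant $\det(\Id-K_{\xi_1,\dots,\xi_k})$ where $K_{\xi_1,\dots,\xi_k}$ incorporates only the localized data $-2\,\text{sech}^2(\cdot-\xi_j)$ and the rank-$k$ projection correction, each piece supported (up to exponential tails) near the respective $\xi_j$; thus $K_{\xi_1,\dots,\xi_k}=\sum_{j=1}^k K_{\xi_j}+R_{\xi_1,\dots,\xi_k}$ with $\sup_j\|K_{\xi_j}\|_{\mathcal{I}_1}\les 1$ and $\|R_{\xi_1,\dots,\xi_k}\|_{\mathcal{I}_1}\les e^{-cd_\eps}$, since every cross-well contribution carries an overlap integral $\int e^{-|\,\cdot-\xi_i|}(\cdots)e^{-|\,\cdot-\xi_j|}\les e^{-c|\xi_i-\xi_j|}$ and $k=|Q|$ is a fixed constant. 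A cluster expansion $\log\det(\Id-\sum_j K_{\xi_j})=\sum_j\log\det(\Id-K_{\xi_j})+O(e^{-cd_\eps})$ (the non-diagonal terms $\mathrm{tr}(K_{\xi_{j_1}}\cdots K_{\xi_{j_n}})$ with not all $j_a$ equal each containing such an overlap) together with the Lipschitz bound $|\det(\Id+A)-\det(\Id+B)|\le\|A-B\|_{\mathcal{I}_1}e^{1+\|A\|_{\mathcal{I}_1}+\|B\|_{\mathcal{I}_1}}$ then gives $\det\mathcal{L}_{VV}=\prod_{j=1}^k\det(\P_{V_0}\mathcal{L}_0\P_{V_0})\,(1+O(e^{-cd_\eps}))$, uniformly in $\xi_1,\dots,\xi_k$ with $\min_{i\neq j}|\xi_i-\xi_j|\ge d_\eps$ and in $L_\eps$. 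Combined with the first step and the translation invariance of $Z_0$, and noting $e^{-cd_\eps}=(\eps\log\tfrac1\eps)^c\to0$, this yields $Z_{\xi_1,\dots,\xi_k}=(Z_0)^k(1+O(e^{-cd_\eps}))$.

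The main obstacle is the near-zero-mode bookkeeping of the first step. The operator $\mathcal{L}_{\xi_1,\dots,\xi_k}$ has $k$ eigenvalues of size $O(e^{-cd_\eps})$ (indeed slightly negative), so $\det\mathcal{L}_{\xi_1,\dots,\xi_k}$ and the $k\times k$ block $\det\mathcal{L}_{WW}=\det(\langle t_i,\mathcal{L}_{\xi_1,\dots,\xi_k}t_j\rangle)$ are each themselves exponentially small in $\eps$, and only the determinant of the \emph{projected} operator $\mathcal{L}_{VV}$ is of order one. Quantifying this cancellation — in particular, showing that $\mathrm{span}\{t_1,\dots,t_k\}$ captures the near-zero eigenspace accurately enough that the Schur correction genuinely is $O(e^{-cd_\eps})$ in trace norm despite the large spectral range $\sim L_\eps^2$ of $\mathcal{L}$ on the finite interval — is where the localization of the $t_j$ (Lemma~\ref{LEM:TGRAM}), the uniform coercivity on the normal space (Lemma~\ref{LEM:coer3}) and the Hessian comparison \eqref{2ndvar} must be combined with care. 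By contrast, once the problem has been reduced to the positive operator $\mathcal{L}_{VV}$, the cluster-expansion step is routine.
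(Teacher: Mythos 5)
Your proposal takes a genuinely different route from the paper's. The paper proves the factorization by direct spectral comparison: it translates the orthonormal eigenbasis $\{\phi_\ell\}$ of the single-well operator $\mathcal{L}_0$ on the normal space to each center via $\phi_{\ell,j}=\phi_\ell(\cdot-\xi_j)$, uses the localization $|\phi_\ell(x)|\les e^{-c|x|}$ to show the $\phi_{\ell,j}$ are approximate eigenfunctions of $\mathcal{L}_{\xi_1,\dots,\xi_k}$ with eigenvalues $\ld_\ell+O(e^{-cd_\eps})$, concludes that the multi-well spectrum on the normal space organizes into disjoint $k$-fold clusters $\ld_{\ell,1},\dots,\ld_{\ell,k}$ around the single-well levels, and factors the finite-dimensional determinant cluster by cluster. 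Your step (ii), a relative Fredholm determinant against the Ornstein--Uhlenbeck operator factorized by a trace-norm cluster expansion, is a legitimate alternative that exploits the same locality; it trades the paper's cluster-counting argument for a Lipschitz-type perturbation bound.

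Step (i), the Schur-complement reduction from $\det C_{\xi_1,\dots,\xi_k}^{-1}$ to $\det\mathcal{L}_{VV}$, is however a detour the paper never takes — in \eqref{PT1} and in the computation of $\textup{det}_{V_{\xi_1,\dots,\xi_k}}(\P_N\mathcal{L}_{\xi_1,\dots,\xi_k}\P_N)$ the quadratic form of $\nu^\perp$ is $\jb{v,\mathcal{L}_{\xi_1,\dots,\xi_k}v}$, i.e.\ $\mathcal{L}_{VV}=\P_{V_{\xi_1,\dots,\xi_k}}\mathcal{L}_{\xi_1,\dots,\xi_k}\P_{V_{\xi_1,\dots,\xi_k}}$, not its Schur complement — and your bound on the correction does not close. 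You assert trace-norm $\les e^{-cd_\eps}$ from $\|\mathcal{L}_{VW}\|,\|\mathcal{L}_{WV}\|\les e^{-cd_\eps}$ ``beating'' $\|\mathcal{L}_{WW}^{-1}\|\les e^{cd_\eps}$, but the exponents must actually be tracked, and they do not align. One has $\|\P_{V}\mathcal{L}t_j\|_{L^2}\approx e^{-d_\eps}$, dominated by the nearest-neighbour term $\textup{sech}^2(\cdot-\xi_{j\pm1})\,t_j$. On the other hand, for $k\ge3$ the $k\times k$ matrix $\mathcal{L}_{WW}$ has nearest-neighbour off-diagonal entries of order $e^{-d_\eps}$ while its diagonal entries are only $O(e^{-2d_\eps})$; for odd $k$ the singular leading tridiagonal part has a kernel lifted merely to size $e^{-2d_\eps}$ by the diagonal, giving $\|\mathcal{L}_{WW}^{-1}\|\approx e^{2d_\eps}$. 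The crude product bound then yields $e^{-d_\eps}\cdot e^{2d_\eps}\cdot e^{-d_\eps}=O(1)$, not $O(e^{-cd_\eps})$. For $k=3$ with equally spaced centers, the near-kernel direction is $\approx(t_1-t_3)/\sqrt{2}$ and one checks $\|\P_V\mathcal{L}(t_1-t_3)\|\approx e^{-d_\eps}$ with no further cancellation, so this is not just a lost constant. You would need a genuine cancellation argument — that the $O(e^{-2d_\eps})$ eigendirection of $\mathcal{L}_{WW}$ is nearly annihilated by $\mathcal{L}_{VW}$ — which you flag as ``the main obstacle'' but do not supply. The simpler fix is to drop step (i) and start from $\det\mathcal{L}_{VV}$ directly, as the paper does. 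A smaller issue: $\|\mathcal{L}_{\xi_j}t_j\|\les e^{-c\eps^{-1/2+2\eta}}$ overstates the decay for $j\ge2$; the Gram--Schmidt correction $t_j-\partial_{\xi_j}m^\eps_{\xi_j}/\|\cdot\|$ already contributes $O(e^{-cd_\eps})$.
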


\begin{proof}

Since $\mathcal{L}_0=-\dx^2+1-\textup{sech}^2(x)$ acts on the normal space and is a Sturm–Liouville operator on the compact interval  $[-L_\eps, L_\eps]$, it has a discrete spectrum  
\begin{align}
1\le \ld_1<\ld_2<\dots<\ld_\ell<\dots \to \infty.
\label{sp}
\end{align}

\noi 
Let $\{ \phi \}_{\ell \ge 1}$ be an orthonormal eigenbasis of $\mathcal{L}_0$ on the normal space with eigenvalues $\ld_\ell \ge 1$, thanks to Lemma \ref{LEM:SW}. Since $\mathcal{L}_0$ is a bounded perturbation of $-\dx^2+1$ by
$\textup{sech}^2$,  we can show that  
\begin{align}
|\phi_\ell (x)| +|\dx \phi_{\ell}(x)| \les e^{-c|x|}.
\label{spt}
\end{align}

\noi
For each $\xi_j$, $j=1,\dots,k$, we define $\phi_{\ell, j}(x):=\phi_{\ell}(x-\xi_j)$.  We can easily verify that $\phi_{\ell,j}$ is an eigenfunction of $\mathcal{L}_{\xi_j}$, defined in \eqref{lin1}, with the same eigenvalue $\ld_\ell$.   Furthermore the translated  eigenfunctions $\phi_{\ell, j}$ act as approximate eigenfunctions for $\mathcal{L}_{\xi_1,\dots,\xi_k}$, defined in \eqref{linQ}, as follows
\begin{align*}
\jb{\phi_{\ell,j }, \mathcal{L}_{\xi_1,\dots,\xi_k} \phi_{\ell,j}  }&=\jb{\phi_{\ell,j}, (-\dx^2+1-2\textup{sech}^2(\cdot-\xi_j) ) \phi_{\ell,j}  }-2\sum_{i \neq j} \int \textup{sech}^2(x-\xi_i) |\phi_{\ell,j}(x)|^2 dx\\
&=(\ld_{\ell}+O(e^{-cd_\eps}) ) \| \phi_{\ell, j} \|_{L^2}^2,
\end{align*}

\noi
where we used \eqref{spt}.
Hence,  every single-well level $\ld_\ell$ generates a cluster of $k$ nearby eigenvalues $\ld_{\ell,1}, \dots, \ld_{\ell,k}$ of the multiwell operator $\mathcal{L}_{\xi_1 \dots,\xi_k}$
\begin{align}
\ld_{\ell,j}=\ld_{\ell,j}(\xi_1,\dots,\xi_k)=\ld_{\ell}+O(e^{-cd_\eps}), \quad j=1, \dots, k.
\label{sp3}
\end{align}

\noi
By using the fact that $\{ \phi_{\ell,j }\}_{j=1, \dots, k}$ is an almost-orthonormal family when $d_\eps$ is sufficiently large, and there exists a single-well spectral gap $\ld_{_\ell+1}-\ld_{\ell}>0$ in \eqref{sp}, we can easily check that each cluster has exactly $k$ members and the clusters are disjoint.

To describe all discrete eigenvalues for the multi-well operator $\mathcal{L}_{\xi_1, \dots, \xi_k }$, we take the product over all $\ell \ge 1$ and use \eqref{sp3}.  More precisely, for any finite $N$, 
\begin{align*}
\textup{det}_{V_{\xi_1,\dots,\xi_k}}(\P_N\mathcal{L}_{\xi_1,\dots,\xi_k}\P_N)&= \prod_{\ell=1}^N  \bigg( \prod_{j=1}^k \ld_{\ell, j} \bigg)= \bigg( \prod_{\ell=1}^N \ld_\ell \bigg)^k(1+O(e^{-cd_\eps}))\\
&=\textup{det}_{V_0}(\P_N \mathcal{L}_0 \P_N )^k(1+O(e^{-cd_\eps})),
\end{align*}

\noi 
uniformly in $\xi_1,\dots,\xi_k$ satisfying $\min_{i \neq j} |\xi_i-\xi_j| \ge d_\eps $, where  $\P_N$ is the finite-dimensional projection onto the modes with $|\ell| \le N$. 

\end{proof}


\section{Proof of Theorem \ref{THM:1} (concentration around non-collision manifold)}
\label{SEC:LDP}
In this section, we prove Theorem \ref{THM:1}.
Note that 
\[
\rho_\eps^Q  \big(\{   \textup{dist}(\phi, \M_Q) \ge \dl   \} \big)=
\frac{Z_\eps^Q \Big[ \ind_{ \{ \textup{dist}(\phi, \M_Q ) \ge \dl \}    } \Big] }{ Z_\eps^Q[1]}
\]
where
\[
Z_\eps^Q[g] := \E_{\mu^Q_\eps }\bigg[\exp\Big\{-\frac 1\eps \int_{-L_\eps}^{L_\eps} (1-\cos \phi) dx \Big \} g \bigg] .
\]
This implies 
\begin{align}
\eps \log \rho_\eps^Q  \big(\{   \textup{dist}(\phi, \M_Q) \ge \dl   \} \big) 
= \eps \log  Z_\eps^Q \Big[\ind_{ \{ \textup{dist}(\phi, \M_Q ) \ge \dl \}  }\Big]
 -\eps \log Z_\eps^Q[1].
\label{LDP1}
\end{align}
Similarly,
\begin{align}
\eps \log \rho_\eps^Q  \big(\{   \textup{dist}(\phi, \M_Q^{<d} ) < \dl  \big)
= \eps \log  Z_\eps^Q \Big[\ind_{ \{ \textup{dist}(\phi, \M_Q^{<d} ) < \dl \}  }\Big]
 - \eps \log Z_\eps^Q[1].
\label{LDP1s}
\end{align}


Below we prove the behavior of the free energy $\eps \log Z_\eps^Q[1]$ as $\eps \to 0$, and also obtain upper and lower bounds on $\eps \log Z_\eps^Q[g]$, with $g$ being the respective indicator functions in \eqref{LDP1} and \eqref{LDP1s}.

%
%

For the free energy,
by  Lemma \ref{LEM:BD}, we write  
\begin{align}
\eps\log Z^Q_\eps[1]
&=\sup_{ \dr  \in  \Ha } 
\E\bigg[-  \int_{-L_\eps}^{L_\eps} \big( 1-\cos( \ell^Q+ \sqrt{\eps}Y+  \sqrt{\eps}\Dr )  \big)dx 
-\frac{\eps}2 \int_0^1 \| \dr(t) \|_{L^2_x}^2 dt  \bigg] \notag
\\
&= \sup_{ \dr \in \Ha} \E\bigg[-\int_{-L_\eps}^{L_\eps} ( 1-\cos(\sqrt{\eps}Y +\ell^Q +\Dr )  )dx - \frac 12  \int_0^1 \| \dr(t) \|_{L^2_x}^2 dt   \bigg] \label{e:applyBD},
\end{align}
where we used the notation $Y:=Y(1)$ and $\Dr:=\Dr(1)$ for convenience,
and in the second line we  perform the change of variables $\sqrt{\eps} \dr \to \dr$ which does not affect the variational problem.

Below we prove an upper bound on $\eps \log Z_\eps^Q[1]$  in terms of the minimal energy (see \eqref{UPP1}). To this end, we suitably expand the potential in the fluctuation $Y$ as in \eqref{F1}. However, in a naive expansion of the potential,
one would obtain, at the first order 
\begin{align}
\E \int_{-L_\eps}^{L_\eps} |\sqrt{\eps}Y| dx \sim \eps^{\frac12} L_\eps^{\frac32}
\label{FL1}
\end{align}
in view of \eqref{1M} in Lemma \ref{LEM:Moment}.
So for the fluctuation \eqref{FL1} to vanish in order to get a bound merely in terms of the minimal energy,  one would eventually have to restrict the interval size to $L_\eps=\eps^{-\frac 13+\eta}$,
which is much unnatural and non-optimal.
For the upper bounds on $\eps \log Z_\eps^Q[g]$ with $g$ being the respective indicator functions,
we would run into the same problem.

However, below we show that by exploiting a simple but crucial specific structure of the sine–Gordon model, we are actually able to extend the interval up to $L_\eps=\eps^{-\frac 12+\eta}$. See \eqref{F1}, \eqref{F77}, and \eqref{LDP4}.

\begin{proposition}\label{PROP:free1}
Let $Q \in \Z$ and $L_\eps=\eps^{-\frac 12+\eta}$. Then, we have 
\begin{align*}
\lim_{\eps \to 0}\eps \log Z_\eps^Q[1]=-\inf_{\phi \in \mathcal{C}_Q} E(\phi),
\end{align*}

\noi
where $\mathcal{C}_Q$  is the topological sector in \eqref{HT1} 

\end{proposition}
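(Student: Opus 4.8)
The plan is to evaluate $Z_\eps^Q=\E_{\mu^Q_\eps}\!\big[e^{-F(\phi)}\big]$ with $F(\phi)=\frac1\eps\int_{-L_\eps}^{L_\eps}(1-\cos\phi)\,dx$ by means of the Bou\'e--Dupuis formula (Lemma~\ref{LEM:BD}); since $0\le F\le 4L_\eps/\eps$ is bounded, the integrability hypotheses of Lemma~\ref{LEM:BD} are automatic. Rescaling the drift as $\dr=\eps^{-1/2}\vartheta$, so that $h:=\sqrt\eps\,\Dr(1)=\int_0^1(-\dx^2)^{-1/2}\vartheta(s)\,ds$ satisfies $h\in H^1_0([-L_\eps,L_\eps])$ with $\|h\|_{\dot H^1}^2\le\int_0^1\|\vartheta(t)\|_{L^2}^2\,dt$ (Lemma~\ref{LEM:Moment}(ii) after rescaling) and $\tfrac12\int_0^1\|\dr\|_{L^2}^2=\tfrac1{2\eps}\int_0^1\|\vartheta\|_{L^2}^2$, one obtains
\begin{align*}
-\eps\log Z_\eps^Q=\inf_{\vartheta\in\Ha}\E\Big[\int_{-L_\eps}^{L_\eps}\big(1-\cos(\ell^Q+\sqrt\eps\,Y+h)\big)\,dx+\tfrac12\int_0^1\|\vartheta(t)\|_{L^2}^2\,dt\Big].
\end{align*}
It then suffices to show $\lim_{\eps\to0}(-\eps\log Z_\eps^Q)=\inf_{\phi\in\mathcal{C}_Q}E(\phi)=|Q|\,E_{\textup{kink}}$, the last identity being Remark~\ref{REM:AM}.

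\textbf{Upper bound.} I would take the deterministic drift with $h=m^\eps_{\xi_1,\dots,\xi_Q}-\ell^Q$ (equivalently $\vartheta\equiv(-\dx^2)^{1/2}(m^\eps_{\xi_1,\dots,\xi_Q}-\ell^Q)$), where the centres $\xi_j=\xi_j(\eps)\in[-\cj L_\eps,\cj L_\eps]$ are chosen equispaced so that $\min_{i\neq j}|\xi_i-\xi_j|\to\infty$. Then $\ell^Q+h=m^\eps_{\xi_1,\dots,\xi_Q}$ is deterministic; a pointwise second-order Taylor expansion of the cosine, combined with $\E[Y(1,x)]=0$ and $\E\!\int|\sqrt\eps Y(1)|^2=\tfrac23\eps L_\eps^2=\tfrac23\eps^{2\eta}$ (Lemma~\ref{LEM:Moment}), annihilates the linear term and bounds the quadratic remainder, so $\E\!\int(1-\cos(m^\eps_{\xi_1,\dots,\xi_Q}+\sqrt\eps Y))\,dx=\int(1-\cos m^\eps_{\xi_1,\dots,\xi_Q})\,dx+O(\eps^{2\eta})$. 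Since $\int_{-L_\eps}^{L_\eps}\dx m^\eps_{\xi_1,\dots,\xi_Q}=2\pi Q$, we have $\tfrac12\|h\|_{\dot H^1}^2=\tfrac12\int|\dx m^\eps_{\xi_1,\dots,\xi_Q}|^2-\tfrac{\pi^2Q^2}{L_\eps}$, whence the functional equals $E(m^\eps_{\xi_1,\dots,\xi_Q})-\tfrac{\pi^2Q^2}{L_\eps}+O(\eps^{2\eta})$. Finally $E(m^\eps_{\xi_1,\dots,\xi_Q})\to|Q|E_{\textup{kink}}$ by Lemma~\ref{LEM:MUS} (transferred to $m^\eps$ via \eqref{APM2}--\eqref{APM3}) as the separation tends to infinity, giving $\limsup_{\eps\to0}(-\eps\log Z_\eps^Q)\le|Q|E_{\textup{kink}}$.

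\textbf{Lower bound.} For an arbitrary admissible $\vartheta$, set $\Psi:=\ell^Q+h$ and $\Phi:=\Psi+\sqrt\eps Y$; extending by the boundary constants, $\Psi\in\mathcal{C}_Q$ almost surely (in the relevant case $\|h\|_{\dot H^1}<\infty$ a.s.), so $E(\Psi)\ge\inf_{\mathcal{C}_Q}E$. If either $\tfrac12\E\!\int_0^1\|\vartheta\|^2>\inf_{\mathcal{C}_Q}E$ or $\E\!\int(1-\cos\Phi)>\inf_{\mathcal{C}_Q}E$, the functional already exceeds $\inf_{\mathcal{C}_Q}E$ and we are done; so assume both are $\le\inf_{\mathcal{C}_Q}E$. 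Using $\tfrac12\int_0^1\|\vartheta\|^2\ge\tfrac12\|h\|_{\dot H^1}^2$ and $\int_{-L_\eps}^{L_\eps}\dx h=0$ (so $\tfrac12\|h\|_{\dot H^1}^2=\tfrac12\int|\dx\Psi|^2-\tfrac{\pi^2Q^2}{L_\eps}$), the functional is $\ge\E\big[\int(1-\cos\Phi)\,dx\big]+\E\big[\tfrac12\int|\dx\Psi|^2\big]-\tfrac{\pi^2Q^2}{L_\eps}$, and it remains to prove $\E\!\int_{-L_\eps}^{L_\eps}(1-\cos\Phi)\,dx=\E\!\int_{-L_\eps}^{L_\eps}(1-\cos\Psi)\,dx+o(1)$, since then the functional is $\ge\E[E(\Psi)]-o(1)\ge\inf_{\mathcal{C}_Q}E-o(1)$. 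Expanding $\cos\Psi-\cos\Phi=\cos\Psi(1-\cos\sqrt\eps Y)+\sin\Psi\sin\sqrt\eps Y$ pointwise, the first term contributes $O(\E\!\int\eps Y^2)=O(\eps^{2\eta})$. The cross term $\E\!\int\sin\Psi\sin(\sqrt\eps Y)\,dx$ is the crux: the crude bound $\le\E\!\int|\sqrt\eps Y|=\tfrac{\sqrt\pi}2\eps^{1/2}L_\eps^{3/2}=\tfrac{\sqrt\pi}2\eps^{-1/4+3\eta/2}$ diverges for small $\eta$, so instead I would use the elementary identity $\sin^2\theta\le2(1-\cos\theta)$, giving $\|\sin\Psi\|_{L^2}^2\le2\int(1-\cos\Psi)$, together with Cauchy--Schwarz in $(x,\PP)$ and $\E\|Y(1)\|_{L^2}^2=\tfrac23L_\eps^2$:
\begin{align*}
\Big|\E\!\int\sin\Psi\sin\sqrt\eps Y\,dx\Big|\le\big(\E\|\sin\Psi\|_{L^2}^2\big)^{1/2}\big(\eps\,\E\|Y(1)\|_{L^2}^2\big)^{1/2}\le c\,\eps^{\eta}\big(\E\|\sin\Psi\|_{L^2}^2\big)^{1/2}.
\end{align*}
A short bootstrap closes the argument: combining $\E\!\int(1-\cos\Psi)\le\E\!\int(1-\cos\Phi)+O(\eps^{2\eta})+c\eps^\eta(\E\|\sin\Psi\|_{L^2}^2)^{1/2}$ with $\E\|\sin\Psi\|_{L^2}^2\le2\E\!\int(1-\cos\Psi)$ and solving the resulting quadratic inequality for $(\E\|\sin\Psi\|_{L^2}^2)^{1/2}$ shows $\E\|\sin\Psi\|_{L^2}^2$ is bounded for $\eps$ small, whence the cross term is $o(1)$. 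This yields $\liminf_{\eps\to0}(-\eps\log Z_\eps^Q)\ge\inf_{\mathcal{C}_Q}E$, and together with the upper bound proves the proposition.

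\textbf{Main obstacle.} The only delicate point is the lower bound: because the base field $Y$ has merely Brownian-bridge regularity while $L_\eps\to\infty$, the error incurred in replacing the noisy field $\Phi$ by its regular part $\Psi$ cannot be controlled in $L^1$, and one is forced to exploit the special algebraic inequality $\sin^2\le2(1-\cos)$ together with the self-improving estimate above to bound the cross term purely through $L^2$-quantities (using crucially that near-optimal drifts have bounded potential energy $\int(1-\cos\Psi)$). The upper bound, by contrast, reduces to a routine deterministic Taylor expansion around a well-separated multi-soliton near-minimizer.
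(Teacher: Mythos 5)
Your proof is correct and follows the same Bou\'e--Dupuis variational structure as the paper (Proposition~\ref{PROP:free1}), with the same drift rescaling $\dr\mapsto\sqrt\eps\,\dr$ and the same reliance on the sine--Gordon-specific inequality $\sin^2\le2(1-\cos)$ that is precisely what allows the interval scale $L_\eps=\eps^{-1/2+\eta}$ (cf.~Remark~\ref{REM:scale-int}). The interesting difference is in how the two arguments dispose of the cross term $\sin\cdot\,\sqrt\eps Y$. For the direction $\liminf(-\eps\log Z_\eps^Q)\ge\inf E$ (the paper's \emph{upper} bound on $\eps\log Z_\eps^Q$), the paper applies the pointwise Young-type inequality \eqref{F77}, $|\sin W\cdot\sqrt\eps Y|\le\kappa(1-\cos W)+C_\kappa(\sqrt\eps Y)^2$, and absorbs the $\kappa(1-\cos W)$ piece into the main potential term, leaving a $C_\kappa\eps L_\eps^2=O(\eps^{2\eta})$ remainder. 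You instead apply $\sin^2\le2(1-\cos)$ through Cauchy--Schwarz and close the resulting inequality $b\le C+c\eps^\eta\sqrt b$ by a bootstrap to get an a priori bound on $\E\!\int(1-\cos\Psi)$; this works but requires you first to restrict attention to drifts with bounded potential energy, whereas the paper's absorption is unconditional and avoids the bootstrap. Conversely, for the complementary direction, the paper's drift \eqref{F8}--\eqref{F9} carries an extra $-\sqrt\eps Y_N(1-\eta)$ term (with the $\ind_{\{t>1-\eta\}}$ cut-off to ensure adaptedness), whereas you use a purely deterministic drift $h=m^\eps_{\xi_1,\dots,\xi_Q}-\ell^Q$ and rely only on $\E[\sqrt\eps Y]=0$ to kill the linear term; this is simpler and fully correct for this Proposition. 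You also use the exact identity $\tfrac12\|h\|_{\dot H^1}^2=\tfrac12\int|\dx\Psi|^2-\pi^2Q^2/L_\eps$ in place of the Young-inequality step \eqref{F6}; both give the same $O(1/L_\eps)$ correction. In short, your proof is a valid variant of the paper's argument, slightly more elementary in the drift choice and slightly less streamlined in the cross-term estimate.
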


\begin{proof}
In this proof we write $Z_\eps^Q=Z_\eps^Q[1]$.
We first derive an upper bound.  By \eqref{e:applyBD} and \eqref{DRE}
\begin{align}
\eps \log Z^Q_\eps
&\le \sup_{ \Dr \in \dot{\mathbb{H}}^1  }  \E\bigg[-\int_{-L_\eps}^{L_\eps} ( 1-\cos(\sqrt{\eps}Y +\ell^Q +\Dr )  )dx - \frac 12  \| \dx \Dr \|_{L^2}^2    \bigg],
\label{F0}
\end{align}

\noi
where in the last line we used \eqref{CY1} and $\dot{\mathbb{H}}^1$ denotes the space of $\dot{H}^1$-valued random variables. We now take the change of variable 
\begin{align}
W=\ell^Q+\Dr.
\label{F00}
\end{align}

\noi
Combining \eqref{F0} and \eqref{F00} yields  
\begin{align}
\eps \log Z_\eps^Q&\le \sup_{ W \in  \ell^Q+\dot{\mathbb{H}}^1  } \E\bigg[-\int_{-L_\eps}^{L_\eps} ( 1-\cos(\sqrt{\eps}Y+W )  )dx - \frac 12  \| \dx W \|_{L^2}^2  \notag \\
&\hphantom{XXXXXXXXXXXX}-\frac 12 \| \dx \ell^Q \|_{L^2}^2 +\int_{-L_\eps}^{L_\eps}  \dx W \cdot \dx \ell^Q dx   \bigg].
\label{F5}
\end{align}

\noi
By using Young's inequality, for any small $\dl>0$ we have 
\begin{align}
\bigg| \int_{-L_\eps}^{L_\eps}  \dx W \cdot \dx \ell^Q dx \bigg| \le \frac \dl2 \|\dx W \|_{L^2}^2+C_\dl \| \dx \ell^Q \|_{L^2}^2,
\label{F6}
\end{align}

\noi
where $C_\dl$ is a large constant. By plugging \eqref{F6} into \eqref{F5}, we have
\begin{align}
\eps \log Z_\eps^Q&\le \sup_{ W \in  \ell^Q+\dot{\mathbb{H}}^1  } \E\bigg[-\int_{-L_\eps}^{L_\eps} ( 1-\cos(\sqrt{\eps}Y+W )  )dx - \frac {1-\dl}2  \| \dx W \|_{L^2}^2    \bigg]+C_\dl  \| \dx \ell^Q \|_{L^2}^2.
\label{F7}
\end{align}

\noi
By taking the Taylor expansion around $W$, we have 
\begin{align}
\cos(\sqrt{\eps}Y+W)=\cos(W)-\sin(W) \cdot (\sqrt{\eps}Y)+O\big( (\sqrt{\eps} Y)^2 \big).
\label{F1}
\end{align}

\noi 
Note that 
\[
\sin^2(W)=(1-\cos(W))(1+\cos(W))\le 2(1-\cos(W)) .
\]
Hence, 
\begin{align}
|\sin(W) \cdot (\sqrt{\eps}Y)|
\le 
\kappa (1-\cos(W)) + C_\kappa (\sqrt{\eps}Y)^2
\label{F77}
\end{align}

\noi
for an arbitrarily small $\kappa>0$ and a large constant $C_\kappa$.
Note that the first term can be absorbed into the potential.
By plugging \eqref{F1} into \eqref{F7} and using \eqref{F77}, we obtain
\begin{align*}
\eps \log Z^Q_\eps  &\le \sup_{ W \in \ell^Q+\dot{\mathbb{H}}^1  } \E \bigg [-\int_{-L_\eps}^{L_\eps} (1-\kappa) (1-\cos W) dx -\frac {1-\dl}2 \int_{-L_\eps}^{L_\eps} |\dx W|^2 dx   \bigg]\\
&\hphantom{X}+ \E\bigg[  \int_{-L_\eps}^{L_\eps} C_\kappa | \sqrt{\eps} Y|^2 dx    \bigg]+C_\dl \| \dx \ell^Q \|_{L^2}^2.
\end{align*}

\noi
Using Lemma \ref{LEM:Moment} and $\dx \ell^Q= \frac{\pi Q}{L_\eps}$,
\begin{align}
& \eps \log Z_\eps^Q  \notag \\
&\le \sup_{ W \in \ell^Q+\dot{\mathbb{H}}^1  } \E \bigg [-\int_{-L_\eps}^{L_\eps} (1-\kappa)(1-\cos W) dx -\frac {1-\dl}2 \int_{-L_\eps}^{L_\eps} |\dx W|^2 dx   \bigg] + C_\kappa \eps L_\eps^2+C_\dl \frac{(\pi Q)^2}{L_\eps} .
\label{LDP4}
\end{align}

\noi
Under the condition $L_\eps=\eps^{-\frac 12+}$, by taking the limits $\eps \to 0$, $\dl \to 0$, and $\kappa \to 0$ in order, we obtain
\begin{align}
\limsup_{\eps \to 0} \eps \log Z_\eps^Q \le -\inf_{\phi \in \mathcal{C}_Q} E(\phi).
\label{UPP1}
\end{align}

From now on, we study the lower bound on the free energy. From \eqref{e:applyBD}, recall that 
\begin{align}
\eps \log Z_\eps^Q
=\sup_{ \dr \in \Ha} \E\bigg[-\int_{-L_\eps}^{L_\eps} ( 1-\cos(\sqrt{\eps}Y +\ell^Q +\Dr )  )dx - \frac 12  \int_0^1 \| \dr(t) \|_{L^2_x}^2 dt   \bigg].
\label{F10}
\end{align}

We choose a drift $\dr=\dr^0$ by 
\begin{align}
\dr^0(t)=\zeta^{-1} \cdot \ind_{ \{   t>1-\zeta  \}  } (t) (-\dx^2)^{\frac 12}(-\sqrt{\eps} Y_N(1-\zeta)-\ell^Q+m^\eps_{\xi_1,\dots,\xi_Q}),
\label{F8} 
\end{align}

\noi 
where $\zeta>0$, $\xi_1,\dots,\xi_k \in  [-L_\eps, L_\eps]$, and 
\begin{align*}
Y_N(1-\zeta)=\sum_{1 \le n \le N} \frac{B_n(1-\zeta)}{\sqrt{\ld_n}}e_n(x),
\end{align*}

\noi 
approximating the Gaussian field $Y=Y(1)=\sum_{n \ge 1} \frac{B_n(1)}{\sqrt{\ld_n} }e_n(x)$. Then the definition of $\Dr$ in \eqref{CY1} implies that 
\begin{align}
\Dr^0:=\int_0^1 (-\dx^2)^{-\frac 12} \dr^0(t)dt=-\sqrt{\eps}Y_N(1-\zeta)-\ell^Q+m^\eps_{\xi_1,\dots,\xi_Q}.
\label{F9}
\end{align}

\noi 
In \eqref{F8} the choice of the drift $\dr^0(t)$  is admissible $\dr^0 \in \Ha$ since it is an adapted process due to the cutoff $\ind_{ \{t>1-\zeta\} }$ and it satisfies the required regularity condition thanks to the truncation to the modes $1 \le n \le N$. By plugging \eqref{F8} and \eqref{F9} into \eqref{F10}, we obtain 
\begin{align}
\eps \log Z_\eps^Q 
\ge 
\E\bigg[-\int_{-L_\eps}^{L_\eps} ( 1-\cos(\sqrt{\eps}(Y-Y_N(1-\zeta) ) +m^\eps_{\xi_1,\dots, \xi_Q} )  )dx - \frac 12  \int_0^1 \| \dr^0(t) \|_{L^2_x}^2 dt   \bigg].
\label{F12}
\end{align}

\noi
From the definition of $\dr^0$ in \eqref{F8}, we write    
\begin{align*}
\frac 12  \int_0^1 \| \dr^0(t) \|_{L^2_x}^2 dt=\frac 12\|(-\dx^2)^{\frac 12}(-\sqrt{\eps} Y_N(1-\zeta)-\ell^Q+m^\eps_{\xi_1,\dots,\xi_Q}) \|_{L^2_x}^2.
\end{align*}

\noi
Using the inequality $(a+b+c)^2 \le (1+\dl)|a|^2 + C_\dl(|b|^2+|c|^2)$  for any real numbers $a,b,c$, and $\dl>0$, which follows from Young’s inequality, we have 
\begin{align}
\frac 12  \int_0^1 \| \dr^0(t) \|_{L^2_x}^2 dt  \le \frac{1+\dl}{2} \| \dx m_{\xi_1,\dots,\xi_Q}^\eps \|_{L^2}^2 +\frac{C_\dl}{2}  \big(\| \dx (\sqrt{\eps} Y_N(1-\zeta) )  \|_{L^2}^2+    \| \dx \ell^Q \|_{L^2}^2 \big)
\label{F11} 
\end{align}

\noi
for any $\dl>0 $ and the corresponding constant $C_\dl>0$. Regarding the potential energy term,  by taking the Taylor expansion around $m^\eps_{\xi_1,\dots,\xi_Q}$, 
\begin{align}
&\cos(\sqrt{\eps}(Y-Y_N(1-\zeta) ) +m^\eps_{\xi_1,\dots, \xi_Q} )  ) \notag \\
&=\cos(m_{\xi_1,\dots,\xi_Q}^\eps )-\sin(m_{\xi_1,\dots,\xi_Q}^\eps) \cdot \sqrt{\eps}(Y-Y_N(1-\zeta) )+O\big( (\sqrt{\eps}(Y-Y_N(1-\zeta) ) )^2 \big).
\label{F2}
\end{align}

\noi
By plugging \eqref{F2} and \eqref{F11} into \eqref{F12}, we obtain
\begin{align}
\eps \log Z^Q_\eps &\ge  \E\bigg[ -\int_{-L_\eps}^{L_\eps} (1-\cos m_{\xi_1,\dots,\xi_Q}^\eps ) dx -\frac{1+\dl}{2} \int_{-L_\eps}^{L_\eps} |\dx m_{\xi_1,\dots,\xi_Q}^\eps |^2 dx   \bigg] \notag \\
&\hphantom{X}- \E\bigg[  \int_{-L_\eps}^{L_\eps} | \sqrt{\eps} (Y-Y_N(1-\zeta) ) |^2 dx    \bigg] \notag \\
&\hphantom{X}-C_\dl \| \dx \ell^Q \|_{L^2}^2-C_\dl  \big(\| \dx (\sqrt{\eps} Y_N(1-\zeta) ) \|_{L^2}^2,
\end{align}

\noi
where we used $\E\big[\sqrt{\eps}(Y-Y_N(1-\zeta) ) \big]=0$. Using Lemma \ref{LEM:Moment} and $\dx \ell^Q= \frac{\pi Q}{L_\eps}$, we obtain 
\begin{align*}
\eps \log Z^Q_\eps &\ge  \E\bigg[ -\int_{-L_\eps}^{L_\eps} (1-\cos m^\eps_{\xi_1,\dots,\xi_Q} ) dx -\frac{1+\dl}{2} \int_{-L_\eps}^{L_\eps} |\dx m^\eps_{\xi_1,\dots,\xi_Q}|^2 dx   \bigg]\\
&\hphantom{X}  -c\eps L_\eps^2-C_\dl \frac{(\pi Q)^2}{L_\eps} -C_\dl  \eps \| \dx  Y_N(1-\zeta)  \|_{L^2}^2.
\end{align*}

\noi
Under the condition $L_\eps=\eps^{-\frac 12+}$, by taking the limits $\eps \to 0$ and $\dl \to 0$ in order, we obtain
\begin{align*}
\liminf_{\eps \to 0}\eps \log Z_\eps^Q \ge  - E(m_{\xi_1,\dots,\xi_Q}).
\end{align*}

\noi
By letting $\min_{i \neq j}|\xi_i-\xi_j| \to \infty $, and using Remark \ref{REM:AM}, we obtain
\begin{align*}
\liminf_{\eps \to 0}\eps \log Z_\eps^Q \ge -\inf_{\phi \in \mathcal{C}_Q} E(\phi).
\end{align*}

\end{proof}


\begin{lemma}\label{LEM:LDP}
Let $Q \in \Z$ with $Q>0$ and $L_\eps=\eps^{-\frac 12+}$. Then for $d,\dl>0$,
\begin{align*}
\limsup_{\eps \to 0} \eps \log Z_\eps^Q \Big[ \ind_{ \{ \textup{dist}(\phi, \M_Q ) \ge \dl \}    }   \Big] 
&\le \;\;  - \!\!\!\!\! \inf_{\substack{\phi \in \mathcal{C}_Q \\ \textup{dist}(\phi, \M_Q) \ge  2\dl }  } \!\!\! E(\phi),
\\
\limsup_{\eps \to 0} \eps \log Z_\eps^Q \Big[ \ind_{ \{ \textup{dist}(\phi, \M_Q^{<d} ) < \dl  }   \Big] &\le
\;\; - \!\!\!\!\! \inf_{\substack{\phi \in \mathcal{C}_Q \\ \textup{dist}(\phi, \M_Q^{<d} ) <  \frac \dl2  } } \!\!\! E(\phi).
\end{align*}

\end{lemma}

\begin{proof}
Note that 
\[
\eps \log Z_\eps^Q \Big[ \ind_{ \{ \textup{dist}(\phi, \M_Q ) \ge \dl \}    }   \Big]
\le \eps \log \E_{\mu^Q_\eps }\bigg[\exp\Big\{-\frac 1\eps \int_{-L_\eps}^{L_\eps} (1-\cos \phi) dx \cdot \ind_{ \{ \textup{dist}(\phi, \M^Q ) \ge \dl \}    } \Big \}    \bigg]
\]
since the indicator may only take values in $\{0,1\}$. Proceeding as in the previous proof, 
we can bound the above expression by
\begin{align}
 \sup_{ \Dr \in \dot{\mathbb{H}}^1  }  \E\bigg[-\int_{-L_\eps}^{L_\eps} ( 1-\cos(\sqrt{\eps}Y +\ell^Q +\Dr )  )dx \cdot
\ind_{ \{ \textup{dist}(\sqrt{\eps}Y +\ell^Q +\Dr, \M^Q ) \ge \dl \}    }
- \frac 12  \| \dx \Dr \|_{L^2}^2    \bigg].
\label{LDP3}
\end{align}
Take  the change of variable $W=\ell^Q+\Dr$.
Under the condition $L_\eps=\eps^{-\frac 12+}$, we have $\E\big[ \| \sqrt{\eps}  Y\|_{L^2}^2 \big] \to 0$ by \eqref{1M}. Therefore, with high probability 
\begin{align}
\{ \textup{dist}(W,\M_Q) \ge 2\dl \} \subset \{ \textup{dist}(\sqrt{\eps}Y +W, \M_Q ) \ge \dl \}. 
\label{LDP5}
\end{align}

\noi
By following the steps used to obtain \eqref{LDP4} together with \eqref{LDP5}, we obtain
\begin{align}
&\eqref{LDP3}\le \sup_{ W \in \ell^Q+\dot{\mathbb{H}}^1  } \E \bigg [ \bigg(-\int_{-L_\eps}^{L_\eps} (1-\kappa) (1-\cos W) dx -\frac {1-\eta}2 \int_{-L_\eps}^{L_\eps} |\dx W|^2 dx \bigg) \ind_{ 
\{ \textup{dist}(W,\M_Q ) \ge 2\dl \}    }   \bigg] \notag \\
&\hphantom{XXXXXXXXX} +C_\kappa \eps L_\eps^2+C_\eta \frac{(\pi Q)^2}{L_\eps} 
\label{LDP6} 
\end{align}

\noi
for any $\eta>0$ and $\kappa>0$. Here we have multiplied $\int_{-L_\eps}^{L_\eps} |\dx W|^2 dx$ by the indicator as it is positive.
This implies that  under the condition $L_\eps=\eps^{-\frac 12+}$, by taking the limits $\eps \to 0$, $\eta \to 0$, and $\kappa \to 0$ in order, we obtain
 the first part of Lemma \ref{LEM:LDP}.

For the second part of Lemma \ref{LEM:LDP}, the only modification is that now
with high probability 
\begin{align}
\{ \textup{dist}(W,\M_Q^{<d} ) < \tfrac \dl2 \} &\subset \{ \textup{dist}(\sqrt{\eps}Y +W, \M_Q^{<d} ) < \dl \}. 
\label{LDP7}
\end{align}
So for $\eps \log Z_\eps^Q \Big[ \ind_{ \{ \textup{dist}(\phi, \M_Q^{<d} ) < \dl  }   \Big]$ we get the same bound
as in \eqref{LDP6}, 
with the indicator replaced by $\ind_{ 
\{ \textup{dist}(W,\M_Q^{<d} ) < \frac \dl2 \}    } $.
 So  we again obtain the desired result. 

\end{proof}



%
%
%
%
%
%
%
%
%
%
%
%
%
%
%
%


\begin{proof}[Proof of Theorem \ref{THM:1}]

\noi
Combining \eqref{LDP1}, Proposition \ref{PROP:free1}, and Lemma \ref{LEM:LDP} yields 
\begin{align}
\limsup_{\eps \to 0} \eps \log \rho_\eps^Q  \big(\{   \textup{dist}(\phi, \M_Q) \ge \dl   \} \big) \le -\inf_{\substack{\phi \in \mathcal{C}_Q \\ 
\textup{dist}(\phi, \M_Q ) \ge  2 \dl }} E(\phi) +\inf_{  \phi \in \mathcal{C}_Q  } E(\phi).
\label{LDP2}
\end{align}

\noi
This, along with the energy gap estimate in Lemma \ref{LEM:GAP1}, implies
\begin{align}
\limsup_{\eps \to 0} \eps \log \rho_\eps^Q  \big(\{   \textup{dist}(\phi, \M_Q) \ge \dl   \} \big) \le -c \dl^2.
\label{lgdp} 
\end{align}

\noi
This proves \eqref{ldp1}.
Moreover,
by \eqref{LDP1s}, Proposition \ref{PROP:free1} and Lemma \ref{LEM:LDP}
\begin{align}
\limsup_{\eps \to 0} \eps \log \rho_\eps^Q  \big(\{   \textup{dist}(\phi, \M_Q^{<d} ) < \dl  \big)
\le -\inf_{\substack{ \textup{dist}(\phi, \M_Q^{<d} ) <  \tfrac {\dl}{2}    } } E(\phi)+\inf_{  \phi \in \mathcal{C}_Q  } E(\phi).
\label{lldp4}
\end{align}

\noi
Under the condition $c e^{-d} \ge \dl^2$, we can apply 
 Lemma \ref{LEM:GAP2} to obtain 
\begin{align}
\limsup_{\eps \to 0} \eps \log \rho_\eps^Q  \big(\{   \textup{dist}(\phi, \M_Q^{<d} ) < \dl  \big)
 \le - c e^{-d}.
\label{ldp4}
\end{align}

\noi
This proves \eqref{ldp2}, thus completing the proof of Theorem \ref{THM:1}.

\end{proof}

\begin{corollary}\label{COR:LDPA}
Under $\dl_\eps=\eta \sqrt{\eps \log \frac 1\eps}$  and $d_\eps= \big|\log (\eps \log \frac 1\eps) \big| $, we have 
\begin{align}
&\rho_\eps^Q  \big(\{   \textup{dist}(\phi, \M_Q^\eps ) \ge \dl_\eps   \} \big) \les e^{-c \log \frac 1\eps} \label{LDPIN2}\\
&\rho_\eps^Q  \big(\{   \textup{dist}(\phi, \M_Q^{\eps, <d_\eps} ) < \dl_\eps    \} \big)  \les e^{-c \log \frac 1 \eps}
\label{LDPIN3}
\end{align}

\noi
where $\M_Q^\eps$, $\M^{\eps, <d}_Q$ are approximating soliton manifolds, defined as in \eqref{MAN00} and \eqref{MAN0}.


\end{corollary}

\begin{proof}
By following the proof of Theorem \ref{THM:1}, we can also establish the following large deviation estimates for the approximating soliton manifolds: 
\begin{align}
\limsup_{\eps \to 0} \eps \log \rho_\eps^Q  \big(\{   \textup{dist}(\phi, \M_Q^\eps ) \ge \dl   \} \big) &\le -c \dl^2
\label{LDPIN}\\
\limsup_{\eps \to 0} \eps \log  \rho_\eps^Q  \big(\{   \textup{dist}(\phi, \M_Q^{\eps, <d} ) < \dl    \} \big)  &\le - c e^{-d},
\label{LDPIN1}
\end{align}

\noi
where the condition $c \cdot e^{-d} \ge \dl^2$  follows from Lemma~\ref{LEM:GAP2}. Therefore, from \eqref{LDPIN}, \eqref{LDPIN1} and $c \cdot e^{-d} \ge \dl^2 $, by choosing $\dl_\eps=\eta \sqrt{\eps \log \frac 1\eps}$  and $d_\eps= \big|\log (\eps \log \frac 1\eps) \big| $,  we obtain  \eqref{LDPIN2} and \eqref{LDPIN3}.

\end{proof}














\section{Change of coordinates}
\label{SEC:coord}


The main result of this section is Proposition~\ref{PROP:cha}
which allows us to turn the integration over $\phi$ to an integration
over $(\xi_1,\cdots,\xi_Q)$ and $v\in V_{\xi_1,\dots,\xi_Q}$, up to small errors.
To this end we first
use the large-deviation estimates established above to decompose the entire space into a typical event and large-deviation events as follows. Recall the definition of $\M^{\eps,  \ge d}_Q$ in \eqref{MAN1}.

\begin{lemma}\label{LEM:partition}
Let $ Q \in \Z$ with $Q>0$, $\dl_\eps=\eta\sqrt{\eps \log \frac 1\eps}$, $d_\eps=\big|\log (\eps \log \frac 1\eps)   \big|$, as in \eqref{LDPIN2} and \eqref{LDPIN3}, and let $F$ be a bounded function. Then  
\begin{align*}
\int F(\phi) \rho_\eps^Q(d\phi)
= \int_{   \{  \textup{dist}(\phi, \M_Q^{\eps, \, \ge d_\eps} ) <\dl_\eps        \} } F(\phi) \rho_\eps^Q(d\phi)+\textup{O}(e^{-c  \log \frac 1\eps }).
\end{align*}

\end{lemma}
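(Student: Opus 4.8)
The statement is a straightforward consequence of the large-deviation estimates collected in Remark~\ref{REM:LDP} (estimates \eqref{LDPIN2}, \eqref{LDPIN3}) and Remark~\ref{REM:CONCENT} (estimate \eqref{JK6}). The idea is to write the complement of the typical event $\{\textup{dist}(\phi,\M_Q^{\eps,\,\ge (1+\eta)d_\eps})<\dl_\eps\}$ as a union of three large-deviation events whose probabilities are each at most $e^{-c\log\frac 1\eps}$, and then bound the contribution of $F$ on that complement using $\|F\|_{L^\infty}$.

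\textbf{Step 1: Decompose the complement.} First I would split
\begin{align*}
\big\{\textup{dist}(\phi,\M_Q^{\eps,\,\ge (1+\eta)d_\eps})\ge \dl_\eps\big\}
\subset \; &\big\{\textup{dist}(\phi,\M_Q^\eps)\ge \dl_\eps\big\} \\
&\cup \big\{\textup{dist}(\phi,\M_Q^{\eps,<d_\eps})<\dl_\eps\big\} \\
&\cup \Big(\big\{\textup{dist}(\phi,\M_Q^{\eps,\,\ge d_\eps})<\dl_\eps\big\}\setminus \big\{\textup{dist}(\phi,\M_Q^{\eps,\,\ge (1+\eta)d_\eps})<\dl_\eps\big\}\Big).
\end{align*}
To justify this inclusion, suppose $\phi$ lies in none of the three sets on the right. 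From failing the first two, $\textup{dist}(\phi,\M_Q^\eps)<\dl_\eps$ and $\textup{dist}(\phi,\M_Q^{\eps,<d_\eps})\ge\dl_\eps$; since $\M_Q^\eps=\M_Q^{\eps,<d_\eps}\cup\M_Q^{\eps,\ge d_\eps}$ (as a disjoint union at the level of parameters) and distance to a union is the min of the distances, this forces $\textup{dist}(\phi,\M_Q^{\eps,\ge d_\eps})<\dl_\eps$. Then failing the third set (together with this last inequality) gives $\textup{dist}(\phi,\M_Q^{\eps,\ge(1+\eta)d_\eps})<\dl_\eps$, contradicting membership in the complement. Hence the inclusion holds.

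\textbf{Step 2: Estimate each piece.} The first event has $\rho_\eps^Q$-probability $\les e^{-c\log\frac 1\eps}$ by \eqref{LDPIN2}; the second has probability $\les e^{-c\eps^{-1/2+}}$ by \eqref{LDPIN3}; the third has probability $\les e^{-c\eps^{-1/2+}}$ by \eqref{JK6}. (All three require checking $\tfrac12 e^{-d_\eps}\ge c\,\dl_\eps$ with the stated choices $\dl_\eps=\eta\sqrt{\eps\log\frac1\eps}$, $d_\eps=|\log\sqrt{\eps\log\frac1\eps}|$, which holds for $\eps$ small since $e^{-d_\eps}=\sqrt{\eps\log\frac1\eps}$, so $\tfrac12 e^{-d_\eps}=\tfrac{1}{2}\sqrt{\eps\log\frac1\eps}\ge \eta\sqrt{\eps\log\frac1\eps}$ once $\eta<\tfrac12$; one may shrink $\eta$ without loss.) Summing, the complement has $\rho_\eps^Q$-probability $\les e^{-c\log\frac1\eps}$ for some $c>0$, the first term dominating.

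\textbf{Step 3: Conclude.} Writing $\int F\,d\rho_\eps^Q = \int_{\{\textup{dist}(\phi,\M_Q^{\eps,\ge(1+\eta)d_\eps})<\dl_\eps\}} F\,d\rho_\eps^Q + \int_{(\cdots)^c} F\,d\rho_\eps^Q$ and bounding the second integral by $\|F\|_{L^\infty}\cdot\rho_\eps^Q\big((\cdots)^c\big)\les \|F\|_{L^\infty}\,e^{-c\log\frac1\eps}$ gives the claimed identity with error term $O(e^{-c\log\frac1\eps})$. There is no real obstacle here; the only point requiring care is the set-theoretic inclusion in Step~1 (in particular the fact that the collision/non-collision decomposition of $\M_Q^\eps$ behaves well under taking $L^2$-distances, which relies only on $\textup{dist}(\phi,A\cup B)=\min(\textup{dist}(\phi,A),\textup{dist}(\phi,B))$) and the verification of the compatibility condition $\tfrac12 e^{-d_\eps}\ge c\,\dl_\eps$ needed to invoke the large-deviation bounds.
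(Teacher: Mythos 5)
Your proof is correct and relies on exactly the same three ingredients as the paper (the concentration estimate \eqref{LDPIN2}, the collision bound \eqref{LDPIN3}, and the middle-range bound \eqref{JK6}), together with the same set-theoretic observation that $\M_Q^\eps=\M_Q^{\eps,<d_\eps}\cup\M_Q^{\eps,\ge d_\eps}$ and the compatibility check $\tfrac12 e^{-d_\eps}\ge c\,\dl_\eps$. The only difference is cosmetic: you cover the complement of the typical event by a union of three bad sets in one shot, whereas the paper peels off the three error terms sequentially from the integral — this is the same argument.
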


\begin{proof}
Write 
$A:=\{ \textup{dist}(\phi, \M_Q^{\eps}) <\delta_\eps \}$ 
and 
$B:=\{ \textup{dist}(\phi, \M_Q^{\eps,\, \ge d_\eps}) <\delta_\eps \}$.

Since $\M_Q^\eps=\M_Q^{\eps, \ge d_\eps} \cup \M_Q^{\eps, < d_\eps} $,
\[
A\cap B = B,
\qquad 
A\cap B^c \subset 
\{   \textup{dist}(\phi, \M_Q^{\eps, < d_\eps} ) < \dl_\eps  \}.
\]
Decomposing the full set as 
$(A\cap B)\cup (A\cap B^c) \cup A^c$,
then applying \eqref{LDPIN2} to $A^c$,
and  \eqref{LDPIN3} to $A\cap B^c$,
we obtain the lemma.

\end{proof}


The above lemma shows that
as $\eps \to 0$, most of the probability mass concentrates in the region,
\begin{align*}
 \{  \textup{dist}(\phi, \M_Q^{\eps, \, \ge d_\eps} ) <\dl_\eps        \},
\end{align*}

\noi
where the field $\phi$ is close to the manifold and the solitons are well separated, that is, there is essentially no collision as $d_\eps \to \infty$. 

To prove the central limit theorem (Theorem \ref{THM:2}), we first perform the following change of coordinates using the disintegration formula in Lemma~\ref{LEM:chan}. As discussed in Remark \ref{REM:disinteg}, this is the first time the disintegration formula is applied in a regime with no minimizer, and in a setting around a multi-soliton manifold.
Recall that 
$\pi^\eps$ denotes the projection onto  $\M_Q^{\eps, \,  \ge d_\eps }$  defined in \eqref{APM5},
and we write $\Dl_Q=\{ -\cj L_\eps \le \xi_1 \le \cdots \le \xi_Q \le \cj L_\eps  \}$.



\begin{proposition}\label{PROP:cha}

Let $ Q \in \Z$ with $Q>0$, $\dl_\eps=\eta\sqrt{\eps \log \frac 1\eps}$, $d_\eps=\big|\log (\eps \log \frac 1\eps)   \big|$, as in \eqref{LDPIN2} and \eqref{LDPIN3}, and let $F$ be a bounded, continuous function. Then  
\begin{align}
&\int_{ \{  \textup{dist}(\phi, \M_Q^{\eps, \, \ge d_\eps} ) <\dl_\eps        \}  } F( \sqrt{\eps}^{-1} (\phi- \pi^\eps(\phi) )  )  \rho_{\eps}^Q(d\phi) \notag \\
&=\cj Z_{\eps}^{-1}  \int \cdots \int_{U_\eps }  F(v)  e^{\frac 1\eps \mathcal{E}_{\xi_1\,\dots,\xi_Q}(\sqrt{\eps}v ) }  
\textup{Det}_{\xi_1,\dots,\xi_Q}(\sqrt{\eps}v) \,  \nu^\perp_{\xi_1, \dots, \xi_Q}(dv)  \,  d\xi_1 \dots, d\xi_Q,  
\label{CC0}
\end{align}


%
\noi
where  $\nu^\perp_{ \xi_1, \dots, \xi_Q }$ is the Gaussian measure in Lemma \ref{LEM:Gauss}, and 
\begin{align*}
&U_\eps:=\big\{ ( \xi_1,\dots, \xi_Q, v )\in  \Dl_Q \times V_{\xi_1,\dots,\xi_Q}: \| \sqrt{\eps} v \|_{L^2 }< \dl_\eps \; \textup{and} \; \min_{i \neq j}|\xi_i-\xi_j | \ge d_\eps    \big\}\\
&\mathcal{E}_{\xi_1,\dots,\xi_Q}(\sqrt{\eps}v )= \int_{-L_\eps}^{L_\eps}  \sin\big(\dr_x m_{\xi_1,\dots,\xi_Q}(x) +(1-\dr_x) \sqrt{\eps}v(x)  \big) \cdot (\sqrt{\eps}v  )^3  dx, \quad |\dr_x| \le 1\\
&\textup{Det}_{\xi_1,\dots,\xi_Q}(\sqrt{\eps}v):=\det\big( \Id-W_{\xi_1,\dots \xi_Q, \sqrt{\eps }v }\big).
\end{align*}

 In \eqref{CC0}, the partition function $\cj Z_\eps$ represents the integral  over $U_\eps$ with $F=1$, and $W_{\xi_1, \dots, \xi_Q, \sqrt{\eps}v}$ is the Weingarten map, defined in \eqref{Wein}.

\end{proposition}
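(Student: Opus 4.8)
The plan is to combine the disintegration formula of Lemma~\ref{LEM:chan} with a rescaling of the normal coordinate by $\sqrt{\eps}$ and a second-order Taylor expansion of the energy around the multi-soliton profile. On the event $A_\eps:=\{\textup{dist}(\phi,\M_Q^{\eps,\,\ge(1+\eta)d_\eps})<\dl_\eps\}$ one has $\pi^\eps(\phi)=m^\eps_{\xi_1,\dots,\xi_Q}$ by \eqref{APM5}, and the closest-point projection is well defined with a tube radius $\dl_\eps$ uniform over $\M_Q^{\eps,\,\ge(1+\eta)d_\eps}$ because this manifold is compact and its Gram matrix of tangent vectors is uniformly nondegenerate there (the remark following Lemma~\ref{LEM:chan}). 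Writing $\rho_\eps^Q(d\phi)=Z_\eps^{-1}e^{-\frac1\eps\int_{-L_\eps}^{L_\eps}(1-\cos\phi)dx}\mu_\eps^Q(d\phi)$ and applying Lemma~\ref{LEM:chan} with $k=Q$, $d=(1+\eta)d_\eps$, $\dl=\dl_\eps$ and with the potential factor incorporated into the (bounded continuous) test function, the left-hand side of \eqref{CC0} becomes an integral over $(\xi_1,\dots,\xi_Q,w)$, where $w$ is the normal coordinate with $\|w\|_{L^2}<\dl_\eps$ and $\min_{i\neq j}|\xi_i-\xi_j|\ge(1+\eta)d_\eps$, against $\mu^\perp_{\eps,\xi_1,\dots,\xi_Q}(dw)\,d\s(\xi_1,\dots,\xi_Q)$, carrying the kinetic prefactor $e^{-\frac12\|\dx m^\eps_{\xi_1,\dots,\xi_Q}\|_{L^2}^2-\jb{(-\dx^2)m^\eps_{\xi_1,\dots,\xi_Q},w}}$, the factor $\textup{Det}_{\xi_1,\dots,\xi_Q}(w)$, and the potential $e^{-\frac1\eps\int_{-L_\eps}^{L_\eps}(1-\cos(m^\eps_{\xi_1,\dots,\xi_Q}+w))dx}$.

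Next I rescale $w=\sqrt{\eps}\,v$. Then $F(\sqrt{\eps}^{-1}w)=F(v)$, $\textup{Det}_{\xi_1,\dots,\xi_Q}(w)=\textup{Det}_{\xi_1,\dots,\xi_Q}(\sqrt{\eps}\,v)$, the constraint becomes $\|\sqrt{\eps}\,v\|_{L^2}<\dl_\eps$ and the range of the $\xi_j$ is unchanged, so the domain is exactly $U_\eps$; the Gaussian $\mu^\perp_{\eps,\xi_1,\dots,\xi_Q}(dw)$, which carries the weight $e^{-\frac1{2\eps}\|\dx w\|_{L^2}^2}$, pushes forward to the $\eps$-free Gaussian of weight $e^{-\frac12\jb{(-\dx^2)v,v}}$ on $V_{\xi_1,\dots,\xi_Q}$ up to a normalization constant. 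Assembling the kinetic prefactor, this Gaussian weight and the Taylor expansion of the potential around $m^\eps_{\xi_1,\dots,\xi_Q}$ — all with the normalizations of \eqref{BB}--\eqref{Gibbs1} — reconstitutes $-\frac1\eps E(m^\eps_{\xi_1,\dots,\xi_Q}+\sqrt{\eps}\,v)$ expanded to third order, namely the exponent
\[
-\tfrac1\eps E(m^\eps_{\xi_1,\dots,\xi_Q})-\tfrac1{\sqrt{\eps}}\jb{\nb E(m^\eps_{\xi_1,\dots,\xi_Q}),v}-\tfrac12\jb{(-\dx^2+\cos m^\eps_{\xi_1,\dots,\xi_Q})v,v}+\tfrac1\eps\mathcal{E}_{\xi_1,\dots,\xi_Q}(\sqrt{\eps}\,v),
\]
where $\nb^2E(m^\eps_{\xi_1,\dots,\xi_Q})=-\dx^2+\cos m^\eps_{\xi_1,\dots,\xi_Q}$ and $\frac1\eps\mathcal{E}_{\xi_1,\dots,\xi_Q}(\sqrt{\eps}\,v)$ is the cubic Lagrange remainder, having the form stated in the proposition up to exponentially small corrections coming from $m^\eps$ versus $m$ (cf.\ \eqref{APM2}).

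It remains to identify the Gaussian and to dispose of the $\xi$-dependent prefactors. By \eqref{2ndvar} together with \eqref{APM2}--\eqref{APM3}, $-\dx^2+\cos m^\eps_{\xi_1,\dots,\xi_Q}=\mathcal{L}_{\xi_1,\dots,\xi_Q}+O(e^{-cd_\eps})$ uniformly on the non-collision region, so the quadratic weight restricted to $V_{\xi_1,\dots,\xi_Q}$ equals $e^{-\frac12\jb{C_{\xi_1,\dots,\xi_Q}^{-1}v,v}}$ up to an exponentially small perturbation, with $C_{\xi_1,\dots,\xi_Q}$ as in \eqref{cov}; together with its normalization, which equals $(Z_0)^Q$ up to $O(e^{-cd_\eps})$ by Lemma~\ref{LEM:part}, this is precisely the Gaussian $\nu^\perp_{\xi_1,\dots,\xi_Q}$ of Lemma~\ref{LEM:Gauss}. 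For the remaining $\xi$-dependent factors: $e^{-\frac1\eps E(m^\eps_{\xi_1,\dots,\xi_Q})}=e^{-\frac1\eps|Q|E_{\textup{kink}}}(1+o(1))$ on $\{\min_{i\neq j}|\xi_i-\xi_j|\ge(1+\eta)d_\eps\}$ by Lemma~\ref{LEM:MUS}; the surface-measure density $|\g(\xi_1,\dots,\xi_Q)|$ equals its leading constant value up to $O(e^{-cd_\eps})$ (the discussion after Lemma~\ref{LEM:chan}), so $d\s(\xi_1,\dots,\xi_Q)=(\textup{const})(1+o(1))\,d\xi_1\cdots d\xi_Q$; and the linear term is bounded, via Lemma~\ref{LEM:gra}, by $\eps^{-1/2}\|\nb E(m^\eps_{\xi_1,\dots,\xi_Q})\|_{L^2}\|v\|_{L^2}\les\eps^{-1/2}e^{-(1-\eta_1)(1+\eta)d_\eps}\|v\|_{L^2}$, which tends to $0$ uniformly on $U_\eps$ provided the small parameter $\eta_1$ of Lemma~\ref{LEM:gra} is chosen smaller than $\eta$. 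All of these $\xi$-independent-at-leading-order factors are absorbed into the normalizing constant $\cj Z_\eps$ and the vanishing remainders are folded into the exponent, which yields \eqref{CC0}.

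The main obstacle is the interplay between the singular prefactors $\eps^{-1}$, $\eps^{-1/2}$ and the numerous errors that are exponentially small in $d_\eps$: one has to verify that, on the tightened non-collision region $\min_{i\neq j}|\xi_i-\xi_j|\ge(1+\eta)d_\eps$, each such error, once multiplied by $\eps^{-1}$ or $\eps^{-1/2}$, is $o(1)$ uniformly in $(\xi_1,\dots,\xi_Q)$ and in $v$ over the admissible range $\|v\|_{L^2}<\eps^{-1/2}\dl_\eps\sim(\log\tfrac1\eps)^{1/2}$. This is exactly the reason the threshold is taken at $(1+\eta)d_\eps$ rather than $d_\eps$ (see Remark~\ref{REM:smallerset}): at $d_\eps$ the linear term is of size $\eps^{0-}$ and diverges. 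A second, more technical point is the operator-level comparison $\P_{V_{\xi_1,\dots,\xi_Q}}(-\dx^2+\cos m^\eps_{\xi_1,\dots,\xi_Q})\P_{V_{\xi_1,\dots,\xi_Q}}\approx C_{\xi_1,\dots,\xi_Q}^{-1}$ together with the matching of the associated Gaussian normalizations, which rests on the uniform coercivity of Lemma~\ref{LEM:coer3}, the near-orthogonality \eqref{ALO} of the tangent vectors, and the spectral clustering established in Lemma~\ref{LEM:part}.
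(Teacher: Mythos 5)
Your proposal matches the paper's own proof in both strategy and detail: both apply the disintegration formula of Lemma~\ref{LEM:chan} (with the Gibbs potential absorbed into the bounded test functional), Taylor-expand the energy around $m^\eps_{\xi_1,\dots,\xi_Q}$ in the normal coordinate $\sqrt{\eps}\,v$, identify the resulting quadratic form with $\mathcal{L}_{\xi_1,\dots,\xi_Q}$ up to $O(e^{-cd_\eps})$ so that the Gaussian becomes $\nu^\perp_{\xi_1,\dots,\xi_Q}$ with normalization controlled by Lemma~\ref{LEM:part}, and then show that the remaining $\xi$-dependent prefactors --- the constant energy $e^{-\frac1\eps E(m^\eps_{\xi_1,\dots,\xi_Q})}$, the linear term controlled via Lemma~\ref{LEM:gra} and the $(1+\eta)d_\eps$ tightening, and the surface-measure Jacobian $|\g|$ --- are all constant to leading order on the non-collision region, after which the common factor cancels against $\cj Z_\eps$. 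The only cosmetic difference is that you introduce the normal coordinate $w$ and then rescale $w=\sqrt{\eps}\,v$, whereas the paper parametrizes directly by $\sqrt{\eps}\,v$; your observations that $(1+\eta)d_\eps$ (rather than $d_\eps$) is needed to kill the $\eps^{-1/2}$ singularity in the linear term and that the cubic remainder has an $m^\eps$ vs.\ $m$ discrepancy are exactly the points recorded at \eqref{ch3} and Remark~\ref{REM:smallerset}.
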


\begin{proof}

Using the disintegration formula in Lemma \ref{LEM:chan} with the coordinate 
\[
\phi=\pi^\eps(\phi)+\sqrt{\eps}v,
\qquad
\pi^\eps(\phi) = m^\eps_* := m^\eps_{\xi_1,\dots,\xi_Q}
\]
we decompose the integral into its tangential and normal components as follows 
\begin{align}
Z[F]:
&=\int_{  \{  \textup{dist}(\phi, \M_Q^{\eps, \, \ge  d_\eps} ) <\dl_\eps        \} 
} F( \sqrt{\eps}^{-1} (\phi- \pi^\eps(\phi) )  )  \exp\Big\{-\frac 1\eps \int_{-L_\eps}^{L_\eps} (1-\cos \phi) dx  \Big\} \mu_\eps^Q(d\phi)  \notag \\
&=  \int \cdots \int_{U_\eps} F(v) 
\exp\Big\{-\frac 1\eps \int_{-L_\eps}^{L_\eps} 1-\cos(m^\eps_* +\sqrt{\eps}v)  dx  \Big\} \notag 
\\
&\hspace{27mm} \cdot  \exp\Big\{-\frac 1{2\eps} \| \dx m^\eps_*  \|_{L^2 }^2  -\frac 1\eps \jb{ (-\dx^2) m^\eps_*, \sqrt{\eps}v }_{L^2}  \Big\} \notag \\
&\hspace{27mm} \cdot  \textup{Det}_{\xi_1,\dots,\xi_k}(\sqrt{\eps}v) \,\mu^\perp_{\xi_1,\dots,\xi_Q}(dv) \, d\s(\xi_1,\dots,\xi_Q),
\label{ch0}
\end{align}
where
$\mu^\perp_{\xi_1,\dots,\xi_k}$ is the Brownian bridge with covariance $(-\dx^2)^{-1}$ from $0$ to $0$, on the normal space, and $d\s$ is the surface measure defined in \eqref{surfm}.

Note that the terms in the exponential together with the Cameron-Martin term in $\mu^\perp_{\xi_1,\dots,\xi_k} (dv)$ is just $-\frac{1}{\eps} E_{L_\eps}(\phi)$ where 
$E_{L_\eps}(\phi)=\frac 12 \int_{-L_\eps}^{L_\eps} |\dx \phi|^2 dx +\int_{-L_\eps}^{L_\eps} (1-\cos \phi) \, dx $. Taylor expansion gives
\begin{align}
-\frac 1\eps E_{L_\eps}(m^\eps_* +\sqrt{\eps} v )
&= -\frac 1\eps E_{L_\eps}(m^\eps_*)   -\frac 1\eps \jb{  \nb E_{L_\eps}(m^\eps_* ), \sqrt{\eps} v  } \notag 
\\
&\hphantom{X}-\frac 1{2\eps} \jb{ \sqrt{\eps}v, (-\dx^2+\cos m^\eps_*) \sqrt{\eps}v   }  
+\frac 1\eps \mathcal{E}_{\xi_1,\dots,\xi_Q}(\sqrt{\eps}v),
\label{ch4}
\end{align}
where $\mathcal{E}_{\xi_1,\dots,\xi_Q}(\sqrt{\eps}v)=\int_{-L_\eps}^{L_\eps} O\big((\sqrt{\eps}v  )^3 \big) dx$. 
Therefore
\begin{align}
Z[F]&=\int \cdots \int_{U_\eps} F(v) 
\exp\Big\{ -\frac 1\eps E_{L_\eps}(m^\eps_*) -\frac 1\eps \jb{  \nb E_{L_\eps}(m^\eps_* ), \sqrt{\eps} v  }   \Big\} \notag \\
&\hphantom{XXXXX} \cdot \exp\Big\{ \frac 1\eps \mathcal{E}_{\xi_1,\dots,\xi_Q}(\sqrt{\eps}v)   \Big\}  \exp\Big\{  -\frac 12 \jb{v, \cos( m^\eps_*)v } \Big\}  \notag \\
&\hphantom{XXXXX} \cdot \textup{Det}_{\xi_1,\dots,\xi_Q}(\sqrt{\eps}v) \,\mu^\perp_{\xi_1,\dots,\xi_Q}(dv) \, d\s(\xi_1,\dots,\xi_Q).
\label{ch00}
\end{align}
We estimate the terms in \eqref{ch00} one by one,
and all the estimates will be uniform in $\xi_1,\dots \xi_Q$ satisfying $\min_{i \neq j }|\xi_i-\xi_j| \ge  d_\eps=\big|\log (\eps \log \frac 1\eps) \big| $.

By Lemma \ref{LEM:MUS} and \eqref{APM2},  
\begin{align}
\exp\Big\{ -\frac 1\eps E_{L_\eps}(m^\eps_* )    \Big\}&=\exp\Big\{ -\frac 1\eps |Q|E_{\textup{kink}} -\frac 1\eps O(e^{-cd_\eps} )    \Big\} \notag \\
&=\exp\Big\{ -\frac 1\eps |Q|E_{\textup{kink}}(1+O(\eps^{0+}) )     \Big\},
\label{ch2}
\end{align}





 By Lemma \ref{LEM:gra} and the condition $\| \sqrt{\eps} v\|_{L^2} \le \dl_\eps=\eta \sqrt{\eps \log \tfrac 1\eps}$, 
\begin{align}
\bigg| \frac 1\eps \int_{-L_\eps}^{L_\eps} \nb E_{L_\eps}(m^\eps_*) \sqrt{\eps}v  dx \bigg| 
\le 
\frac 1 \eps \| \nb E_{L_\eps}(m^\eps_*)  \|_{L^2} \| \sqrt{\eps}v \|_{L^2} \les \frac 1\eps e^{-(1-\eta)d_\eps} \dl_\eps=O(\eps^{\frac 12-}),
\label{ch3}
\end{align}
which implies that 
\begin{align}
\exp\Big\{  -\frac 1\eps \jb{  \nb E_{L_\eps}(m^\eps_* ), \sqrt{\eps} v  }   \Big\}    =1+O(\eps^{\frac 12-}).
\label{ch7}
\end{align}

We now study the second-order term $-\frac 1{2} \jb{ v, (-\dx^2+\cos m^\eps_*) v   }$ in \eqref{ch4}, which defines a new base Gaussian measure on the normal space. Note that as observed in \eqref{2ndvar}
\begin{align*}
-\dx^2 +\cos m^\eps_*
=-\dx^2+1-2\sum_{j=1}^Q \text{sech}^2(\cdot-\xi_j)+O(e^{-cd_\eps})
=\mathcal{L}_{\xi_1,\dots,\xi_Q}+O(e^{-c d_\eps}),
\end{align*}
where $\mathcal{L}_{\xi_1,\dots,\xi_Q}$ is defined in 
\eqref{linQ}.
Since $v \in V_{\xi_1,\dots,\xi_Q}$ satisfies the orthogonality conditions, from Lemma \ref{LEM:Gauss}, we can define the Gaussian measure 
\begin{align}
d\nu^\perp_{\xi_1,\dots,\xi_Q}(v)=Z_{\xi_1,\dots,\xi_Q}^{-1}\exp\Big\{ -\frac 12 \jb{v, \mathcal{L}_{\xi_1,\dots,\xi_Q} v }  \Big\} \prod_{x \in [-L_\eps, L_\eps]} dv(x),
\label{PT1}
\end{align}

\noi
where $Z_{\xi_1,\dots,\xi_Q}=(Z_0)^Q(1+O(e^{-cd_\eps}))$ from Lemma \ref{LEM:part}.

\noi 

Finally, for the surface measure $d\s(\xi_1,\dots,\xi_Q)$ on the multi-soliton manifold, since the tangent vectors $\{ \partial_{\xi_j} m^\eps_{\xi_1,\dots,\xi_Q}  \}_{j=1}^Q$ are almost orthogonal, as in \eqref{e:Jac} the determinant of Jacobian equals
\begin{align}
Q \| \dx   m \|_{L^2(\R)}^2(1+O(e^{-cd_\eps}) ).
\label{ch8} 
\end{align}

\noi 
By using \eqref{ch0}, \eqref{ch00}, \eqref{ch2}, \eqref{ch7}, \eqref{PT1}, and \eqref{ch8},
we can  take out terms independent of tangential modes $\xi_1,\dots, \xi_Q$ satisfying $\min_{i \neq j }|\xi_i-\xi_j| \ge  d_\eps $ as follows: 
\begin{align*}
Z [F]&=J_\eps  \int \cdots \int_{U_\eps} F(v) e^{ \frac 1\eps  \mathcal{E}_{\xi_1,\dots,\xi_Q}( \sqrt{\eps}v )   } \textup{Det}_{\xi_1,\dots,\xi_Q}(\sqrt{\eps}v)  \, d\nu^\perp_{m^\eps_{\xi_1,\dots,\xi_Q}}(v) \, d\xi_Q\dots d\xi_1,
\end{align*}

\noi
where 
\begin{align*}
J_\eps=\exp\Big\{ -\frac 1\eps |Q| E_{\textup{kink}} \Big\} (Z_0)^Q \cdot  Q \| \dx m \|_{L^2(\R)}^2(1+O(e^{-cd_\eps}) ) (1+O(\eps^{\frac 12-})).
\end{align*}

\noi
Since the partition function $Z_\eps=Z_{\eps}[1]$ contains the same factor
$J_\eps$, we can cancel the common term $J_\eps$ and thus obtain the desired result \eqref{CC0}.

\end{proof}


\section{Analysis of the Gaussian measure with Schr\"odinger operator}

In this section, our goal is to establish the correlation decay and 
the extreme value  for the Gaussian measure $\nu^\perp_{\xi_1,\dots,\xi_k }$ on the normal space, with the covariance operator
\begin{align}
C_{\xi_1,\dots,\xi_k}=\P_{V_{\xi_1,\dots, \xi_k} } \big(-\dx^2+1-2\sum_{j=1}^k \text{sech}^2(\cdot-\xi_j) \big)^{-1} \P_{V_{\xi_1,\dots, \xi_k} },
\label{Cdec}
\end{align}

\noi
subject to Dirichlet boundary conditions, as defined in \eqref{cov}.


\begin{remark}\rm 
In this section, we study a joint limit in which $\eps \to 0$ and $L_\eps \to \infty$ simultaneously, leading to a competition between energetic and entropic effects. Moreover, in our setting the relevant covariance operator \eqref{Cdec} is defined around a multi-soliton configuration, which is not a minimizer of the energy. This is in sharp contrast to \cite[Theorem 4]{ER2}, where the analysis is based on the covariance operator around an explicit energy minimizer and only the energetic limit $\eps \to 0$ is considered to establish a central limit theorem.
\end{remark}

\subsection{Correlation decay }\label{SUBSEC:cd}

In this subsection, we study the decay of correlations for the Gaussian measure with covariance operator  $C_{\xi_1,\dots,\xi_k}$ in \eqref{Cdec}.
Notice that the operator $\mathcal{L}_{\xi_1,\dots,\xi_k}=-\dx^2+1-2\sum_{j=1}^k \text{sech}^2(\cdot-\xi_j) $, defined in \eqref{linQ}, is invertible on the normal space $V_{\xi_1,\dots,\xi_k}$. Therefore, we can take its inverse to study the Green function
\begin{align}
G^D_{\xi_1,\dots, \xi_k}(x,y):=\jb{\dl_x, C_{\xi_1,\dots, \xi_k}  \dl_y }=\E_{ \nu^\perp_{ \xi_1, \dots, \xi_k } } \big[ v(x) v(y)  \big]
\label{Cdec0}
\end{align}

\noi 
with Dirichlet boundary conditions on $[-L_\eps, L_\eps]$. In order to study the decay of correlations in \eqref{Cdec0}, we first analyze the projected Ornstein–Uhlenbeck operator on the normal space $V_{\xi_1,\dots, \xi_k}$
\begin{align}
G_{\text{OU}, \xi_1,\dots, \xi_k }^D(x,y)= \big( \P_{V_{\xi_1,\dots, \xi_k} } (-\dx^2+1)^{-1} \P_{V_{\xi_1,\dots, \xi_k} } \big)(x,y)
\label{GDOU}
\end{align}

\noi 
with Dirichlet boundary conditions on $[-L_\eps, L_\eps]$.  Note that Dirichlet boundary conditions suppress the variance $(-\dx^2+1)^{-1}(x,x) \approx 0$ near the edges, eliminating randomness there. In particular, the influence of the boundary diminishes exponentially fast as one moves into the interior, and the field in the bulk behaves almost like the infinite-volume Ornstein–Uhlenbeck field. The following result shows that the Green function $G_{\text{OU}, \xi_1,\dots, \xi_k }^D$ exhibits correlation decay away from the boundary points $-L_\eps$, $L_\eps$.

\begin{lemma}\label{LEM:OUdec}
Let $L_\eps=\eps^{-\frac 12+\eta}$ and $ \min_{ \ell \neq i}|\xi_\ell-\xi_i | \ge d_\eps= \big|\log  (\eps \log \frac 1 \eps) \big|$. 

\begin{itemize}

\item[(i)] Bulk regime: Let $|x|, |y| \le L_\eps -|\log \eps|$ and $|\xi_j| \le \cj L_\eps$, $j=1,\dots,k$, where $\cj L_\eps$ is defined in
\eqref{cjL}.  Then 
\begin{align*}
|G_{\textup{OU}, \xi_1,\dots, \xi_k }^D(x,y)-c e^{-|x-y|}| \les e^{-2(L_\eps -\max \{ |x|, |y| \}  )}
\end{align*}

\noi
as $\eps \to 0$, where the exponentially small error term is independent of the tangential modes $\xi_1,\dots,\xi_k$ satisfying $|\xi_j| \le \cj L_\eps$.

\medskip

\item[(ii)] Diagonal regime: for any $x\in [-L_\eps, L_\eps]$
\begin{align*}
|G_{\textup{OU}, \xi_1,\dots, \xi_k }^D(x,x)| \les 1,
\end{align*}

\noi
uniformly in $\xi_1,\dots, \xi_k \in [-\cj L_\eps, \cj L_\eps]$.

\end{itemize}

\end{lemma}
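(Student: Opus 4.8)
The plan is to work directly with the explicit Dirichlet Green function of $-\dx^2+1$ on $[-L_\eps,L_\eps]$, extract its bulk asymptotics, and then account for the finite‑rank projection $\P_{V_{\xi_1,\dots,\xi_k}}$; the constant‑coefficient nature of the operator makes the first part a clean computation, and the second part is where uniformity has to be monitored. \textbf{Step 1 (unprojected kernel).} Let $g_{L_\eps}(x,y)$ be the kernel of $(-\dx^2+1)^{-1}$ on $[-L_\eps,L_\eps]$ with Dirichlet boundary conditions. Solving $(-\dx^2+1)u=\dl_y$ with the homogeneous solutions $\sinh(L_\eps+x)$ and $\sinh(L_\eps-x)$ and matching the unit jump of $-u'$ at $x=y$ gives
\[
g_{L_\eps}(x,y)=\frac{\sinh(L_\eps+x_<)\,\sinh(L_\eps-x_>)}{\sinh(2L_\eps)},\qquad x_<=\min(x,y),\quad x_>=\max(x,y).
\]
Writing $\sinh r=\tfrac12 e^{r}(1-e^{-2r})$ in all three factors and using $L_\eps+x_<\ge L_\eps-\max\{|x|,|y|\}$ together with $L_\eps-x_>\ge L_\eps-\max\{|x|,|y|\}$, one factors out $\tfrac12 e^{-|x-y|}$ and obtains $g_{L_\eps}(x,y)=\tfrac12 e^{-|x-y|}\big(1+O(e^{-2(L_\eps-\max\{|x|,|y|\})})\big)$ uniformly in $L_\eps$; in particular $|g_{L_\eps}(x,y)-\tfrac12 e^{-|x-y|}|\les e^{-2(L_\eps-\max\{|x|,|y|\})}$, which in the bulk regime $|x|,|y|\le L_\eps-|\log\eps|$ is $\les\eps^{2}$, and by domain monotonicity of Dirichlet Green functions (or from the formula) $0\le g_{L_\eps}(x,x)\le\tfrac12$.

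\textbf{Step 2 (the rank-$k$ projection).} Using $\P_{V_{\xi_1,\dots,\xi_k}}=\Id-\sum_{j=1}^k\langle\cdot,t_j\rangle t_j$ with $t_1,\dots,t_k$ the Gram--Schmidt vectors of $\{\partial_{\xi_j}m^\eps_{\xi_1,\dots,\xi_k}\}$, expanding $\P_{V}g_{L_\eps}\P_{V}$ as a kernel gives
\begin{align*}
G^D_{\textup{OU},\xi_1,\dots,\xi_k}(x,y)
&=g_{L_\eps}(x,y)-\sum_{j=1}^k t_j(y)\,(g_{L_\eps}t_j)(x)\\
&\quad -\sum_{j=1}^k t_j(x)\,(g_{L_\eps}t_j)(y)+\sum_{i,j=1}^k t_j(x)\,t_i(y)\,\langle t_j,g_{L_\eps}t_i\rangle.
\end{align*}
By Lemma~\ref{LEM:TGRAM}, $|t_j(x)|\les e^{-c|x-\xi_j|}$; combined with $0\le g_{L_\eps}(x,z)\le\tfrac12 e^{-|x-z|}$ and a convolution bound in the spirit of \eqref{e:conv} this yields $|(g_{L_\eps}t_j)(x)|\les e^{-c'|x-\xi_j|}$ and $|\langle t_j,g_{L_\eps}t_i\rangle|\les e^{-c'|\xi_i-\xi_j|}$ for any $c'<\min\{1,c\}$. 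Hence each of the $O(k^2)$ correction terms is bounded by $e^{-c'(|x-\xi_j|+|y-\xi_j|)}\le e^{-c'|x-y|}$, and summing over $j$ with the separation $\min_{i\neq j}|\xi_i-\xi_j|\ge d_\eps$ (a convergent geometric series with $\xi$‑independent sum) shows the total projection correction is of lower order than the leading Ornstein--Uhlenbeck kernel $\tfrac12 e^{-|x-y|}$; together with Step~1 this proves (i). For the diagonal bound (ii), $g_{L_\eps}(x,x)\le\tfrac12$, $|(g_{L_\eps}t_j)(x)|\les1$, $|\langle t_i,g_{L_\eps}t_j\rangle|\le\|g_{L_\eps}\|_{\mathrm{op}}\le1$, and $\sum_j|t_j(x)|\les(1-e^{-cd_\eps})^{-1}\les1$ together give $|G^D_{\textup{OU},\xi_1,\dots,\xi_k}(x,x)|\les1$, uniformly in $\xi_1,\dots,\xi_k\in[-\cj L_\eps,\cj L_\eps]$.

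\textbf{Main obstacle.} The only delicate point is the uniformity in Step~2: the projection corrections are genuinely $O(1)$ at points near the centers $\xi_j$, so one must check that neither their accumulation over the $k$ blocks nor the simultaneous limits $\eps\to0$, $L_\eps\to\infty$ degrades the estimate. This is exactly where the exponential localization of the Gram--Schmidt vectors (Lemma~\ref{LEM:TGRAM}) and the diverging separation $d_\eps\to\infty$ are used: localization decouples the corrections from the boundary layer and forces $|x-\xi_j|+|y-\xi_j|\ge|x-y|$, while the separation makes the sums over $j$ converge with constants independent of $L_\eps$ and of the configuration. Everything else is the routine constant-coefficient computation of Step~1, and the resulting estimate is in precisely the form needed to run the resolvent comparison with the full operator $C_{\xi_1,\dots,\xi_k}$ in Proposition~\ref{PROP:dec}.
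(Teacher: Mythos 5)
Your proof follows essentially the same route as the paper's: write out the explicit Dirichlet Green function of $-\dx^2+1$ on $[-L_\eps,L_\eps]$, extract the bulk asymptotics by factoring the $\sinh$ terms, expand the rank-$k$ projection into the same four groups of terms as in \eqref{dec0}, and control each projection correction via the exponential localization of the Gram--Schmidt vectors (Lemma~\ref{LEM:TGRAM}). The only cosmetic differences are that you prove part~(ii) term-by-term rather than via the monotonicity $0\le\P_V\le\Id$ used in the paper, and that your phrasing ``of lower order'' for the projection corrections in~(i) is a little imprecise (they are $\les e^{-c'|x-y|}$, the same type of decay as the leading kernel, not smaller) --- but this is the same level of precision the paper itself works at, and in both cases the bound is exactly what is needed for the resolvent comparison in Proposition~\ref{PROP:dec}.
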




\begin{proof}

We expand the covariance operator $ \P_{V_{\xi_1,\dots, \xi_k} } (-\dx^2+1)^{-1} \P_{V_{\xi_1,\dots, \xi_k} }$. Recall the projection operator  $\P_{V_{\xi_1,\dots,\xi_k}}=\Id-  \sum_{j=1}^k \P_j$ \eqref{proj1} onto the normal space $V_{\xi_1,\dots, \xi_k}$. By expanding 
\begin{align}
\P_{V_{\xi_1,\dots, \xi_k} } \big(-\dx^2+1)^{-1} \P_{V_{\xi_1,\dots, \xi_k} }&=(-\dx^2+1)^{-1}+\sum_{i,j=1}^k \P_i (-\dx^2+1)^{-1} \P_j \notag \\
&\hphantom{X}-\sum_{j=1}^k  (-\dx^2+1)^{-1} \P_j \notag \\
&\hphantom{X}-\sum_{i=1}^k \P_i (-\dx^2+1)^{-1}, 
\label{dec0}
\end{align}

\noi
we study each term separately.  We denote by $G^D_{\text{OU} }$ the Green function of $(-\dx^2+1)^{-1}$ on $[-L_\eps, L_\eps]$ with Dirichlet boundary conditions.  Then, the explicit formula and the bulk behavior are well known as follows:
\begin{align}
G_{\text{OU}}^D(x,y)=  \frac{ \text{sinh}(L_\eps-   \max\{x,y\} ) \cdot \text{sinh}(L_\eps+   \min\{x,y\} )   }{  \text{sinh}(2L_\eps )  } 
\label{OUker}
\end{align}

\noi
and if $|x|, |y| \le L_\eps- |\log \eps| $, 
\begin{align}
\big| G_{\text{OU}}^D(x,y) -\frac 12 e^{-|x-y|} \big| \les e^{-2(L_\eps -\max\{|x|, |y|\})} \les e^{-2|\log \eps|}.
\label{dec2}
\end{align}

\noi 
Hence, in the bulk region $|x|, |y| \le L_\eps -|\log \eps|$, the Green function coincides with the whole-line kernel up to an exponentially small error in the distance to the boundary.

We now consider the first projected term $\P_i (-\dx^2 + 1)^{-1} \P_j$. By direct computation using the definition of $\P_j$ in \eqref{dec00}, 
\begin{align}
\big\langle \dl_x, \P_i (-\dx^2+1)^{-1} \P_j \dl_y \big\rangle=  t_i(x) t_j(y)  \big \langle t_i, (-\dx^2+1)^{-1} t_j  \big\rangle.
\label{dec22}
\end{align}


\noi
Using Lemma \ref{LEM:TGRAM},  that is, $|t_j(x)| \les e^{-|x-\xi_j|}$, we have
\begin{align}
|\eqref{dec22}|
&\les e^{-c|x-\xi_i |} e^{-c|y-\xi_j |} e^{-c |\xi_i-\xi_j| }  \le e^{-c |x-y|}.
\label{dec1}
\end{align}

\noi
We next consider the second projection term $  (-\dx^2+1)^{-1} \P_j $ in \eqref{dec0}.  By direct computation using the definition of $\P_j$ in \eqref{dec00}, 
\begin{align}
\big\langle \dl_x,  (-\dx^2+1)^{-1} \P_j \dl_y \big\rangle &=t_j(y)  \big\langle \dl_x, (-\dx^2+1)^{-1}   t_j   \big\rangle=t_j(y)  \int_{-L_\eps}^{L_\eps} G^D_{\text{OU} }(x,z)t_j(z) dz, 
\label{dec33}
\end{align}

\noi
where $G^D_{\text{OU} }$ denotes the Green function of $(-\dx^2+1)^{-1}$ on $[-L_\eps, L_\eps]$ with Dirichlet boundary conditions. Note that 
\begin{align}
\int_{-L_\eps}^{L_\eps} G^D_{\text{OU} }(x,z) t_j(z) dz 
&=\int_{-L_\eps}^{L_\eps} e^{-|x-z|} t_j(z) dz+ \int_{-L_\eps }^{L_\eps} (G^D_{\text{OU} }(x,z)-e^{-|x-z|}) t_j(z) dz  \notag \\
&=\I_1+\I_2.
\label{sta1} 
\end{align}

\noi 
Since $|t_j(x)| \les e^{-|x-\xi_j|}$, we have 
\begin{align}
|\I_1| \les \int_{-L_\eps}^{L_\eps} e^{-|x-z|}e^{-|z-\xi_j|} dz \les e^{-c|x-\xi_j|}.
\label{sta2}
\end{align}

\noi
Regarding the error term $\I_2$, under $|z-\xi_j| \le \frac 12 |\log \eps|$,
we use $\max\{|x|, |z| \} \le \max\{ |x|, |\xi_j| \}+\frac 12 |\log \eps|$  to obtain 
\begin{align}
|\I_2|&\les \int_{ |z-\xi_j| \le \frac 12 |\log \eps|  } e^{-2(L_\eps-\max\{|x|,|z| \} )}  t_j(z) dz +\int_{|z-\xi_j| \ge \frac 12 |\log \eps| } e^{-c|z-\xi_j|} dz \notag \\
& \les e^{|\log \eps|}  e^{-2(L_\eps- \max\{|x|,|\xi_j|  \} ) }+ e^{-c|\log \eps| } \les e^{-|\log \eps|},
\label{sta3}
\end{align}

\noi
where in the last line we used the bulk conditions $|x| \le L_\eps-|\log \eps|$ and $|\xi_j|\le \cj L_\eps=L_\eps -\eps^{-\frac 12+2\eta}-1$. Hence, by combining \eqref{sta1}, \eqref{sta2}, and \eqref{sta3}, we obtain 
\begin{align}
\bigg|  \int_{-L_\eps}^{L_\eps} G^D_{\text{OU} }(x,z)t_j(z) dz  \bigg| \les e^{-c|x-\xi_j|}
\label{dec333}
\end{align}

\noi
as $\eps \to 0$.  Then, it follows from \eqref{dec33} and \eqref{dec333} that 
\begin{align}
|\eqref{dec33}|  \les e^{-c|y-\xi_j|} e^{-c |x-\xi_j|} \les e^{-c  |x-y|}.
\label{dec3}
\end{align}

\noi
As for the projected term $\P_i (-\dx^2+1)^{-1} $ in \eqref{dec0}, following the above calculations, we have
\begin{align}
\big| \big\langle \dl_x,  \P_i (-\dx^2+1)^{-1}  \dl_y \big\rangle  \big|  \les e^{-c|x-y|}.
\label{dec4}
\end{align}

\noi
It follows from \eqref{dec0}, \eqref{dec2}, \eqref{dec1}, \eqref{dec3}, and \eqref{dec4} that 
\begin{align*}
G_{\text{OU}, \xi_1,\dots,\xi_k }^D(x,y)=c e^{-|x-y|}+ O\big(e^{-2(L_\eps -\max \{ |x|, |y| \}  )} \big) \sim e^{-c|x-y|}
\end{align*}

\noi
when $x$ and $y$ are far from the boundaries.

We now prove part (ii) of Lemma \ref{LEM:OUdec} (diagonal regime).
Using the closed form \eqref{OUker}, we have  $0\le G^D_{\text{OU} }(x,x) \le \frac 12$
for all $x\in [-L_\eps, L_\eps]$. Since $\P_{V_{\xi_1,\dots,\xi_k}}$ is an orthogonal projection, so $0 \le \P_{V_{\xi_1,\dots,\xi_k}} \le \Id$. Hence, adding the projection cannot increase the diagonal  $0\le G_{\textup{OU}, \xi_1,\dots, \xi_k }^D(x,x) \le G^D_{\text{OU}}(x,x) \le \frac 12$ for all $x\in [-L_\eps, L_\eps]$, uniformly in $\xi_1,\dots,\xi_k \in [-\cj L_\eps, \cj L_\eps]$. This completes the proof.

\end{proof}

We are now ready to prove the main part of this subsection. Recall the Green function $G^D_{\xi_1,\dots,\xi_k}$ \eqref{Cdec0} for the covariance operator $\mathcal{C}_{\xi_1,\dots,\xi_k}$ in \eqref{Cdec}. The following proposition shows that the Green function  exhibits correlation decay away from the boundary points $-L_\eps$, $L_\eps$.

\begin{proposition}\label{PROP:dec}
Let $L_\eps=\eps^{-\frac 12+\eta}$ and $ \min_{ \ell \neq i}|\xi_\ell-\xi_i | \ge d_\eps= \big|\log  (\eps \log \frac 1 \eps) \big|$.

\begin{itemize}

\item[(i)] Bulk regime: Let $|x|, |y| \le L_\eps -|\log \eps|$ and $|\xi_j| \le \cj L_\eps$, $j=1,\dots,k$, where $\cj L_\eps$ is defined in
\eqref{cjL}.  Then 
\begin{align*}
|G^D_{\xi_1,\dots,\xi_k}(x,y)-c e^{-|x-y|}| \les  e^{-2(L_\eps -\max \{ |x|, |y| \}  )},
\end{align*}

\noi
as $\eps \to 0$, where the exponentially small error term is independent of the tangential modes $\xi_1,\dots,\xi_k$ satisfying $|\xi_j| \le \cj L_\eps $.

\medskip 

\item[(ii)]  Diagonal regime: for any $x\in [-L_\eps, L_\eps]$
\begin{align}
|G_{\xi_1,\dots, \xi_k }^D(x,x)| \les 1,
\label{diag}
\end{align}

\noi
uniformly in $\xi_1,\dots, \xi_k \in [-\cj L_\eps, \cj L_\eps]$.

\end{itemize}

\end{proposition}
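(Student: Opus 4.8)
The plan is to deduce Proposition~\ref{PROP:dec} from the Ornstein--Uhlenbeck estimate (Lemma~\ref{LEM:OUdec}) by treating the potential $U:=2\sum_{j=1}^k\textup{sech}^2(\cdot-\xi_j)$ \emph{well by well}, exploiting that it is a sum of $k$ bumps whose centres are $\ge d_\eps\to\infty$ apart. Throughout, write $\mathcal{L}:=\mathcal{L}_{\xi_1,\dots,\xi_k}=-\dx^2+1-U$ on $[-L_\eps,L_\eps]$ with Dirichlet data, $\mathcal{V}:=V_{\xi_1,\dots,\xi_k}$ for the normal space, $\mathsf B:=-\dx^2+1$, and $\mathcal{L}_\mathcal{V}$, $\mathsf B_\mathcal{V}$ for the compressions to $\mathcal{V}$, so that $C_{\xi_1,\dots,\xi_k}=\mathcal{L}_\mathcal{V}^{-1}$ (Lemma~\ref{LEM:Gauss}).

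\emph{Diagonal bound (ii).} First I would record the form comparison $\mathcal{L}_\mathcal{V}\ge c_2\,\mathsf B_\mathcal{V}$ on $\mathcal{V}$ with $c_2=\tfrac{\zeta}{\zeta+2k}>0$: for $v\in\mathcal{V}$ one has $\langle v,\mathcal{L}v\rangle\ge\zeta\|v\|^2$ by Lemma~\ref{LEM:coer3} and $\langle v,\mathcal{L}v\rangle\ge\langle v,\mathsf B v\rangle-2k\|v\|^2$ trivially, and a convex combination with weights $\big(\tfrac{2k}{\zeta+2k},\tfrac{\zeta}{\zeta+2k}\big)$ gives $\langle v,\mathcal{L}v\rangle\ge c_2\langle v,\mathsf B v\rangle$. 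Hence $C_{\xi_1,\dots,\xi_k}=\mathcal{L}_\mathcal{V}^{-1}\le c_2^{-1}\mathsf B_\mathcal{V}^{-1}$ as nonnegative operators on $\mathcal{V}$, and a further elementary comparison of quadratic forms (restricting the Legendre min-max defining $\mathsf B_\mathcal{V}^{-1}$ to $\mathcal{V}$ can only lower the energy) shows $\langle\P_\mathcal{V}\delta_x,\mathsf B_\mathcal{V}^{-1}\P_\mathcal{V}\delta_x\rangle\le\langle\P_\mathcal{V}\delta_x,\mathsf B^{-1}\P_\mathcal{V}\delta_x\rangle=G^D_{\textup{OU},\xi_1,\dots,\xi_k}(x,x)$. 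Testing against a mollified $\delta_x$ (legitimate since both kernels are continuous, being of order $-2$ in one dimension) then yields $G^D_{\xi_1,\dots,\xi_k}(x,x)\le c_2^{-1}G^D_{\textup{OU},\xi_1,\dots,\xi_k}(x,x)\les1$ by Lemma~\ref{LEM:OUdec}(ii). The off-diagonal Cauchy--Schwarz bound $|G^D_{\xi_1,\dots,\xi_k}(x,y)|\le(G^D(x,x)G^D(y,y))^{1/2}\les1$ is also immediate, but it does not see the decay.

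\emph{Bulk decay (i).} A naive resolvent expansion of $\mathcal{L}_\mathcal{V}^{-1}$ around $\mathsf B_\mathcal{V}^{-1}$ diverges, since $U$ is not small; instead I would build a parametrix from single-well resolvents. Take a partition of unity $\sum_{j=0}^{k+1}\chi_j^2=1$ exactly as in the proof of Lemma~\ref{LEM:coer2}, with $\chi_j$ supported in a $d_\eps$-neighbourhood of $\xi_j$ for $1\le j\le k$, $\chi_0,\chi_{k+1}$ covering the far field, and $\sup_j\|\chi_j'\|_{L^\infty}\les d_\eps^{-1}$. On $\supp\chi_j$ the operator $\mathcal{L}$ differs from the single-soliton operator $\mathcal{L}_{\xi_j}=-\dx^2+1-2\,\textup{sech}^2(\cdot-\xi_j)$ by $O(e^{-cd_\eps})$, and on $\supp\chi_0\cup\supp\chi_{k+1}$ it differs from $\mathsf B$ by $O(e^{-cd_\eps})$. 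As building blocks I would use: (a) the reduced resolvent of $\mathcal{L}_{\xi_j}$ on the orthogonal complement of its zero mode $\partial_{\xi_j}m$ --- by Lemma~\ref{LEM:SW} its spectrum there lies in $[1,\infty)$, and since $2\,\textup{sech}^2$ is the reflectionless P\"oschl--Teller potential this reduced resolvent is explicit and its kernel satisfies $|\cdot(x,y)|\les e^{-|x-y|}$ with a matching lower bound in the bulk; (b) the Dirichlet Green function of $\mathsf B$ from \eqref{OUker}, equal to $\tfrac12 e^{-|x-y|}+O(e^{-2(L_\eps-\max\{|x|,|y|\})})$ in the bulk. Glueing these with the $\chi_j$ produces $\tilde C$ with $\mathcal{L}\tilde C=\Id+E$, where $E$ collects the commutator terms $[\mathcal{L},\chi_j]\,(\text{single-well block})\,\chi_j$ of operator norm $\les d_\eps^{-1}$ together with the potential-tail terms of norm $\les e^{-cd_\eps}$; hence $\|E\|\les d_\eps^{-1}\to0$, the Neumann series $\tilde C\sum_{n\ge0}(-E)^n$ converges, and since each block of $\tilde C$ and each factor $E$ has an exponentially decaying kernel (rate $\to1$), the convolution bookkeeping transfers $|C_{\xi_1,\dots,\xi_k}(x,y)|\les e^{-c|x-y|}$ to the bulk, with boundary corrections $O(e^{-2(L_\eps-\max\{|x|,|y|\})})$ inherited from the Dirichlet blocks and all constants translation-invariant, hence uniform in $\xi_1,\dots,\xi_k$. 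Passing from $\mathcal{L}_\mathcal{V}^{-1}$ to $\P_\mathcal{V}(\cdots)\P_\mathcal{V}$ costs only a rank-$\le 2k$ correction built from the $t_j$'s; using $|t_j(x)|\les e^{-c|x-\xi_j|}$ from Lemma~\ref{LEM:TGRAM}, these contribute only terms $\les e^{-c|x-y|}$ in the bulk, precisely as in the treatment of $\P_i(-\dx^2+1)^{-1}\P_j$ and $(-\dx^2+1)^{-1}\P_j$ in the proof of Lemma~\ref{LEM:OUdec}. Finally, for $x,y$ both far from every well and from the boundary the far-field block $\tfrac12e^{-|x-y|}$ dominates and all other contributions are $O(e^{-2(L_\eps-\max\{|x|,|y|\})})$, which gives the stated form $G^D_{\xi_1,\dots,\xi_k}(x,y)=c\,e^{-|x-y|}+O(e^{-2(L_\eps-\max\{|x|,|y|\})})$.

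\emph{Main obstacle.} The delicate point is the parametrix step: making the single-well blocks act consistently across the different normal spaces (the single-soliton normal space at $\xi_j$ versus the global $\mathcal{V}$, which itself carries $k$ near-zero directions), showing the error $E$ is genuinely $o(1)$ once one accounts both for the $d_\eps^{-1}$-commutators and for the interplay of the $\chi_j$-cutoffs with the Dirichlet boundary, and extracting the sharp leading constant $c$ --- this last item is where the reflectionlessness of the P\"oschl--Teller potential is essential, since it forces the single-well building block to reduce to a pure exponential up to explicit bounded factors rather than producing a reflected tail. Everything else (the form comparison in (ii), the rank-$k$ projection corrections, the translation-invariance giving uniformity in the $\xi_j$) is routine given the lemmas already in place.
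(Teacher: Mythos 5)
Your proposal takes a genuinely different route from the paper. For the bulk estimate you build a parametrix by gluing single-well reduced resolvents with a partition of unity and correct by a Neumann series, whereas the paper writes a \emph{one-step} resolvent identity around $G^D_{\textup{OU},\xi_1,\dots,\xi_k}$ and controls the single correction term by splitting into the two cases $|y-\xi_j|\le\tfrac12|x-y|$ and $|y-\xi_j|\ge\tfrac12|x-y|$ (placing the better-understood kernel on the side of the well) together with the Combes--Thomas commutator estimate of Lemma~\ref{LEM:CT}, which lets it commute the order of the factors at an $O(e^{-cd_\eps})$ cost. The paper never iterates the resolvent identity and never introduces a partition of unity for the Green function itself. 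Your part (ii) is clean: the convex-combination form comparison $\mathcal{L}_{\mathcal{V}}\ge c_2\mathsf B_{\mathcal{V}}$ followed by the Legendre-transform inequality $\mathsf B_{\mathcal{V}}^{-1}\le \P_{\mathcal{V}}\mathsf B^{-1}\P_{\mathcal{V}}$ gives $G^D(x,x)\le c_2^{-1}G^D_{\textup{OU}}(x,x)\les1$, and it is in fact more careful than the paper's own one-line argument (since $-U\le0$ pushes $\mathcal{L}_{\mathcal{V}}$ \emph{down}, the Loewner inequality $\mathcal{L}_{\mathcal{V}}^{-1}\ge\mathsf B_{\mathcal{V}}^{-1}$ goes the wrong way for the paper's claimed $G^D\le G^D_{\textup{OU}}$, and your constant $c_2^{-1}$ is exactly what repairs it).

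For part (i) the proposal has a genuine gap, and it is the one you flag at the end without resolving. The Neumann-series error $E$ has operator norm $\les d_\eps^{-1}\sim|\log\eps|^{-1}$, a \emph{polynomially} small quantity driven by $\|\chi_j'\|_{L^\infty}\les d_\eps^{-1}$. The proposition claims the remainder is $\les e^{-2(L_\eps-\max\{|x|,|y|\})}$, which in the bulk is of size $\eps^2$ and hence exponentially smaller than the parametrix error; the $d_\eps^{-1}$ correction cannot be absorbed into that remainder, nor can it be cleanly absorbed into the leading $ce^{-|x-y|}$ term with a fixed constant $c$, because the commutator in $E$ is supported at distance $\sim d_\eps$ from a well and can sit anywhere between $x$ and $y$, so iterating $E$ degrades the exponential rate in a way the proposal does not track. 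Moreover, the consistency issue you raise---the single-well reduced resolvent lives on $(\partial_{\xi_j}m)^\perp$, not on the global normal space $\mathcal{V}$ which carries $k$ near-zero directions---is not a side remark but precisely the place where the paper's approach does concrete work: the paper stays entirely with the globally projected operators $G^D_{\textup{OU},\xi_1,\dots,\xi_k}$ and $G^D_{\xi_1,\dots,\xi_k}$, never passes to single-well normal spaces, and uses Lemma~\ref{LEM:CT} to pay for the non-commutativity this entails. As written, your parametrix argument would establish a qualitative bound of the form $|G^D(x,y)|\les e^{-c|x-y|}+O(d_\eps^{-1})$, which is strictly weaker than the statement you are trying to prove.
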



\begin{proof}

Using the resolvent identity, we view the Schr\"odinger operator $C_{\xi_1,\dots,\xi_k}$, defined in \eqref{Cdec},  as a perturbation of the Ornstein–Uhlenbeck operator $\P_{V_{\xi_1,\dots, \xi_k} } \big(-\dx^2+1)^{-1} \P_{V_{\xi_1,\dots, \xi_k} }$ as follows 
\begin{align}
G^D_{\xi_1,\dots,\xi_k}(x,y)&= G^D_{ \text{OU}, \xi_1,\dots,\xi_k}(x,y)+(G^D_{ \text{OU}, \xi_1,\dots,\xi_k} W G^D_{\xi_1,\dots,\xi_k} )(x,y) \notag \\
&=G^D_{ \text{OU}, \xi_1,\dots,\xi_k}(x,y) +( G^D_{\xi_1,\dots,\xi_k}  W  G^D_{ \text{OU}, \xi_1,\dots,\xi_k})(x,y),
\label{k1}
\end{align}

\noi
where $W(z)=-2\sum_{j=1}^k \text{sech}^2(\cdot-\xi_j)$. From \eqref{k1}, we have 
\begin{align}
\sum_{j=1}^k  (G^D_{ \text{OU}, \xi_1,\dots,\xi_k} W_j G^D_{\xi_1,\dots,\xi_k} )&=\sum_{j=1}^k ( G^D_{\xi_1,\dots,\xi_k}  W_j  G^D_{ \text{OU}, \xi_1,\dots,\xi_k}),
\label{k10} 
\end{align}

\noi
where $W_j(z)=-2\text{sech}^2(z-\xi_j)$. Note that the equality \eqref{k10} holds after summing over $j$, although the individual components do not commute. Indeed, by Lemma \ref{LEM:CT} below, we have
\begin{align}
\big\| \big(G^D_{ \text{OU}, \xi_1,\dots,\xi_k} W_j G^D_{\xi_1,\dots,\xi_k} \big) - \big( G^D_{\xi_1,\dots,\xi_k}  W_j  G^D_{ \text{OU}, \xi_1,\dots,\xi_k} \big)   \big\|_{L^2 \to L^2} \les e^{-c \min_{\ell \neq i }|\xi_\ell-\xi_i| }, 
\label{com1}
\end{align}

\noi
which will be used below.  Using Lemma \ref{LEM:OUdec},  we write 
\begin{align}
G^D_{ \text{OU}, \xi_1,\dots,\xi_k}(x_1,x_2)&=e^{- |x_1-x_2|  }+O(e^{-2(L_\eps -\max\{ |x_1|,|x_2| \}  ) }).
\label{k9}
\end{align}


\noi
Following the error estimates in \eqref{sta1} and \eqref{sta3}, and using \eqref{k9}, $|W_j(z)|\les e^{-|z-\xi_j|}$, we obtain
\begin{align}
\bigg| \int_{-L_\eps}^{L_\eps}
G^D_{ \text{OU}, \xi_1,\dots,\xi_k}(x,z)W_j(z) G^D_{\xi_1,\dots,\xi_k} (z,y) dz  \bigg| 
&\les  \int_{-L_\eps }^{ L_\eps }  e^{-\ld |x-z| } e^{-\ld |z-\xi_j| } G^D_{\xi_1,\dots,\xi_k} (z,y) dz+ e^{-c|\log \eps|}
\label{k7} 
\end{align}

\noi
for some $\ld>0$, and
\begin{align}
&\bigg| \int_{-L_\eps}^{L_\eps}
G^D_{\xi_1,\dots,\xi_k} (x,z) W_j(z) G^D_{ \text{OU}, \xi_1,\dots,\xi_k}(z,y) dz  \bigg| \les \int_{-L_\eps}^{L_\eps}
G^D_{\xi_1,\dots,\xi_k} (x,z) e^{-\ld |z-\xi_j|}  e^{-\ld |y-z|}  dz +e^{-c|\log \eps|}.
\label{k8}
\end{align}

\noi
We proceed by considering two cases: $|y-\xi_j|  \le \frac 12 |x-y| $ and  $|y-\xi_j| \ge \frac 12 |x-y|$.  In the former case, we use \eqref{k7}, while in the latter we use \eqref{k8}.

\textbf{Case 1}: if $|y-\xi_j|  \le \frac 12 |x-y| $, then
\begin{align}
\int_{-L_\eps }^{ L_\eps }  e^{-\ld |x-z| } e^{-\ld |z-\xi_j| } G^D_{\xi_1,\dots,\xi_k} (z,y) dz  
&\les \int_{-L_\eps }^{L_\eps}  e^{ -\frac \ld2   (|x-y| -|y-\xi_j| -|z-\xi_j| )         }   e^{-\ld |z-\xi_j |  }  G^D_{\xi_1,\dots,\xi_k} (z,y)  dz \notag \\
& \les e^{-\frac \ld4 |x-y| } \int_{-L_\eps}^{L_\eps}   e^{-\frac \ld2 |z-\xi_j| } G^D_{\xi_1,\dots,\xi_k} (z,y) dz.
\label{k3} 
\end{align}

Denoting by $u_f=\mathcal{L}^{-1}_{\xi_1,\dots,\xi_k} (\P_{V_{\xi_1,\dots,\xi_k}} f)$ the solution to the elliptic equation associated with $\mathcal{L}_{\xi_1,\dots,\xi_k}=-\dx^2+1-2\sum_{j=1}^k \text{sech}^2(\cdot-\xi_j)$, the elliptic regularity theorem yields
\begin{align}
\| \dx^2 u_f \|_{L^2}^2+ \| u_f \|_{L^2}^2 \le C \| \P_{V_{\xi_1,\dots,\xi_k}} f\|_{L^2}^2 
\label{k2}
\end{align}

\noi
for any $f \in L^2$. By using the Sobolev embedding and \eqref{k2}, we have that for $f=e^{-\frac \ld2 |z-\xi_j| }$,
\begin{align}
\sup_{y\in [-L_\eps, L_\eps] }\bigg| \int_{-L_\eps}^{L_\eps}   e^{-\frac \ld2 |z-\xi_j| } G^D_{\xi_1,\dots,\xi_k} (z,y)   dz \bigg|  &\le C \|  u_f \|_{H^{\frac 12+}}  \le C \| u_f \|_{H^2} 
 \le C \| \P_{V_{\xi_1,\dots,\xi_k}}   f\|_{L^2}.
\label{k4}
\end{align}

\noi
Combining \eqref{k7}, \eqref{k3} and \eqref{k4} yields 
\begin{align}
\bigg| \int_{-L_\eps}^{L_\eps}
G^D_{ \text{OU}, \xi_1,\dots,\xi_k}(x,z)W_j(z) G^D_{\xi_1,\dots,\xi_k} (z,y) dz  \bigg| \les e^{-\frac \ld4 |x-y|}.
\label{k11} 
\end{align}

\textbf{Case 2}: if $|y-\xi_j|  \ge \frac 12 |x-y|$, then 
\begin{align}
\int_{-L_\eps}^{L_\eps}
G^D_{\xi_1,\dots,\xi_k} (x,z) e^{-\ld |z-\xi_j|}  e^{-\ld |y-z|}  dz  &\les \int_{-L_\eps}^{L_\eps}
G^D_{\xi_1,\dots,\xi_k} (x,z) e^{-\ld |z-\xi_j|}   e^{- \frac \ld2 (|y-\xi_j |- |z-\xi_j| )      }   dz \notag \\
&  \les     e^{- \frac \ld4 |x-y|   }     \int_{-L_\eps }^{L_\eps } G^D_{\xi_1,\dots,\xi_k} (x,z)    e^{ -\frac \ld2 |z-\xi_j| } dz.
\label{k5}
\end{align}

\noi
Using the elliptic regularity theorem \eqref{k2}, and then following \eqref{k4}, we obtain
\begin{align}
\sup_{x \in [-L_\eps, L_\eps] } \bigg| \int_{-L_\eps}^{L_\eps}   G^D_{\xi_1,\dots,\xi_k} (x,z) e^{-\frac \ld2 |z-\xi_j| } dz \bigg| \les 1.
\label{k6}
\end{align}

\noi
Combining \eqref{k8}, \eqref{k5} and \eqref{k6} yields 
\begin{align}
\bigg| \int_{-L_\eps}^{L_\eps}
G^D_{\xi_1,\dots,\xi_k} (x,z) W_j(z) G^D_{ \text{OU}, \xi_1,\dots,\xi_k}(z,y) dz  \bigg| \les e^{-\frac \ld4 |x-y|}.
\label{k12}
\end{align}

\noi
We conclude the case study.


We now go back to the resolvent identities~\eqref{k1} and~\eqref{k10}. If $|y-\xi_j|  \le \frac 12 |x-y| $, then we use \eqref{k11} to get
\begin{align}
|G^D_{ \text{OU}, \xi_1,\dots,\xi_k} W_j G^D_{\xi_1,\dots,\xi_k}(x,y)| \les e^{-c |x-y|}.
\label{k13}
\end{align}

\noi
If $|y-\xi_j|  \ge \frac 12 |x-y|$, then using the commutator estimate \eqref{com1}, we switch the order
\begin{align*}
G^D_{ \text{OU}, \xi_1,\dots,\xi_k} W_j G^D_{\xi_1,\dots,\xi_k}(x,y)=   G^D_{\xi_1,\dots,\xi_k} W_j  G^D_{ \text{OU}, \xi_1,\dots,\xi_k  }(x,y)+R_j(x,y), 
\end{align*}

\noi
where $|R_j(x,y)| \les e^{-c d_\eps}$. This, along with \eqref{k12}, implies that
\begin{align}
|G^D_{ \text{OU}, \xi_1,\dots,\xi_k} W_j G^D_{\xi_1,\dots,\xi_k}(x,y)| \les e^{-c|x-y|}+e^{-c|\log (\eps \log \frac 1\eps) |}.
\label{k14}
\end{align}

\noi
Hence, combining \eqref{k1}, \eqref{k13}, \eqref{k14}, and Lemma \ref{LEM:OUdec} yields 
\begin{align*}
G^D_{\xi_1,\dots,\xi_k}(x,y)=c e^{-|x-y|}+ O\big(e^{-2(L_\eps -\max \{ |x|, |y| \}  )} \big).
\end{align*}

\noi
We now prove the diagonal regime \eqref{diag}. From the resolvent identity \eqref{k1} and Lemma \ref{LEM:OUdec} (ii), we have that for any $x\in [-L_\eps, L_\eps]$, $0 \le G^D_{\xi_1,\dots,\xi_k}(x,x) \le  G^D_{ \text{OU}, \xi_1,\dots,\xi_k}(x,x)  \les 1$,  uniformly in $\xi_1,\dots,\xi_k$ since $G^D_{ \text{OU}, \xi_1,\dots,\xi_k}$ and  $G^D_{\xi_1,\dots,\xi_k}  $ are positive operators and $W_j \ge 0$.  This completes the proof of Proposition \ref{PROP:dec}.


\end{proof}

Before concluding this subsection, we present the proof of the following lemma under the separation condition.

\begin{lemma}\label{LEM:CT}
Under $\min_{\ell \neq i} |\xi_\ell -\xi_i| \ge d_\eps=|\log(\eps \log \frac 1\eps)|$, we have 
\begin{align*}
\big\| \big(G^D_{ \text{OU}, \xi_1,\dots,\xi_k} W_j G^D_{\xi_1,\dots,\xi_k} \big) - \big( G^D_{\xi_1,\dots,\xi_k}  W_j  G^D_{ \text{OU}, \xi_1,\dots,\xi_k} \big)   \big\|_{L^2 \to L^2} \les e^{-c \min_{\ell \neq i} |\xi_\ell -\xi_i|  } \to 0, 
\end{align*}

\noi
as $\eps \to 0$, where $W_j=\textup{sech}^2(\cdot-\xi_j)$.

\end{lemma}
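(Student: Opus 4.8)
The plan is to exploit the two forms of the resolvent identity together with the exponential localization of $W_j=\textup{sech}^2(\cdot-\xi_j)$ near $\xi_j$ and the exponential off-diagonal decay of the Green kernels from Lemma~\ref{LEM:OUdec} and Proposition~\ref{PROP:dec}. Write $A:=G^D_{\textup{OU},\xi_1,\dots,\xi_k}$ and $B:=G^D_{\xi_1,\dots,\xi_k}$, both understood as inverses of the corresponding operators restricted to the normal space $V_{\xi_1,\dots,\xi_k}$ (cf.\ Lemmas~\ref{LEM:OUdec},~\ref{LEM:Gauss}), so that $A=A\,\P_{V_{\xi_1,\dots,\xi_k}}=\P_{V_{\xi_1,\dots,\xi_k}}A$ and likewise for $B$. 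Setting $\widehat W_\ell:=\P_{V_{\xi_1,\dots,\xi_k}}W_\ell\,\P_{V_{\xi_1,\dots,\xi_k}}$ one has $AW_jB=A\widehat W_jB$ and $BW_jA=B\widehat W_jA$, and the resolvent identity (cf.\ \eqref{k1}) in its two forms gives $B=A+2\sum_\ell A\widehat W_\ell B=A+2\sum_\ell B\widehat W_\ell A$. Substituting, the commutator
\begin{align*}
E:=AW_jB-BW_jA=2\sum_{\ell}\big(A\widehat W_jA\widehat W_\ell B-B\widehat W_\ell A\widehat W_jA\big),
\end{align*}
and I would split this sum into the off-diagonal part $\ell\neq j$ and the diagonal part $\ell=j$.

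For $\ell\neq j$, each summand contains a factor $\widehat W_jA\widehat W_\ell$ or $\widehat W_\ell A\widehat W_j$. By Lemma~\ref{LEM:TGRAM} the operators $\widehat W_m$ are exponentially localized near $\xi_m$ (the corrections coming from the tangent projections $\P_\ell$, $\ell\neq m$, being themselves $O(e^{-cd_\eps})$), while by Lemma~\ref{LEM:OUdec} the kernel of $A$ satisfies $|A(x,y)|\lesssim e^{-c|x-y|}$ up to a boundary error that is super-exponentially small in the regime $|\xi_m|\le\cj L_\eps$. Hence $\|\widehat W_jA\widehat W_\ell\|_{L^2\to L^2}\lesssim e^{-c|\xi_j-\xi_\ell|}\le e^{-cd_\eps}$ for $\ell\neq j$, and since the remaining factors are uniformly bounded — $\|A\|\le 1$ and $\|B\|=\|C_{\xi_1,\dots,\xi_k}\|\le\zeta^{-1}$ by Lemma~\ref{LEM:coer3} — the off-diagonal part $R:=2\sum_{\ell\neq j}(\cdots)$ satisfies $\|R\|_{L^2\to L^2}\lesssim e^{-cd_\eps}$. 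For the diagonal term, substituting $A\widehat W_jB=B\widehat W_jA+E$ and $B\widehat W_jA=A\widehat W_jB-E$ into $2(A\widehat W_jA\widehat W_jB-B\widehat W_jA\widehat W_jA)$ recombines it into $2A\widehat W_jE+2E\widehat W_jA$, so that $E$ solves the Sylvester-type fixed-point relation $E-2A\widehat W_jE-2E\widehat W_jA=R$. Equivalently, one localizes near $\xi_j$: replacing $B$ by the single-soliton Green function $G^D_{\xi_j}$ (the inverse of $-\dx^2+1-2\,\textup{sech}^2(\cdot-\xi_j)$ off its zero mode, coercive by Lemma~\ref{LEM:SW}(3)) costs only $O(e^{-cd_\eps})$ because the removed potentials and tangent projections are supported or localized far from $\xi_j$, and similarly for $A$; one is then left with a single-potential commutator, which vanishes by the two forms of the resolvent identity for $-\dx^2+1-2\,\textup{sech}^2(\cdot-\xi_j)$ against $-\dx^2+1$. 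Either route yields $\|E\|_{L^2\to L^2}\lesssim e^{-cd_\eps}$, which is the claim.

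The hard part is the bookkeeping of the projections $\P_{V_{\xi_1,\dots,\xi_k}}$. Since $-\dx^2+1-2\sum_j\textup{sech}^2(\cdot-\xi_j)$ has $k$ near-zero modes (the tangent directions), these must be projected off to keep $B$, and all the operators above, uniformly $O(1)$; but then the single-potential commutation at the heart of the diagonal estimate is exact only for the operators projected off the single zero mode, and one must check that the resulting remainders — a priori $O(1)$ and supported near $\xi_j$ — cancel between the two orderings of the commutator up to $O(e^{-cd_\eps})$. Phrased through the fixed-point relation, this is precisely the uniform bounded invertibility of $X\mapsto X-2A\widehat W_jX-2X\widehat W_jA$ over the configuration space $\{\min_{i\neq\ell}|\xi_i-\xi_\ell|\ge d_\eps\}$, to be extracted from the positivity and norm bounds $0\le\widehat W_j\le\textup{Id}$, $0\le A\le\textup{Id}$ together with the coercivity of $C_{\xi_1,\dots,\xi_k}$; establishing this uniform invertibility, equivalently the cancellation of the near-$\xi_j$ remainders, is the delicate point of the argument.
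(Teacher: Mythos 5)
Your algebraic reduction is correct as far as it goes: from the two resolvent identities $B=A+2\sum_\ell A\widehat W_\ell B=A+2\sum_\ell B\widehat W_\ell A$ one indeed gets $E:=AW_jB-BW_jA=R+2A\widehat W_j E+2E\widehat W_j A$ with $R:=2\sum_{\ell\neq j}(A\widehat W_jA\widehat W_\ell B-B\widehat W_\ell A\widehat W_jA)$, and $\|R\|\les e^{-cd_\eps}$ by localization. But your first route stalls exactly where you flag it: you never establish uniform invertibility of the Sylvester map $X\mapsto X-2A\widehat W_jX-2X\widehat W_jA$, and this is a genuine obstruction, not a technicality. Since $\|A\|\le 1$ (not small) and $\|W_j\|_{L^\infty}=1$, the perturbation $X\mapsto 2A\widehat W_jX+2X\widehat W_jA$ can have norm up to $4$, so neither a Neumann-series nor a naive spectral-separation argument closes the loop. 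Your second route is much closer to the paper's proof, but as stated it contains a false claim and a projection mismatch. Replacing $B$ by a single-soliton Green function does \emph{not} cost $O(e^{-cd_\eps})$ in operator norm: by the resolvent identity $B-G^D_{\xi_j}=2G^D_{\xi_j}(\sum_{\ell\neq j}W_\ell)B$ is $O(1)$; only the $W_j$-sandwiched factor $W_jG^D_{\xi_j}W_\ell$ ($\ell\neq j$) is exponentially small. Moreover, for the exact single-bump commutation $AW_jG^D_{\xi_j}=G^D_{\xi_j}W_jA$ to hold, $G^D_{\xi_j}$ must carry the \emph{same} projection $\P_{V_{\xi_1,\dots,\xi_k}}$ as $A=G^D_{\textup{OU},\xi_1,\dots,\xi_k}$; with only the single-kink zero mode projected off, the identity fails.

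The paper follows what you sketch as the alternative route, but carried out precisely and without any Sylvester equation. It defines $G^D_{\xi_j}:=\P_{V_{\xi_1,\dots,\xi_k}}\big(-\dx^2+1-2W_j\big)^{-1}\P_{V_{\xi_1,\dots,\xi_k}}$ with the full $k$-dimensional projection, so the two single-bump resolvent identities give the \emph{exact} cancellation $AW_jG^D_{\xi_j}=G^D_{\xi_j}W_jA$. Expanding $B$ against $G^D_{\xi_j}$ via $B=G^D_{\xi_j}+2G^D_{\xi_j}\big(\sum_{\ell\neq j}W_\ell\big)B$ (and its transposed form) and substituting into $E$, the exact cancellation removes the leading term and leaves the explicit formula $E=2\sum_{\ell\neq j}\big(AW_jG^D_{\xi_j}W_\ell B-BW_\ell G^D_{\xi_j}W_jA\big)$. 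Each summand carries the inner factor $W_jG^D_{\xi_j}W_\ell$ (or its transpose), whose operator norm is $O(e^{-c|\xi_j-\xi_\ell|})$ by a Combes--Thomas bound, while the outer factors $A$, $B$ are uniformly bounded; summing over $\ell\neq j$ gives the claim directly and sidesteps the ``delicate point'' entirely.
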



\begin{proof}

Recall $G^D_{\xi_1,\dots,\xi_k}(x,y)$, defined in \eqref{Cdec} and \eqref{Cdec0}. Using the resolvent identity, we write 
\begin{align}
G^D_{\xi_1,\dots,\xi_k}(x,y)&= G^D_{ \xi_j}(x,y)- G^D_{ \xi_j}\Big(\sum_{\ell \neq j} W_\ell \Big) G^D_{\xi_1,\dots,\xi_k}(x,y), 
\label{CT0} 
\end{align}

\noi
where $G^D_{ \xi_j}(x,y)=\P_{V_{\xi_1,\dots, \xi_k} } \big(-\dx^2+1-2\text{sech}^2(\cdot-\xi_j) \big)^{-1} \P_{V_{\xi_1,\dots, \xi_k} }(x,y)$. Again, using the single–bump resolvent identities
\begin{align*}
G^D_{\xi_j}&=G^D_{ \text{OU}, \xi_1,\dots,\xi_k}+G^D_{ \text{OU}, \xi_1,\dots,\xi_k} W_j G^D_{\xi_j}\\
G^D_{\xi_j}&=G^D_{ \text{OU}, \xi_1,\dots,\xi_k}+  G^D_{\xi_j} W_j  G^D_{ \text{OU}, \xi_1,\dots,\xi_k}, 
\end{align*}

\noi
we have 
\begin{align}
G^D_{ \text{OU}, \xi_1,\dots,\xi_k} W_j G^D_{\xi_j}=G^D_{\xi_j} W_j G^D_{ \text{OU}, \xi_1,\dots,\xi_k}.
\label{CT1}
\end{align}

\noi 
Inserting the resolvent identity \eqref{CT0} into both $G^D_{ \text{OU}, \xi_1,\dots,\xi_k} W_j G^D_{\xi_1,\dots,\xi_k}$ and $G^D_{\xi_1,\dots,\xi_k}  W_j  G^D_{ \text{OU}, \xi_1,\dots,\xi_k}$ and using \eqref{CT1}, we obtain
\begin{align}
&G^D_{ \text{OU}, \xi_1,\dots,\xi_k} W_j G^D_{\xi_1,\dots,\xi_k}  -  G^D_{\xi_1,\dots,\xi_k}  W_j  G^D_{ \text{OU}, \xi_1,\dots,\xi_k} \notag \\
&=\sum_{\ell \neq j} 
\Big(G^D_{ \text{OU}, \xi_1,\dots,\xi_k} W_j G^D_{ \xi_j} W_\ell  G^D_{\xi_1,\dots,\xi_k} -G^D_{\xi_1,\dots,\xi_k} W_\ell  G^D_{ \xi_j} W_j G^D_{ \text{OU}, \xi_1,\dots,\xi_k} \Big) \notag \\
&=\sum_{\ell \neq j} (T_{j \ell }+ T_{\ell j} ).
\label{CT2}
\end{align}

\noi 
By Combes–Thomas bounds for 1D Schrödinger resolvents with a positive mass term, for any bounded $V$ such that the inverse $(-\dx^2+1+V)^{-1}$ exists, we have 
\begin{align*}
\| \ind_A   (-\dx^2+1+V)^{-1}   \ind_B \|_{L^2\to L^2} \les e^{-c \cdot \text{dist}(A,B) }.
\end{align*}

\noi
This implies that 
\begin{align}
\|  W_j G^D_{ \xi_j} W_\ell   \|_{L^2 \to L^2} \les e^{-c |\xi_j-\xi_\ell|}
\label{CT3}
\end{align}

\noi
since $W_j=\text{sech}^2(\cdot-\xi_j)$ is highly localized around $\xi_j$ with an exponentially decaying tail.  Using \eqref{CT3}, the operator $T_{j\ell}$ in \eqref{CT2} satisfies
\begin{align}
\| T_{j \ell} \|_{L^2 \to L^2} \le \| G^D_{ \text{OU}, \xi_1,\dots,\xi_k} \|_{L^2 \to L^2}   \|  W_j G^D_{ \xi_j} W_\ell   \|_{L^2 \to L^2} \| G^D_{\xi_1,\dots,\xi_k } \|_{L^2 \to L^2} \les e^{-c |\xi_j-\xi_\ell|}
\label{CT4}
\end{align}

Combining \eqref{CT2} and \eqref{CT4} yields 
\begin{align*}
\| G^D_{ \text{OU}, \xi_1,\dots,\xi_k} W_j G^D_{\xi_1,\dots,\xi_k}  -  G^D_{\xi_1,\dots,\xi_k}  W_j  G^D_{ \text{OU}, \xi_1,\dots,\xi_k} \|_{L^2\to L^2}&\les  \sum_{\ell \neq j}  \big(\|T_{j\ell}  \|_{L^2  \to L^2} +\|T_{\ell j}  \|_{L^2 \to L^2} \big)\\
&\les \sum_{\ell \neq j} e^{-c |\xi_j -\xi_\ell |} \les e^{-c d_\eps},
\end{align*}

\noi
where $d_\eps=\min_{\ell \neq j}|\xi_\ell -\xi_j| \ges  |\log (\eps \log \frac 1\eps) | $. This completes the proof of Lemma \ref{LEM:CT}.

\end{proof}

\subsection{Maximum of the Gaussian process}

In this subsection, we study the maximal behavior of the field $v$ under the Gaussian measure
\begin{align*}
d\nu^\perp_{ \xi_1,\dots,\xi_k}=Z_{\xi_1,\dots,\xi_k}^{-1} e^{-\frac 12 \jb{C_{\xi_1,\dots, \xi_k}^{-1} v,v  } }\prod_{x \in [-L_\eps, L_\eps] } dv(x),
\end{align*}

\noi
defined in Lemma \ref{LEM:Gauss}, with covariance operator $C_{\xi_1,\dots, \xi_k}$ \eqref{cov}.  In the following proposition, we show that the typical size of $v$ under the measure $\nu^\perp_{\xi_1,\dots,\xi_k}$ satisfies $ \| v\|_{L^\infty}  \le K \sqrt{|\log L_\eps|}  $  for some sufficiently large $K \ge 1$.

\begin{lemma}\label{LEM:T1}
There exist a constant $c>0$ such that for sufficiently large $K$, we have
\begin{align}
\nu^\perp_{\xi_1,\dots,\xi_k}(A_\eps^c) \les e^{-cK^2 |\log L_\eps | }, 
\label{max1}
\end{align}

\noi
uniformly in $\xi_1,\dots,\xi_k \in [-\cj L_\eps, \cj L_\eps]^k$, where 
\begin{align*}
A_\eps:=\{   \|v \|_{L^\infty (  [-L_\eps, L_\eps] ) } \le K \sqrt{|\log L_\eps|}   \}.
\end{align*}

\end{lemma}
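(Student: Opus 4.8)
The plan is to view $v$ under $\nu^\perp_{\xi_1,\dots,\xi_k}$ as a centered Gaussian field on $[-L_\eps,L_\eps]$ with covariance kernel $G^D_{\xi_1,\dots,\xi_k}$, and to control $\|v\|_{L^\infty}$ by combining a metric–entropy (Dudley) bound for the expected supremum with the Borell–TIS concentration inequality. Two ingredients are needed, and both are essentially already available. The first is the uniform pointwise variance bound $\E[v(x)^2]=G^D_{\xi_1,\dots,\xi_k}(x,x)\le \s_0^2$ for an absolute constant $\s_0$, valid for all $x\in[-L_\eps,L_\eps]$ and all $\xi_1,\dots,\xi_k\in[-\cj L_\eps,\cj L_\eps]$ in the separated regime; this is exactly the diagonal estimate \eqref{diag} of Proposition \ref{PROP:dec}. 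The second is a uniform modulus–of–continuity bound for the canonical pseudometric
\[
\rho(x,y)^2:=\E|v(x)-v(y)|^2=G^D_{\xi_1,\dots,\xi_k}(x,x)+G^D_{\xi_1,\dots,\xi_k}(y,y)-2G^D_{\xi_1,\dots,\xi_k}(x,y),
\]
namely $\rho(x,y)\les|x-y|^{1/2}$, uniformly in the $\xi_j$ and in $L_\eps$. Establishing this second bound, with genuinely uniform constants, is what I expect to be the main obstacle.

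To prove the increment bound I would use the resolvent identity $G^D_{\xi_1,\dots,\xi_k}=G^D_{\text{OU},\xi_1,\dots,\xi_k}+G^D_{\text{OU},\xi_1,\dots,\xi_k}\,W\,G^D_{\xi_1,\dots,\xi_k}$ from \eqref{k1} (with the rank-$k$ projection corrections expanded as in the proof of Proposition \ref{PROP:dec}), which reduces matters to two points. First, the explicit one–dimensional OU Green function \eqref{OUker} satisfies $G^D_{\text{OU}}(x,x)+G^D_{\text{OU}}(y,y)-2G^D_{\text{OU}}(x,y)\le|x-y|$, and the projected version $G^D_{\text{OU},\xi_1,\dots,\xi_k}$ differs from it by a kernel that is Lipschitz in each variable with a uniform constant, since the correction involves only the smooth, uniformly localized profiles $t_j$ (Lemma \ref{LEM:TGRAM}) and the smoothing operator $(-\dx^2+1)^{-1}$. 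Second, the perturbation term $\big(G^D_{\text{OU},\xi_1,\dots,\xi_k}\,W\,G^D_{\xi_1,\dots,\xi_k}\big)(x,y)$ has a kernel that is Lipschitz in $(x,y)$ with a constant uniform in the $\xi_j$ and in $L_\eps$: here $W=-2\sum_j\text{sech}^2(\cdot-\xi_j)$ is a bounded, uniformly localized potential, $G^D_{\xi_1,\dots,\xi_k}$ is bounded on $L^2$ by the coercivity of Lemma \ref{LEM:coer3}, and elliptic regularity \eqref{k2} together with the Sobolev embedding $H^2\hookrightarrow C^1$ (with embedding constant uniform for intervals of length $\ge 1$) upgrades this to a $C^1$ kernel bound. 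The delicate bookkeeping is precisely the claim that none of these constants degenerate as $\xi_1,\dots,\xi_k$ or $L_\eps$ varies, which is where most of the work lies; note that any Hölder bound $\rho(x,y)\les|x-y|^\alpha$ with $\alpha>0$ would suffice for what follows.

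Given these two bounds, the pseudometric $\rho$ on $[-L_\eps,L_\eps]$ has diameter $\les 1$, and a $\rho$-ball of radius $u$ contains a Euclidean interval of length $\ges u^2$, so the covering numbers satisfy $N(\rho,u)\les L_\eps u^{-2}$. Dudley's entropy bound then yields
\[
\E\|v\|_{L^\infty([-L_\eps,L_\eps])}\les\int_0^{C}\sqrt{\log N(\rho,u)}\,du\les\sqrt{|\log L_\eps|}+\int_0^C\sqrt{\log(1/u)}\,du\le C_0\sqrt{|\log L_\eps|}
\]
for an absolute constant $C_0$ and all $L_\eps$ large, the remaining integral being convergent; this is uniform in $\xi_1,\dots,\xi_k$ since all inputs were uniform. (One should first record that $v$ has an a.s.-continuous modification, by Kolmogorov's criterion applied to $\E|v(x)-v(y)|^{2p}\les_p|x-y|^p$, so that $\|v\|_{L^\infty}$ is a genuine supremum over a countable dense set.) Finally, applying the Borell–TIS inequality to the centered Gaussian field $v$, with $\sup_x\E[v(x)^2]\le\s_0^2$, gives for every $t\ge 0$
\[
\nu^\perp_{\xi_1,\dots,\xi_k}\big(\|v\|_{L^\infty}>C_0\sqrt{|\log L_\eps|}+t\big)\le e^{-t^2/(2\s_0^2)}.
\]
Choosing $K\ge 2C_0$ and $t=(K-C_0)\sqrt{|\log L_\eps|}\ge\tfrac K2\sqrt{|\log L_\eps|}$ then gives $\nu^\perp_{\xi_1,\dots,\xi_k}(A_\eps^c)\les e^{-cK^2|\log L_\eps|}$ with $c=1/(8\s_0^2)$, uniformly in $\xi_1,\dots,\xi_k\in[-\cj L_\eps,\cj L_\eps]^k$, which is \eqref{max1}. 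A slightly more hands-on alternative to the Dudley step is to discretize $[-L_\eps,L_\eps]$ at scale $\dl=L_\eps^{-10}$, apply a union bound over the $\les L_\eps^{11}$ grid points via the Gaussian tail, and control the oscillation on each length-$\dl$ subinterval by a crude super-polynomially small estimate (again using the increment bound and Borell–TIS on the subinterval); this yields the same conclusion, with $K$ absorbing the fixed power $11$.
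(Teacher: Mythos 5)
Your proposal is correct and takes essentially the same approach as the paper: both reduce to the uniform increment bound $\E|v(x)-v(y)|^2\les|x-y|$ (established via Lipschitz regularity of the Green function $G^D_{\xi_1,\dots,\xi_k}$, which in the paper is stated as \eqref{RG0}--\eqref{RG1} and attributed to elliptic regularity) together with the uniform diagonal bound $G^D_{\xi_1,\dots,\xi_k}(x,x)\les 1$, and then run a chaining argument. The paper upgrades to $p$-th moments via Gaussianity and then cites a ready-made chaining lemma from \cite{SS24}, whereas you spell out the equivalent Dudley entropy bound plus Borell--TIS concentration; these are standard interchangeable packagings of the same estimate.
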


\begin{proof}

We first  show that there exists a constant $C>0$ such that for any finite $p \ge 2$, we have 
\begin{align}
\E_{\nu^\perp_{\xi_1,\dots,\xi_k}} \big[ | v(x)-v(y)|^p \big] \le C p^{\frac p2} |x-y|^{\frac p2},
\label{vmax00}
\end{align}

\noi
uniformly in $(\xi_1,\dots,\xi_k) \in  [-\cj L_\eps, \cj L_\eps]^k $ and $\cj L_\eps$. 
Since $\nu^\perp_{\xi_1,\dots,\xi_k }$ is a Gaussian
\begin{align}
\E_{\nu^\perp_{  \xi_1,\dots,\xi_k}    } \big[  |v(x)-v(y)|^p  \big] \le p^{\frac p2} \Big(  \E_{\nu^\perp_{\xi_1,\dots,\xi_k}   } \big[ | v(x)-v(y)|^2  \big]     \Big)^{\frac p2}.
\label{vmax0}
\end{align}

\noi
By expanding the second moment, we have
\begin{align}
\E_{\nu^\perp_{  \xi_1,\dots,\xi_k }   } \big[ | v(x)- v(y)|^2  \big] 
=G^D_{\xi_1,\dots, \xi_k}(x,x) -G^D_{\xi_1,\dots, \xi_k}(x,y)
+G^D_{\xi_1,\dots,\xi_k}(y,y) -G^D_{\xi_1,\dots,\xi_k }(y,x)
\label{vmax1} 
\end{align}


\noi
Here $G^D_{\xi_1,\dots, \xi_k}$ is the Green function defined in \eqref{Cdec0}, corresponding to the covariance operator $\mathcal{C}_{\xi_1,\dots,\xi_k}$  given in \eqref{cov}. Using the elliptic regularity theorem, one can show that
\begin{align}
\sup_{x \in [-L_\eps, L_\eps]} | G^D_{\xi_1,\dots,\xi_k}(x,y)-G^D_{\xi_1,\dots,\xi_k}(x,z)  | \le c |y-z| 
\end{align}

\noi
uniformly in $\xi_1,\dots, \xi_k \in [-\cj L_\eps, \cj L_\eps ]^k$ and even in $L_\eps$, and the same holds for the other variable.
So \eqref{vmax1} is bounded by $ c |x-y|$,
thus \eqref{vmax00} follows.
Once we have \eqref{vmax00}, the maximal behavior of the Gaussian field follows from Talagrand’s chaining argument. See \cite[Proposition 5.1]{SS24} or \cite[Chapter 2]{TAL}. 

\end{proof}

In the previous lemma, we showed that the typical behavior of the field
$v$ under the Gaussian measure $\nu^\perp_{\xi_1,\dots,\xi_k}$  is captured by the events $A_\eps=\{ \|v \|_{L^\infty} \le K \sqrt{|\log L_\eps|} \}$. In the following, we study the error estimate outside the typical behavior $A_\eps$. 

\begin{lemma}\label{LEM:AEC}
Let $F$ be a bounded, continuous function and $\dl_\eps=\eta\sqrt{\eps \log \frac 1\eps}$, defined in \eqref{LDPIN2}. Then
\begin{align*}
\E_{ \nu^\perp_{ \xi_1,\dots,\xi_k } }\Big[F(v) e^{\frac 1\eps \mathcal{E}_{\xi_1,\dots,\xi_k}(v)  } \textup{Det}_{\xi_1,\dots,\xi_k}(\sqrt{\eps}v), \, A_\eps^c, \, \| \sqrt{\eps}v \|_{L^2} < \dl_\eps   \Big] \les e^{-c K^2 |\log L_\eps| },
\end{align*}

\noi
uniformly in $\xi_1, \dots,\xi_k \in [-\cj L_\eps, \cj L_\eps]^k $, where  $\mathcal{E}_{\xi_1,\dots,\xi_k}$ and $\textup{Det}_{\xi_1,\dots,\xi_k}$ are defined in Proposition \ref{PROP:cha}.

\end{lemma}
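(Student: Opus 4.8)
The plan is to estimate the integrand pointwise on the relevant event and then integrate, using the tail bound from Lemma~\ref{LEM:T1}. First I would observe that the three factors in the integrand are all controlled: since $F$ is bounded, $|F(v)| \le \|F\|_{L^\infty}$ uniformly; and on the event $\{\|\sqrt{\eps}v\|_{L^2} < \dl_\eps\}$ the remainder term $\mathcal{E}_{\xi_1,\dots,\xi_k}(v)$ is cubic in $\sqrt{\eps}v$, so using the interpolation $\|\sqrt{\eps}v\|_{L^3}^3 \le \|\sqrt{\eps}v\|_{L^\infty}\|\sqrt{\eps}v\|_{L^2}^2$ together with $\dl_\eps^2 = \eta^2\eps\log\tfrac1\eps$, one gets $\tfrac1\eps|\mathcal{E}_{\xi_1,\dots,\xi_k}(v)| \lesssim \tfrac1\eps \|\sqrt{\eps}v\|_{L^\infty}\dl_\eps^2 = \eta^2 \sqrt{\eps}\,|\log\tfrac1\eps|\,\|v\|_{L^\infty}$. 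Likewise, from the determinant expansion $\textup{Det}_{\xi_1,\dots,\xi_k}(\sqrt{\eps}v) = 1 + O(\|\sqrt{\eps}v\|_{L^\infty}^k)$, on the same event this factor is bounded by a constant plus $O((\sqrt{\eps}\|v\|_{L^\infty})^k)$. So on $A_\eps^c \cap \{\|\sqrt{\eps}v\|_{L^2}<\dl_\eps\}$ the absolute value of the integrand is bounded by $C\exp\{c_1\sqrt{\eps}\,|\log\tfrac1\eps|\,\|v\|_{L^\infty}\}(1 + (\sqrt{\eps}\|v\|_{L^\infty})^k)$.

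Next I would split the complement $A_\eps^c$ dyadically: write $A_\eps^c = \bigcup_{m\ge 0}\{2^m K\sqrt{|\log L_\eps|} \le \|v\|_{L^\infty} < 2^{m+1}K\sqrt{|\log L_\eps|}\}$. On the $m$-th shell the integrand is bounded by $C\exp\{c_1\sqrt{\eps}|\log\tfrac1\eps|\, 2^{m+1}K\sqrt{|\log L_\eps|}\}(1 + (2^{m+1}K\sqrt{\eps|\log L_\eps|})^k)$, and by Lemma~\ref{LEM:T1} (applied with $2^m K$ in place of $K$) the $\nu^\perp_{\xi_1,\dots,\xi_k}$-measure of this shell is at most $e^{-c\,4^m K^2|\log L_\eps|}$. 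Since $L_\eps = \eps^{-\frac12+\eta}$, we have $|\log L_\eps| = (\tfrac12-\eta)|\log\eps| \asymp |\log\eps|$, so $\sqrt{\eps}|\log\tfrac1\eps|\sqrt{|\log L_\eps|} \to 0$ much faster than any polynomial rate; in particular the exponential growth factor $\exp\{c_1\sqrt{\eps}|\log\tfrac1\eps| 2^{m+1}K\sqrt{|\log L_\eps|}\}$ is dominated by the Gaussian decay $e^{-c4^m K^2|\log L_\eps|}$ for all $\eps$ small and all $m\ge 0$, uniformly. Summing the geometric-type series over $m$ yields a bound of the form $Ce^{-\frac{c}{2}K^2|\log L_\eps|}$, which is the claimed estimate. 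All bounds are uniform in $(\xi_1,\dots,\xi_k) \in [-\cj L_\eps,\cj L_\eps]^k$ because Lemma~\ref{LEM:T1}, the diagonal Green's function bound of Proposition~\ref{PROP:dec}(ii), and the determinant expansion are all uniform in the centers.

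The main technical point — though not really an obstacle — is making sure the exponential growth from $e^{\frac1\eps\mathcal{E}}$ and the polynomial growth from $\textup{Det}$ are genuinely absorbed by the Gaussian tail of Lemma~\ref{LEM:T1}; this hinges on the fact that on the truncation event $\{\|\sqrt{\eps}v\|_{L^2}<\dl_\eps\}$ the factor multiplying $\|v\|_{L^\infty}$ in the exponent carries a prefactor $\sqrt{\eps}|\log\tfrac1\eps|$ that vanishes, whereas the decay rate $|\log L_\eps|$ diverges, so their product in the exponent still tends to $+\infty$ in the right direction. One subtlety worth double-checking is that $\mathcal{E}_{\xi_1,\dots,\xi_k}(v)$ as defined in Proposition~\ref{PROP:cha} is written with argument $\sqrt{\eps}v$ already absorbed, i.e. it equals $\int \sin(\cdots)(\sqrt{\eps}v)^3\,dx$, so that $\tfrac1\eps\mathcal{E}$ is indeed $O(\eps^{1/2}\|v\|_{L^\infty}\|v\|_{L^2}^2)$ after using $\|\sqrt{\eps}v\|_{L^2}<\dl_\eps$; with this the computation above goes through verbatim.
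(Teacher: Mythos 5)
Your proof is correct and follows essentially the same strategy as the paper: dyadic decomposition of $A_\eps^c$ in the $L^\infty$ norm, a pointwise bound on $e^{\frac{1}{\eps}\mathcal{E}}$ and on the determinant on each shell using the $L^2$ constraint $\|\sqrt{\eps}v\|_{L^2}<\dl_\eps$, and Lemma~\ref{LEM:T1} to control the shell probabilities before summing. The only cosmetic difference is that the paper inserts a Cauchy--Schwarz step before passing to pointwise bounds, whereas you bound the integrand directly and multiply by $\nu^\perp(G_\ell)$; this is if anything cleaner, and your remark about $\mathcal{E}$ being evaluated at $\sqrt{\eps}v$ correctly resolves a small notational ambiguity in the lemma statement.
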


\begin{proof}



We decompose the event $A_\eps^c=\{ \| v\|_{L^\infty} \ge K \sqrt{|\log L_\eps|} \}$ into dyadic shells
\begin{align*}
&\E_{\nu^\perp_{m_{\xi_1,\dots,\xi_k}}  } \Big[ e^{\frac 1\eps \int_{-L_\eps}^{L_\eps} (\sqrt{\eps}v)^3 dx } \cdot \textup{Det}_{\xi_1,\dots,\xi_k}(\sqrt{\eps}v) ,  \, A_\eps^c, \,  \| \sqrt{\eps}v \|_{L^2} < \dl_\eps   \Big]\\
&= \sum_{\ell \ge K} \E_{\nu^\perp_{m_{\xi_1,\dots,\xi_k}}  } \Big[ e^{\frac 1\eps \int_{-L_\eps}^{L_\eps} (\sqrt{\eps}v)^3 dx } \cdot \textup{Det}_{\xi_1,\dots,\xi_k}(\sqrt{\eps}v) ,  \, G_\ell, \,  \| \sqrt{\eps}v \|_{L^2} < \dl_\eps  \Big],
\end{align*}

\noi
where $\ell$ ranges over the dyadic numbers $\ell=2^j K$, $ j \ge 0$, so that the sum runs over $\ell=K,2K,4K,\dots$, and  $G_\ell=\big\{ \ell \sqrt{| \log L_\eps|} \le \|v \|_{L^\infty} <2\ell \sqrt{|\log L_\eps|}   \big\}$.  Since $\textup{Det}_{\xi_1,\dots,\xi_k}(\sqrt{\eps}v)=1+O(\| \sqrt{\eps}v \|_{L^\infty}^k)$ and $\| v\|_{L^2}<\eta \sqrt{\log \frac 1\eps}$,  using Hölder’s inequality and Lemma \ref{LEM:T1}, we obtain
\begin{align*}
&\sum_{\ell \ge K}  \bigg(\E_{\nu^\perp_{m_{\xi_1,\dots,\xi_k}}  } \Big[ e^{\frac 1\eps \int_{-L_\eps}^{L_\eps} (\sqrt{\eps}v)^3 dx },  \, G_\ell, \,  \| \sqrt{\eps}v \|_{L^2} < \dl_\eps  \Big] \bigg)^\frac 12 \nu^\perp_{m_{\xi_1,\dots,\xi_k} }(G_\ell)^\frac 12\\
&\les \sum_{\ell \ge K} e^{\sqrt{\eps} \log \frac 1\eps \cdot \ell \sqrt{|\log L_\eps| } } \cdot  (1+\eps^{\frac k2} \ell^k |\log L_\eps|^{\frac k2}  ) \cdot     e^{-c \ell^2 |\log L_\eps| }\\
&\les \sum_{\ell \ge K} e^{-\frac c2 \ell^2 |\log L_\eps| }
\les  e^{-c K^2 |\log L_\eps|  },
\end{align*}

\noi 
uniformly in $\xi_1, \dots,\xi_k \in [-\cj L_\eps, \cj L_\eps]^k $ and $L_\eps \ge 1$.

\end{proof}

\section{Ornstein–Uhlenbeck limit on normal space}

In this section, we present the central limit theorem on the normal space to the Ornstein–Uhlenbeck measure under the conditions: 
\begin{align*}
\textup{dist}(\supp g, \xi_j) \ge \eps^{-\frac 12 +2\eta}
\end{align*}





\noi
for every $1 \le j \le k$. This condition ensures that the test function $g$  is at a distance at least  $\eps^{-\frac 12 +2\eta }$ from all solitons $m^\eps_{\xi_1},\dots, m^\eps_{\xi_k}$.



\begin{proposition}\label{PROP:OUN}
Let $L_\eps=\eps^{-\frac 12+\eta}$ and let $g$ be a real-valued, smooth, compactly supported function. Under the conditions   
\begin{align}
\textup{dist}(\supp g, \xi_j) \ge \eps^{-\frac 12 +2\eta} 
\label{twocond}
\end{align}

\noi
for every  $1 \le j \le k$,  we have 
\begin{align}
&\E_{\nu^\perp_{\xi_1,\dots,\xi_k } } \Big[ e^{i \jb{v,g} } e^{\frac 1\eps \mathcal{E}_{\xi_1,\dots,\xi_k}( \sqrt{\eps}v ) } \textup{Det}_{\xi_1,\dots,\xi_k}(\sqrt{\eps}v)  , \; A_\eps, \;  \| \sqrt{\eps}v\|_{L^2}<\dl_\eps  \Big] \notag \\ 
&=\exp\Big\{-\frac 12 \jb{g, (-\dx^2+1)^{-1} g }_{L^2(\R)} \Big \} \cdot(1+O(\eps^{0+})), 
\label{Glimit}
\end{align}

\noi
uniformly in $\xi_1,\dots,\xi_k \in [-\cj L_\eps, \cj L_\eps]$ and $\cj L_\eps$. Here  $A_\eps=\big\{ \|v \|_{L^\infty} \le K \sqrt{|\log L_\eps|} \big\}$, defined in Lemmas \ref{LEM:T1}, and $\dl_\eps=\sqrt{\eps \log \frac 1\eps}$ in \eqref{LDPIN2}.
\end{proposition}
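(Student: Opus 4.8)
The plan is to compute the characteristic functional of $v$ under $\nu^\perp_{\xi_1,\dots,\xi_k}$, show that the weight factors $e^{\frac1\eps \mathcal{E}_{\xi_1,\dots,\xi_k}(\sqrt\eps v)}$ and $\textup{Det}_{\xi_1,\dots,\xi_k}(\sqrt\eps v)$ are $1+O(\eps^{0+})$ on the typical event $A_\eps\cap\{\|\sqrt\eps v\|_{L^2}<\dl_\eps\}$, and then identify the limiting quadratic form with the free Ornstein--Uhlenbeck one using the correlation decay of Proposition~\ref{PROP:dec} together with the support separation hypothesis \eqref{twocond}. The three pieces are: (a) the deterministic error bounds for the cubic remainder and the Weingarten determinant; (b) the exact Gaussian computation of $\E_{\nu^\perp_{\xi_1,\dots,\xi_k}}[e^{i\jb{v,g}}]=\exp\{-\frac12\jb{g,C_{\xi_1,\dots,\xi_k}g}\}$; (c) the comparison $\jb{g,C_{\xi_1,\dots,\xi_k}g}=\jb{g,(-\dx^2+1)^{-1}g}_{L^2(\R)}+O(\eps^{0+})$; and finally (d) removing the restriction to $A_\eps\cap\{\|\sqrt\eps v\|_{L^2}<\dl_\eps\}$ via Lemma~\ref{LEM:AEC}.

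\textbf{Carrying it out.} First I would handle (a): on $A_\eps$ we have $\|\sqrt\eps v\|_{L^\infty}\lesssim \sqrt\eps\sqrt{|\log L_\eps|}$, and combined with $\|\sqrt\eps v\|_{L^2}<\dl_\eps=\sqrt{\eps\log\frac1\eps}$ one gets
\begin{align*}
\Big|\frac1\eps \mathcal{E}_{\xi_1,\dots,\xi_k}(\sqrt\eps v)\Big|
\le \frac1\eps \|\sqrt\eps v\|_{L^\infty}\|\sqrt\eps v\|_{L^2}^2
\lesssim \eps^{-1}\cdot \eps^{\frac12}\sqrt{|\log L_\eps|}\cdot \eps\log\tfrac1\eps
= \eps^{\frac12}\log\tfrac1\eps\sqrt{|\log L_\eps|}=O(\eps^{0+}),
\end{align*}
so $e^{\frac1\eps\mathcal{E}}=1+O(\eps^{0+})$, and since $\textup{Det}_{\xi_1,\dots,\xi_k}(\sqrt\eps v)=1+O(\|\sqrt\eps v\|_{L^\infty}^k)=1+O(\eps^{k/2}|\log L_\eps|^{k/2})=1+O(\eps^{0+})$, both factors are uniformly close to $1$ on the relevant event; these bounds are uniform in the centers. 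For (b), since $v\mapsto \jb{v,g}$ is a centered Gaussian under $\nu^\perp_{\xi_1,\dots,\xi_k}$ with variance $\jb{g,C_{\xi_1,\dots,\xi_k}g}$, the full-space characteristic function is exactly $\exp\{-\frac12\jb{g,C_{\xi_1,\dots,\xi_k}g}\}$. For (c), I would write $\jb{g,C_{\xi_1,\dots,\xi_k}g}=\iint g(x)G^D_{\xi_1,\dots,\xi_k}(x,y)g(y)\,dx\,dy$. Because $g$ is compactly supported with $\textup{dist}(\supp g,\xi_j)\ge \eps^{-\frac12+2\eta}$ for all $j$, and $\supp g$ lies in the bulk $|x|\le L_\eps-|\log\eps|$ for $\eps$ small, the bulk estimate of Proposition~\ref{PROP:dec}(i) gives $G^D_{\xi_1,\dots,\xi_k}(x,y)=\tfrac12 e^{-|x-y|}+O(e^{-2(L_\eps-\max\{|x|,|y|\})})$ there (the constant $c$ in the proposition is $\tfrac12$ by the $(-\dx^2+1)^{-1}$ normalization), and the exponentially small error integrates against $g$ to $O(e^{-cL_\eps})=O(\eps^{0+})$; the remaining $\tfrac12\iint g(x)e^{-|x-y|}g(y)=\jb{g,(-\dx^2+1)^{-1}g}_{L^2(\R)}$. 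One must also note the tangential-direction corrections: the projection $\P_{V_{\xi_1,\dots,\xi_k}}$ modifies $C_{\xi_1,\dots,\xi_k}$ away from $(-\dx^2+1-2\sum\textup{sech}^2)^{-1}$, but those terms involve $t_j$, which is localized within $O(1)$ of $\xi_j$ by Lemma~\ref{LEM:TGRAM}, hence $\jb{g,t_j}$ is exponentially small by the separation \eqref{twocond}; and the potential $-2\sum\textup{sech}^2(\cdot-\xi_j)$ is itself exponentially small on $\supp g$, so the resolvent comparison between $(-\dx^2+1-2\sum\textup{sech}^2)^{-1}$ and $(-\dx^2+1)^{-1}$ tested against $g$ costs only $O(e^{-c\eps^{-1/2+2\eta}})$. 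Finally (d): the left side of \eqref{Glimit} with $A_\eps$ and $\{\|\sqrt\eps v\|_{L^2}<\dl_\eps\}$ differs from the same expectation over the full event $\{\|\sqrt\eps v\|_{L^2}<\dl_\eps\}$ by the $A_\eps^c$ contribution, which is $\lesssim e^{-cK^2|\log L_\eps|}=O(\eps^{0+})$ by Lemma~\ref{LEM:AEC}; combining with (a) reduces everything to the unrestricted Gaussian integral in (b), up to $O(\eps^{0+})$.

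\textbf{The main obstacle.} The delicate point is (c): controlling $\jb{g,C_{\xi_1,\dots,\xi_k}g}$ with an error that is $O(\eps^{0+})$ \emph{uniformly} in all admissible $\xi_1,\dots,\xi_k$ (including configurations where several centers cluster near the separation threshold $d_\eps$, and configurations where some $\xi_j$ sits close to $\partial\supp g$ is excluded only by \eqref{twocond}). The bulk bound in Proposition~\ref{PROP:dec} already gives the needed uniformity in the centers, but one must still assemble the several small contributions --- the boundary error $e^{-2(L_\eps-\max|x|)}$, the projection corrections $\jb{g,t_j}$, and the resolvent difference coming from the $\textup{sech}^2$ potential on $\supp g$ --- and verify each is $o(1)$ on the scale $\eps^{0+}$ given $L_\eps=\eps^{-1/2+\eta}$ and $\textup{dist}(\supp g,\xi_j)\ge\eps^{-1/2+2\eta}$. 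A secondary subtlety is the interplay between the two restrictions $A_\eps$ and $\{\|\sqrt\eps v\|_{L^2}<\dl_\eps\}$ in the error terms of (a), and making sure the cubic bound $\eps^{1/2}\log\frac1\eps\sqrt{|\log L_\eps|}$ genuinely beats $1$ --- which it does since $\log L_\eps\sim\eps^{-0}$ only logarithmically, so the net is $\eps^{1/2+}$. Once these uniform estimates are in place, integrating against $g$ and passing $\eps\to0$ is routine.
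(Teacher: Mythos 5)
Your proposal follows essentially the same route as the paper's proof, with a few cosmetic variations. The paper also (i) bounds the cubic remainder and the Weingarten determinant by $1+O(\eps^{0+})$ on $A_\eps$, (ii) reduces to the exact Gaussian characteristic functional $\exp\{-\tfrac12\jb{g,C_{\xi_1,\dots,\xi_k}g}\}$ (they do this via the orthogonal split $v=\P\jb{v,g}+w$, which is equivalent to your direct covariance computation), (iii) compares $\jb{g,C_{\xi_1,\dots,\xi_k}g}$ to $\jb{g,(-\dx^2+1)^{-1}g}_{L^2(\R)}$ — this is precisely the content of Lemma~\ref{LEM:cov}, whose proof your step (c) essentially sketches — and (iv) drops the restrictions $A_\eps$, $\{\|\sqrt\eps v\|_{L^2}<\dl_\eps\}$ at $O(\eps^{0+})$ cost. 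Your bound for the cubic remainder, $\frac1\eps\|\sqrt\eps v\|_{L^\infty}\|\sqrt\eps v\|_{L^2}^2\lesssim\eps^{1/2}\log\tfrac1\eps\sqrt{|\log L_\eps|}$, exploits the $L^2$ constraint and is slightly sharper than the paper's $\sqrt\eps\, L_\eps\,\|v\|_{L^\infty}^3\lesssim\eps^\eta|\log L_\eps|^{3/2}$ (which only uses $A_\eps$); both are $O(\eps^{0+})$ and either works. One small omission: in step (d) you remove $A_\eps^c$ explicitly via Lemma~\ref{LEM:AEC} but gloss over how the restriction $\{\|\sqrt\eps v\|_{L^2}<\dl_\eps\}$ is then lifted to pass to the unrestricted Gaussian integral; the paper records this as the final error term $O(e^{-c\dl_\eps^2/\eps})$ in \eqref{CLT3}, and you should include an analogous accounting. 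Otherwise the argument is sound and matches the paper step for step, including the decomposition of the covariance error into the boundary part $e^{-2(L_\eps-\max|x|)}$, the projection corrections $\jb{g,t_j}$, and the resolvent/potential perturbation, which is exactly how Lemma~\ref{LEM:cov} is organized in the text.
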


\begin{remark}\rm 
If the tangential modes $\xi_j$ do not satisfy the condition  $\textup{dist}(\supp g, \xi_j) \ge \eps^{-\frac 12 +2\eta}$ for every $j=1,\dots,k$ ($g$ is close to one of the solitons), then the central limit theorem to the mean–zero Ornstein–Uhlenbeck measure does not hold. However, in the next section, we show that the contribution of the forbidden region $\textup{dist}(\supp g, \xi_j)< \eps^{-\frac 12+2\eta} $ becomes relatively small compared to the size of the partition function as $L_\eps \to 0$.  Hence, we can still obtain the desired result with a more careful geometric analysis.
\end{remark}

\subsection{Asymptotic behavior of the covariance}

In order to prove Proposition \ref{PROP:OUN},  we first take some preliminary steps. With respect to the measure $\nu^\perp_{\xi_1,\dots,\xi_k } $, we perform the following orthogonal decomposition
\begin{align}
v(x)= \P(x) \jb{v,g}+w(x),
\label{orth1}
\end{align}

\noi
where $\P(x)$ is the projection of $v$ onto the direction of $\jb{v,g}$ 
\begin{align*}
\P(x)=\frac{ \E_{\nu^\perp_{\xi_1,\dots,\xi_k} } \big[ \jb{v,g}v(x)   \big]   }{\E_{\nu^\perp_{\xi_1,\dots,\xi_k} } \big[ |\jb{v,g}|^2  \big]}.
\end{align*}

\noi
The orthogonal decomposition implies that $w$ and $\jb{v,g}$ are independent Gaussian random variables. Hence, the measure $\nu^\perp_{\xi_1,\dots,\xi_k}$ can be decomposed as follows
\begin{align*}
d\nu^\perp_{\xi_1,\dots,\xi_k}(v)= \frac{1}{\sqrt{2\pi}\s_g } e^{-\frac {t^2}{2\s_g^2 } } \mathrm{d}t\, \mathrm{d} \nu^{\perp \perp }_{\xi_1,\dots,\xi_k}(w),
\end{align*}

\noi
where $\s_g^2=\E_{\nu^\perp_{\xi_1,\dots,\xi_k}} \big[  |\jb{v,g}|^2  \big]$.   Before presenting the proof of Proposition \ref{PROP:OUN}, we study the following lemma, which provides control on the variance $\s_g^2$.


\begin{lemma}\label{LEM:cov}
Let $g$ be a smooth, compactly supported function. Under the conditions   
\begin{align*}
\textup{dist}(\supp g, \xi_j) \ge \eps^{-\frac 12 +2\eta} 
\end{align*}

\noi
for every  $1 \le j \le k$,  we have 
\begin{align*}
\s_g^2=\E_{\nu^\perp_{\xi_1,\dots,\xi_k } } \big[ |\jb{v,g}|^2 \big]=\jb{g, (-\dx^2+1)^{-1} g}\cdot \big(1+O(e^{-c \eps^{-\frac 12+2\eta} }) \big),
\end{align*}

\noi
uniformly in $\xi_1,\dots,\xi_k \in [- \cj L_\eps, \cj L_\eps] $ and $\cj L_\eps$.




\end{lemma}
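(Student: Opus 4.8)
\textbf{Proof plan for Lemma \ref{LEM:cov}.}
The plan is to compute the variance $\s_g^2 = \langle g, C_{\xi_1,\dots,\xi_k} g\rangle$, where $C_{\xi_1,\dots,\xi_k} = \P_{V_{\xi_1,\dots,\xi_k}}(-\dx^2+1-2\sum_j\text{sech}^2(\cdot-\xi_j))^{-1}\P_{V_{\xi_1,\dots,\xi_k}}$, and show it is exponentially close to $\langle g, (-\dx^2+1)^{-1}g\rangle$. The argument proceeds in two independent reduction steps: first remove the projections $\P_{V_{\xi_1,\dots,\xi_k}}$, then remove the potential $W = -2\sum_j\text{sech}^2(\cdot-\xi_j)$, each producing an error of order $e^{-c\eps^{-\frac12+2\eta}}$ thanks to the separation hypothesis $\textup{dist}(\supp g, \xi_j)\ge \eps^{-\frac12+2\eta}$.

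First I would handle the projections. Writing $\P_{V_{\xi_1,\dots,\xi_k}} = \Id - \sum_{j=1}^k \langle\cdot, t_j\rangle t_j$ as in \eqref{proj1}, the quantity $\langle g, C_{\xi_1,\dots,\xi_k}g\rangle$ equals $\langle \P_{V}g, (-\dx^2+1+W)^{-1}\P_V g\rangle$. Since $|t_j(x)|\lesssim e^{-c|x-\xi_j|}$ by Lemma \ref{LEM:TGRAM} and $\supp g$ is at distance $\ge \eps^{-\frac12+2\eta}$ from each $\xi_j$, we get $|\langle g, t_j\rangle|\lesssim e^{-c\eps^{-\frac12+2\eta}}\|g\|_{L^1}$, so $\P_V g = g + O_{L^2}(e^{-c\eps^{-\frac12+2\eta}})$; combined with the uniform operator bound $\|(-\dx^2+1+W)^{-1}\|_{L^2\to L^2}\lesssim 1$ from Lemma \ref{LEM:coer3} (coercivity on the relevant subspace, or just positivity of the operator), this replaces the projected covariance by $\langle g, (-\dx^2+1+W)^{-1}g\rangle$ up to the stated error. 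Next I would use the resolvent identity
\begin{align*}
(-\dx^2+1+W)^{-1} = (-\dx^2+1)^{-1} - (-\dx^2+1)^{-1}W(-\dx^2+1+W)^{-1},
\end{align*}
so that
\begin{align*}
\langle g, (-\dx^2+1+W)^{-1}g\rangle - \langle g, (-\dx^2+1)^{-1}g\rangle = -\big\langle (-\dx^2+1)^{-1}g,\; W(-\dx^2+1+W)^{-1}g\big\rangle.
\end{align*}
Here $(-\dx^2+1)^{-1}g$ has kernel $\tfrac12 e^{-|x-y|}$ (on $\R$; on $[-L_\eps,L_\eps]$ with Dirichlet data the difference is $O(e^{-2(L_\eps-\ldots)})$, negligible), hence $(-\dx^2+1)^{-1}g(x)\lesssim e^{-c\,\text{dist}(x,\supp g)}$, which is $\lesssim e^{-c\eps^{-\frac12+2\eta}}$ on $\supp W \subset \bigcup_j\{|x-\xi_j|\lesssim \text{(mild)}\}$ — more precisely, $W$ decays like $e^{-2|x-\xi_j|}$ so the overlap of $e^{-c\,\text{dist}(x,\supp g)}$ with $W$ is integrably exponentially small in $\eps^{-\frac12+2\eta}$. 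Bounding $\|(-\dx^2+1+W)^{-1}g\|_{L^2}\lesssim \|g\|_{L^2}$ uniformly then gives the claimed error $O(e^{-c\eps^{-\frac12+2\eta}})$, uniformly in $\xi_1,\dots,\xi_k\in[-\cj L_\eps,\cj L_\eps]$ and in $\cj L_\eps$.

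The main obstacle is keeping all estimates genuinely \emph{uniform} in the centers $\xi_1,\dots,\xi_k$ and in $L_\eps$, and correctly bookkeeping the two sources of error — the finite-volume Dirichlet truncation versus the infinite-volume operator in the statement. For the former I would invoke Lemma \ref{LEM:OUdec} and Proposition \ref{PROP:dec}, whose bulk estimates already provide $G^D_{\textup{OU}}(x,y) = \tfrac12 e^{-|x-y|} + O(e^{-2(L_\eps-\max\{|x|,|y|\})})$ uniformly; since $\supp g$ is compact and fixed while $L_\eps\to\infty$, and the centers lie in $[-\cj L_\eps,\cj L_\eps]$, all the boundary-layer errors are dominated by $e^{-c\eps^{-\frac12+2\eta}}$ and can be absorbed. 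The uniform resolvent bound $\|(-\dx^2+1+W)^{-1}\|_{L^2\to L^2}\lesssim1$ holds because $-\dx^2+1+W\ge c>0$ uniformly (the $\text{sech}^2$ wells only create the $k$ near-zero modes inside $V_{\xi_1,\dots,\xi_k}^\perp$, and on all of $L^2$ one still has a uniform lower bound away from those modes; but since we work with $g$ orthogonal-up-to-exponential-error to the $t_j$, the relevant spectral contribution is controlled by Lemma \ref{LEM:coer3}). Assembling the two reduction steps and the finite-volume correction yields $\s_g^2 = \langle g, (-\dx^2+1)^{-1}g\rangle\,(1+O(e^{-c\eps^{-\frac12+2\eta}}))$, as claimed.
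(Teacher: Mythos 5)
Your plan of attack---remove the projections $\P_{V_{\xi_1,\dots,\xi_k}}$ first, then apply the resolvent identity for the unprojected operators $(-\dx^2+1+W)^{-1}$ and $(-\dx^2+1)^{-1}$---is genuinely different from the paper's, and it has a real gap: the uniform bound $\|(-\dx^2+1+W)^{-1}\|_{L^2\to L^2}\lesssim 1$ that you use to control the error in both reduction steps is false. Lemma~\ref{LEM:coer3} gives coercivity of $\mathcal L_{\xi_1,\dots,\xi_k}=-\dx^2+1+W$ only on the normal subspace $V_{\xi_1,\dots,\xi_k}$, i.e.\ after projecting off the tangential directions $t_1,\dots,t_k$. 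On all of $L^2$, the paper's own discussion (just after \eqref{L0} and before Lemma~\ref{LEM:coer2}) shows that the quadratic form restricted to the tangent directions satisfies $\jb{\mathcal L_{\xi_1,\dots,\xi_k}\partial_{\xi_j}m,\partial_{\xi_j}m}=-O(e^{-\min_{i\neq j}|\xi_i-\xi_j|})$, so $\mathcal L_{\xi_1,\dots,\xi_k}$ has $k$ near-zero eigenvalues of size $O(e^{-cd_\eps})$, and in general of indefinite sign; the Dirichlet boundary correction is of much smaller order $O(e^{-c\eps^{-\frac12+2\eta}})$ since $|\xi_j|\le\cj L_\eps$. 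Consequently the unprojected resolvent has operator norm $\gtrsim e^{cd_\eps}\sim(\eps\log\frac1\eps)^{-c}$, i.e.\ polynomially large in $\eps^{-1}$ rather than $O(1)$, and may not even be globally defined. Invoking Lemma~\ref{LEM:coer3} or ``just positivity'' does not supply the bound you need, so the step replacing $\P_V g$ by $g$ and the step bounding $\|(-\dx^2+1+W)^{-1}g\|_{L^2}\lesssim\|g\|_{L^2}$ both fail as stated. There is also a subtler conflation lurking here: the covariance $C_{\xi_1,\dots,\xi_k}$ that makes $\nu^\perp_{\xi_1,\dots,\xi_k}$ a well-defined Gaussian measure is the inverse of $\P_V\mathcal L\P_V$ \emph{on the subspace} $V$ (this is what Lemma~\ref{LEM:Gauss} constructs), which is not the same object as $\P_V\mathcal L^{-1}\P_V$ with $\mathcal L^{-1}$ the $L^2$-inverse; your reduction implicitly switches between these.

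Your argument is not necessarily beyond repair: the near-zero eigenvectors are exponentially localized near the $\xi_j$'s, and you do observe that $\jb{g,t_j}$ is superexponentially small. A spectral decomposition that isolates the bad directions and shows their contribution to $\jb{g,(-\dx^2+1+W)^{-1}g}$ is $O(e^{-2c\eps^{-\frac12+2\eta}}\cdot e^{cd_\eps})$, which is still negligible, could plausibly close the gap. But that is not the argument you wrote, and it requires more care (in particular, knowing precisely which subspace carries the near-zero modes and how well $g$ avoids it). The paper avoids the issue entirely by never leaving the normal subspace $V$: it writes the resolvent identity \eqref{v1} directly for the projected Green functions $G^D_{\mathrm{OU},\xi_1,\dots,\xi_k}$ and $G^D_{\xi_1,\dots,\xi_k}$, both of which are uniformly bounded on $V$ thanks to Lemma~\ref{LEM:coer3}, and controls the perturbation term via the pointwise bulk decay estimates of Lemma~\ref{LEM:OUdec} and Proposition~\ref{PROP:dec}. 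If you want to keep your order of operations (de-project first, then perturb the potential), you should spell out why the tangential sector of $\mathcal L^{-1}$ is harmless when tested against $g$, rather than asserting a uniform operator bound that does not hold.
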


\begin{proof}

By the definition of the Gaussian measure $\nu^\perp_{ \xi_1,\dots,\xi_k }$, we have
\begin{align}
\E_{\nu^\perp_{ \xi_1,\dots,\xi_k } } \big[ |\jb{v,g}|^2 \big]=\jb{g,G^D_{\xi_1,\dots,\xi_k} g},
\label{v0}
\end{align}

\noi
where $G^D_{\xi_1,\dots,\xi_k}$ is the Green's function, defined in \eqref{Cdec0}.  
Recall the resolvent identity \eqref{k1} for $G^D_{\xi_1,\dots,\xi_k}$.  We first study the leading order term $G_{\text{OU},\xi_1,\dots,\xi_k}^D$ in \eqref{k1}.  By expanding the projection  $\P_{V_{\xi_1\,\dots,\xi_k}}$ as in \eqref{dec0},  we begin with the first projected term $\P_i(-\dx^2+1)^{-1} \P_j$. Note that 
\begin{align*}
\jb{g, \P_i(-\dx^2+1)^{-1} \P_j g}=\jb{g, t_i  } \jb{g, t_j  } \big \langle t_i, (-\dx^2+1)^{-1} t_j  \big\rangle.
\end{align*}

\noi
Under the condition $\textup{dist}(\supp g, \xi_j) \ge \eps^{-\frac 12 +2\eta}$, together with Lemma \ref{LEM:TGRAM} $|t_j(x)| \les e^{-|x-\xi_j|}$,
\begin{align}
|\jb{g, t_j  }| \les \bigg| \int_{\supp g} g(x)  e^{-|x-\xi_j|}  dx \bigg| \les \|g \|_{L^2} e^{-\textup{dist}(\supp g, \xi_j) } \les e^{- \eps^{-\frac 12+2\eta}}.
\label{v5}
\end{align}

\noi
This implies that 
\begin{align}
|\jb{g, \P_i(-\dx^2+1)^{-1} \P_j g} | \les  e^{- \eps^{-\frac 12+2\eta}}.
\label{v7}
\end{align}

\noi
Furthermore, the separation condition \eqref{v5} implies 
\begin{align}
|\jb{g, \P_i(-\dx^2+1)^{-1} g} | \les  e^{- \eps^{-\frac 12+2\eta}} \quad \textup{and} \quad  |\jb{g, (-\dx^2+1)^{-1} \P_j g} |   \les e^{- \eps^{-\frac 12+2\eta}}.
\label{v6}
\end{align}

\noi 
Combining \eqref{dec0}, \eqref{v7}, and \eqref{v6} yields 
\begin{align}
\jb{g, G^D_{\textup{OU}, \xi_1,\dots, \xi_k }  g }=\jb{g, (-\dx^2+1)^{-1}g}+O(e^{-\eps^{-\frac 12+2\eta}}).
\label{v15}
\end{align}

\noi
This completes the leading order term $G_{\text{OU},\xi_1,\dots,\xi_k}^D$ in \eqref{k1}.

We now study the perturbation term $G_{\text{OU}, \xi_1,\dots,\xi_k}^D W G^D_{\xi_1,\dots,\xi_k}$ in \eqref{k1}.  We expand 
\begin{align}
&|\jb{g, (G_{\text{OU}, \xi_1,\dots,\xi_k }^D W G^D_{\xi_1,\dots,\xi_k} )  g}| \notag \\
& \les \sum_{j=1}^{k } \bigg|  \int_{-L_\eps}^{L_\eps} g(x) \bigg( \int_{-L_\eps}^{L_\eps} G_{\text{OU},\xi_1,\dots,\xi_k}^D(x,z) e^{-|z-\xi_j|} \bigg(\int_{-L_\eps}^{L_\eps}  G^D_{\xi_1,\dots,\xi_k}(z,y) g(y) dy\bigg) dz \bigg) dx  \bigg|,
\label{v11}
\end{align}

\noi
where we used $\textup{sech}^2(z-\xi_j)\les e^{-|z-\xi_j|}$. To use the correlation decay, we separate $G_{\text{OU},\xi_1,\dots,\xi_k}^D(x,z)$ and $G^D_{\xi_1,\dots,\xi_k}(z,y)$ into its bulk and boundary parts,  based on Lemma \ref{LEM:OUdec} and Proposition \ref{PROP:dec}
We first focus on the bulk effect  $e^{-|x-z|}$ and $e^{-|z-y|}$ by plugging them into \eqref{v11}
\begin{align}
&\sum_{j=1}^{k } \bigg|  \int_{-L_\eps}^{L_\eps} g(x) \bigg( \int_{-L_\eps}^{L_\eps} e^{-|x-z|} e^{-|z-\xi_j|} \bigg(\int_{-L_\eps}^{L_\eps}  e^{-|z-y|} g(y) dy \bigg) dz \bigg) dx  \bigg| \notag \\
& \les \sum_{j=1}^{k } L_\eps \bigg|  \int_{\supp g} \int_{\supp g}   e^{-\frac 14|x-\xi_j|} e^{-\frac 14  |y-\xi_j| }   g(y) g(x) dy dx    \bigg| \notag \\
&\les \sum_{j=1}^{k} L_\eps e^{-\frac 12 \text{dist}(\supp g, \xi_j)  }  \|g \|_{L^1} \| g \|_{L^1}  \les  L_\eps  e^{- \frac 12 \eps^{-\frac 12+2\eta}}  \les e^{- \frac 14 \eps^{-\frac 12+2\eta}},
\label{v14}
\end{align}

\noi
where in the last step, we use the condition  $\textup{dist}(\supp g, \xi_j) \ge \eps^{-\frac 12 +2\eta} $ for every $1 \le j \le k$.  For the boundary effects $e^{-2(L_\eps-\max\{ |x|,|z|\}    )}$ and $e^{-2(L_\eps-\max\{ |z|, |y| \} )}$ in Lemma \ref{LEM:OUdec} and Proposition \ref{PROP:dec}, following the arguments in \eqref{sta1} and \eqref{sta3}, we obtain the error term
$e^{-c \eps^{-\frac 12+2\eta} }$. Combining \eqref{v11},  \eqref{v14}, and the boundary error $e^{-c \eps^{-\frac 12+2\eta} }$ yields 
\begin{align}
|\jb{g, (G_{\text{OU}, \xi_1,\dots,\xi_k }^D W G^D_{\xi_1,\dots,\xi_k} )  g}| \les e^{- \frac 14 \eps^{-\frac 12+2\eta}}+ e^{-c \eps^{-\frac 12+2\eta} }
\label{v16}
\end{align}

\noi
as $\eps \to 0$. Finally, using \eqref{v0}, \eqref{k1}, \eqref{v15}, and \eqref{v16}, we obtain
\begin{align*}
\E_{\nu^\perp_{ \xi_1,\dots,\xi_k } } \big[ |\jb{v,g}|^2 \big]&=\jb{g,G^D_{\xi_1,\dots,\xi_k} g}\\
&=\jb{g, G^D_{\textup{OU},\xi_1,\dots,\xi_k }g }+\jb{g, (G_{\text{OU}, \xi_1,\dots,\xi_k }^D W G^D_{\xi_1,\dots,\xi_k} )  g}\\
&=\jb{g, (-\dx^2+1)^{-1}g}(1+O( e^{-c \eps^{-\frac 12+2\eta} }  ) ).
\end{align*}

\noi
This completes the proof of Lemma \ref{LEM:cov}.

\end{proof}

\subsection{Ornstein–Uhlenbeck limit on the normal space}

We are now ready to prove Proposition \ref{PROP:OUN}.

\begin{proof}[Proof of Proposition \ref{PROP:OUN}]

We decompose the expectation as follows
\begin{align}
&\E_{\nu^\perp_{\xi_1,\dots,\xi_k} }\Big[ e^{i \jb{v,g} } e^{\frac 1\eps \mathcal{E}_{\xi_1,\dots,\xi_k}(\sqrt{\eps}v) }  \textup{Det}_{\xi_1,\dots,\xi_k}(\sqrt{\eps}v), \, A_\eps, \, \| \sqrt{\eps}v\|_{L^2}< \dl_\eps   \Big] \notag \\
&=\E_{\nu^\perp_{\xi_1,\dots,\xi_k} } \Big[ e^{i \jb{v,g} } \textup{Det}_{\xi_1,\dots,\xi_k}(\sqrt{\eps}v) , \; A_\eps, \;  \| \sqrt{\eps}v\|_{L^2}< \dl_\eps,  \Big] \notag \\
&\hphantom{X}+\E_{\nu^\perp_{\xi_1,\dots,\xi_k} } \Big[ e^{i \jb{v,g} } (e^{\frac 1\eps \mathcal{E}_{\xi_1,\dots,\xi_k}(\sqrt{\eps}v ) }-1) \textup{Det}_{\xi_1,\dots,\xi_k}(\sqrt{\eps}v), \; A_\eps, \;  \| \sqrt{\eps}v\|_{L^2}< \dl_\eps      \Big] \notag \\
&=\I_1+\I_2,
\label{CLT1}
\end{align}

\noi
where $\I_1 $  is the main term and $\I_2$ is an error term. On the set $A_\eps=\{  \|v\|_{L^\infty} \le K \sqrt{|\log L_\eps|}   \}$, the higher-order error term and the determinant in Proposition \ref{PROP:cha} satisfy
\begin{align*}
\Big| \frac 1\eps \mathcal{E}_{\xi_1,\dots,\xi_k}(\sqrt{\eps}v) \Big| \le \frac 1\eps \int_{-L_\eps}^{L_\eps} |\sqrt{\eps} v|^3 dx  &\le \sqrt{\eps} L_\eps \|v \|_{L^\infty}^3 \les \eps^{0+} |\log \eps|^{\frac 32}=\eps^{0+},
\end{align*}

\noi
where we used $L_\eps=\eps^{-\frac 12+}$, and
\begin{align}
\textup{Det}_{\xi_1,\dots,\xi_k}(\sqrt{\eps}v)=1+O(\| \sqrt{\eps}v \|_{L^\infty}^k)=1+O(\eps^{\frac k2-})
\label{M10}
\end{align}

\noi
uniformly in $\xi_1,\dots,\xi_k \in [-\cj L_\eps, \cj L_\eps]^k$. This implies that 
\begin{align}
\I_2=O(\eps^{0+}),
\label{CLT2}
\end{align}

\noi
uniformly in $\xi_1,\dots,\xi_k \in [-\cj L_\eps, \cj L_\eps]^k$.  Regarding the main term $\I_1$, we use \eqref{M10} and the tail probability estimates in Lemmas \ref{LEM:T1} and \eqref{LDPIN2} to obtain
\begin{align}
\I_1=\E_{\nu^\perp_{\xi_1,\dots,\xi_k } } \Big[ e^{i \jb{v,g} } \Big]+O(\eps^{\frac k2-}) +O(e^{-c K^2 |\log L_\eps| })+O(e^{- \frac{c \dl_\eps^2}{\eps}}),
\label{CLT3}
\end{align}

\noi
where $\dl_\eps=\eta\sqrt{\eps \log \frac 1\eps}$. Using the orthogonal decomposition $v=\P(x)\jb{v,g}+w$ in \eqref{orth1}, where $\jb{v,g}$ and $w$ are independent Gaussian, we write
\begin{align}
\E_{\nu^\perp_{ \xi_1,\dots,\xi_k } } \Big[ e^{i \jb{v,g} } \Big]&= \iint e^{it} e^{-\frac{t^2}{2\s_g^2}} \frac{dt}{\sqrt{2\pi} \s_g }\nu^{\perp \perp}_{\xi_1,\dots,\xi_k}(w)=e^{-\frac {\s_g^2}{2} } \notag \\
&=\exp\Big\{-\frac 12  \jb{g,   (-\dx^2+1)^{-1} g  } \cdot \big(1+O(e^{-c \eps^{-\frac 12+2\eta} }) \big)  \Big\},
\label{CLT4}
\end{align}


\noi
where we used Lemma \ref{LEM:cov}. By combining \eqref{CLT1}, \eqref{CLT2}, \eqref{CLT3}, and \eqref{CLT4}, we obtain 
\begin{align*}
&\E_{\nu^\perp_{\xi_1,\dots,\xi_k} } \Big[ e^{i \jb{v,g} } e^{\frac 1\eps \mathcal{E}_{\xi_1,\dots,\xi_k}( \sqrt{\eps}v ) }\textup{Det}_{\xi_1,\dots,\xi_k}(\sqrt{\eps}v), \; A_\eps, \;  \| \sqrt{\eps}v \|_{L^2} <\dl_\eps  \Big] \notag \\
&=\exp\Big\{-\frac 12  \jb{g,   (-\dx^2+1)^{-1} g  } \cdot \big(1+O(e^{-c \eps^{-\frac 12+2\eta} }) \big)  \Big\}+O(\eps^{0+}).
\end{align*}

\noi
Therefore, by taking the limit $\eps \to 0$, we have 
\begin{align*}
&\lim_{\eps \to 0} \E_{\nu^\perp_{\xi_1,\dots,\xi_k} } \Big[ e^{i \jb{v,g} } e^{\frac 1\eps \mathcal{E}_{\xi_1,\dots,\xi_k}( \sqrt{\eps}v ) }\textup{Det}_{\xi_1,\dots,\xi_k}(\sqrt{\eps}v), \; A_\eps, \;  \| \sqrt{\eps}v \|_{L^2}<\dl_\eps  \Big]\\
&=\exp\Big\{-\frac 12 \jb{g, (-\dx^2+1)^{-1} g }_{L^2(\R)} \Big \}.
\end{align*}



\noi
This completes the proof of Proposition \ref{PROP:OUN}.

\end{proof}

\begin{remark}\rm 
Following the proof of Proposition \ref{PROP:OUN}, we also obtain
\begin{align}
\E_{\nu^\perp_{ \xi_1,\dots,\xi_k} } \Big[ e^{\frac 1\eps \mathcal{E}_{\xi_1,\dots,\xi_k}( \sqrt{\eps}v ) }\textup{Det}_{\xi_1,\dots,\xi_k}(\sqrt{\eps}v), \; A_\eps, \;  \| \sqrt{\eps}v \|_{L^2}<\dl_\eps  \Big] = 1+O(\eps^{0+}),
\label{part1}
\end{align}

\noi
uniformly in $\xi_1,\dots,\xi_k \in [-\cj L_\eps, \cj L_\eps]^k$, where $\dl_\eps=\eta\sqrt{\eps \log \frac 1\eps}$. Note that in \eqref{part1} we do not need the conditions $\textup{dist}(\supp g, \xi_j) \ge \eps^{-\frac 12 +2\eta}$  for $j=1,\dots k$. These conditions are only used in \eqref{CLT4}, where we used Lemma \ref{LEM:cov}. Note that if $e^{it}$ is replaced by $1$, then \eqref{CLT4} is immediately equal to $1$.

\end{remark}

\section{Proof of Theorem \ref{THM:2}}

In this section we prove Theorem \ref{THM:2}.  By studying the geometric structure of the forbidden sets $\textup{dist}(\supp g, \xi_j)<\eps^{-\frac 12+2\eta}$, we remove the conditions used in proving the Ornstein–Uhlenbeck limit (Proposition \ref{PROP:OUN}): $\textup{dist}(\supp g, \xi_j) \ge \eps^{-\frac 12 +2\eta}$ for every $1\le j \le k$.

\begin{proof}[Proof of Theorem \ref{THM:2}]

Let $g$ be a smooth, compactly supported function. It suffices to prove that 
\begin{align*}
\lim_{\eps \to 0} \int e^{i\jb{\phi,g }}  (T^\eps)_{\#} \rho_\eps^Q(d\phi)=\exp\Big\{-\frac 12 \jb{g, (-\dx^2+1)^{-1}g } \Big\},
\end{align*}

\noi
where $T^{\eps}(\phi)=\eps^{-\frac 12}(\phi-\pi^\eps(\phi) ) $ and $\pi^\eps$ denotes the projection onto the multi-soliton manifold $\M^{\eps, \ge  d_\eps}_Q$, defined in \eqref{APM5}. Using Lemma \ref{LEM:partition}, we decompose the integral into the large-deviation events and the main term $\J_1$ as follows
\begin{align}
\int e^{i\jb{T^{\eps}(\phi),g }}  \rho_\eps^Q(d\phi)=
\J_1+\textup{O}(e^{-c  \log \frac 1\eps })
\label{M9}
\end{align}

\noi
where
\begin{align*}
\J_1&=\int_{   \{  \textup{dist}(\phi, \M_Q^{\eps, \, \ge  d_\eps} ) <\dl_\eps        \} } e^{\jb{T^\eps(\phi),g } } \rho_\eps^Q(d\phi).
\end{align*}

\noi
Here $\dl_\eps= \eta \sqrt{\eps \log \frac 1\eps}$ and $d_\eps= \big|\log(\eps \log \frac 1\eps) \big| $, as defined in \eqref{LDPIN2} and \eqref{LDPIN3}. 
By using the coordinate expression $\phi=m^\eps_{\xi_1,\dots,\xi_Q}+\sqrt{\eps}v$  and Proposition \ref{PROP:cha}, we write 
\begin{align}
\J_1=\cj Z_\eps^{-1} \int\cdots\int_{\Dl_Q} \mathcal{F}_{\xi_1,\dots,\xi_Q}(e^{i\jb{v,g}}) \ind_{ \{ \min_{i \neq j}|\xi_i-\xi_j|  \ge  d_\eps  \} }
 d\xi_1 \dots d\xi_Q,  
\label{M1} 
\end{align}

\noi
where $\Dl_Q=\{-\cj L_\eps\le \xi_1\le \cdots \le \xi_Q \le \cj L_\eps\}$  is defined as in~\eqref{wein1} and
\begin{align*}
\mathcal{F}_{\xi_1,\dots,\xi_Q}(\psi)=\E_{\nu^\perp_{ \xi_1,\dots,\xi_Q} } \Big[ \psi \cdot e^{\frac 1\eps \mathcal{E}_{\xi_1\,\dots,\xi_Q}(\sqrt{\eps}v ) }  
\textup{Det}_{\xi_1,\dots,\xi_Q}(\sqrt{\eps}v), \, \| \sqrt{\eps}v \|_{L^2} < \dl_\eps  \Big].
\end{align*}

\noi
Here, the partition function $\cj Z_\eps$ in \eqref{M1} is 
\begin{align}
\cj Z_\eps=\int\cdots\int_{\Dl_Q} \mathcal{F}_{\xi_1,\dots,\xi_Q}(1) \ind_{ \{ \min_{i \neq j}|\xi_i-\xi_j|  \ge d_\eps  \} } d\xi_1 \dots, d\xi_Q.
\label{part2} 
\end{align}

\noi
We now split the main term $J_1$ in \eqref{M1} into two parts, $\Xi_\eps$ and $\Xi_\eps^c$ as follows  
\begin{align}
\J_1&=\cj Z_\eps^{-1} \int \cdots \int_{ \Xi_\eps }  \mathcal{F}_{\xi_1,\dots,\xi_Q}(e^{i\jb{v,g}}) \ind_{ \{ \min_{i \neq j}|\xi_i-\xi_j|  \ge  d_\eps  \} }
 d\xi_1 \dots, d\xi_Q   \notag \\
&\hphantom{X}+\cj Z_\eps^{-1} \int \cdots \int_{ \Xi_\eps^c }  \mathcal{F}_{\xi_1,\dots,\xi_Q}(e^{i\jb{v,g}}) \ind_{ \{ \min_{i \neq j}|\xi_i-\xi_j|  \ge   d_\eps  \} }
 d\xi_1 \dots, d\xi_Q \notag \\
&=\J_1^{(1)}+\J_1^{(2)},
\label{M2} 
\end{align}

\noi
where
\begin{align}
\Xi_\eps=\big\{(\xi_1,\dots,\xi_Q)\in \Dl_Q: \textup{dist}(\supp g, \xi_j) \ge \eps^{-\frac 12 +2\eta} \; \; \textup{for every $j$} \big\}.
\label{M5}
\end{align}


\noi
In order to use Proposition \ref{PROP:OUN}, we decompose 
\begin{align*}
\mathcal{F}_{\xi_1,\dots,\xi_Q}(e^{i\jb{v,g}})&=\mathcal{F}_{\xi_1,\dots,\xi_Q}(e^{i\jb{v,g}} \cdot \ind_{ A_\eps   } )+\mathcal{F}_{\xi_1,\dots,\xi_Q}(e^{i\jb{v,g}} \cdot \ind_{ A_\eps^c   } ),
\end{align*}

\noi
where $A_\eps=\{ \|v\|_{L^\infty} \le K \sqrt{|\log L_\eps|} \}$. Using the tail estimate Lemma \ref{LEM:AEC} with Proposition \ref{PROP:OUN}, we obtain that on the set $\Xi_\eps$
\begin{align}
\mathcal{F}_{\xi_1,\dots,\xi_Q}(e^{i\jb{v,g}})=\exp\Big\{-\frac 12 \jb{g, (-\dx^2+1)^{-1} g }_{L^2(\R)} \Big \}\cdot (1+O(\eps^{0+})),
\label{M3}
\end{align}

\noi
uniformly in $(\xi_1,\dots,\xi_Q)\in  \Xi_\eps $.  For the term $\mathcal{F}_{\xi_1,\dots,\xi_Q}(1)$ appearing in the partition function  $\cj Z_\eps$ in \eqref{part2}, we obtain, using \eqref{part1}, that
\begin{align}
\mathcal{F}_{\xi_1,\dots,\xi_Q}(1)=1+O(\eps^{0+}),
\label{M4} 
\end{align}

\noi
uniformly in $(\xi_1,\dots,\xi_Q)\in \Dl_Q$.  Combining \eqref{M2}, \eqref{M3}, \eqref{part2}, and \eqref{M4} yields that 
\begin{align}
\J_{1}^{(1)}
&= e^{-\frac 12 \| g\|^2_{H^{-1}(\R) } } 
\cdot \frac{  \big| \Xi_\eps \cap   \{  \min_{i \neq j} |\xi_i-\xi_j| \ge  d_\eps   \}     \big|  }{ \big|  \Dl_Q  \cap   \{  \min_{i \neq j} |\xi_i-\xi_j| \ge  d_\eps   \}   \big|  }(1+O(\eps^{0+})).
\label{BJ1}
\end{align}

\noi 
Furthermore,  \eqref{M2}, \eqref{part2}, and \eqref{M4} imply that  
\begin{align}
|\J_{1}^{(2)}| \les  \frac{  \big| \Xi_\eps^c \cap   \{  \min_{i \neq j} |\xi_i-\xi_j| \ge  d_\eps   \}     \big|  }{ \big|  \Dl_Q  \cap   \{  \min_{i \neq j} |\xi_i-\xi_j| \ge  d_\eps   \}   \big|  }(1+O(\eps^{0+})).
\label{BJ0}
\end{align}

\noi
We show that $\J_{1}^{(1)} \to e^{-\frac 12 \| g\|^2_{H^{-1} } }$ and $\J_{1}^{(2)} \to 0 $ as $\eps \to 0$.  The area of the forbidden region, namely the band of length $\cj L_\eps^{Q-1}$ and width $d_\eps$, is 
\begin{align}
&\big| \{ (\xi_1,\dots,\xi_Q) \in \Dl_Q:   \min_{i \neq j} |\xi_i-\xi_j|  <d_\eps  \}     \big| 
\sim O(\cj L_\eps^{Q-1}  d_\eps ).
\label{symp}
\end{align}

\noi
Hence, the separation $\min_{i \neq j}|\xi_i-\xi_j| \ge  d_\eps$ removes only a thin tubular neighbourhood of the diagonal band, of thickness $d_\eps$, from the large cube of side length $\cj L_\eps$,  whose volume is of order $O(\cj L_\eps^{Q-1}  d_\eps )$. Therefore, we have 
 \begin{align}
\big| \Dl_Q  \cap   \{  \min_{i \neq j} |\xi_i-\xi_j| \ge  d_\eps   \} \big| &=\big| \Dl_Q      \big|-O\big(\cj L_\eps^{Q-1} d_\eps\big) \notag \\
&\sim  (2\cj L_\eps)^{Q}\Big(1- O\Big(\frac{d_\eps}{\cj L_\eps}\Big)  \Big) \sim (2\cj L_\eps)^Q,
\label{M6}  
\end{align}

\noi
where in the last line we used $d_\eps=\big|\log (\eps \log \frac 1\eps)\big|$ and $\cj L_\eps\sim \eps^{-\frac 12+\eta}$ from \eqref{cjL}. So the outside band region has almost full area, up to a relative error of order $\tfrac{d_\eps}{\cj L_\eps} \sim \eps^{\frac 12-} $. For the complement of the set $\Xi_\eps$ defined in \eqref{M5}, we have
\begin{align}
|\Xi_\eps^c| &\les Q (\cj L_\eps)^{Q-1} \big| \big\{ \xi_j \in [-\cj L_\eps, \cj L_\eps]: \textup{dist}(\supp g, \xi_j) < \eps^{-\frac 12+2\eta}     \big\}   \big| \notag \\
&\les Q (\cj L_\eps)^{Q-1} (|\supp g|+2\eps^{-\frac 12+2\eta}).
\label{M7}
\end{align}

\noi
Combining \eqref{M6} and \eqref{M7} yields that
\begin{align*}
\frac{  \big| \Xi_\eps^c \cap   \{  \min_{i \neq j} |\xi_i-\xi_j| \ge  d_\eps   \}     \big|  }{ \big|  \Dl_Q  \cap   \{  \min_{i \neq j} |\xi_i-\xi_j| \ge d_\eps   \}   \big|  }
\les  \frac{|\supp g| +2\eps^{-\frac 12+2\eta} }{\cj L_\eps} \les \eps^{\eta},
\end{align*} 

\noi
where we used $\cj L_\eps=L_\eps(1-\eps^\eta-\eps^{\frac 12-\eta} )\sim \eps^{-\frac 12+\eta} $ from \eqref{cjL}. This implies that in \eqref{BJ0} 
$\J_{1}^{(2)} \to 0 $ and so in \eqref{BJ1} $\J_{1}^{(1)} \to e^{-\frac 12 \| g\|^2_{H^{-1} } }$ as $\eps \to 0$.

From \eqref{M9}, \eqref{M2}, and the fact that $\J_{1}^{(1)} \to e^{-\frac 12 \| g\|^2_{H^{-1} } }$, $\J_{1}^{(2)} \to 0 $ as $\eps \to 0$, we conclude that 
\begin{align*}
\lim_{\eps \to 0}\int e^{i\jb{\phi,g }}  (T^\eps)_{\#} \rho_\eps^Q(d\phi)=\exp\Big\{-\frac 12 \jb{g, (-\dx^2+1)^{-1}g } \Big\}.
\end{align*}

\noi
This completes the proof of Theorem \ref{THM:2}.




\end{proof}

\section{Proof of Theorem \ref{THM:3}}


Recall that  under the coordinate representation $\phi=m^\eps_{\xi_1,\dots,\xi_Q}+\sqrt{\eps}v$,  we take the projection 
$\pi_\eps^T$ onto the tangential directions by $\pi_\eps^T(\phi)=(\xi_1,\dots,\xi_Q)$. The marginal tangential projection is given by
$\pi^T_{j}(\phi)=\xi_{j}$, where $\xi_{j}$ denotes the $j$-th ordered center in the increasing rearrangement $\xi_{1}\le \cdot\le \xi_{Q}$. 


\begin{proof}[Proof of Theorem \ref{THM:3}]
Using Lemma \ref{LEM:partition}, we decompose the integral into the large-deviation events and the main term $\J_1$ as follows
\begin{align}
\int \ind_{ \{ \pi_\eps^{T}(\phi) \in A \}   }  \rho_\eps^Q(d\phi)=
\J_1+\textup{O}(e^{-c  \log \frac 1\eps }),
\label{RT1}
\end{align}

\noi
where $\J_1=\int_{   \{  \textup{dist}(\phi, \M_Q^{\eps, \, \ge  d_\eps} ) <\dl_\eps        \} } \ind_{ \{ \pi_\eps^{T}(\phi) \in A \}   }  \rho_\eps^Q(d\phi)$.  Here $\dl_\eps= \eta \sqrt{\eps \log \frac 1\eps}$ and $d_\eps= \big|\log(\eps \log \frac 1\eps) \big| $, as defined in \eqref{LDPIN2} and \eqref{LDPIN3}.  By using the coordinate expression $\phi=m^\eps_{\xi_1,\dots,\xi_Q}+\sqrt{\eps}v$ and Proposition \ref{PROP:cha}, we write 
\begin{align}
\J_1&=\cj Z_\eps^{-1}  \int\cdots\int_{\Dl_Q} \mathcal{F}_{\xi_1,\dots,\xi_Q}(1)  \ind_{  \{  (\xi_1,\dots,\xi_Q) \in A    \}   }   \cdot  \ind_{ \{ \min_{i \neq j}|\xi_i-\xi_j|  \ge  d_\eps  \}  } \,
d\xi_1 \dots d\xi_Q,
\label{ORU1}
\end{align}

\noi
where $\Dl_Q=\{-\cj L_\eps\le \xi_1\le \cdots \le \xi_Q \le \cj L_\eps\}$  is defined as in~\eqref{wein1} and
\begin{align*}
\mathcal{F}_{\xi_1,\dots,\xi_Q}(\psi)=\E_{\nu^\perp_{ \xi_1,\dots,\xi_Q} } \Big[ \psi \cdot e^{\frac 1\eps \mathcal{E}_{\xi_1\,\dots,\xi_Q}(\sqrt{\eps}v ) }  
\textup{Det}_{\xi_1,\dots,\xi_Q}(\sqrt{\eps}v), \, \| \sqrt{\eps}v \|_{L^2} < \dl_\eps  \Big].
\end{align*}

\noi
Using \eqref{part1} and Lemma \ref{LEM:AEC}, we decompose the main term and the tail contributions as follows
\begin{align}
\mathcal{F}_{\xi_1,\dots,\xi_Q}(1)&=\mathcal{F}_{\xi_1,\dots,\xi_Q}(\ind_{ A_\eps   } )+\mathcal{F}_{\xi_1,\dots,\xi_Q}(\ind_{ A_\eps^c   } )=1+O(\eps^{0+}),
\label{ORU2}
\end{align}



\noi
uniformly in $(\xi_1,\dots,\xi_Q) \in \Dl_Q$. 
By following the same procedure, we can also write the partition function $\cj Z_\eps$ in \eqref{ORU1} as follows
\begin{align}
\cj Z_\eps = \int\cdots\int_{\Dl_Q}   \ind_{ \{ \min_{i \neq j}|\xi_i-\xi_j|  \ge  d_\eps  \} } d\xi_1 \dots d\xi_Q  \cdot (1+O(\eps^{0+})).
\label{ORU4}
\end{align}

\noi
Combining \eqref{ORU1}, \eqref{ORU2}, and \eqref{ORU4} yields that 
\begin{align}
\J_1=\frac{\big|A \cap \{ \min_{i \neq j}|\xi_i-\xi_j|  \ge  d_\eps  \}  \cap \Dl_Q \big|}{ \big|\{ \min_{i \neq j}|\xi_i-\xi_j|  \ge  d_\eps  \} \cap  \Dl_Q \big|  }(1+O(\eps^{0+}) ).
\label{ORU5}
\end{align}

\noi
From \eqref{M6}, we have 
\begin{align}
\big| \Dl_Q  \cap   \{  \min_{i \neq j} |\xi_i-\xi_j| \ge  d_\eps   \} \big|&\sim  (2\cj L_\eps)^{Q}\Big(1- O\Big(\frac{d_\eps}{\cj L_\eps}\Big)  \Big) \sim (2\cj L_\eps)^Q.
\label{ORU6}
\end{align}

\noi
Therefore, the effect of imposing $\min_{i \neq j} |\xi_i-\xi_j| \ge  d_\eps$ is negligible compared to the total volume of the simplex  $\frac{(2\cj L_\eps)^Q}{Q!}$.  Using \eqref{ORU5} and \eqref{ORU6}, we obtain   
\begin{align}
\J_1=\frac{|A   \cap \Dl_Q |}{ | \Dl_Q |  }(1+O(\eps^{0+}) ).
\label{ORU7}
\end{align}


\noi
Combining  \eqref{RT1} and \eqref{ORU7} yields 
\begin{align}
\rho_\eps^Q \{ \pi_\eps^{T}(\phi) \in A \}
= \frac{|A   \cap \Dl_Q |}{ | \Dl_Q |  }(1+O(\eps^{0+}) ).
\label{M11} 
\end{align}

\noi
Therefore, we obtain the desired result.

By following the arguments used to obtain \eqref{M11}, we have
\begin{align}
\rho^Q_\eps\big\{ \pi^T_{j}(\phi) \in B  \big\}=\frac{ | \{\xi_j \in B  \} \cap \Dl_Q |}{ |\Dl_Q |  }(1+O(\eps^{0+}) ).
\label{M15}
\end{align}

In the following, our goal is to find a density function $f_j(x)$
\begin{align*}
\rho_\eps^Q\big\{ \pi^T_{j}(\phi) \in B  \big\}= \int_{B} f_j(x) dx \cdot (1+O(\eps^{0+})).
\end{align*}

\noi
Fix $x \in (-\cj L_\eps, \cj L_\eps)$ and look at the slice with $\xi_j=x$ as follows \begin{align}
S_j(x)=\{  (\xi_1,\dots,\xi_Q) \in \Dl_Q  : \xi_j=x   \}.
\label{M12}
\end{align}

\noi
Then, $(\xi_1,\dots, \xi_{j-1})$ form an ordered simplex $-\cj L_\eps \le \xi_1\le \dots \le \xi_{j-1}\le x$ in the interval $[-\cj L_\eps, x]$, volume
\begin{align}
\frac{(x+\cj L_\eps)^{j-1}}{(j-1)!}.
\label{M13}
\end{align}

\noi
Also, $(\xi_{j+1},\dots, \xi_Q)$ form an ordered simplex $x\le \xi_{j+1}\le \dots \le \xi_{Q} \le \cj L_\eps$ in the interval $[x, \cj L_\eps]$, volume
\begin{align}
\frac{ (\cj L_\eps-x)^{Q-j}  }{(Q-j)! }.
\label{M14}
\end{align}

\noi
Combining \eqref{M12}, \eqref{M13} and \eqref{M14} yields 
\begin{align*}
| \{\xi_j \in B  \} \cap \Dl_Q |=\int_B |S_j(x)| dx=\int_B \frac{(x+\cj L_\eps)^{j-1}}{(j-1)!} \frac{ (\cj L_\eps-x)^{Q-j}  }{(Q-j)! } dx.
\end{align*}

\noi
This, along with \eqref{M15}, implies that
\begin{align*}
\rho_\eps^Q\big\{ \pi^T_{j}(\phi) \in B  \big\}=\int_B f_j(x) dx \cdot (1+O(\eps^{0+}) ),
\end{align*}

\noi
where 
\begin{align*}
f_j(x)= \frac{Q!}{ (2\cj L_\eps)^Q }\frac{(x+\cj L_\eps)^{j-1}}{(j-1)!} \frac{ (\cj L_\eps-x)^{Q-j}  }{(Q-j)! }, \; \; -\cj L_\eps<x<\cj L_\eps.
\end{align*}

\noi
Recall that $\pi^T_{j}(\phi)=\xi_j \in  [-\cj L_\eps, \cj L_\eps]$. Rescaling by $\cj L_\eps$, define $V_j:=\frac{\xi_j+\cj L_\eps}{2\cj L_\eps} \in [0,1]$. Let $x=2\cj L_\eps v-\cj L_\eps$ and $dx=2\cj L_\eps dv$. Then, the density of $V_j$ is 
\begin{align*}
f_{V_j}(v)=f_j(x) \cdot (2\cj L_\eps)&=\frac{Q!}{(2\cj L_\eps)^Q} \frac{(2\cj L_\eps v)^{j-1} (2\cj L_\eps(1-v)  )^{Q-j} }{(j-1)! (Q-j)!  } \cdot (2\cj L_\eps)\\
&=\frac{Q!}{ (j-1)! (Q-j)! } v^{j-1} (1-v)^{Q-j},
\end{align*}

\noi
where $0<v<1$. This shows $V_j \sim \textup{Beta}(j,Q+1-j)$. Hence, 
$\pi^T_{(j)}(\phi)=\xi_{j}=-\cj L_\eps +2\cj L_\eps V_j$ follows a Beta distribution, whose expected location is given by 
\begin{align*}
\E_{\rho_\eps^Q } \big[  \pi^T_{j}(\phi)  \big]=\Big(-\cj L_\eps+\frac{2\cj L_\eps j}{Q+1} \Big) \cdot (1+O(\eps^{0+}) ).
\end{align*}

\noi
This implies that soliton centers $\xi_1,\dots,\xi_{Q}$ are evenly spaced, dividing the interval $[-\cj L_\eps, \cj L_\eps]$ into $Q + 1$ equal parts of length $\tfrac{2\cj L_\eps}{Q + 1}$.

\end{proof}

\begin{ackno}\rm 
K.S. would like to thank  Hendrik Weber for pointing out helpful references. H.S. gratefully acknowledges financial support from an NSF grant (CAREER DMS-2044415). The work of P.S. is partially supported by NSF grants DMS-1811093,  DMS-2154090, and a Simons Fellowship.  
\end{ackno}

\end{document}